\newtheorem{thm}{Theorem}[section]
\newtheorem{assum}{Assumption}
\newtheorem{lem}[thm]{Lemma}
\newtheorem{cor}[thm]{Corollary}
\newtheorem{pro}[thm]{Proposition}
\newtheorem{introthm}{Theorem}
\newtheorem{introcor}[introthm]{Corollary}
\theoremstyle{definition}
\newtheorem{defi}[thm]{Definition}
\newtheorem{ex}[thm]{Example}
\newtheorem*{introex}{Example}
\newtheorem{rmk}[thm]{Remark}
\newcommand{\Fib}{\operatorname{Fib}}
\newcommand{\ind}{\operatorname{ind}}
\newcommand{\Lef}{\operatorname{Lef}}
\newcommand{\arf}{\operatorname{Arf}}
\newcommand{\Pin}{\operatorname{Pin}}
\newcommand{\red}{\operatorname{red}}
\newcommand{\hmred}{HM^{\red}}
\renewcommand{\top}{\operatorname{top}}
\newcommand{\sign}{\operatorname{sign}}
\newcommand{\grad}{\operatorname{grad}}
\newcommand{\coker}{\operatorname{coker}}
\newcommand{\im}{\operatorname{im}}
\newcommand{\gr}{\operatorname{gr}}
\newcommand{\B}{\mathcal B}
\newcommand{\C}{\mathcal C}
\newcommand{\D}{\mathcal D}
\newcommand{\M}{\mathcal M}
\renewcommand{\L}{\mathcal L}
\newcommand{\E}{\mathcal E}
\newcommand{\K}{\mathcal K}
\newcommand{\T}{\mathcal T}
\renewcommand{\S}{\mathcal S}
\newcommand{\Q}{\mathbb Q}
\newcommand{\R}{\mathbb R}
\newcommand{\Z}{\mathbb Z}
\newcommand{\s}{\mathfrak s}
\newcommand{\p}{\partial}
\newcommand{\ep}{\varepsilon}
\newcommand{\zp}{Z_{\infty}}
\renewcommand{\wp}{W_{\infty}}
\newcommand{\lsw}{\lambda_{\rm{SW}}}
\newcommand{\supp}{\operatorname{supp}}
\newcommand{\spin}{\,\operatorname{spin}}
\newcommand{\spinc}{\,\operatorname{spin}^c}
\newcommand{\Tr}{\operatorname{Tr}}
\newcommand{\Id}{\operatorname{Id}}
\renewcommand{\phi}{\varphi}
\newcommand{\loc}{\operatorname{loc}}
\newcommand{\Tor}{\operatorname{Tor}}
\title{A splitting theorem for the Seiberg-Witten invariant of a homology $S^1 \times S^3$}
\thanks{The second author was partially supported by NSF grant DMS-1506328, and the third author was partially supported by a Collaboration Grant from the Simons Foundation}
\author[Jianfeng Lin]{Jianfeng Lin}
\address{Department of Mathematics\newline\indent Massachusetts Institute of
Technology \newline\indent Cambridge MA 02139}
\email{\rm{linjian5477@gmail.com}}
\author[Daniel Ruberman]{Daniel Ruberman}
\address{Department of Mathematics, MS 050\newline\indent Brandeis
University \newline\indent Waltham, MA 02454}
\email{\rm{ruberman@brandeis.edu}}
\author[Nikolai Saveliev]{Nikolai Saveliev}
\address{Department of Mathematics\newline\indent
University of Miami \newline\indent PO Box 249085
\newline\indent Coral Gables, FL 33124}
\email{\rm{saveliev@math.miami.edu}}
\subjclass[2010]{57R57, 57R58, 57M27, 53C21, 58J28, 58J35}
\begin{document}
\begin{abstract}
We study the Seiberg--Witten invariant $\lsw(X)$ of smooth spin $4$-manifolds $X$ with rational homology of $S^1\times S^3$ defined by Mrowka, Ruberman, and Saveliev as a signed count of irreducible monopoles amended by an index-theoretic correction term. We prove a splitting formula for this invariant in terms of the Fr{\o}yshov invariant $h(X)$ and a certain Lefschetz number in the reduced monopole Floer homology of Kronheimer and Mrowka.  We apply this formula to obstruct existence of metrics of positive scalar curvature on certain 4-manifolds, and to exhibit new classes of homology $3$-spheres of infinite order in the homology cobordism group. 
\end{abstract}

%%%%%%%%%%%%%%%%%%%%%%%%%%%%%%%%%%%%%%%%%%%%

\maketitle

\section{Introduction}
Let $X$ be a smooth oriented spin 4-manifold with the rational homology of $S^1\,\times\,S^3$. Such manifolds play an important role in the study of homology cobordisms of homology 3-spheres and in addressing certain classification problems in 4-dimensional topology; see discussion in \cite{RS1}. Their study, however, represents a challenge because the usual count of the Seiberg--Witten monopoles on $X$  generally depends on the auxiliary choices of metric and perturbation and hence does not result in a smooth invariant of $X$. This problem has been remedied by Mrowka, Ruberman, and Saveliev \cite{MRS1}, who defined an invariant $\lsw (X)$ for integral homology $S^1 \times S^3$ by adding an index-theoretic correction term to the count of the Seiberg--Witten monopoles on $X$; we will extend their definition to the case of rational homology $S^1 \times S^3$ in this paper. Because of its gauge theoretic nature, the invariant $\lsw(X)$ is difficult to compute directly from its definition. We address this problem in the current paper by expressing $\lsw(X)$ in terms of Floer theoretic invariants via a gluing theory, the way it is done for the classical Seiberg--Witten invariants.

An invariant relevant to this gluing theory was defined by Fr{\o}yshov \cite{F}: under the additional  hypothesis that a generator of $H_3 (X;\Z)$ is carried by an embedded rational homology sphere $Y$, he showed \cite[Theorem 8]{F} that the invariant $h(Y,\s)$ arising from the monopole Floer homology of $Y$ with the induced spin structure $\s$ is an invariant of the spin manifold $X$ alone; we will denote this invariant by $h(X)$. The invariants $\lsw(X)$ and $h(X)$ are certainly different: for instance, the mod $2$ reduction of $\lsw (X)$ equals the Rohlin invariant of $X$, while this is not the case for $h(X)$. 

The following theorem establishes a precise relation between $\lsw(X)$ and $h(X)$. It is followed by some strong applications (Theorems \ref{T:psc}, \ref{T:Theta}, and \ref{T:Theta2} below) to the study of metrics of positive scalar curvature and of the homology cobordism group of homology 3-spheres. 

\begin{introthm}\label{T:main}
Let $X$ be a smooth oriented spin rational homology $S^1 \times S^3$ which is homology oriented by a choice of generator in $H^1 (X;\mathbb Z)$. Assume that the Poincar\'e dual of this generator is realized by a rational homology sphere $Y \subset X$. Let $\s$ be the induced spin structure on $Y$, and let $W$ be the spin cobordism from $Y$ to itself obtained by cutting $X$ open along $Y$. Then
\begin{equation}\label{E:main}
\lsw(X)\, +\, h(X)\; =\;- \Lef\, (W_*: \hmred(Y,\s)\to \hmred(Y,\s)).
\end{equation}
\end{introthm}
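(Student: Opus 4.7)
I will use a neck-stretching argument along the cross-section $Y$. Cutting $X$ along $Y$ produces the cobordism $W$, and $X$ is recovered as $W$ with its two boundary copies of $Y$ identified. Equip $X$ with a family of metrics $g_T$ having a cylindrical neck $[-T, T] \times Y$, and let $T \to \infty$. Using the compactness and gluing theory of monopole Floer homology of Kronheimer and Mrowka, in this limit the moduli space of irreducible Seiberg--Witten monopoles on $(X, g_T)$ is realized as a fiber product: finite-energy monopoles on the cylindrical-end cobordism $W_\infty = ((-\infty,0] \times Y) \cup_Y W \cup_Y ([0,\infty) \times Y)$, constrained so that the two asymptotic limits coincide as critical points of the perturbed Chern--Simons--Dirac functional on $Y$. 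Counting such matched pairs with signs is tautologically a graded trace of the cobordism chain map induced by $W$ on the monopole Floer complex of $Y$, i.e. a Lefschetz-type number.

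\textbf{Role of reducibles and the correction term.} The raw Lefschetz trace has contributions from both the irreducible critical points and the reducible stratum on $Y$. On the reducible locus, which since $Y$ is a rational homology sphere consists of finitely many flat $\spinc$ connections with a Bott tower coming from the blow-up construction, the cobordism map is described by the Kronheimer--Mrowka map on $\overline{HM}(Y,\s)$. Applying the Atiyah--Patodi--Singer index theorem to the reducible moduli spaces on $W_\infty$, one sees that the index-theoretic correction term in the definition of $\lsw(X)$ is exactly the regularization needed to make sense of this infinite reducible contribution. The residual Frøyshov structure on the $\overline{HM}$-tower identifies the remaining constant in the regularized reducible Lefschetz count as $h(Y,\s) = h(X)$. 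After this absorption, the only part of the Floer complex contributing is the irreducible part, i.e. precisely the quotient whose homology is $\hmred(Y,\s)$.

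\textbf{Assembly and main obstacle.} Putting these pieces together, the regularized count $\lsw(X)$ equals $-\Lef(W_*: \hmred(Y,\s)\to\hmred(Y,\s)) - h(X)$, which is \eqref{E:main}; the overall sign comes from the grading and orientation conventions in the Kronheimer--Mrowka Lefschetz formula and from the way the signed count of monopoles in MRS is normalized. The main technical obstacle is the careful analysis of the reducible stratum as $T \to \infty$: the reducibles are not transversely cut out, boundary-obstructed cylindrical trajectories appear in the compactifications of the fiber product, and the tower of reducible solutions on $W_\infty$ must be handled via Kronheimer--Mrowka's blow-up. Matching these local reducible contributions to the global $\eta$-invariant correction defining $\lsw$ requires a delicate excision and additivity argument for spectral flow and $\eta$-invariants. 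A secondary difficulty is extending the construction of $\lsw$ from integral to rational homology $S^1 \times S^3$ in such a way that the correction term is still well-defined and the splitting formula takes the same shape; this in turn requires choosing the holonomy perturbations compatibly with the possibly enlarged reducible stratum coming from torsion in $H_1(X;\mathbb Z)$.
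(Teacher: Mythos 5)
Your overall skeleton is right and matches the paper: stretch the neck at $Y$, interpret the limiting moduli space as a fiber product of monopoles on $\wp$ over asymptotic critical points of $\L_{\mathfrak q}$, read off the signed count as a Lefschetz number of the cobordism map, and let the index-theoretic correction term account for the Frøyshov/reducible piece. But there are two genuine gaps in how you close the argument.

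First, the handling of reducibles is mis-framed. You propose to ``handle'' the reducible tower via the blow-up and ``regularize'' an ``infinite reducible contribution'' with APS index theory on reducible moduli spaces. The paper does something sharper: it proves that irreducible monopoles on $X_{T_n}$ \emph{cannot} degenerate to a reducible on $\wp$ as $T_n\to\infty$, so the fiber product is indexed purely by $\mathfrak C^*$ and no reducible regularization ever enters the count. The mechanism is an a priori lower bound on the first eigenvalue of the Dirac Laplacian on the stretched manifold (Proposition~\ref{eigenvalue estimate}, applied in Lemma~\ref{converging to reducible}), which forces a would-be sequence limiting to the reducible to violate the eigenvalue bound because $\D^{+}_{A_n}\phi_n$ is small whenever the connection is close to flat. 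This isolation statement is what rescues the non-separating gluing from the usual hypothesis (absent here since $b_2^+(W)=0$) that $\wp$ carries no reducibles, and without it your Lefschetz picture does not close up.

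Second, the algebraic assembly is glossed over. The fiber-product count gives $\#\M(X_R) = -\Lef(W_*\colon C^o\to C^o)$ at the \emph{chain} level, and $H(C^o)$ is not $\hmred(Y,\s)$; meanwhile the correction term computes to the integer $n(Y,h,\s)$, not directly to $h(Y,\s)$. The bridge is the pair of short exact sequences
\[
0 \to C^s_{\leq N} \to \widecheck{C}_{\leq N} \to C^o \to 0, \qquad
0 \to \im(i_{\leq N}) \to \widecheck{HM}_{\leq N}(Y,\s) \to \hmred(Y,\s) \to 0,
\]
the fact that $W_*$ acts as the identity on both $C^s_{\leq N}$ and $\im(i_{\leq N})$ (Lemma~\ref{L:id}), and a dimension count which produces exactly the discrepancy $n(Y,h,\s)-h(Y,\s)$ when comparing the $\Z/2$-graded Euler characteristics of $C^s_{\leq N}$ and $\im(i_{\leq N})$. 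Your ``residual Frøyshov structure on the $\overline{HM}$-tower'' is the right intuition, but the trade from $(n(Y,h,\s),\ \Lef(C^o))$ to $(h(Y,\s),\ \Lef(\hmred))$ is a specific finite truncation argument, not a regularization, and it needs Lemma~\ref{L: reducible grading} identifying $\gr^{\Q}[\theta_i] = -2n(Y,h,\s)+2i$. Finally, the extension from integral to rational homology $S^1\times S^3$ is not really about ``enlarged reducible strata from torsion''---with a fixed spin structure there is still a single reducible on $Y$; the only genuine issue is the well-definedness of $w(X,g)$ under change of $(Y,Z)$, which requires taking the prime $p$ in the cyclic-cover argument large enough (Remark~\ref{R:p-cyclic cover}).
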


The reduced monopole Floer homology $\hmred(Y,\s)$ that appears in the statement of this theorem is the Floer theory defined by Kronheimer and Mrowka \cite{KM}, with rational coefficients. We could use instead the monopole Floer homology defined by Fr{\o}yshov \cite{F}, which would change the sign of the Lefschetz number because of different grading conventions in the two theories. 
%It follows in particular that the Lefschetz number in \eqref{E:main} is an invariant of $X$ alone; this was previously observed by Fr{\o}yshov \cite[Theorem 8]{F}. 
The rational homology sphere $Y$ in Theorem \ref{T:main} is oriented by the rule that the orientation on a curve Hom-dual to $\alpha$ followed by the orientation on $Y$ gives the orientation on $X$. The precise sign convention for $\lsw (X)$ is described in Section \ref{S:lsw}. %One should also mention that a specific choice of spin structure on $X$ is immaterial because any two such choices are equivalent as $\spinc$ structures and therefore lead to the same invariants.

In the special case of $X = S^1 \times Y$, where $Y$ is an integral homology sphere, Theorem \ref{T:main} reduces to a theorem of Fr{\o}yshov \cite[Theorem 5]{F} which relates the Casson invariant $\lambda(Y)$, the Fr{\o}yshov invariant $h(Y)$, and the Euler characteristic of $\hmred(Y)$. An analogous theorem \cite[Theorem 7]{F} holds for manifolds $X$ with $b^2_+(X) > 1$ once the term $\lsw (X) + h(X)$ in formula (\ref{E:main}) is replaced by the usual Seiberg--Witten invariant. %It should be pointed out that our work is similar in some respects to \cite{F} but it has several key new features, which we discuss next.

%%%%%%%%%%%%%%%%%%%%%%%%%%%%%%%%%%%%%%%%%%%%%%%

\subsection{An outline of the proof}
The proof of Theorem \ref{T:main} relies on the calculation of the two terms in the definition of $\lsw(X)$, the count of the Seiberg--Witten monopoles on $X$ and the index-theoretic correction term, using metrics on $X$ with long necks $(0,R) \times Y$. This neck stretching technique for counting monopoles is well known in gauge theory, although mainly in the separating case. The non-separating case at hand was studied in \cite[Section 11.1]{F:gluing} under the technical assumption of the absence of reducible monopoles on the non-compact manifold $\wp$ obtained by attaching infinite product  ends to $W$. In our case, this assumption fails because $b^{+}_{2}(W) = 0$. Instead, we prove that such a reducible monopole does not cause any trouble because it is isolated in the sense that a sequence of irreducible monopoles on $X$ cannot converge to it as $R \to \infty$. This is proved using an a priori estimate on the smallest eigenvalue of the Dirac Laplacian on a manifold with long neck. We provide  a detailed argument in Sections \ref{S:eigenvalue}, \ref{S:compact}, and \ref{S:gluing}, which all use the setup of the book \cite{KM}. As a result, we are able to match the count of monopoles on $X$ with a certain Lefschetz number in the monopole Floer chain complex. The truly novel part of the proof of Theorem \ref{T:main}, however, is the calculation of the correction term in $\lsw(X)$, which boils down to calculating the index of the spin Dirac operator on a manifold with periodic end modeled on the infinite cyclic cover of $X$ as $R \to \infty$. This is done in two substantially different ways, one direct in Section \ref{S:correction} and the other using the end-periodic index theorem of \cite{MRS3} in Section \ref{S:eta}.

A  very useful technical result underpinning the proof of Theorem \ref{T:main} is the existence of Riemannian metrics on the manifold $\wp$ with infinite product ends which make the $L^2$ Sobolev completion of the spin Dirac operator invertible. This existence result, which is proved in Section \ref{S:generic} in all dimensions divisible by four, is an extension of the generic metric theorem of \cite{ADH} to certain non-compact manifolds. One advantage of working with such generic metrics is that they greatly simplify the treatment of perturbations needed to ensure the regularity of the Seiberg--Witten moduli spaces, and allow to avoid perturbations on manifolds with periodic ends altogether. See the discussion at the end of Section \ref{S:lsw}.

%%%%%%%%%%%%%%%%%%%%%%%%%%%%%%%%%%%%%%%%%%%%%%%

\subsection{Calculations and applications}
The splitting formula of Theorem~\ref{T:main} makes the invariant $\lsw(X)$ computable in a number of cases. This is due, on one hand, to the availability of advanced computational tools in monopole Floer homology (such as Floer exact triangles and the $\Pin(2)$ symmetry), and on the other, to the identification between monopole Floer homology and Heegaard Floer homology, by the work of Kutluhan, Lee, and Taubes \cite{kutluhan-lee-taubes:HFSW-I,kutluhan-lee-taubes:HFSW-II,kutluhan-lee-taubes:HFSW-III,kutluhan-lee-taubes:HFSW-IV,kutluhan-lee-taubes:HFSW-V}, or alternatively, the work of Colin, Ghiggini, and Honda \cite{colin-ghiggini-honda:HFECH-I,colin-ghiggini-honda:HFECH-II,colin-ghiggini-honda:HFECH-III} and Taubes \cite{taubes:HMECH}. This newly found computability of $\lsw(X)$ leads to a  number of applications, of which we present two in Sections~\ref{S:Theorem B} and \ref{S:Theorem C}. In both applications, the Lefschetz number in formula \eqref{E:main} vanishes, albeit for different reasons. 

The first application gives an obstruction to a $4$-manifold having a Riemannian metric of positive scalar curvature. Historically~\cite{witten:monopole}, the Seiberg--Witten invariants have been used to produce many obstructions of this nature that go well beyond the classical index-theoretic obstruction of Lichnerowicz~\cite{lichnerowicz:spinors}. We add to this body of knowledge the following theorem, which was originally proved by the first-named author~\cite[Theorem 1.2]{lin:sw-psc} using different techniques. It was conjectured in \cite[Remark 1]{lin:sw-psc} that there should exist a proof along the lines of this paper.

\begin{introthm}\label{T:psc}
Let $X$ be a smooth oriented spin rational homology $S^1\times S^3$ which is homology oriented by a choice of generator in $H^1 (X; \Z)$. Assume that the Poincar\'e dual of this generator is realized by a rational homology sphere $Y \subset X$ with the induced spin structure $\s$. Then $X$ admits no Riemannian metric of positive scalar curvature unless $\lsw(X) + h(Y,\s) = 0$.
\end{introthm}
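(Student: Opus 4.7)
The plan is to derive this as a direct corollary of Theorem~\ref{T:main}. Since $h(X) = h(Y,\s)$ by Fr\o yshov's identification (as noted in the introduction), the splitting formula~\eqref{E:main} rearranges to
\[
\lsw(X) + h(Y,\s) \;=\; -\Lef\bigl(W_*\colon \hmred(Y,\s)\to \hmred(Y,\s)\bigr),
\]
so the conclusion of Theorem~\ref{T:psc} is equivalent to the vanishing of this Lefschetz number whenever $X$ carries a metric of positive scalar curvature. I would in fact establish the stronger statement that the cobordism map $W_*$ itself vanishes on $\hmred(Y,\s)$.

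To see this, restrict a PSC metric on $X$ to the cobordism $W$ obtained by cutting along $Y$, and modify it in a collar neighborhood of $\p W = Y\sqcup(-Y)$ so as to be cylindrical near the boundary; the modification is supported in a thin collar and preserves PSC on the compact core of $W$. The Weitzenb\"ock formula combined with the Seiberg--Witten relation $F_A^+ = \sigma(\Phi,\Phi)$ then forces any finite-energy solution on the cylindrical-end manifold $\wp$ to have its spinor vanish on the PSC region, and (after a neck-stretching argument if needed to rule out concentration near the non-PSC ends) unique continuation upgrades this to vanishing everywhere. Consequently, the entire $\wp$ Seiberg--Witten moduli space consists of reducibles. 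By the standard Kronheimer--Mrowka factorisation principle for cobordism maps, the induced map on monopole Floer homology therefore factors through the reducible piece $\overline{HM}(Y,\s)$. Since $\hmred(Y,\s)$ is built precisely to annihilate the contribution of $\overline{HM}$, it follows that $W_* = 0$ on $\hmred(Y,\s)$, and hence $\Lef(W_*) = 0$.

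The main technical obstacle is that $Y$ itself need not admit a metric of positive scalar curvature, so one cannot hope to produce a globally PSC metric on $\wp$; the Weitzenb\"ock argument must therefore be localised to the PSC core of $W$, and controlling the contribution of the non-PSC cylindrical ends requires a neck-stretching or energy-concentration argument in the spirit of the analysis in Sections~\ref{S:eigenvalue}--\ref{S:gluing}. Once the vanishing of $\Lef(W_*)$ is in place, Theorem~\ref{T:main} immediately yields $\lsw(X)+h(Y,\s)=0$, as claimed.
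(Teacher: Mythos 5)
Your high-level framing is correct: by Theorem~\ref{T:main}, the theorem reduces to showing that $\Lef(W_*\colon \hmred(Y,\s)\to\hmred(Y,\s))=0$ when $X$ admits a PSC metric. However, the route you propose for proving this has a genuine gap that you yourself flag but do not close, and the paper's argument is of an essentially different (and more algebraic) character.

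The gap: your plan is to run a Weitzenb\"ock argument on the cylindrical-end manifold $\wp$ to show every finite-energy Seiberg--Witten solution is reducible, and hence that $W_*=0$ on $\hmred(Y,\s)$. But the Weitzenb\"ock inequality forces the spinor to vanish only where the scalar curvature is positive, and there is no reason $Y$ admits a PSC metric, so after bending the metric on $W$ to be cylindrical near $\p W$ one has no curvature control on the collar, let alone on the infinite ends of $\wp$. The global integral form of the Weitzenb\"ock argument requires PSC (or at least non-negativity with suitable decay) on the whole non-compact manifold; with an entire cylindrical end lacking PSC, the boundary terms and the contribution of the end are not controlled by the inequality at all, and there is no open set on which the spinor is known to vanish, so unique continuation has nothing to start from. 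The appeal to ``a neck-stretching or energy-concentration argument in the spirit of Sections~\ref{S:eigenvalue}--\ref{S:gluing}'' does not actually supply the missing estimate, since those sections assume invertibility of $\D^+(\wp)$ for a generic metric, not positivity of scalar curvature, and do not produce a mechanism for killing irreducible solutions. In short, as stated the Weitzenb\"ock step does not go through, and the claimed stronger conclusion $W_*=0$ is not established.

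The paper instead sidesteps the problem that $Y$ need not carry PSC. By the Schoen--Yau minimal hypersurface theorem, a PSC metric on $X$ forces the Poincar\'e dual class to be represented by \emph{some} embedded hypersurface $M\subset X$ carrying a PSC metric, and $M$ need not be the given $Y$ nor even a rational homology sphere. For such an $M$, $\hmred(M,\s_M)=0$ by \cite[Proposition 36.1.3]{KM}. Lifting $M$ to the infinite cyclic cover and placing it disjointly from a lift of $Y$ shows that some iterate $W^{(k)}$ of the cobordism factors through $M$, so $(W_*)^k$ factors through $\hmred(M,\s_M)=0$. Thus $W_*$ is nilpotent on $\hmred(Y,\s)$, its traces vanish in each $\Z/2$ grading, and the Lefschetz number is zero. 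Note this only gives nilpotency, not $W_*=0$ --- which is all that is needed. The key ideas you are missing are: using Schoen--Yau to replace $Y$ by a PSC representative $M$; the $c_1=0$ PSC vanishing of $\hmred(M)$ from Kronheimer--Mrowka; and the factorization/nilpotency trick (borrowed from Fr\o yshov's argument for the well-definedness of the Lefschetz number). Filling the analytic gap in your approach would amount to re-proving the minimal hypersurface theorem or the PSC vanishing of monopole homology in a setting where the hypotheses do not hold; the paper's route is both simpler and correct.
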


It was shown in \cite{MRS1} that $\lsw(X)$ reduces modulo $2$ to the Rohlin invariant $\rho(X)$. As in~\cite{lin:sw-psc}, this fact leads to the corollary that, if $X$ admits a metric of positive scalar curvature, any rational homology sphere $Y$ carrying the generator of $H_3 (X;\Z)$ must satisfy the relation $h(Y,\s) = \rho (X)\pmod 2$ with respect to the induces spin structure $\s$. For example, if a generator of $H_3 (X;\Z)$ is carried by the Brieskorn homology sphere $\Sigma(2,3,7)$, then $X$ cannot admit a positive scalar curvature metric.

The second, and more elaborate, application is to the study of homology cobordisms. Recall that oriented 3-manifolds $Y_0$ and $Y_1$ are called homology cobordant (respectively, $\Z/2$ homology cobordant) if there exists a smooth, compact, oriented cobordism $W$ from $Y_0$ to $Y_1$ such that $H_*(W,Y_i; \Z) = 0$ (respectively, $H_*(W,Y_i; \Z/2)=0$) for $i=0,1$. The homology cobordism group $\Theta^3_{\Z}$ is generated by oriented integral homology 3-spheres, modulo the equivalence relation of being homology cobordant. Similarly, the $\Z/2$ homology cobordism group $\Theta^3_{\Z/2}$ is generated by oriented $\Z/2$ homology 3-spheres, modulo the equivalence relation of being $\Z/2$ homology cobordant.

Let us first consider the group $\Theta^3_{\Z}$. Recall that the Rohlin invariant provides a surjective homomorphism $\rho: \Theta^3_{\Z}\to \Z/2$. Manolescu \cite{manolescu:triangulation} used $\Pin(2)$-equivariant Seiberg--Witten theory to show that this homomorphism does not split, that is, no integral homology sphere $Y$ with $\rho(Y)= 1$ has order two in $ \Theta^{3}_{\mathbb{Z}}$. It seems reasonable to conjecture that $\rho(Y)= 1$ in fact implies that $Y$ has infinite order in $\Theta^{3}_{\mathbb{Z}}$; indeed, this was shown to be true for all Seifert fibered homology spheres by the third-named author~\cite{saveliev:plumbed}. Generalizing this result, we show that the conjecture holds under an additional assumption that $Y$ is $h$-positive or $h$-negative: a $\Z/2$ homology sphere $Y$ is said to be $h$-positive (respectively, $h$-negative) if the reduced monopole homology $\hmred(Y,\s)$ corresponding to the unique spin structure $\s$ on $Y$ is supported in degrees $\geq -2h(Y,\s)$ (respectively, $\leq -2h(Y,\s)-1$); see Definition \ref{defi:h-positive/h-negative} for more details. All Seifert fibered homology spheres satisfy this assumption, and many more examples can be found in Section \ref{S:Theorem C}.

\begin{introthm}\label{T:Theta}
Any $h$-positive (or $h$-negative) integral homology sphere $Y$ with $\rho(Y) = 1$ has infinite order in $\Theta^{3}_{\mathbb{Z}}$.
\end{introthm}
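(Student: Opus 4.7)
The approach is proof by contradiction: suppose that $Y$ has finite order $n$ in $\Theta^{3}_{\Z}$. Since the Rohlin invariant descends to a homomorphism $\rho\colon \Theta^{3}_{\Z} \to \Z/2$ with $\rho(Y)=1$, the integer $n$ must be even; since the Fr\o yshov invariant is a torsion-free homomorphism $h\colon \Theta^{3}_{\Z} \to \Z$, we must also have $h(Y,\s)=0$. The $h$-positive (respectively $h$-negative) hypothesis then reduces to the statement that $\hmred(Y,\s)$ is concentrated in degrees $\geq 0$ (respectively $\leq -1$). Moreover, $nY$ bounds a smooth spin $\Z$-homology $4$-ball $V$, and it is this ball that will power the contradiction.

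The plan is to use $V$ to assemble a closed, smooth, spin homology $S^{1}\times S^{3}$ $X$ in which $Y$ is realized as the cross-section Poincar\'e dual to the chosen generator of $H^{1}(X;\Z)$, and then to apply Theorem~\ref{T:main} to $X$. Concretely, one combines $V$ with the standard connected-sum cobordism from $\sqcup_{n} Y$ to $nY = Y \# \cdots \# Y$ and with product cobordisms $Y\times [0,1]$ glued cyclically between boundary copies of $Y$, to obtain a spin self-cobordism $W$ of $Y$ whose induced map on $\hmred(Y,\s)$ factors through the cobordism map of $V$; gluing the two ends of $W$ by the identity produces $X$. Theorem~\ref{T:main} together with $h(Y,\s)=0$ then yields
\begin{equation*}
\lsw(X)\;=\;-\Lef\bigl(W_{*}\colon \hmred(Y,\s)\to\hmred(Y,\s)\bigr),
\end{equation*}
while the mod-$2$ reduction of $\lsw$ gives $\lsw(X)\equiv \rho(X)=\rho(Y)=1\pmod 2$, so that $\Lef(W_{*})$ is forced to be odd.

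The crux, and main obstacle, is to show that the $h$-positive assumption in fact forces $\Lef(W_{*})$ to be even, thereby contradicting the previous display. Structurally, $W_{*}$ factors through the cobordism map induced by $V$, which composes across a homology ball whose ``other end'' feeds into $\hmred(S^{3})=0$; this collapses large portions of $\hmred(Y,\s)$, and the grading concentration from $h$-positivity isolates precisely those graded pieces on which $W_{*}$ can contribute nontrivially to the trace. A careful bookkeeping in the Kronheimer--Mrowka complex, tracking the cobordism maps of $V$, the connected-sum pieces, and the product cobordisms separately, should show that the surviving contributions pair off modulo $2$. The $h$-negative case is then handled by applying the $h$-positive argument to $-Y$, which also has finite order in $\Theta^{3}_{\Z}$ and whose $\hmred$ inherits the $h$-positive grading structure from the Poincar\'e duality pairing between $\hmred(Y,\s)$ and $\hmred(-Y,\s)$.
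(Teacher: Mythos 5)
Your overall strategy---reduce to $h(Y)=0$ via the Fr\o yshov homomorphism, build a homology $S^1\times S^3$ from the finite-order hypothesis, apply Theorem~\ref{T:main}, and derive a parity contradiction with the Rohlin congruence---matches the paper's, but the proposal has a genuine gap at its central step. You assert that ``$h$-positivity forces $\Lef(W_*)$ to be even'' via ``careful bookkeeping'' that ``should show the surviving contributions pair off modulo~$2$.'' This is not an argument, and it also misidentifies the mechanism: the Lefschetz number is not merely even but \emph{exactly zero}, and the vanishing comes from a disjoint-support argument, not from a $\mathbb{Z}/2$ pairing of trace contributions. Without the actual vanishing mechanism the proof does not close.

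Here is what is missing. If $Y$ has order $n$ in $\Theta^3_{\mathbb{Z}}$, then $Y$ is $\mathbb{Z}$-homology cobordant to $-(n-1)Y$ via a cobordism $W_1$; take $W = W_1\cup_{-(n-1)Y}(-W_1)$ and close it up to obtain $X$. (This is cleaner than your construction from $V$ plus connected-sum and cyclically glued product cobordisms, which is not clearly a $\mathbb{Z}$-homology self-cobordism of $Y$.) The map $(W_1)_*\colon \hmred(Y)\to\hmred(-(n-1)Y)$ preserves the absolute $\mathbb{Q}$-grading. Since $h(Y)=0$ and $Y$ is $h$-positive, $\hmred(Y)$ is supported in degrees $\geq 0$. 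You then need two facts which your write-up does not isolate: (i) connected sums of $h$-positive $\mathbb{Z}/2$ homology spheres are $h$-positive (so $(n-1)Y$ is $h$-positive), and (ii) $Z$ is $h$-positive iff $-Z$ is $h$-negative (so $-(n-1)Y$ is supported in degrees $\leq -1$). These are Lemmas~\ref{close under connected sum} and~\ref{orientation reversal} in the paper; (i) in particular requires a nontrivial connected-sum computation. Once both are in hand, the supports of $\hmred(Y)$ and $\hmred(-(n-1)Y)$ are disjoint, so the grading-preserving map $(W_1)_*$ vanishes, hence $W_*=0$ and $\Lef(W_*)=0$. Theorem~\ref{T:main} then gives $\lsw(X)=-h(Y)-0=0$, contradicting $\lsw(X)\equiv\rho(Y)=1\pmod 2$. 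Your idea of handling the $h$-negative case by passing to $-Y$ is correct, but it relies on the same unproved fact~(ii).
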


It is worth mentioning that, instead of using Theorem \ref{T:main}, one could follow similar arguments to deduce Theorem \ref{T:Theta} from Stoffregen's connected Seiberg--Witten Floer homology \cite{stoffregen:pin(2)-seifert}.

Let us now turn our attention to the group $\Theta^3_{\Z/2}$ where we can prove a stronger result. Let $\Theta^3_L$ be the subgroup of $\Theta^3_{\Z/2}$ generated by all the $L$-spaces (over $\Q$) that are $\Z/2$ homology spheres. It was shown by Stoffregen \cite{stoffregen:pin(2) connected sum} (using the invariants $\alpha$, $\beta$, and $\gamma$) and Hendricks, Manolescu, and Zemke \cite[Proposition 1.4]{manolescu-hendricks-zemke:involutive connected sum} (using the invariants $\overline{d}$ and $\underline{d}$) that the Brieskorn homology sphere $\Sigma(2,3,7)$ has infinite order in the quotient group $\Theta^3_{\Z/2}/\Theta^3_L$. In particular, this implies that $\Theta^3_{\Z/2}/\Theta^3_L$ is an infinite group. The following theorem generalizes this result by exhibiting a large family of such manifolds.

\begin{introthm}\label{T:Theta2}
Any $h$-positive (or $h$-negative) $\Z/2$ homology sphere $Y$ with $\rho(Y)\neq h(Y)\pmod 2$ has infinite order in the group $\Theta^3_{\Z/2}/\Theta^3_L$.
\end{introthm}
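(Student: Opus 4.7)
The plan is to argue by contradiction via Theorem~\ref{T:main}. Suppose $[Y]$ has finite order $n>0$ in $\Theta^3_{\Z/2}/\Theta^3_L$, so that $\#^n Y$ is $\Z/2$-homology cobordant to some connected sum $L$ of $L$-spaces (and their orientation reverses) via a cobordism $V$. A preliminary observation is that a $\Z/2$-homology cobordism between $\Z/2$-homology spheres is automatically a $\Q$-homology cobordism: by universal coefficients the relative integer homology is odd torsion, hence rationally trivial. This lets me apply Theorem~\ref{T:main} in the rational setting and use the $\Q$-homology cobordism invariance of the Fr{\o}yshov invariant $h$.

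Form the $\Q$-homology $S^1\times S^3$ manifold $X := V \cup_{\partial V}(-V)$, the double of $V$ along its boundary, with cross-section $Y_1 := \#^n Y$. Cutting $X$ open along $Y_1$ yields the self-cobordism $W_1 = V \cup_L (-V)$ of $Y_1$. Since $L$ is an $L$-space with $\hmred(L,\s)=0$, the induced map $W_{1*}\colon \hmred(Y_1,\s)\to \hmred(Y_1,\s)$ factors through zero and $\Lef(W_{1*})=0$. Theorem~\ref{T:main} therefore reads $\lsw(X)+h(Y_1)=0$. Combined with $h(Y_1)=n\,h(Y)$ (additivity of $h$ under connected sum) and $h(Y_1)=h(L)$ (cobordism invariance), this yields $\lsw(X)=-n\,h(Y)$.

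Reducing modulo $2$ via the congruence $\lsw(X)\equiv \rho(X)\equiv \rho(Y_1)=n\,\rho(Y)\pmod 2$---the extension of \cite{MRS1} to $\Q$-homology $S^1\times S^3$---the above integer equation gives
\[
n\bigl(\rho(Y)+h(Y)\bigr)\equiv 0\pmod 2,
\]
forcing $n$ to be even under the hypothesis $\rho(Y)\not\equiv h(Y)\pmod 2$.

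The main obstacle is to upgrade ``$n$ is even'' to ``$n$ is infinite''. Here the $h$-positive (or $h$-negative) hypothesis enters essentially: I expect the argument to exploit the asymmetric degree support of $\hmred(Y)$---and, via the monopole Floer connected-sum formula, of $\hmred(\#^n Y)$---together with finer structure of the cobordism map $V_*$ at the level of the full Floer exact triangle (not merely on $\hmred$), to produce refined divisibility constraints on $n$ by higher powers of $2$, ultimately contradicting any finite value. The mechanism should parallel the way Stoffregen's connected-sum $\Pin(2)$-equivariant invariants distinguish elements of $\Theta^3_{\Z/2}/\Theta^3_L$, as alluded to in the remark following Theorem~\ref{T:Theta}.
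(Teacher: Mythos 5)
Your setup is correct as far as it goes, and the parity computation $n\bigl(\rho(Y)+h(Y)\bigr)\equiv 0\pmod 2$ is right, but it only forces $n$ to be even — and then the proof stops, which you acknowledge. The speculation in the final paragraph about ``higher powers of $2$'' and $\Pin(2)$-equivariant refinements is not the right direction; the actual argument never leaves ordinary (non-equivariant) reduced monopole Floer homology, and it contradicts \emph{every} finite $n$, not by divisibility but by a single parity obstruction applied to a differently chosen pair of manifolds.

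The missing idea is to rebalance the cobordism before doubling. Rather than running the splitting theorem on the double of a cobordism from $\#^n Y$ to the $L$-space $L$, move one summand of $Y$ across: the same data gives a $\Z/2$-homology cobordism from $-Y$ to $\bigl(\#^{n-1}Y\bigr)\#L$. Here the $h$-positive hypothesis becomes indispensable — not as a divisibility device, but because $h$-positivity is preserved under connected sums (Lemma~\ref{close under connected sum}, which is a nontrivial computation with the K\"unneth/$\Tor$ formula for $\widehat{HM}$). Hence $\bigl(\#^{n-1}Y\bigr)\#L$ is $h$-positive, while $-Y$ is $h$-negative by Lemma~\ref{orientation reversal}. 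Cobordant $\Z/2$-homology spheres share the same Fr\o yshov invariant, so one has $S(-Y)\subset(-\infty,-2h-1]$ and $S\bigl((\#^{n-1}Y)\#L\bigr)\subset[-2h,\infty)$ — disjoint supports. Now your Lefschetz-vanishing and mod-$2$ argument, applied to the double of \emph{this} cobordism (this is exactly Proposition~\ref{cobordism obstruction}), yields $h(-Y)\equiv\rho(-Y)\pmod 2$, which contradicts the hypothesis for every finite $n\ge 1$. In short, the gap is that you applied Theorem~\ref{T:main} to the wrong pair of cross-sections, and you did not invoke the closure of $h$-positivity under connected sum, which is the one place the $h$-positive assumption is actually used.
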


A further study of the $h$-positive ($h$-negative) condition using the rational surgery formula of Ozsv\'ath and Szab\'o~\cite{oz:q-surgery} leads to the following corollary.

\begin{introcor}\label{C: surgery infinite order}
Let $Y$ be an integral homology sphere obtained by $1/n$ surgery on a knot $K$ in $S^3$ with $n$ odd, and suppose that $\arf(K) =1$.  Then $Y$ has infinite order in $ \Theta^{3}_{\mathbb{Z}}$ in the following cases\,:
\begin{enumerate}
\item $K$ is the figure-eight knot;
\item $K$ is a quasi-alternating knot with non-zero signature $\sigma(K)$.
\end{enumerate}
\end{introcor}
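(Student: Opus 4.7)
The plan is to deduce the corollary from Theorem \ref{T:Theta} applied to $Y = S^3_{1/n}(K)$, by verifying its two hypotheses: that $\rho(Y) = 1$, and that $Y$ is $h$-positive or $h$-negative.

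The Rohlin invariant is immediate. By the classical surgery formula,
\[
\rho\bigl(S^3_{1/n}(K)\bigr) \;\equiv\; n\cdot \arf(K) \pmod 2,
\]
so under the standing hypotheses that $n$ is odd and $\arf(K)=1$, one obtains $\rho(Y)=1$ in all cases. (Note that the figure-eight knot has $\arf = 1$ automatically, since $\Delta_K(-1)=5$.)

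For the $h$-condition, I would invoke the Kutluhan-Lee-Taubes (equivalently, Colin-Ghiggini-Honda-Taubes) identification of monopole Floer homology with Heegaard Floer homology, under which $\hmred(Y)$ corresponds, up to a grading shift, to $HF_{\mathrm{red}}(Y)$, and $-2h(Y)$ corresponds to the Ozsv\'ath-Szab\'o correction term $d(Y)$. The $h$-positivity/negativity condition then becomes a statement about the grading support of $HF_{\mathrm{red}}(Y)$ relative to $d(Y)$, which I would probe using the rational surgery (mapping cone) formula of Ozsv\'ath-Szab\'o for $1/n$ surgery on a knot in $S^3$. In case (2), a quasi-alternating knot is Floer thin by Manolescu-Ozsv\'ath, with $\tau(K) = -\sigma(K)/2$ and knot Floer complex completely determined by $\Delta_K$ and $\sigma(K)$; the hypothesis $\sigma(K) \neq 0$ forces $\tau(K) \neq 0$, which causes $HF_{\mathrm{red}}(Y)$ to be concentrated on one side of the $U$-tower, the side being dictated by the sign of $n\sigma(K)$, and yielding $h$-positivity in one case and $h$-negativity in the other. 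In case (1), the figure-eight knot has $\sigma=0$ and $\tau=0$ and is thus outside the scope of (2), but its $CFK^\infty$ is a small explicit (two-step staircase) complex, so the mapping cone for $1/n$ surgery can be computed directly and $h$-positivity or $h$-negativity verified by inspecting the gradings of the generators of $HF_{\mathrm{red}}(Y)$.

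The main obstacle is the grading bookkeeping in the mapping cone complex: one has to check that every nonzero generator of $HF_{\mathrm{red}}(Y)$ lies in the prescribed range relative to $-2h(Y)$, uniformly over the appropriate sign and parity of $n$, and that the sign of $\sigma(K)$ in case (2) interacts correctly with the sign of $n$. Once this is established in both cases, Theorem \ref{T:Theta} applies and delivers the claimed infinite order of $Y$ in $\Theta^{3}_{\mathbb{Z}}$.
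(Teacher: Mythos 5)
Your overall strategy is the same as the paper's: establish $\rho(Y) = 1$ via the surgery formula for the Rohlin invariant, then try to apply Theorem~\ref{T:Theta}, using the identification $\hmred \cong HF_{\mathrm{red}}$ and $d = -2h$ together with the Ozsv\'ath--Szab\'o mapping cone formula. Your Rohlin step is correct, and your figure-eight knot analysis (direct computation of the mapping cone for a small staircase complex) matches the paper's route via \cite[Proposition~8.3]{oz:boundary} (see Example~\ref{seifert and figure eight}).

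However, in case (2) there is a gap. You assert that $\sigma(K)\neq 0$ forces $HF_{\mathrm{red}}(Y)$ to lie entirely on one side of the $U$-tower with the side determined by the sign of $n\sigma(K)$, so that $Y$ is always $h$-positive or $h$-negative and Theorem~\ref{T:Theta} applies in all sub-cases. The paper's Corollary~\ref{surgery on thin knot is h-positive} establishes $h$-positivity of $S^3_{1/n}(K)$ only when the thinness parameter $\sigma(K)$ is \emph{positive} and $n > 0$; taking mirrors handles $\sigma(K) < 0$, $n < 0$. But in the remaining sub-case $n\sigma(K) < 0$ (say $n > 0$, $\sigma(K) < 0$), neither the paper nor its lemmas establish $h$-negativity, and you would have to prove a separate mapping-cone estimate to get it. The paper sidesteps this entirely: by Lemma~\ref{property of thin knots}(1), $d(S^3_{1/n}(K))\neq 0$ exactly when $n\sigma(K) < 0$, so in that sub-case $h(Y)\neq 0$ and infinite order follows directly from the additivity of $h$ under connected sum, without invoking Theorem~\ref{T:Theta} at all. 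The paper's remark following the corollary makes this dichotomy explicit: $h(Y) = 0$ precisely when $n\sigma(K) > 0$, which is the only sub-case where the Lefschetz-number machinery is genuinely needed. So your proposal as written overreaches in the $n\sigma(K) < 0$ sub-case; the fix is to replace the claimed $h$-positivity/negativity there with the observation $h(Y)\neq 0$.
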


We remark that the Fr{\o}yshov invariant $h(Y)$ in the above corollary vanishes both in case (1) and in case (2) whenever $n\cdot\sigma(K) > 0$ (see Lemma \ref{property of thin knots}) hence the result of the corollary cannot be proved using $h(Y)$ alone. It may be possible, however, to give an alternative proof using other Fr\o yshov--type invariants involving the $\Pin(2)$-symmetry, such as the invariants $\alpha$, $\beta$, and $\gamma$ of Manolescu \cite{manolescu:triangulation, lin:morse-bott} and the invariants $\overline{d}$ and $\underline{d}$ of Hendricks and Manolescu \cite{manolescu-hendricks:involutive}, once their behavior under the connected sum operation is better understood. 

\begin{introex}
Let $Y$ be the integral homology sphere obtained by $1/n$ surgery on a two-bridge knot $K(24m \pm 5,3)$ or $K(24m\pm 11,3)$ with $m > 0$ and odd $n > 0$. Then $h(Y) = 0$ and $Y$ has infinite order in the groups $\Theta^3_{\Z}$ and $\Theta^3_{\Z/2}/\Theta^3_L$. 
\end{introex}

Another major reason to study the group $\Theta^3_{\Z/2}$ is to gain information about the smooth knot concordance group $\C_{\rm s}$ via passing to the double branched cover of the knot. Recall that a knot $K$ is called Khovanov-homology thin (over $\mathbb{Z}/2$) if its reduced Khovanov homology $\widetilde{Kh}(K;\mathbb{Z}/2)$ is supported in a single $\delta$-grading (see \cite{khovanov I, khovanov II}). Such knots are  very common: all quasi-alternating knots \cite{manolescu-ozsvath:quasi} and 238 of the 250 prime knots with up to 10 crossings are  Khovanov-homology thin. Let $\C_{\rm{thin}}$ be the subgroup of $\C_{\rm{s}}$ generated by the Khovanov-homology thin knots. Theorem \ref{T:Theta2} has the following curious corollary.

\begin{introcor}\label{C: thin group infinite index}
Any knot $K$ whose double branched cover satisfies the conditions of Theorem \ref{T:Theta2} generates an infinite cyclic subgroup in $\C_{\rm s}/\C_{\rm{thin}}$.
\end{introcor}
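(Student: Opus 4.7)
The plan is to construct a group homomorphism
\[
\bar\Phi\colon \C_{\rm s}/\C_{\rm thin} \longrightarrow \Theta^3_{\Z/2}/\Theta^3_L
\]
induced by the double branched cover, and then to apply Theorem \ref{T:Theta2} directly.

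The first step is to verify that the assignment $K\mapsto \Sigma_2(K)$ descends to a well-defined homomorphism $\Phi\colon \C_{\rm s}\to \Theta^3_{\Z/2}$. Multiplicativity follows from the standard identity $\Sigma_2(K_1\# K_2) = \Sigma_2(K_1)\#\Sigma_2(K_2)$. For well-definedness, a smooth concordance $C\subset S^3\times[0,1]$ between $K_0$ and $K_1$ gives rise to a double cover of $S^3\times[0,1]$ branched along $C$; a routine Mayer--Vietoris computation shows this is a $\Z/2$-homology cobordism from $\Sigma_2(K_0)$ to $\Sigma_2(K_1)$, which is exactly what is needed since $\Sigma_2(K)$ is only guaranteed to be a $\Z/2$-homology sphere.

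The crux of the argument is to show that $\Phi(\C_{\rm thin})\subset\Theta^3_L$. For this I would invoke the Ozsv\'ath--Szab\'o spectral sequence from the reduced Khovanov homology $\widetilde{Kh}(\overline K;\Z/2)$ of the mirror to the Heegaard Floer homology $\widehat{HF}(\Sigma_2(K);\Z/2)$. For a Khovanov-homology thin knot one has $\dim\widetilde{Kh}(K;\Z/2) = |\det(K)|$, and the spectral sequence yields
\[
\dim\widehat{HF}(\Sigma_2(K);\Z/2) \;\leq\; |\det(K)| \;=\; |H_1(\Sigma_2(K);\Z)|.
\]
The opposite inequality is the standard lower bound for $\widehat{HF}$ of a rational homology sphere, so equality holds and $\Sigma_2(K)$ is an $L$-space over $\Z/2$. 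Because $|H_1(\Sigma_2(K);\Z)|$ is odd, a routine universal-coefficient argument upgrades this to an $L$-space over $\Q$, placing $[\Sigma_2(K)]$ in $\Theta^3_L$.

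With $\bar\Phi$ in hand the corollary is immediate: by hypothesis $\Sigma_2(K)$ satisfies the assumptions of Theorem \ref{T:Theta2}, so $\bar\Phi([K])$ has infinite order in $\Theta^3_{\Z/2}/\Theta^3_L$, whence $[K]$ has infinite order in $\C_{\rm s}/\C_{\rm thin}$, and the cyclic subgroup generated by $K$ is therefore infinite cyclic. I expect the main technical obstacle to be the rank identification $\dim\widetilde{Kh}(K;\Z/2) = |\det(K)|$ for thin knots together with the precise setup of the Ozsv\'ath--Szab\'o spectral sequence in our coefficient conventions; everything else is either formal or a standard application of the branched-cover construction.
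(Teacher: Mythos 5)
Your proposal is correct and follows essentially the same route as the paper: both arguments rest on (i) the Ozsv\'ath--Szab\'o spectral sequence forcing $\Sigma_2(K)$ to be an $L$-space over $\Z/2$ (hence over $\Q$ by universal coefficients) for any Khovanov-thin $K$, (ii) the observation that the double branched cover of a smooth concordance is a $\Z/2$-homology cobordism, and (iii) an application of Theorem~\ref{T:Theta2}. You have merely packaged the argument a bit more formally as an induced homomorphism $\bar\Phi\colon \C_{\rm s}/\C_{\rm thin}\to\Theta^3_{\Z/2}/\Theta^3_L$ and spelled out the rank count that the paper leaves as ``one can easily see.''
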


\begin{introex}
Let $K$ be the $(3,12n-5)$ or $(3,12n-1)$ torus knot, $n>0$. Then the connected sum $\#_{m}K$  ($m>0$) is never smoothly concordant to any Khovanov-homology thin knot. 
\end{introex}

In our subsequent paper \cite{LRS2}, we will use similar techniques to compute the invariant $\lsw(X)$ for the mapping tori $X$ of all smooth orientation preserving involutions $\tau: Y \to Y$ on integral homology spheres $Y$ with the quotient $S^3$. That calculation will confirm the conjecture \cite[Conjecture B]{MRS1} for all such mapping tori $X$ by identifying $\lsw(X)$ with its Yang--Mills counterpart, the invariant $\lambda_{\rm{FO}}(X)$ of Furuta and Ohta \cite{Furuta-Ohta}.

%%%%%%%%%%%%%%%%%%%%%%%%%%%%%%%%%%%%%%%%%%%%%%

\subsection{Organization of the paper}
We begin by reviewing the definitions of the monopole Floer homology and of the invariants $\lsw(X)$ and $h(Y,\s)$ that go into the statement of Theorem \ref{T:main} in Section \ref{S:prelim}. We also use this section to settle various technical matters and introduce some notations. The proofs of Theorems A, B, C and D, as well as Corollaries \ref{C: surgery infinite order} and \ref{C: thin group infinite index},  are given in the three sections that follow. They rely on certain technical results whose proofs are postponed until later in the paper for the sake of exposition. The first of these results, discussed in Section \ref{S:correction}, is a calculation of the index-theoretic correction term $w(X,g)$ on manifolds with long necks. An alternative calculation using the end-periodic index theorem of \cite{MRS3} is given in Section \ref{S:eta}. Both calculations boil down to computing the $L^2$ index of a spin Dirac operator on certain manifolds with periodic ends, which we do in all dimensions $n \equiv 0\pmod 4$. Similar techniques lead to the first eigenvalue estimates in Section \ref{S:eigenvalue}. These are later used for the compactness and gluing results in Sections \ref{S:compact} and \ref{S:gluing}. Several of these results rely on the generic metric theorem whose proof uses a rather different set of techniques and for this reason is postponed until Section \ref{S:generic}. 

Although many results in this paper can be generalized to $\spinc$ structures, we will be mainly concerned with spin structures. We will omit spin structures on 4-manifolds from our notations. We will usually include spin structures on 3-manifolds, to be consistent with \cite{KM}. One exception are $\Z/2$ homology spheres: each of these has a unique spin structure which will be omitted.  

\medskip\noindent
\textbf{Acknowledgments:}\; We are thankful to Tye Lidman, Ciprian Manolescu, and Matthew Stoffregen for generously sharing their expertise.

%%%%%%%%%%%%%%%%%%%%%%%%%%%%%%%%%%%%%%%%%%%

\section{Preliminaries}\label{S:prelim}
We begin by recalling the definitions of all the invariants involved and settling some technical matters.

%%%%%%%%%%%%%%%%%%%%%%%%%%%%%%%%%%%%%%%%%%%

\subsection{The invariant $\lsw (X)$}\label{S:lsw}
Let $X$ be an oriented smooth 4-manifold with rational homology of $S^1 \times S^3$, equipped with a fixed spin structure. We will review the definition of $\lsw(X)$ following \cite{MRS1}. Note that the invariant $\lsw(X)$ was defined in \cite{MRS1} only for an integral homology $S^1\times S^3$ but a careful check of details shows that the construction of \cite{MRS1} extends to a rational $S^1\times S^3$ essentially word for word. There are only two exceptions, which will be discussed in Remark \ref{R:p-cyclic cover} and Remark \ref{R: dependence on spin str}. With a slight abuse of language, we will cite \cite{MRS1} directly.

%The two different spin structures on $X$ are equivalent as $\spin^c$ structures, which makes the Seiberg--Witten moduli space independent of this choice. 

Fix a homology orientation on $X$ by choosing a generator $1 \in H^1 (X; \Z) = \Z$. Given a metric $g$ on $X$ and a co-closed 1-form $\beta \in \Omega^1(X;i\mathbb R)$ orthogonal to $\mathcal{H}^1 (X; i\mathbb R)$ (the space of harmonic 1-forms on $X$), consider the triples $(A,s,\phi)$ consisting of a $U(1)$ connection $A$ on the determinant bundle of the spin bundle, a real number $s \ge 0$, and a positive spinor $\phi$ such that $\|\phi\|_{L^2(X)} = 1$.  The gauge group ${\rm Map}\,(X,S^1)$ acts freely on such triples by the rule $u(A,s,\phi) = (A - u^{-1} du,s,u\phi)$. The blown-up Seiberg--Witten moduli space $\M(X,g,\beta)$ consists of the gauge equivalence classes of triples $(A,s,\phi)$ that solve the perturbed Seiberg--Witten equations
\[
\begin{cases}
\; F^+_A - s^2\, \tau(\phi) = d^+\beta \\
\; \D^+_A \,(X,g) (\phi) = 0.
\end{cases}
\]
The solutions are referred to as monopoles. Monopoles with $s = 0$ are called reducible, and all other monopoles are called irreducible. The latter are identified with the irreducible monopoles in the usual moduli Seiberg--Witten moduli space via the map $(A,s,\phi) \to (A,s\phi)$.

According to \cite[Proposition 2.2]{MRS1}, the moduli space $\M(X,g,\beta)$ is regular for a generic choice of $(g,\beta)$. In particular, $\M(X,g,\beta)$ is a compact oriented manifold of dimension zero that contains only irreducible solutions. The count of points in $\M(X,g,\beta)$ with signs determined by the orientation and homology orientation is denoted by $\#\M (X,g,\beta)$. The invariant $\lsw(X)$ is defined in \cite{MRS1} by the formula
\[
\lsw (X) = \#\,\M(X,g,\beta) - w\,(X,g,\beta),
\]
where $w(X,g,\beta)$ is the correction term which cancels the dependence of $\#\M (X,g,\beta)$ on the parameters $(g,\beta)$. The definition of $w(X,g,\beta)$ is as follows.

Let $Y \subset X$ be a connected manifold which is Poincar\'e dual to the chosen generator in $H^1 (X;\mathbb Z)$. Note that $Y$ is canonically oriented, and inherits a spin structure $\s$ from $X$. %Also note that the two spin structures on $X$ have the same restriction to $Y$. 
Denote by $W$ the cobordism from $Y$ to itself obtained by cutting $X$ open along $Y$. For any smooth compact spin manifold $Z$ with spin boundary $Y$, consider the manifold
\begin{equation}\label{E:zp}
\zp(X)\; =\; Z \,\cup\,W_0\,\cup\,W_1\,\cup\,W_2\,\cup\ldots
\end{equation}
with periodic end modeled on the infinite cyclic cover of $X$; each of the manifolds $W_i$ in this formula is just a copy of $W$. Choose a metric and a perturbation on $\zp(X)$ which match the metric and the perturbation over the end lifted from those on $X$. Then the operator $\D^+(\zp(X),g,\beta) = \D^+(\zp(X),g) + \beta$ is Fredholm with respect to the usual Sobolev $L^2$ completion, and the correction term
\begin{equation}\label{E:w}
w(X,g,\beta)\; =\; \ind \D^+(\zp(X),g,\beta)\, +\, \frac 1 8\,\sign Z
\end{equation}
is independent of the choice of $Y$ and $Z$. 

\begin{rmk}\label{R:p-cyclic cover}
The proof of well-definedness of $w(X,g,\beta)$ in \cite[Proposition 3.2]{MRS1} makes use of the following fact: if $X$ is an integral homology $S^1 \times S^3$, its $p$--fold cyclic cover $X_p$ is a rational homology $S^1\times S^3$ for any prime number $p$. This need not be true if $X$ is a rational homology $S^1 \times S^3$. However, one can use the argument of \cite{MRS1} to show that $X_p$ is a rational homology $S^1\times S^3$ as long as the prime number $p$ is large enough so that $H_* (X; \Z/{p}) = H_* (S^1\times S^3; \Z/p)$. This is sufficient to complete the proof of well-definedness of $w(X,g,\beta)$.
\end{rmk}

Theorem A of \cite{MRS1} asserts that $\lsw(X)$ is indepen\-dent of the choice of metric $g$ and generic perturbation $\beta$, and that the reduction of $\lsw (X)$ modulo $2$ is the Rohlin invariant of $X$.

\begin{rmk}\label{R: dependence on spin str} 
Unlike in the case of an integral homology $S^1\times S^3$ treated in \cite{MRS1}, different spin structures on $X$ may lead to different invariants $\lsw(X)$. To keep our notations clean, we will not include the spin structure in the notation. Note that when $X$ is a $\Z/2$ homology $S^1\times S^3$, different spin structures are all equivalent as $\spinc$ structures and hence give the same invariant $\lsw(X)$.
\end{rmk}  

There are several implicit orientation conventions that go into the definition of $\lsw(X)$. We will not discuss them here but notice that altering these conventions consistently only changes $\lsw(X)$ by an overall sign. The sign of $\lsw(X)$ was fixed in [32, Section 11.2] by the condition
\[
\lsw (S^1 \times Y) = - \lambda (Y),
\]
where $\lambda (Y)$ is the Casson invariant of an integral homology sphere $Y$ normalized so that $\lambda(\Sigma(2,3,5)) = -1$ for the Brieskorn homology sphere $\Sigma(2,3,5)$ oriented as a link of complex surface singularity.

We conclude this section by addressing an important technical point about choices of metrics and perturbations. According to \cite{RS}, the operator $\D^+(\zp(X),g,\beta)$ can be made Fredholm by choosing a generic metric $g$ and letting $\beta = 0$. This choice of metric also guarantees \cite[Proposition 7.2]{MRS1} that the moduli space $\M(X,g,0)$ has no reducibles but  not that it is regular. One way to ensure regularity is to choose a generic perturbation $\beta$ small enough so that $\ind \D^+(\zp(X),g) =  \ind \D^+(\zp(X),g,\beta)$. Then
\begin{equation}\label{E:wxg}
\lsw (X) = \#\,\M(X,g,\beta) - w\,(X,g),
\end{equation}
where $w(X,g)$ stands for $w(X,g,0)$. The perturbation $\beta$ in this formula can be replaced by a more general perturbation $\gamma$ as in \cite[Section 9.2]{MRS1} or in Section \ref{S: perturbation} below. As long as $\gamma$ ensures regularity and is small enough, one can connect $\gamma$ and $\beta$ by a generic path $\{\,\beta_{t}\mid 0\leq t\leq 1\}$ of small perturbations. Since the perturbations $\beta_{t}$ are small, the union 
\[
\mathop{\bigcup}\limits_{0\leq t\leq 1} \M(X,g,\beta_{t})
\]
contains no reducibles (just like $\M(X,g,0)$) and provides an oriented cobordism between $\M (X,g,\beta)$ and $\M (X,g,\gamma)$, as explained in \cite[Proposition 9.4]{MRS1}. This gives us the formula
\begin{equation}\label{E:gen-pert}
\lsw (X) = \#\,\M(X,g,\gamma) - w\,(X,g).
\end{equation}

% The other point concerns metrics with product regions. These are metrics on $X$ which take the form $g = dt^2 + h$ in a tubular neighborhood $[-\ep,\ep] \times Y$, where $t$ is the normal coordinate and $h$ is a metric on $Y$. Note that $\D^{\pm}(X,g) = \p/\p t \pm \D(Y,h)$ in the product region. According to \cite[Theorem 1.1]{ADH}, the metric $h$ can be chosen so that $\ker \D(Y,h) = 0$. It then follows from Theorem \ref{T:one} below that one can choose a metric $g$ on $X$ with a product region which makes the operator $\D^+ (\zp(X),g)$ Fredholm. Moreover, such metrics are generic among all metrics on $X$ with a fixed product metric on $[-\ep,\ep] \times Y$; see Remark \ref{R:two}. In particular, our formula \eqref{E:gen-pert} holds for a generic metric $g$ on $X$ with a product region and any sufficiently small perturbation $\gamma$.

%%%%%%%%%%%%%%%%%%%%%%%%%%%%%%%%%%%%%%%%%%%%%%

\subsection{Monopole Floer homology}\label{S:HM}
In this subsection, we will briefly recall the definition of the monopole Floer homology $HM(Y,\s)$. We will focus on the special case when $Y$ is a rational homology sphere and $\s$ is a spin structure, which will suffice for the purpose of this paper. The general definition can be found in  Kronheimer--Mrowka \cite[Chapter 1]{KM}. We will work with rational coefficients and omit the coefficient ring from our notations.

Let $Y$ be an oriented rational homology sphere with a Riemannian metric $h$ and a spin structure $\s$. An important example to have in mind is the rational homology sphere $Y \subset X$ of Theorem \ref{T:main} with the induced spin structure $\s$. Trivialize the spinor bundle $\S$ and choose the product connection $B_0$ to be our reference connection. We will make the following assumption, which according to \cite[Theorem 1.1]{ADH} holds for a generic metric $h$.

%\begin{rmk}
%In later sections, when $Y$ is an integral homology sphere or the $\spin^c$ structure is obvious from the context, we will neglect $\mathfrak{s}$ from our notations.
%\end{rmk}

\begin{assum}\label{generic metric on Y}
The spin Dirac operator $\D (Y,h)$ has zero kernel. %In particular, there exists $\epsilon_0 > 0$ such that $\D_{B_0} (Y,h)$ has no eigenvalues in the interval $[-\epsilon_0,\epsilon_0]$.
\end{assum}

Let $\mathcal{C} (Y)$ be the Sobolev $L^2_{k-1/2}$ completion of the affine space of configurations $(B,\psi)$, where $B$ is a connection in $\S$, $\psi$ is a spinor, and $k \ge 3$ is an integer which will be fixed throughout the paper. We refer to $\mathcal{C}(Y)$ as the configuration space. We also introduce the blown-up configuration space $\mathcal{C}^{\sigma}(Y)$, which consists of the triples $(B,s,\psi)$, where $B$ is a connection in $\S$, $s \ge 0$ is a real number, and $\psi$ is a spinor with $\|\psi\|_{L^2 (Y)}=1$. 

Let $p: \mathcal{C}^{\sigma}(Y)\rightarrow \mathcal{C}(Y)$ be the natural projection sending $(B,s,\psi)$ to $(B,s \psi)$. Using the terminology of \cite{KM}, configurations in $\mathcal{C}(Y)$ are said to be downstairs, and those in $\mathcal{C}^{\sigma}(Y)$ are said to be upstairs. A downstairs configuration $(B,\psi)$ is called irreducible if $\psi\neq 0$, while an upstairs configuration $(B,s,\psi)$ is called irreducible if $s\neq  0$. All other configurations are called reducible. The projection $p$ provides a diffeomorphism between the spaces of irreducible configurations upstairs and downstairs; we denote both of these spaces by $\mathcal{C}^*(Y)$.

The group of $L^{2}_{k+1/2}$ gauge transformations $u: Y \to S^1$ acts on $\mathcal{C}(Y)$ by the formula $u(B,\psi) = (B - u^{-1} du,u\psi)$ and on $\mathcal{C}^{\sigma}(Y)$ by the formula $u(B,s,\psi) = (B - u^{-1} du,s,u\psi)$. Both actions restrict to an action on $\mathcal{C}^*(Y)$. The corresponding quotient spaces will be denoted by $\B(Y)$, $\B^{\sigma}(Y)$, and $\B^*(Y)$. Note that $\B^*(Y)$ is a Hilbert manifold, while $\B^{\sigma}(Y)$ is a Hilbert manifold with boundary. The boundary of $\B^{\sigma}(Y)$, given by the equation $s = 0$, contains the gauge equivalence classes of all reducible configurations $[(B,0,\psi)]$.

The monopole Floer homology was defined in \cite{KM} as a variant of Morse homology of the Chern--Simons--Dirac functional $\L: \mathcal{C}(Y) \to \mathbb{R}$. The definition of $\L$ can be found in \cite[Definition 4.1.1]{KM}. To ensure that transversality holds, $\L$ is perturbed using a perturbation $\mathfrak{q}$ which is the formal gradient of a gauge invariant functional $f:\mathcal{C}(Y) \to \mathbb{R}$. Note that $\mathfrak{q}$ has two well-defined components, the connection component $\mathfrak{q}^{0}$ and the spinor component $\mathfrak{q}^{1}$. The perturbed Chern--Simons--Dirac functional is denoted by $\L_{\mathfrak{q}} = \L + f$. Its gradient $\grad \L_{\mathfrak{q}} = \grad \L +\mathfrak{q}$ gives rise to a vector field $v^{\sigma}_{\mathfrak{q}}$ on $\mathcal{B}^{\sigma}(Y)$.

Let $\epsilon_0 > 0$ be any small number such that $\D(Y,h)$ has no eigenvalues in the interval $[-\epsilon_0, \epsilon_0]$ (the existence of $\epsilon_0$ follows from Assumption \ref{generic metric on Y}). Then one can prove as in \cite[Proposition 2.8]{lin:sw-psc} that there exist perturbations $\mathfrak{q}$ satisfying the following assumption. 

\begin{assum}\label{small perturbation}
The perturbation $\mathfrak{q}$ satisfies the following three conditions:
\begin{enumerate}[(a)]
\item $\mathfrak{q}$ is nice, that is, $\mathfrak{q}(B,0) = 0$ for all $B$; in other words, $\mathfrak{q}$ equals zero when restricted to reducible configurations,
\item $\mathfrak{q}$ is admissible, that is, the critical points of $v^{\sigma}_{\mathfrak{q}}$ are non-degenerate and the moduli spaces of trajectories connecting them are regular; see Definition 22.1.1 of \cite{KM}, and
\item the derivative of the spinor component $\mathfrak{q}^1$ of $\mathfrak{q}$ satisfies the inequality 
\begin{equation}\label{E: derivative of perturbation small}
\left\| D_{(B_0,0)}\,\mathfrak{q}^{1}(0,\psi) \right\|_{L^2(Y)}\; \leq\; \frac 14\,\epsilon_0\cdot \|\psi\|_{L^2(Y)}
\end{equation}
for any $\psi\in L^2_{k-1/2}(Y;\S)$.
\end{enumerate}
\end{assum}
Under Assumption \ref{small perturbation}, the set $\mathfrak{C}$ of critical points of $v^{\sigma}_{\mathfrak{q}}$ is discrete and can be decomposed into the disjoint union of three subsets:
\begin{itemize}
\item $\mathfrak{C}^{o}$: the set of  irreducible critical points;
\item $\mathfrak{C}^{s}$: the set of reducible, boundary stable critical points (i.e., reducible critical points near which    $v^{\sigma}_{\mathfrak{q}}$ points inside the boundary);
\item $\mathfrak{C}^{u}$: the set of reducible, boundary unstable critical points (i.e., reducible critical points near which    $v^{\sigma}_{\mathfrak{q}}$ points outside the boundary).
\end{itemize}

We will next take a closer look at the reducible critical points. According to \cite[Corollary 4.2.2]{KM}, the vector field $\grad \L + \mathfrak{q}$ has a unique reducible critical point $[(B_0,0)]$ downstairs, which we call $[\theta]$. The situation with the reducible critical points upstairs is quite different. To describe it, consider the perturbed Dirac operator
\[
\D_{B_0,\mathfrak{q}} (Y,h) = \D_{B_0} (Y,h) + D_{(B_{0},0)}\,\mathfrak{q}^{1}(0,-):\, L^2_1\,(Y; \S)\longrightarrow L^2(Y; \S).
\]
This is a self-adjoint elliptic operator. Since $\mathfrak{q}$ is admissible, its eigenvalues are all non-zero and have multiplicity one. We enumerate them so that
\[
\cdots <\, \lambda_{-2}\,<\,\lambda_{-1}\,<\,0\,<\,\lambda_0\,<\,\lambda_1\,< \cdots.
\]
For each $\lambda_i$ pick an eigenvector $\psi_i$ of unit $L^2$-norm and let $[\theta_i] = [(B_0, 0,\psi_i)] \in \B^{\sigma}(Y)$. Then 
\[
\mathfrak{C}^{s}=\{[\theta_{i}]\mid i\geq 0\}\quad\text{and}\quad \mathfrak{C}^{u}=\{[\theta_{i}]\mid i<0\}.
\]

Let $C^{o}$ (respectively $C^{s}$ and $C^{u}$) be a vector space over $\mathbb{Q}$ with the basis $\{e_{[\alpha]}\}$ indexed by the critical points $[\alpha]$ in $\mathfrak{C}^{o}$ (respectively $\mathfrak{C}^{s}$ and  $\mathfrak{C}^{u}$). Define a linear map $\partial^{o}_{o}: C^{o}\rightarrow C^{o}$ by the formula
\[
\partial^{o}_{o} \, e_{[\alpha]} = \mathop{\sum}\limits_{[\beta]\in \mathfrak{C}^{o}} \#\breve{\mathcal{M}}([\alpha],[\beta])\cdot e_{[\beta]} %\ \ \ \ ([\alpha]\in \mathfrak{C}^{o})
\]
where $\breve{\mathcal{M}}([\alpha],[\beta])$ is the moduli space of unparameterized flow lines going from $[\alpha]$ to $[\beta]$ and  $\#\breve{\mathcal{M}}([\alpha],[\beta])$ is the signed count of points in this moduli space. (This number is set to be zero if the dimension of the moduli space is positive.) One defines maps $\partial^{o}_{s}:C^{o}\rightarrow C^{s}$ and $\partial^{u}_{o}:C^{u}\rightarrow C^{o}$ similarly. Consider the vector spaces
\[
\overline{C}=C^{s}\oplus C^{u},\quad \widecheck{C}=C^{o}\oplus C^{s}\quad\text{and}\quad\widehat{C}=C^{o}\oplus C^{u}.
\]
The monopole Floer homologies $\overline{HM}(Y,\s)$, $\widecheck{HM}(Y,\mathfrak{s})$ and $\widehat{HM}(Y,\mathfrak{s})$ are defined, respectively, as the homology of the chain complexes
$(\overline{C},0)$, $(\widecheck{C},\widecheck{\partial})$ and $(\widehat{C},\widehat{\partial})$ with the differentials

\[
\widecheck{\partial}=\left(\begin{array} {cc}
 \partial^{o}_{o}  & 0  \\
 \partial^{o}_{s}  & 0
\end{array}\right)\quad \text{and}\quad\widehat{\partial}=\left(\begin{array} {cc}
 \partial^{o}_{o}  & \partial^{u}_{o}  \\
 0  & 0
\end{array}\right).
\]

\medskip\noindent
(Note that our formulas are simpler than those in \cite{KM} because we are working with a rational homology sphere $Y$ and a nice perturbation $\mathfrak{q}$.) The chain map $i: \overline{C} \rightarrow \widecheck{C}$ with the matrix 
\[
\begin{pmatrix}
0&\partial^{u}_{o} \\ 1&0
\end{pmatrix}
\]
induces a natural map 
\begin{equation}\label{E:map i}
i_*: \overline{HM}(Y,\mathfrak{s})\longrightarrow \widecheck{HM}(Y,\mathfrak{s}).
\end{equation}
We define the reduced monopole Floer homology as $\hmred(Y,\mathfrak{s}) = \coker i_*$. This is a finite dimensional vector space. Note that this definition matches the definition of the reduced monopole Floer homology in \cite[Definition 3.6.3]{KM} because of the long exact sequence \cite[(3.4)]{KM}. A rational homology sphere $Y$ is called an $L$-space (over $\mathbb{Q}$) if  the reduced monopole Floer homology  $\hmred(Y,\mathfrak{s})$ vanishes for all $\spinc$ structures $\s$.

All these different versions of monopole Floer homology are modules over the polynomial ring $\mathbb{Q}[U]$, where $U$ is a formal variable of degree $-2$. In fact, we have canonical isomorphisms $\overline{HM}(Y,\mathfrak{s}) \cong \mathbb{Q}[U,U^{-1}]$ and $\im i_*\cong\mathbb{Q}[U]/U\cdot \mathbb{Q}[U]$ as well as a non-canonical splitting $\widecheck{HM}(Y,\mathfrak{s})\cong \im i_*\,\oplus\, \hmred(Y,\mathfrak{s})$. In addition, the monopole Floer homology has the so-called ``TQFT property''. More precisely, any $\spin^c$ cobordism $(W,\mathfrak{s}_{W})$ from $(Y_{0},\mathfrak{s}_{0})$ to $(Y_{1},\mathfrak{s}_{1})$ induces a morphism 
\begin{equation}\label{E:W*}
(W,\mathfrak{s}_{W})_{*}:HM (Y_{0},\mathfrak{s}_{0})\rightarrow HM (Y_{1},\mathfrak{s}_{1})
\end{equation}
of $\mathbb{Q}[U]$-modules, where $HM$ stands for any one of the monopole Floer homologies $\widecheck{HM}$, $\widehat{HM}$, $\overline{HM}$ or $\hmred$. Each of these morphisms is induced by a respective chain map, whose definition requires further perturbations as described in Section \ref{S: perturbation}. Note that in the current paper, we only consider spin structures $\s_{W}$ and omit them from our notations.
%we always have $H^{2}(W,\partial W;\mathbb{Z})=0$. As a result, there is a unique spin$^{c}$ structure $\mathfrak{s}_{W}$ extending $\mathfrak{s}_{0}\cup \mathfrak{s}_{1}$ and .

Next, we need to discuss the canonical gradings in monopole Floer homology. With each critical point $[\alpha]\in \mathfrak{C}$ one associates two gradings,
\[
\gr^{\Q}([\alpha])\,\in\,\Q \quad \text{and} \quad \gr^{(2)}([\alpha])\,\in\,\Z/2;
\]
see \cite[page 587]{KM} for the former and \cite[page 427]{KM} for the latter. These naturally induce (absolute) $\Q$ and $\Z/2$ gradings on $\widecheck{HM}(Y,\s)$, $\widehat{HM}(Y,\s)$ and $\hmred(Y,\s)$. The generators $[\theta_i] \in \overline{HM}(Y,\s)$ are graded by $\gr^{\mathbb{Q}}([\theta_{i}])$ and $\gr^{(2)}([\theta_{i}])$ if $i\geq 0$, and by $\gr^{\mathbb{Q}}([\theta_{i}])-1$ and $\gr^{(2)}([\theta_{i}])-1$ if $i< 0$. In all cases, the $U$-action decreases the $\Q$-grading by two and preserves the $\Z/2$ grading. We will use the $\Z/2$  grading to define various Lefschetz numbers, and use the $\Q$-grading to define the Fr{\o}yshov invariant.

\begin{defi}
The Fr{\o}yshov invariant $h(Y,\s)$ is defined as negative one-half of the lowest $\mathbb{Q}$-grading of elements in $\im (i)$, where $i$ is the map \eqref{E:map i}. If $X$ is a spin homology $S^1 \times S^3$ as in Theorem \ref{T:main}, we define its Fr{\o}yshov invariant by the formula $h(X) = h(Y,\s)$, where $Y \subset X$ is an oriented rational homology sphere Poincar\'e dual to $1 \in H^1 (X;\Z)$, with the induced spin structure $\s$. It follows from \cite[Theorem 8]{F} that $h(X)$ is well-defined.
\end{defi}

The Fr{\o}yshov invariant $h(Y,\s)$ is an invariant of $\spinc$ rational homology cobordism. It is also known \cite[Theorem 3]{F} that it changes sign with the change of orientation and that it is additive with respect to connected sums. A Heegaard Floer version of $h(X)$ was defined in~\cite{levine-ruberman:codim1} without the assumption that $Y$ be a rational homology sphere.

We will conclude this section by computing the gradings of the generators $[\theta_i] \in \overline{HM}(Y,\s)$. To this end, consider a smooth compact spin manifold $Z$ with spin boundary $Y$ and define
\begin{equation}\label{E:correction term Y}
n(Y,h,\s)\; =\; \ind \D^+ (\zp)\, +\, \frac 1 8\,\sign Z,
\end{equation}
where $\D^+ (\zp)$ is the spin Dirac operator on the manifold $\zp$ with cylindrical end obtained by setting $X = S^1 \times Y$ in formula \eqref{E:zp}. In essence, $n(Y,h,\s)$ is a special case of the correction term \eqref{E:w} and, like the latter, it is independent of the arbitrary choices in its definition. 

\begin{lem}\label{L: reducible grading}
 For any $i\geq 0$, we have $\gr^{(2)}([\theta_{i}])=0$ and $\gr^{\mathbb{Q}}([\theta_{i}])=-2n(Y,h,\mathfrak{s})+2i$.
\end{lem}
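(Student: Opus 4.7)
The plan is to separate the calculation into a relative grading step and an absolute grading step, reducing everything to computing the gradings of $[\theta_0]$ via an APS-type index formula on a spin filling.

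First, I would establish the relative formula $\gr^{\mathbb{Q}}([\theta_i]) - \gr^{\mathbb{Q}}([\theta_{i-1}]) = 2$ with $\gr^{(2)}$ preserved for every $i \geq 1$. Joining $[\theta_{i-1}]$ and $[\theta_i]$ by a path of reducible configurations in $\mathcal{B}^{\sigma}(Y)$ obtained by rotating the unit spinor through the plane spanned by $\psi_{i-1}$ and $\psi_i$, the relative grading is given by the spectral flow of $\D_{B_0,\mathfrak{q}}(Y,h)$ along this path, following \cite[Section 14.4 and Proposition 22.3.2]{KM}. Exactly one eigenvalue is crossed, contributing $2$ to the $\mathbb{Q}$-grading and $0$ to the $\mathbb{Z}/2$-grading. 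Iterating gives $\gr^{\mathbb{Q}}([\theta_i]) = \gr^{\mathbb{Q}}([\theta_0]) + 2i$ and $\gr^{(2)}([\theta_i]) = \gr^{(2)}([\theta_0])$, so it suffices to prove the case $i = 0$.

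Next, I would compute the gradings of $[\theta_0]$ using a smooth compact spin filling $Z$ of $Y$. By \cite[Chapter 28]{KM}, the absolute $\mathbb{Q}$-grading of a reducible critical point equals minus twice the expected dimension of the moduli space of monopoles on the cylindrical-end manifold $\zp = Z \cup ([0,\infty)\times Y)$ with asymptotic value $[\theta_0]$. Since the problem is reducible, this expected dimension is controlled by the $L^{2}$ index of the spin Dirac operator $\D^+(\zp)$ and a signature contribution from $Z$, and unwinding the formula yields
\[
\gr^{\mathbb{Q}}([\theta_0]) \;=\; -2\,\ind \D^+(\zp) \,-\, \tfrac{1}{4}\,\sign(Z) \;=\; -2\,n(Y,h,\s),
\]
by definition \eqref{E:correction term Y}. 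The equality $\gr^{(2)}([\theta_0]) = 0$ then follows from the same dimension formula: the mod-$2$ reduction of the relevant index is an integer Dirac index on a spin $4$-manifold, whose parity matches Kronheimer--Mrowka's normalization of the $\mathbb{Z}/2$-grading on the boundary-stable reducibles.

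The main obstacle will be matching Kronheimer--Mrowka's grading conventions, which are developed in a general $\spinc$ framework relative to an arbitrary reference configuration, with the spin setup used in \eqref{E:correction term Y}. In particular, one must verify that the contribution from the boundary value of the reference connection $B_0$ drops out, that the signature term enters with coefficient $1/4$ rather than $1/8$, and that the overall sign is correct once the blow-up identification $p$ has been taken into account. Once these conventions are aligned, the remainder reduces to a standard APS excision calculation comparing the spectral data on $[0,\infty)\times Y$ with the index on $Z$.
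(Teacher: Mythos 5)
Your route—relative gradings by spectral flow of the perturbed Dirac operator, then anchoring the absolute grading of $[\theta_0]$ by an APS index computation on $\zp = Z \cup ([0,\infty) \times Y)$—is essentially the unwinding of the Kronheimer--Mrowka definitions that the paper's one-line proof cites. Your formula $\gr^{\mathbb{Q}}([\theta_0]) = -2\,\ind\D^+(\zp) - \tfrac{1}{4}\sign Z = -2n(Y,h,\s)$ matches the claim, and the relative step $\gr^{\mathbb{Q}}([\theta_i]) = \gr^{\mathbb{Q}}([\theta_0]) + 2i$, $\gr^{(2)}([\theta_i]) = \gr^{(2)}([\theta_0])$, is correct.

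However, there is a genuine gap exactly where the paper locates the ``only non-trivial point,'' and your list of obstacles (the $B_0$ boundary term dropping out, the $1/4$ vs.\ $1/8$ coefficient, the blow-up sign) does not include it. The critical points $[\theta_i]$ are indexed by the eigenvalues of the \emph{perturbed} operator $\D_{B_0,\mathfrak{q}}(Y,h)$, and the Fredholm problem on $\zp$ that computes $\gr^{\mathbb{Q}}([\theta_0])$ in the conventions of \cite{KM} comes with APS boundary conditions determined by the $\mathfrak{q}$-perturbed spectral projection. The quantity $n(Y,h,\s)$, on the other hand, is defined via the \emph{unperturbed} Dirac operator $\D(Y,h)$ on $\zp$. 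These two indices differ by the spectral flow of the path $\D_{B_0,t\mathfrak{q}}(Y,h)$, $t\in[0,1]$, and nothing in your write-up shows this spectral flow vanishes. It is precisely Assumption~\ref{small perturbation}(c)---the bound $\|D_{(B_0,0)}\mathfrak{q}^1(0,\psi)\|_{L^2} \le \tfrac14\epsilon_0\|\psi\|_{L^2}$ with $\epsilon_0$ smaller than the spectral gap of $\D(Y,h)$---that guarantees the operator $\D_{B_0,t\mathfrak{q}}$ never acquires kernel along the homotopy, so the eigenvalue labelling and the APS index are unchanged when the perturbation is turned off. Without invoking this, your identification of the index on $\zp$ with $\ind\D^+(\zp,g)$ in \eqref{E:correction term Y}, and hence your conclusion $\gr^{\mathbb{Q}}([\theta_0]) = -2n(Y,h,\s)$, is unjustified; the formula could be off by twice an integer spectral flow.
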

\begin{proof}
This follows directly from the definition of $\gr^{(2)}$ and $\gr^{\mathbb{Q}}$ (see the discussion at the end of \cite[Page 421]{KM}). The only non-trivial point is the use of Assumption \ref{small perturbation}\,(c) to ensure that $\D_{B_{0},\mathfrak{q}}(Y,h)$ can be deformed into $\D_{B_{0}}(Y,h)$ without acquiring non-zero spectral flow, which allows one to compute the grading with zero perturbation.
\end{proof}

%%%%%%%%%%%%%%%%%%%%%%%%%%%%%%%%%%%%%%%%%%%%%%

\section{Proof of Theorem \ref{T:main}}\label{S:main}
Our proof will use the neck stretching operation which is well-known in gauge theory; we will use its non-separating version. 

%%%%%%%%%%%%%%%%%%%%%%%%%%%%%%%%%%%%%%%%%%%%%%

\subsection{Manifolds with long necks}
Let $X$ be a spin rational homology $S^1 \times S^3$ and $Y \subset X$ a rational homology sphere Poincar\'e dual to the choice of homology orientation $1 \in H^1(X;\Z)$. The spin structure on $X$ induces a spin structure $\s$ on $Y$. %which will often be suppressed in our notations in this section% 
Let $h$ be a metric on $Y$ satisfying Assumption \ref{generic metric on Y}, and extend it to a metric $g$ on $X$ which takes the form $g = dt^2 + h$ in a product region $[-\ep,\ep] \times Y$, $\ep > 0$. Given a real number $R > 0$, consider the spin manifold `with long neck'
\begin{equation}\label{E: XR}
X_R = W\,\cup\,([0,R] \times Y)
\end{equation}
obtained by cutting $X$ open along $\{ 0\} \times Y$ and gluing in the cylinder $[0,R] \times Y$ along the two copies of $Y$. We also consider the non-compact manifold
\begin{equation}\label{E: Winfinity}
W_{\infty}=((-\infty,0]\times Y)\,\cup\,W\,\cup\,([0,+\infty)\times Y)
\end{equation}
with two product ends. The metric $g$ induces metrics on $X_{R}$ and $W_{\infty}$, which will be denoted respectively by $g_{R}$ and $g_{\infty}$. 

\begin{assum}\label{A:Dirac over W}
The metric $g$ on $X$ has the form $g = dt^2 + h$ in a product region $[-\ep,\ep] \times Y$, $\ep > 0$, and makes the spin Dirac operator
\begin{equation}\label{E: Dirac over W}
\D^+ (W_{\infty},g_{\infty}):\; L^2_1\,(W_{\infty};\,\S^+)\longrightarrow L^2(W_{\infty};\,\S^-)
\end{equation}
invertible.
\end{assum}

\noindent
The existence of metrics satisfying Assumption \ref{A:Dirac over W} will be proved in Theorem \ref{T:two}; see also Remark \ref{R:two}. Note that, once Assumption \ref{A:Dirac over W} is satisfied, the metric $h$ that shows up in its statement will automatically satisfy Assumption \ref{generic metric on Y}.

The proof of Theorem \ref{T:main} will rely on the following two theorems about manifolds with long necks, whose proofs occupy Section \ref{S:correction} to Section \ref{S:eta}.

\begin{thm}\label{T:Eta long neck}
Let $g$ be a metric on $X$ satisfying Assumption \ref{A:Dirac over W}. Then, for all sufficiently large $R$, the correction term $w(X_R,g_R)$ in formula \eqref{E:wxg} is well-defined, and we have the equality (see (\ref{E:correction term Y}))
\[
w(X_R, g_R)\; =\; n(Y,h,\s).
\]
\end{thm}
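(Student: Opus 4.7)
The plan is to show that for all sufficiently large $R$, the two Fredholm indices
\[
\ind \D^+(\zp(X_R), g_R) \quad \text{and} \quad \ind \D^+(Z \cup_Y ([0,\infty)\times Y), g_\infty)
\]
coincide. Since the definitions of $w(X_R,g_R)$ and $n(Y,h,\s)$ both add the same term $\tfrac{1}{8}\sign Z$ to their respective index (via formulas \eqref{E:w} and \eqref{E:correction term Y}), this equality of indices is exactly what is needed.

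First I would establish that $\D^+(\zp(X_R), g_R)$ is Fredholm on its $L^2_1 \to L^2$ completion for large $R$. The end of $\zp(X_R)$ is modeled on the positive part of the infinite cyclic cover $\widetilde{X_R}$, which is built from infinitely many copies of $W$ glued by cylinders $[0,R]\times Y$. By the standard Fredholmness criterion for end-periodic Dirac operators (as developed by Taubes and in \cite{MRS3}), it suffices to invert the lifted operator $\D^+$ on $\widetilde{X_R}$. Starting from the inverse of $\D^+(W_\infty, g_\infty)$ provided by Assumption \ref{A:Dirac over W}, I would paste local inverses using cut-off functions supported in the long necks. Because the metric on $Y$ satisfies Assumption \ref{generic metric on Y}, the Dirac operator $\D(Y,h)$ has a spectral gap at zero, which forces solutions on a cylinder to decay exponentially at rate $c>0$. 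Thus the parametrix error on the necks is $O(e^{-cR})$, and for $R$ large enough a Neumann series correction produces an honest bounded inverse; this gives Fredholmness of $\D^+(\zp(X_R), g_R)$ and also (by the same argument applied to a half-infinite cylinder in place of a periodic end) of $\D^+(\zp, g_\infty)$.

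The core step is the equality of indices, established by an excision/neck-stretching argument. Introduce cut-off functions separating $\zp(X_R)$ along each of the long necks $[0,R]\times Y$ into the pieces $Z\cup W_0$, $W_1$, $W_2,\ldots$, each equipped with cylindrical ends. A splicing/parametrix construction, using the inverses of $\D^+$ on $\zp = Z\cup([0,\infty)\times Y)$ and on $W_\infty$, relates $\D^+(\zp(X_R), g_R)$ to the disjoint-union operator
\[
\D^+\bigl(Z\cup([0,\infty)\times Y),\, g_\infty\bigr) \;\oplus\; \bigoplus_{i\geq 1} \D^+(W_\infty,\,g_\infty).
\]
Since each summand $\D^+(W_\infty, g_\infty)$ is invertible by Assumption \ref{A:Dirac over W} and therefore contributes $0$ to the total index, the right-hand side has index equal to $\ind \D^+(\zp, g_\infty)$. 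The main technical obstacle is to turn this decomposition into a genuine equality of Fredholm indices, rather than a merely asymptotic relation, uniformly across the infinitely many pieces; this requires $R$-independent operator-norm control on the pasted parametrices, which in turn relies on the exponential decay of the Green's kernels on the cylinders, a consequence of the spectral gap of $\D(Y,h)$. Once this is achieved, adding $\tfrac{1}{8}\sign Z$ to both indices yields $w(X_R,g_R)=n(Y,h,\s)$.
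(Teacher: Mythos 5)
Your overall strategy is sound and reduces the statement to the same core equality that the paper proves in its Theorem \ref{thm}, namely $\ind \D^+(\zp(X_R)) = \ind \D^+(\zp)$ for large $R$; the $\tfrac{1}{8}\sign Z$ terms then match automatically, and the analytic inputs you single out --- invertibility of $\D^+(\wp)$ from Assumption \ref{A:Dirac over W}, the spectral gap of $\D(Y,h)$, and the consequent exponential decay along necks --- are exactly the ones the paper uses. What differs is the machinery carrying out the excision. The paper avoids building parametrices entirely: it expresses $L^2_1(\zp(X_R))$ as a Hilbert-space direct sum over the pieces $Z$, $W_i$, and $M_i = [0,R]\times Y$ with boundary-matching constraints (Lemma \ref{break the sobolev space}), and then repeatedly applies the elementary fact (Lemma \ref{kernel to domain}) that restricting a Fredholm operator to the kernel of a surjective second component preserves the index. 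Under this algebraic reduction, the boundary data on the necks propagate through the transfer operators $e^{\pm R\D}$ acting on the positive/negative spectral subspaces of $\D(Y)$; this is precisely where the $O(e^{-\mu R})$ smallness enters, and for $R$ large the resulting operator splits as the APS problem on $Z$ plus infinitely many isomorphic copies of the APS problem on $W$, each of which is an isomorphism by Lemma \ref{assum}. Your splicing/parametrix route is a classical and valid alternative, and you correctly identify its main delicacy: since one is gluing at infinitely many necks, the parametrix errors must be controlled uniformly, which your approach must derive from the exponential decay of the Green's kernels; in the paper's formulation this uniformity is automatic because all $W_i$ and $M_i$ are literally the same compact pieces in the direct-sum Hilbert space. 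Your approach also establishes Fredholmness via the Taubes criterion (inverting $\D^+$ on the $\mathbb Z$-cover $\widetilde{X_R}$, equivalently inverting $\D^+_z(X_R)$ for all $|z|=1$), whereas the paper proves it afresh through the step-by-step reduction to the APS setup; both are legitimate, and indeed the $z$-family viewpoint reappears elsewhere in the paper, in Proposition \ref{eigenvalue estimate_z} and in the $\eta$-invariant proof of Section \ref{S:eta}.
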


\begin{thm}\label{T: SW=Lefschetz}
Let $g$ be a metric on $X$ satisfying Assumption \ref{A:Dirac over W}. Then, for all sufficiently large $R$ and all sufficiently small perturbations $\hat{\mathfrak p}_R$ (defined in Section \ref{S: perturbation}) which make the moduli space $\M(X_R,g_R,\hat{\mathfrak p}_R)$ regular, we have the equality
\[
\#\,\M(X_R,g_R,\hat{\mathfrak p}_R) = - \Lef\, (W_*: C^o \to C^o).
\]
\end{thm}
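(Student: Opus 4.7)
The plan is to apply the neck-stretching technique to the family $X_R = W \cup ([0,R]\times Y)$, analyzing the moduli space $\M(X_R,g_R,\hat{\mathfrak p}_R)$ as $R \to \infty$ and matching it with the chain-level map $W_*\colon C^o \to C^o$. The argument splits naturally into compactness, gluing, and a sign comparison. The compactness statement, proved in Section \ref{S:compact}, asserts that for any sequence $R_n \to \infty$ and any sequence of monopoles on $X_{R_n}$, a subsequence converges (after gauge and translation) in the broken-trajectory sense to a finite-energy monopole on $W_\infty$ asymptotic at the two ends to critical points of the perturbed Chern--Simons--Dirac functional on $Y$. Because $X_R$ identifies the two boundary components of $W\cup([0,R]\times Y)$ with each other, these two asymptotic critical points must coincide, so the limit is asymptotic at both ends to a single $[\alpha] \in \mathfrak{C}$.

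The crucial point in compactness is that $[\alpha]$ must actually lie in $\mathfrak{C}^o$. A reducible limit $[\alpha] = [\theta_i]$ would produce a reducible monopole on $W_\infty$, and this is ruled out by the invertibility of $\D^+(W_\infty,g_\infty)$ in Assumption \ref{A:Dirac over W}. One also has to exclude sequences of irreducible monopoles on $X_R$ whose spinor component decays to zero along the long neck and whose bulk contribution on $W_\infty$ therefore becomes reducible in the limit. This is precisely what the first-eigenvalue estimates for the Dirac Laplacian on long necks in Section \ref{S:eigenvalue} are designed to preclude: they give a uniform lower bound on the smallest eigenvalue that keeps the spinor away from the zero section.

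Conversely, the gluing step proved in Section \ref{S:gluing} shows that every irreducible, index-zero monopole on $W_\infty$ asymptotic at both ends to the same $[\alpha] \in \mathfrak{C}^o$ can be glued, for all sufficiently large $R$, to a unique nearby monopole on $X_R$ via an implicit function theorem argument in the blown-up configuration space. Invertibility of the linearized operator combines the regularity of the relevant $W_\infty$-moduli space, the admissibility clause of Assumption \ref{small perturbation}(b), the nondegeneracy of $[\alpha]$, and the spectral gap estimates of Section \ref{S:eigenvalue}. Together with compactness, this yields a bijection between $\M(X_R, g_R, \hat{\mathfrak p}_R)$ and the disjoint union, over $[\alpha] \in \mathfrak{C}^o$, of the zero-dimensional moduli spaces of monopoles on $W_\infty$ asymptotic to $[\alpha]$ at both ends.

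Finally, the diagonal matrix element of $W_*\colon C^o\to C^o$ at $[\alpha]$ is by definition the signed count of these index-zero monopoles on $W_\infty$, and summing over $[\alpha] \in \mathfrak{C}^o$ with the signs prescribed by the $\Z/2$ grading of $\widecheck{HM}(Y,\s)$ gives $\Lef(W_*\colon C^o \to C^o)$. Comparing the homology orientation on $X_R$ coming from the chosen generator of $H^1(X;\Z)$ with the conventions used to orient the $W_\infty$-moduli spaces and the chain groups $C^o$ introduces an overall minus sign, which one can verify against the model case $X = S^1 \times Y$ using the normalization $\lsw(S^1\times Y) = -\lambda(Y)$ recalled in Section \ref{S:lsw}. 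The main obstacle is the exclusion of reducible limits in the compactness step, which is why Assumption \ref{A:Dirac over W} and the eigenvalue estimates of Section \ref{S:eigenvalue} are so central to the argument; once these are secured, the gluing and the sign bookkeeping proceed along well-established lines.
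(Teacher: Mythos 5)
Your overall strategy---compactness, gluing via the implicit function theorem, and a sign comparison anchored in the model case---matches the paper's proof exactly, and your summary of the gluing step and the identification with the Lefschetz number is accurate. However, one sentence in your compactness discussion is wrong in a way worth flagging: you write that a reducible limit ``would produce a reducible monopole on $W_\infty$, and this is ruled out by the invertibility of $\D^+(W_\infty,g_\infty)$.'' Assumption~\ref{A:Dirac over W} does \emph{not} rule out reducible monopoles on $W_\infty$. Since $b_2^+(W) = 0$, the flat reducible $[(A_0,0)]$ genuinely sits in $\M(W_\infty,[\theta])$, and the paper records this explicitly (it is the single point of $\M(W_\infty,[\theta])$, a moduli space of expected dimension $-1$ with a one-dimensional cokernel). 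The introduction even emphasizes this as the departure from Fr{\o}yshov's setup, where the absence of reducibles on $W_\infty$ is assumed. What the invertibility actually buys is the uniform spectral gap of Proposition~\ref{eigenvalue estimate}, which feeds into Lemma~\ref{converging to reducible}: if a sequence of irreducible $X_R$-monopoles converged to the reducible critical point, then after a global gauge transformation the connections would be $C^0$-close to flat, and the eigenvalue lower bound on $\D^-\D^+$ over $X_R$ combined with the smallness of the perturbation would force the spinors to vanish, a contradiction. Your following sentence describes this mechanism correctly; the first sentence of the paragraph conflates the nonexistence of the reducible with the impossibility of converging to it.

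On the sign, your description is too vague to carry the argument. The paper pins down the universal sign in $\#\M(X) = \pm\chi(C^o)$ (for $X = S^1\times Y$, where $W_* = \mathrm{id}$ so $\Lef = \chi$) by a concrete computation with $Y = \Sigma(2,3,7)$: one computes $w(S^1\times Y, g) = 1$ from the vanishing of $\ind\D^+(Z_\infty)$, invokes $\lambda(\Sigma(2,3,7)) = -1$ together with the normalization $\lsw(S^1\times Y) = -\lambda(Y)$ to conclude $\#\M(X,g) = 2$, and then uses $\gr^{\mathbb{Q}}([\theta_0]) = -2 < 0 = -2h(Y)$ to force $\partial^o_s \neq 0$, so both generators of $C^o$ carry odd $\Z/2$-grading and $\chi(C^o) = -2$. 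The general case then follows by excision for determinant line bundles. Simply appealing to the normalization $\lsw(S^1\times Y) = -\lambda(Y)$ does not by itself determine the sign; one must know $\chi(C^o)$ for an explicit example.
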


%%%%%%%%%%%%%%%%%%%%%%%%%%%%%%%%%%%%%%%%%%%%%%%

\subsection{The proof}
Let $g$ be a metric on $X$ satisfying Assumption \ref{A:Dirac over W}. Fix a large number $N\in -2n(Y,h,\s) + 2\Z$ and consider the truncated monopole chain complex $(\widecheck{C}_{\leq N},\widecheck{\partial}_{\leq N})$ for $Y$. It is generated by the irreducible critical points and the boundary stable reducible critical points of grading $\le N$. Then we have a short exact sequence
\[
\begin{CD}
0 @>>> C^s_{\leq N} @>>> \widecheck{C}_{\leq N} @>>> C^o @>>> 0,
\end{CD}
\]
where the chain complex $C^s_{\leq N}$ is generated by the boundary stable reducible critical points of grading $\le N$ and has trivial differential. By Lemma \ref{L: reducible grading}, each $m\in -2n(Y,h,\s) + 2\mathbb{Z}$ with $-2n(Y,h,\s) \leq m\leq N$ contributes a generator to $C^s_{\leq N}$. Therefore, we have
\smallskip
\[
\dim\, (C^s_{\leq N})\; =\; \frac{N + 2n(Y,h,\s)} 2 + 1.
\]
The cobordism $W$ induces chain maps $W_*$ on the three chain complexes in the above exact sequence making the following diagram commute
\medskip
\[
\begin{CD}
0 @>>> C^{s}_{\leq N} @>>> \widecheck{C}_{\leq N} @>>> C^{o} @>>> 0 \\
@. @VV W_* V @VV W_* V @VV W_* V @. \\
0 @>>> C^{s}_{\leq N} @>>> \widecheck{C}_{\leq N} @>>> C^{o} @>>> 0
\end{CD}
\]

\smallskip\noindent
With the obvious abuse of notations, the Lefschetz numbers of the three maps $W_*$ in this diagram are therefore related by the equation
\[
\Lef (\widecheck{C}_{\leq N})\; =\; \Lef (C^{s}_{\leq N})\, +\, \Lef (C^{o}).
\]

\begin{lem}\label{L:id}
The restriction of $W_*$ to the chain complex $C^{s}_{\leq N}$ is the identity map.
\end{lem}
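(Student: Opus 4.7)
The plan is to compute the chain-level matrix of $W_*$ in the basis $\{e_{[\theta_i]}\}_{i\ge 0}$ of $C^s_{\leq N}$ and show that it is the identity. The entry $(W_*)_{ji}$ counts with signs the zero-dimensional part of the moduli space $\bar{\M}([\theta_i], W_\infty, [\theta_j])$ of reducible trajectories on $W_\infty$ asymptotic to $[\theta_i]$ at the $Y_0$-end and $[\theta_j]$ at the $Y_1$-end. Such a trajectory is encoded upstairs by a unit-$L^2$-norm spinor $\psi$ on $W_\infty$ satisfying the perturbed Dirac equation, with exponential asymptotics matching $\psi_i$ at the $Y_0$-end and $\psi_j$ at the $Y_1$-end, coupled with the essentially unique reducible connection on the spin bundle.

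The first analytic step is to reduce to the unperturbed operator $\D^+(W_\infty, g_\infty)$, which is invertible on $L^2$ by Assumption \ref{A:Dirac over W}. I would show that the perturbed Dirac operator on $W_\infty$ remains invertible on the weighted Sobolev spaces implementing the eigenvalue-matched asymptotics; the key input is the bound \eqref{E: derivative of perturbation small} from Assumption \ref{small perturbation}(c), which guarantees that the linearized spinor perturbation is strictly dominated by the spectral gap $\ep_0$ of $\D(Y,h)$, so invertibility persists after perturbation.

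Next I would perform the dimension/index computation. By Lemma \ref{L: reducible grading} the relative $\Q$-grading between $[\theta_i]$ and $[\theta_j]$ equals $2(i-j)$, and the invertibility of $\D^+(W_\infty, g_\infty)$ makes the associated Dirac-index contribution of $W$ vanish, so the expected dimension of $\bar{\M}([\theta_i], W_\infty, [\theta_j])$ is $2(i-j)$. Thus only the diagonal $i=j$ can contribute to the zero-dimensional signed count. For $i=j$, a single solution is obtained by inverting $\D^+(W_\infty, g_\infty)$ to interpolate $\psi_i$ coherently between the two ends: existence follows from invertibility and uniqueness from the triviality of the kernel, so the moduli space is a single point.

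Finally, the unique diagonal solution contributes with sign $+1$ by the standard orientation convention on reducible moduli spaces in the blown-up framework. I expect the main obstacle to lie not in the geometric picture---in which the two ends of $W$ are the same $Y$ with the same metric and the identity is visibly realized---but rather in the rigorous Fredholm analysis on weighted Sobolev spaces and in tracking the sign conventions of \cite{KM}. The technical ingredients for this analysis are provided by the eigenvalue estimates and compactness results in Sections \ref{S:eigenvalue} and \ref{S:compact}, on which the proof will rely.
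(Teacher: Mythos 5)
Your approach is a from-scratch moduli-space argument, whereas the paper's proof simply cites \cite[Proposition 39.1.2]{KM}, which already establishes the corresponding statement for $\overline{HM}$, and then notes that since the differential on $C^s_{\leq N}$ is trivial the chain-level statement is literally the same as the homology-level one. Both routes lead to the same conclusion, but the paper's is much shorter precisely because it delegates to KM exactly the Fredholm analysis and sign-tracking that you flag as the ``main obstacle.'' If you want to pursue the direct route, two points in your outline need more than a sketch. First, the $(s,s)$-block of the cobordism chain map $\widecheck m$ in KM's formalism is not a priori $\bar m^s_s$ alone; it carries a correction term involving $\bar m^s_u$, and one must observe that $\bar m^s_u$ vanishes here because the cobordism map is grading-preserving while the supports of $C^s$ and $C^u$ (in degrees $\geq -2n(Y,h,\s)$ and $\leq -2n(Y,h,\s)-3$ respectively, by Lemma~\ref{L: reducible grading}) are disjoint. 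Second, invertibility of $\D^+(\wp)$ on $L^2$ does not by itself give a one-dimensional kernel for the weighted Fredholm problem with asymptotics $\psi_i$ at both ends: the index shifts with the choice of weight, and you need to compute that shift to conclude that the diagonal moduli space is a single point rather than empty or higher-dimensional. Finally, the sign really does matter---the Lefschetz-number bookkeeping in the proof of Theorem~\ref{T:main} would fail if the diagonal entry were $-1$---so ``standard orientation convention'' would have to be unpacked; the paper's citation quietly absorbs this.
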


\begin{proof}
This is essentially proved in \cite[Proposition 39.1.2]{KM}. The result is stated there for homology but it holds as well for the chain complex because the boundary map is trivial for grading reasons.
\end{proof}

By Lemma \ref{L: reducible grading}, the $\Z/2$ gradings of the generators of $C^{s}_{\leq N}$ are all zero. %\smargin{Explain in what sense: they are not integers. Jianfeng: corrected} 
Then Lemma \ref{L:id} implies that $\Lef(C^{s}_{\leq N}) = \dim(C^{s}_{\leq N})$ and therefore
\smallskip
\begin{equation}\label{total lefs}
\Lef (\widecheck{C}_{\leq N})\; =\; \frac{N+2n(Y,h,\s)}{2}+1\, +\, \Lef (C^{o}).
\end{equation}

On the other hand, for all sufficiently large $N$, the group $\hmred (Y,\s)$ can be identified with the cokernel of the map
$$
i_{\leq N}: \widebar{HM}_{\leq N}(Y,\s) \longrightarrow \widecheck{HM}_{\leq N}(Y,\s).
$$
Therefore, we have the commutative diagram
\medskip
\[
\begin{CD}
0 @>>> \im(i_{\leq N}) @>>> \widecheck{HM}_{\leq N}(Y,\s) @>>>\hmred (Y,\s) @>>> 0 \\
@. @VV W_* V @VV W_* V @VV W_* V @. \\
0 @>>> \im(i_{\leq N}) @>>> \widecheck{HM}_{\leq N}(Y,\s) @>>>\hmred (Y,\s) @>>> 0
\end{CD}
\]

\medskip\noindent
with the exact rows, which gives us the identity
\[
\Lef(\widecheck{HM}_{\leq N}(Y,\s))=\Lef(\im(i_{\leq N}))+\Lef (\hmred(Y,\s)).
\]
Since $\im(i_{\leq N})$ is a finite length $U$-tail whose top grading is $N$ and lowest grading is $-2h(Y,\s)$, we have
$$
\dim (\im(i_{\leq N}))\; =\; \frac{N+2h(Y,\s)}{2}+1.
$$
The restriction of $W_*$ on $\im(i_{\leq N})$ is the identity map by Lemma \ref{L:id}, therefore, $\Lef(\im i_*) = \dim (\im i_*)$ and
\begin{equation}\label{lefs on U-tail}
\Lef(\widecheck{HM}_{\leq N}(Y,\s)) = \frac{N+2h(Y,\s)}{2} + 1+ \Lef (\hmred(Y,\s)).
\end{equation}
Combining (\ref{total lefs}) and (\ref{lefs on U-tail}) with the fact that the Lefschetz number of a chain map equals the Lefschetz number of the induced map on homology, we obtain the identity
\smallskip
$$
\frac{N+2n(Y,h,\s)}{2}+1+\Lef(C^{o}) = \frac{N+2h(Y,\s)}{2}+1 + \Lef(\hmred(Y,\s))
$$
and, after simplification,
 \[
- \Lef(C^o) - n(Y,h,\s) + h(Y,\s) = -\Lef(\hmred(Y,\s)).
 \]
The proof is now complete because it follows from Theorem \ref{T:Eta long neck} and Theorem \ref{T: SW=Lefschetz} that, for all sufficiently large $R$ and the small perturbation $\hat{\mathfrak{p}}_R$,
$$
\lsw(X) = \#\,\M(X_R,g_R,\hat{\mathfrak{p}}_R) - w\,(X_R,g_R) = -\Lef(C^o) - n(Y,h,\s).
$$

 %%%%%%%%%%%%%%%%%%%%%%%%%%%%%%%%%%%%%%%%%%%%

\section{Proof of Theorem \ref{T:psc}}\label{S:Theorem B}
In this section we prove Theorem \ref{T:psc}, which is an application of Theorem \ref{T:main} to the question of existence of metrics of positive scalar curvature. 

Let $X$ be as in the statement of Theorem \ref{T:psc} and suppose that it admits a metric of positive scalar curvature. According to a theorem of Schoen and Yau~\cite{schoen-yau:psc}, the Poincar\'e dual to the generator of $H^1 (X; \mathbb{Z})$ can be realized by an embedded manifold $M \subset X$ which admits a metric of positive scalar curvature. Since the first Chern class of the spin structure $\s_M$ induced on $M$ vanishes, it follows from \cite[Proposition 36.1.3]{KM} that $\hmred (M,\s_M) = 0$. Note that the manifold $M$ need not be a rational homology sphere, however, we will prove that its existence implies that $\Lef\,(W_*: \hmred(Y,s) \to \hmred(Y,\s)) = 0$.

Our proof will adapt the argument of Fr{\o}yshov~\cite[Section 13]{F} that shows the well-definedness of the Lefschetz number. Since $M$ generates $H_3 (X;\Z)$, the standard covering space theory implies that the manifold $M$ lifts to the infinite cyclic cover $\tilde X$, and that this lift can be arranged to be disjoint from a copy of $Y$. It follows that, for some $k\geq 0 $, the manifold 
\[
W^{(k)}\; =\; W \cup_Y W \cup_Y \ldots \cup_Y W\quad\text{($k$ times)}
\]
contains a copy of $M$ separating its two boundary components. Therefore, the map
\[
(W_*)^{k} = W^{(k)}_*: \hmred(Y,\s)\longrightarrow \hmred(Y,\s)
\]
factors through $\hmred(M,\s_M) = 0$ making $W_*$ nilpotent. It then follows that the trace of $W_*$ vanishes in each $\Z/2$ grading, and that the Lefschetz number of $W_*$ must therefore be zero.

%%%%%%%%%%%%%%%%%%%%%%%%%%%%%%%%%%%%%%%%%%%%%%%

\section{Proof of Theorem \ref{T:Theta} and \ref{T:Theta2}}\label{S:Theorem C}
We now prove Theorems \ref{T:Theta} and \ref{T:Theta2} from the introduction which assert that, in a number of circumstances, a homology sphere must have infinite order in the homology cobordism groups $\Theta^3_{\Z}$ or $\Theta^3_{\Z/2}/\Theta^3_L$. The proofs can be found at the end of Section \ref{S:obstruction}. The part of Section \ref{S:Theorem C} after that is dedicated to examples and the proofs of Corollaries \ref{C: surgery infinite order} and \ref{C: thin group infinite index}.

%%%%%%%%%%%%%%%%%%%%%%%%%%%%%%%%%%%%%%%%%%%%%%%

\subsection{A homology cobordism obstruction from $\hmred$}\label{S:obstruction}
Let $(Y,\s)$ be a rational homology sphere with a $\spinc$ structure  then $\hmred (Y,\s)$ is graded by the rational numbers, and we define the support of $\hmred (Y,\s)$ by the formula
\[
S(Y,\s)\, =\, \{\,a\in \Q\,\mid  \hmred_{a}(Y,\s)\neq 0\,\}.
\]
For the rest of this section, whenever we are considering the unique spin structure $\s$ on a $\mathbb{Z}/2$ homology sphere $Y$, we will usually drop $\s$ from our notations. In particular, the notations $\rho(Y)$, $S(Y)$ and $h(Y)$ will be used, respectively, to denote the Rohlin invariant, the support of reduced monopole Floer homology, and the Fr{\o}yshov invariant for the unique spin structure.

\begin{pro}\label{cobordism obstruction}
Let $Y_1$ and $Y_2$ be $\mathbb{Z}/2$ homology spheres such that $S(Y_1)\,\cap\,S(Y_2) = \emptyset$ and $ h(Y_1)\neq\rho(Y_1) \pmod 2.$ %, $j = 1, 2$.
% where $\rho$ stands for the Rohlin invariant. 
Then $Y_1$ is not $\mathbb{Z}/2$ homology cobordant to $Y_2$.
\end{pro}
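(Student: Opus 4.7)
The plan is a proof by contradiction. Assume $W$ is a $\mathbb{Z}/2$ homology cobordism from $Y_1$ to $Y_2$, from which I will build a closed spin $4$-manifold $X$ of the type studied in Theorem \ref{T:main} and derive a contradiction with the mod $2$ reduction of $\lsw(X)$.

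I first construct $X$. A finitely generated abelian group $A$ with both $A\otimes\mathbb{Z}/2=0$ and $\operatorname{Tor}(A,\mathbb{Z}/2)=0$ must be finite of odd order, hence vanishes after tensoring with $\mathbb{Q}$. Applied to $H_*(W,Y_i;\mathbb{Z})$, this shows any $\mathbb{Z}/2$ homology cobordism between $\mathbb{Z}/2$ homology spheres is automatically a rational homology cobordism, and is spin since $H^2(W;\mathbb{Z}/2)=0$ forces $w_2(W)=0$. Set $V = W\cup_{Y_2}(-W)$, a cobordism from $Y_1$ to $Y_1$, and form $X$ by gluing its two boundary copies of $Y_1$ via the identity. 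A routine Mayer--Vietoris computation shows that $X$ is a spin rational homology $S^1\times S^3$, and both $Y_1$ and $Y_2$ sit inside $X$ as embedded rational homology spheres Poincar\'e dual to a chosen generator of $H^1(X;\mathbb{Z})$.

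Next, I apply Theorem \ref{T:main} with $Y_1$ as the cross section; cutting $X$ open along $Y_1$ recovers $V$, yielding
\[
\lsw(X) + h(Y_1) = -\Lef\bigl(V_*:\hmred(Y_1)\to\hmred(Y_1)\bigr),
\]
where $h(X)=h(Y_1)$ by definition. By the TQFT composition law, $V_* = (-W)_*\circ W_*$ factors through $\hmred(Y_2)$. Since $W$ is a spin rational homology cobordism between rational homology spheres, the cobordism grading shift $\bigl(c_1(\s_W)^2 - 2\chi(W) - 3\sigma(W)\bigr)/4$ evaluates to zero (each summand vanishes), so $W_*$ preserves the absolute $\mathbb{Q}$-grading. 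The disjoint-support hypothesis $S(Y_1)\cap S(Y_2)=\emptyset$ then forces $W_*$ to vanish on $\hmred(Y_1)$, whence $V_*=0$ and $\lsw(X) = -h(Y_1)$.

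To conclude I reduce modulo $2$: combining $\lsw(X)\equiv\rho(X)\pmod 2$ from \cite{MRS1} with $\rho(X)=\rho(Y_1)$ (the Rohlin invariant of $X$ evaluated on the embedded cross section $Y_1$) gives $h(Y_1)\equiv\rho(Y_1)\pmod 2$, directly contradicting the standing hypothesis. The two technical checks --- that $X$ is a rational homology $S^1\times S^3$ and that $W_*$ preserves $\mathbb{Q}$-grading --- are straightforward; the conceptual heart of the argument is the disjoint-support vanishing of the grading-preserving map $W_*:\hmred(Y_1)\to\hmred(Y_2)$.
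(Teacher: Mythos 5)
Your proof is correct and follows essentially the same route as the paper: form the doubled cobordism $W\cup_{Y_2}(-W)$, glue to get $X$, argue that the grading-preserving map $W_*$ must vanish because $S(Y_1)\cap S(Y_2)=\emptyset$, apply Theorem \ref{T:main}, and reduce mod $2$ to contradict the Rohlin congruence. Your additional verifications (that a $\Z/2$ homology cobordism is automatically a rational homology cobordism and spin, and the explicit computation of the degree shift) fill in details the paper treats as standing assumptions but do not alter the structure of the argument.
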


\begin{proof}
Suppose to the contrary that we have an $\mathbb{Z}/2$ homology cobordism $W_1$ from $Y_1$ to $Y_2$. It carries a unique spin structure, which restricts to $\s_{i}$ on $Y_{i}$. Reversing the orientation, we obtain a spin cobordism $-W_1$ from $Y_2$ to $Y_1$. Now, consider the composite cobordism
\[
W = W_1\,\cup_{Y_2} (-W_1)
\]
from $Y_1$ to itself. The morphism
\[
(W_1)_*: \hmred(Y_1,\s_{1})\longrightarrow \hmred(Y_2,\s_{2})
\]
induced by the cobordism $W_1$ as in \eqref{E:W*} preserves the absolute grading. Since the intersection $S(Y_1)\,\cap\,S(Y_2)$ is empty, we conclude that the map $(W_1)_*$ must be zero. By functoriality, the map 
\[
W_*: \hmred(Y_1,\s_{1})\longrightarrow \hmred(Y_1,\s_{1})
\]
is also zero; in particular, its Lefschetz number vanishes. Let $X$ be the homology $S^1\times S^3$ obtained by identifying the two boundary components of $W$ via the identity map. Then, using Theorem \ref{T:main}, we obtain
\[
\lsw(X) = -h(Y_1) - \Lef\,(W_*) = - h(Y_1)\, \neq\, \rho(Y_1)\pmod 2.
\]
This contradicts \cite[Theorem A]{MRS1} which asserts that $\lsw (X)$ equals the Rohlin invariant of $X$ modulo 2.
\end{proof}
\begin{rmk}
For a $\mathbb{Z}/2$ homology sphere $Y$ with spin structure $\s$, the condition $ h(Y)\neq\rho(Y) \pmod 2$ is equivalent to condition that $\operatorname{dim}_{\mathbb{Q}}\hmred(Y,\s)$ is odd.
\end{rmk}
The following definition is a slight generalization of the one given in the introduction.
\begin{defi}\label{defi:h-positive/h-negative} Let $Y$ be a rational homology sphere with $\spinc$-structure $\s$. 
\begin{itemize}
\item We will say that $(Y,\s)$ is $h$-positive if $\hmred(Y,\s)$ is supported in degrees $\geq -2h(Y,\s)$, that is, $S(Y,\s) \subset [-2h(Y,\s),+\infty)$. 
\item  We will say that $(Y,\s)$ is $h$-negative if $\hmred(Y,\s)$ is supported in degrees $\leq -2h(Y,\s)-1$, that is, $S(Y,s)\subset (-\infty,-2h(Y,\s) - 1]$. 
\end{itemize}
\end{defi}

Because Heegaard Floer homology is (at present) easier to compute, we would prefer to use it in place of the monopole Floer homology in our calculations whenever possible. In fact, these two theories are known to be isomorphic. Furthermore, by combining the main results of \cite{Gripp,Gripp-Huang,Gardiner}, the absolute $\Q$-gradings in the two theories coincide. Therefore, the relation $d(Y,\s) = -2h(Y,\s)$ between the Fr\o yshov invariant and the Heegaard Floer correction term holds for any rational homology spheres. It then follows that the property of $(Y,\s)$ being $h$-positive (respectively, $h$-negative) can be characterized by saying that  the reduced Heegaard Floer homology $HF^{\text{red}} (Y,\s)$ is supported in degrees $\geq d(Y,\s)$ (respectively, in degrees $\leq d(Y,\s)-1$). 

%\smargin{\textcolor{red}{It is actually possible to deduce the result $d(Y,\s)=-2h(Y,\s)$ from Theorem 3.4 of  Rustamov's paper ``Surgery formula for the renormalized Euler characteristic of Heegaard Floer homology.'' However, we might not want to use this because there are several subtle technical points.}} %(it is known to hold for integral homology spheres, by comparison with the Casson invariant; see \cite[Theorem 1.3]{oz:boundary} and~\cite[Theorem 5]{F}). On the other hand, the isomorphism between the two theories discussed in~\cite[Main Theorem]{kutluhan-lee-taubes:HFSW-I} does preserve the relative $\Z$-grading. 

\begin{ex}\label{seifert and figure eight}
By a positive orientation on a Seifert fibered homology sphere we will mean the canonical orientation of it as a link of singularity. Using the graded roots model for computing Heegaard Floer homology \cite[Section 11.13]{Nemethi}, one can show that all Seifert fibered homology spheres with positive orientation are $h$-negative, while the ones with negative orientation are $h$-positive. According to \cite[Proposition 8.3]{oz:boundary}, the homology sphere obtained by $1/n$ surgery on the figure-eight knot is $h$-negative if $n\geq 0$ and $h$-positive if $n\leq 0$. (Note that $L$-spaces are both $h$-positive and $h$-negative.)
\end{ex}

\begin{lem}\label{orientation reversal}
A $\mathbb{Z}/2$ homology sphere $Y$ is $h$-positive if and only if the $\mathbb{Z}/2$ homology sphere $-Y$ obtained from $Y$ by orientation reversal is $h$-negative.
\end{lem}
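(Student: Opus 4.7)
The plan is to combine two well-known facts about how reduced monopole Floer homology and the Fr{\o}yshov invariant behave under orientation reversal, and then perform a short bookkeeping check on supports.

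First, by \cite[Theorem 3]{F} (already cited in the paper), the Fr{\o}yshov invariant satisfies $h(-Y,\s) = -h(Y,\s)$. Second, Kronheimer--Mrowka's duality (the ``op'' construction of \cite[Chapter 22]{KM}, combined with universal coefficients over $\Q$) produces a canonical isomorphism
\[
\hmred_{d}(-Y,\s)\;\cong\;\hmred_{-d-1}(Y,\s)
\]
of $\Q$-vector spaces for every grading $d\in\Q$. I will cite this duality; the grading shift by $1$ comes from the swap of boundary-stable and boundary-unstable reducibles under orientation reversal, which is exactly the reason $\overline{HM}$ is graded ``with a shift of one'' in the negative half in the conventions of \cite{KM}.

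Given the duality, the support sets are related by $S(-Y,\s) = \{-d-1 : d\in S(Y,\s)\}$. Now I would carry out the equivalence directly: suppose $Y$ is $h$-positive, so $S(Y,\s)\subset[-2h(Y,\s),+\infty)$. Applying the bijection $d\mapsto -d-1$, every element of $S(-Y,\s)$ has the form $-d-1$ with $d\ge -2h(Y,\s)$, whence $-d-1\le 2h(Y,\s)-1 = -2h(-Y,\s)-1$. This is precisely the condition that $-Y$ be $h$-negative. The reverse implication is the same argument with $Y$ and $-Y$ interchanged (using $-(-Y)=Y$ and $h(-(-Y),\s)=h(Y,\s)$).

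The only step that really requires care is invoking the correct grading-shifted duality for $\hmred$ over $\Q$; once it is in hand, the lemma reduces to a one-line arithmetic check. Since the paper already works with rational coefficients throughout and cites both \cite{KM} for the Floer package and \cite[Theorem 3]{F} for $h(-Y,\s)=-h(Y,\s)$, there is no additional technical obstacle.
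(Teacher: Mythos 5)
Your proposal is correct and follows essentially the same route as the paper: both use the Kronheimer--Mrowka duality with the grading shift $d\mapsto -1-d$, combine it with $h(-Y)=-h(Y)$, and finish with the same arithmetic on supports. The only difference is presentational: you assert the duality isomorphism directly at the level of $\hmred$, while the paper derives it more carefully by dualizing the map $j_{Y,a}\colon \widecheck{HM}_a(Y,\s)\to\widehat{HM}_a(Y,\s)$ (whose image is $\hmred_a(Y,\s)$) into $j_{-Y,-1-a}$, which makes the shift and the support identity $S(-Y)=\{-1-a:a\in S(Y)\}$ transparent; if you wish to cite a duality for $\hmred$ itself you should include this short derivation rather than treating it as already available in \cite{KM}.
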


\begin{proof}
We use $\s$ and $-\s$ to denote the spin structure on $Y$ and $-Y$ respectively.  Recall from \cite[(3.4)]{KM} that there is a long exact sequence\footnote{While the grading convention for $\widecheck{HM}$ and $\overline{HM}$ are consistent with those for $HF^{+}$ and $HF^{\infty}$, respectively, the grading convention for $\widehat{HM}$ differs from that for $HF^{-}$ by $1$. For example, the generator of $\widehat{HM}(S^3)$ as a $\mathbb{Q}[U]$-module has grading $-1$, while the generator of $HF^-(S^3)$ has grading $-2$. We follow here the conventions of \cite{kmos:lens}. }
\[
\begin{CD}
\cdots\to\widehat{HM}_{a+1}(Y,\s) @> p_{Y,a} >> \overline{HM}_{a}(Y,\s) @> i_{Y,a} >> \widecheck{HM}_{a}(Y,\s) @> j_{Y,a} >> \widehat{HM}_{a}(Y,\s)\to\cdots,
\end{CD}
\]
therefore, the set $S(Y)$ can be equivalently defined as $S(Y) = \{\,a\in \Q\mid j_{Y,a}\neq 0\,\}$. Under the natural duality isomorphisms
\begin{equation*}
\begin{split}
&\widecheck{HM}_{a}(Y,\s)\; \cong\; (\widehat{HM}_{-1-a}(-Y,-\s))^{*},\\
&\widehat{HM}_{a}(Y,\s)\; \cong\; (\widecheck{HM}_{-1-a}(-Y,-\s))^{*},
\end{split}
\end{equation*}
the dual map
\[
(j_{Y,a})^{*}:\widehat{HM}_{-1-a}(-Y,-\s)\longrightarrow \widecheck{HM}_{-1-a}(Y,-\s)
\]
is exactly the map $j_{-Y,-1-a}$, therefore, $S(-Y) = \{-1-a\mid a\in S(Y)\}$. The result now follows because $h(Y) = -h(-Y)$.
\end{proof}

\begin{lem}\label{close under connected sum}
For $\mathbb{Z}/2$ homology spheres $Y_{1},Y_{2}$, the connected sum $Y_{1}\,\#\,Y_{2}$ is $h$-positive (respectively, $h$-negative) if $Y_1$ and $ Y_2$ are both $h$-positive (respectively, $h$-negative).
\end{lem}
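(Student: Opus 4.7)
The plan is to deduce the lemma from a K\"unneth-type connected sum formula for the monopole Floer homology, combined with the additivity $h(Y_1 \# Y_2) = h(Y_1) + h(Y_2)$ of the Fr\o yshov invariant.

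First, by Lemma \ref{orientation reversal} together with the identity $-(Y_1 \# Y_2) = (-Y_1) \# (-Y_2)$, the $h$-positive and $h$-negative statements are interchanged under orientation reversal, so it suffices to treat (say) the $h$-positive case. By the definition of $h$ and the decomposition $\widecheck{HM}(Y,\s) \cong \im i_* \oplus \hmred(Y,\s)$, where $\im i_*$ is the $U$-tower with bottom at degree $-2h(Y,\s)$, the hypothesis that $Y_i$ is $h$-positive is equivalent to saying that the entire $\widecheck{HM}(Y_i,\s_i)$ is supported in degrees $\geq -2h(Y_i)$.

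I would then invoke the K\"unneth-type connected sum formula---either directly in monopole Floer homology or transported from the Ozsv\'ath--Szab\'o formula in Heegaard Floer via the isomorphisms \cite{kutluhan-lee-taubes:HFSW-I, Gripp, Gripp-Huang, Gardiner}---which expresses the Floer homology of $Y_1 \# Y_2$ as the homology of a derived tensor product over $\mathbb{Q}[U]$ of chain complexes associated to $Y_1$ and $Y_2$. Because the absolute $\mathbb{Q}$-grading is additive under tensor product and $U$ lowers grading by $2$, supports bounded below by $-2h(Y_i)$ on each factor yield a support bounded below by $-2h(Y_1) - 2h(Y_2) = -2h(Y_1 \# Y_2)$ on the tensor product and on its homology. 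Passing to the quotient by $\im i_*$ then gives exactly the required bound on $\hmred(Y_1 \# Y_2)$ and hence $h$-positivity of the connected sum.

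The principal technical obstacle lies in the precise formulation of the connected sum formula. The Heegaard Floer version typically mixes $CF^+$ with $CF^-$ or $CF^\infty$, and the latter complexes extend to arbitrarily negative degrees; one must check carefully that the potentially degree-lowering contributions cancel out under the $h$-positive hypothesis, or equivalently that the derived tensor product admits chain representatives whose absolute $\mathbb{Q}$-grading satisfies the required lower bound. A potentially cleaner route stays entirely on the monopole side: use the pair-of-pants cobordism $W_c \colon Y_1 \sqcup Y_2 \to Y_1 \# Y_2$ given by attaching a single 4-dimensional $1$-handle, together with its dual and the TQFT property of cobordism maps on $\widecheck{HM}$, from which the needed degree bound can be read off directly without explicit $\operatorname{Tor}$ bookkeeping.
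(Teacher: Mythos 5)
Your overall strategy---reduce to one case via orientation reversal, then combine a K\"unneth-type connected sum formula with additivity of $h$---is exactly the paper's. The paper, however, works on the opposite side of the duality: it reduces to the $h$-\emph{negative} case and carries out the computation in $\widehat{HM}$, whereas you reduce to the $h$-positive case and work in $\widecheck{HM}$. This is not a cosmetic difference. The Ozsv\'ath--Szab\'o connected sum formula (Proposition 6.2 of \cite{Ozsvath-Szabo2}) is a derived tensor product statement for $HF^-$-type invariants, hence applies cleanly to $\widehat{HM}$; there is no comparably clean derived tensor product description of $HF^+$ or $\widecheck{HM}$. Moreover, for an $h$-negative $Y$ the \emph{entire} module $\widehat{HM}(Y,\s)$---including the infinite $U$-tower $\T^-_{-2h(Y)-1}$---is supported in degrees $\leq -2h(Y)-1$, so the K\"unneth-plus-grading argument runs directly with supports bounded above. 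The ``technical obstacle'' you flag is thus real, and it is precisely what the $\widehat{HM}$/$h$-negative choice circumvents.

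Neither of your proposed workarounds is carried out: ``one must check carefully'' is not a proof, and the pair-of-pants route would require, among other things, showing that the cobordism-induced map hits all of $\hmred(Y_1\#Y_2)$, which you do not address. What the paper does, after flipping to $\widehat{HM}$, is decompose each $\widehat{HM}(Y_i)$ (non-canonically) into the tower $\T^-_{-2h(Y_i)-1}$ and finite $U$-tails $\T_a(b)$, then verify term by term---in both the tensor and $\Tor$ parts of the K\"unneth splitting---that every finite summand of $\widehat{HM}(Y_1\#Y_2)$ is supported in degrees at most $-2h(Y_1)-2h(Y_2)-1$. That bookkeeping, which you hope to avoid, is in fact the substance of the proof.
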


\begin{proof}
Because of Lemma \ref{orientation reversal}, we only need to treat the $h$-negative case. Let us introduce the notations 
\begin{equation}\label{U-tail}
\T_{a}(b) = (\mathbb{Q}[U]/U^{b})[-a]\quad\text{and}\quad
\mathcal{T}^{-}_{a} = (\mathbb{Q}[U])[-a],
\end{equation}
where $1\in \mathbb{Q}[U]$ has degree 0 and for a graded module $M$, we follow the convention $(M[c])_k = M_{k+c}$ for the grading shift. Let $\s_{j}$ ($j=1,2$) be the spin structure on $Y_{j}$. For both $Y_1$ and $Y_2$, we have a (non-canonical) splitting of the $\Q[U]$-modules,
\begin{align*}
\widehat{HM}(Y_1,\s_{1}) &\, =\, \T^{-}_{-2h(Y_1)-1}\oplus \T_{a_{1}}(b_{1})\oplus\ldots\oplus \T_{a_{k}}(b_{k})
\quad\text{and} \\
\widehat{HM}(Y_2,\s_{2}) &\, =\, \T^{-}_{-2h(Y_2)-1}\oplus \T_{c_{1}}(d_{1})\oplus\ldots\oplus \T_{c_{\ell}}\,(b_{\ell}).
\end{align*}
By combining the connected sum formula for Heegaard Floer homology \cite[Proposition 6.2]{Ozsvath-Szabo2} with the identification between monopole Floer homology and Heegaard Floer homology (or alternatively, by directly using the connected sum formula in \cite{BMO, baldwin-bloom:sum}) we obtain \footnote{Only the relatively graded version of this formula appears in \cite{Ozsvath-Szabo2}. One obtains the absolutely graded version with the help of the Fr\o yshov invariant, which is additive under connected sum.}
\begin{equation*}\begin{split}
\widehat{HM}(Y_1\# Y_2,\s_{1}\#\s_{2})\, =&\\ \, (\widehat{HM}(Y_1,\s_{1})& \otimes_{\,\mathbb{Q}[U]}  \widehat{HM}(Y_2,\s_{2}))[-1]\, \oplus\, (\Tor_{\,\mathbb{Q}[U]}\,(\widehat{HM}(Y_1,\s_{1}),\widehat{HM}(Y_2,\s_{2}))[-2].\end{split}
\end{equation*}
We will now trace the contributions of each of the summands of $\widehat{HM}(Y_1)$ and $\widehat{HM}(Y_2)$ to $\widehat{HM}(Y_1 \# Y_2,\s_{1}\#\s_{2})$:
\begin{itemize}
\item The tensor product 
\[
(\T^{-}_{-2h(Y_1)-1} \otimes_{\,\Q[U]} \T^{-}_{-2h(Y_2)-1})[-1]\, = \, \T^{-}_{-2h(Y_1)-2h(Y_2)-1}
\]
contributes the infinite $U$-tail to $\widehat{HM}(Y_1\# Y_2,\s_{1}\#\s_{2})$;
\item Each of the tensor products
\begin{align*}
& (\T_{a_i}(b_i)\, \otimes_{\,\Q[U]} \T^{-}_{-2h(Y_2)-1})[-1]\, =\, \T_{-2h(Y_2)+a_i}(b_i),\\
& (\T^{-}_{-2h(Y_1)-1} \otimes_{\,\Q[U]} \T_{c_j}(d_j))[-1]\, =\, \T_{-2h(Y_1)+c_j}(d_j), \\
& (\T_{a_i}(b_i) \otimes_{\,\Q[U]} \T_{c_j}(d_j))[-1]\; =\; \T_{a_i+c_j+1}(\min(b_i,d_j))
\end{align*}
contributes a summand to the kernel of the map $p_{\,Y_1\#Y_2,*}$;
\item To compute $\Tor\,(\T_{a_{i}}(b_{i})\otimes_{\,\Q[U]}\T_{c_{j}}(d_{j}))[-2]$, consider the following \emph{grading preserving} free resolution
\[
\begin{CD}
0 @>>> \T^{-}_{a_i - 2b_i} @> U^{b_i} >> \T^{-}_{a_i} @>>> \T_{a_i}(b_i) @>>> 0.
\end{CD}
\]
By taking tensor product with $\T_{c_j}(d_j)[-2]$ and computing homology of the resulting chain complex, we obtain
\[
\Tor\,(\T_{a_i}(b_i)\otimes_{\,\Q[U]} \T_{c_j}(d_j))[-2]\, =\, \T_{a_i + c_j + 2 - 2\max(b_i,d_j)}(\min(b_i,d_j)),
\]
which contributes another summand to the kernel of $p_{Y_1\#Y_2,*}$.
\end{itemize}
Since $Y_1$ and $Y_2$ are both $h$-negative, we have $a_i\leq -2h(Y_1) - 1$ and $c_j\leq -2h(Y_2) - 1$. Also note that $b_i$, $d_j\geq 1$. It is now easy to check that all the summands in $\ker p_{Y_1\#Y_2,*}$ are supported in degrees at most $-2h(Y_1) - 2h(Y_2) - 1 = -2h(Y_1\# Y_2) - 1$. Therefore, $Y_1\# Y_2$ is $h$-negative, and the lemma is proved.
\end{proof}

\begin{cor}\label{C:first part}
Let $Y_1$ be an $h$-positive (respectively, $h$-negative) $\mathbb{Z}/2$ homology sphere, and suppose that $h(Y_1)\neq \rho(Y_1)\pmod 2$. Then $Y_1$ is not $\mathbb{Z}/2$ homology cobordant to any $h$-negative (respectively, $h$-positive) $\mathbb{Z}/2$ homology sphere $Y_2$.
\end{cor}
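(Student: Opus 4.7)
My plan is to deduce this corollary directly from Proposition~\ref{cobordism obstruction} by checking that the support hypothesis $S(Y_1)\cap S(Y_2)=\emptyset$ is automatic whenever $Y_1$ and $Y_2$ are $\Z/2$ homology cobordant and have opposite $h$-signs.

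First, I would establish that a $\Z/2$ homology cobordism $W$ from $Y_1$ to $Y_2$ is automatically a $\Q$ homology cobordism. The universal coefficient theorem gives
\[
H_n(W,Y_i;\Z/2)\;\cong\;H_n(W,Y_i;\Z)\otimes\Z/2\,\oplus\,\Tor(H_{n-1}(W,Y_i;\Z),\Z/2),
\]
so the vanishing of the left-hand side forces $H_*(W,Y_i;\Z)$ to consist entirely of odd torsion, hence $H_*(W,Y_i;\Q)=0$. Since the Fr{\o}yshov invariant is an invariant of $\spinc$ rational homology cobordism, this gives $h(Y_1)=h(Y_2)$; denote this common value by $h_0$.

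Next, I would combine this equality of $h$ with the hypothesis that $Y_1$ and $Y_2$ have opposite $h$-signs. In the case where $Y_1$ is $h$-positive and $Y_2$ is $h$-negative, Definition~\ref{defi:h-positive/h-negative} gives
\[
S(Y_1)\subset[-2h_0,+\infty)\qquad\text{and}\qquad S(Y_2)\subset(-\infty,-2h_0-1],
\]
and these two sets are disjoint. The roles of $Y_1$ and $Y_2$ in the disjointness argument are symmetric, so the reverse case (with $Y_1$ $h$-negative and $Y_2$ $h$-positive) is identical.

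Finally, with $S(Y_1)\cap S(Y_2)=\emptyset$ established and the hypothesis $h(Y_1)\neq\rho(Y_1)\pmod 2$ in hand, Proposition~\ref{cobordism obstruction} applies verbatim and yields the desired conclusion that $Y_1$ is not $\Z/2$ homology cobordant to $Y_2$. There is no real obstacle here: the only substantive ingredient beyond the already-proved Proposition~\ref{cobordism obstruction} is the observation that $\Z/2$ homology cobordism preserves $h$, which is a short universal-coefficients argument combined with the known $\spinc$ $\Q$-homology cobordism invariance of $h$ from \cite[Theorem 3]{F}.
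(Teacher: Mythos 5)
Your proof is correct and takes essentially the same route as the paper: both reduce to Proposition~\ref{cobordism obstruction} by noting that $h$ (and $\rho$) are invariants of $\Z/2$ homology cobordism and then observing that the $h$-positive and $h$-negative conditions force the supports $S(Y_1)$ and $S(Y_2)$ to lie on opposite sides of $-2h-1/2$. The only difference is cosmetic: you spell out, via the universal coefficient theorem, why a $\Z/2$ homology cobordism is automatically a $\Q$ homology cobordism (and hence preserves $h$), a fact the paper simply asserts; you could also note, as the paper does implicitly, that such a cobordism carries a unique spin structure restricting to those on $Y_1$ and $Y_2$, which is what actually licenses the appeal to $\spinc$ $\Q$-homology cobordism invariance of the Fr{\o}yshov invariant.
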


\begin{proof}
Suppose to the contrary that $Y_1$ is $\mathbb{Z}/2$ homology cobordant to an $h$-negative $\mathbb{Z}/2$ homology sphere $Y_2$. Since both $h$ and $\rho$ are invariants of $\mathbb{Z}/2$ homology cobordism, 
\[
h(Y_1) = h(Y_2)\quad\text{and}\quad \rho(Y_1) = \rho(Y_2) \pmod 2
\]
and in particular $h(Y_2) \neq \rho(Y_2)\pmod 2$. Let $h = h(Y_1) = h(Y_2)$ then $S(Y_1)\,\subset\, [-2h,+\infty)$ and $S(Y_2)\,\subset\, (-\infty,-2h-1]$ so that $S(Y_1)\,\cap\, S(Y_2) = \emptyset$. This contradicts Proposition \ref{cobordism obstruction}. 
\end{proof}

\begin{cor}\label{h-positive not cobordism to h-negative}
Let $Y_1,Y_2,\cdots,Y_n$ be $h$-positive $\mathbb{Z}/2$ homology spheres, and $a_1,a_2,\cdots, a_n$ positive integers. Suppose that at least one of the $Y_{j}$ satisfies the condition $h(Y_j) \neq \rho(Y_j) \pmod 2$. Then the connected sum
\begin{equation}\label{connected sum manifold}
(\#_{a_1} Y_1)\,\# \ldots \#\,(\#_{a_n}Y_n)
\end{equation}
cannot be $\mathbb{Z}/2$ homology cobordant to $S^3$. A similar result holds for $h$-negative homology spheres.
\end{cor}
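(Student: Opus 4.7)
The plan is to reduce the statement to Corollary~\ref{C:first part} applied to the single connected sum $Y := (\#_{a_1} Y_1)\,\#\cdots\#\,(\#_{a_n}Y_n)$ and to $S^3$. First I would iterate Lemma~\ref{close under connected sum} on the total number $a_1+\cdots+a_n$ of summands to conclude that $Y$ is itself $h$-positive. Since $S^3$ is an $L$-space, we have $\hmred(S^3)=0$ and hence $S^3$ is (trivially) $h$-negative, so it is a legitimate partner in Corollary~\ref{C:first part}. Once these two structural points are in place, the only remaining task is to verify the numerical condition $h(Y)\neq \rho(Y)\pmod 2$; after that Corollary~\ref{C:first part} immediately yields that $Y$ cannot be $\mathbb{Z}/2$ homology cobordant to $S^3$, which is the claim.

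For the numerical condition I would invoke the two additivity properties already recorded in the paper: the Fr{\o}yshov invariant is additive under connected sum (stated just after the definition in Section~\ref{S:HM}), and the Rohlin invariant is additive modulo $2$. Thus
\[
h(Y)-\rho(Y)\;\equiv\;\sum_{j=1}^{n} a_j\bigl(h(Y_j)-\rho(Y_j)\bigr)\pmod 2,
\]
and the hypothesis that at least one summand contributes nontrivially (together with the implicit parity bookkeeping on the $a_j$) forces the left-hand side to be nonzero mod $2$. Plugging into Corollary~\ref{C:first part} with $Y_1=Y$ and $Y_2=S^3$ gives the desired obstruction. I would then handle the $h$-negative case either by the symmetric argument (Lemma~\ref{close under connected sum} applies equally there) or by appealing to Lemma~\ref{orientation reversal} to pass between the two versions via orientation reversal, noting that both $h$ and $\rho$ behave predictably under this operation.

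The main obstacle, as I see it, is not any deep geometric input—the heavy lifting is already done by Theorem~\ref{T:main} via Proposition~\ref{cobordism obstruction}—but rather bookkeeping: one must ensure that the $h$-positivity of $Y$ really does propagate through arbitrary iterated connected sums (an easy induction from Lemma~\ref{close under connected sum}), and that the parity combination of the $a_j$ with the differences $h(Y_j)-\rho(Y_j)$ is arranged so the mod-$2$ obstruction does not collapse. No further gauge-theoretic input is needed, and no perturbation or neck-stretching argument enters this corollary directly.
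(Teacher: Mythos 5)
Your plan has a genuine gap in the parity step, and it is exactly the step you flag as needing ``implicit parity bookkeeping.'' By additivity of $h$ and $\rho$ under connected sum, you do correctly get
\[
h(Y)-\rho(Y)\;\equiv\;\sum_{j=1}^{n} a_j\bigl(h(Y_j)-\rho(Y_j)\bigr)\pmod 2,
\]
where $Y=(\#_{a_1}Y_1)\#\cdots\#(\#_{a_n}Y_n)$. But the corollary's hypothesis only says that \emph{some} $Y_j$ has $h(Y_j)\neq\rho(Y_j)\pmod 2$; it places no constraint on the $a_j$, which are arbitrary positive integers. If the lone summand with $h\neq\rho$ appears with even multiplicity $a_j$, its contribution to the sum is $0\pmod 2$, and the right-hand side may well vanish. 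The simplest failure is $n=1$, $a_1=2$: then $h(Y)-\rho(Y)\equiv 2\bigl(h(Y_1)-\rho(Y_1)\bigr)\equiv 0\pmod 2$ no matter what $Y_1$ is, so your reduction to Corollary~\ref{C:first part} with the pair $(Y,S^3)$ gives no contradiction. The corollary is nonetheless true in that case, so the argument, not just its exposition, is broken.

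The paper avoids this by never trying to run Corollary~\ref{C:first part} on the full connected sum against $S^3$. Instead it singles out one summand $Y_1$ with $h(Y_1)\neq\rho(Y_1)\pmod 2$ and moves a single copy of it to the other side of the hypothetical cobordism: if $Y$ is $\mathbb{Z}/2$ homology cobordant to $S^3$, then $-Y_1$ is $\mathbb{Z}/2$ homology cobordant to $(\#_{a_1-1}Y_1)\#\cdots\#(\#_{a_n}Y_n)$. Now $-Y_1$ is $h$-negative by Lemma~\ref{orientation reversal} and still satisfies $h(-Y_1)\neq\rho(-Y_1)\pmod 2$ (both $h$ and $\rho$ change sign under orientation reversal, which is invisible mod $2$), while the other side is $h$-positive by Lemma~\ref{close under connected sum} (or is $S^3$ itself, which being an $L$-space is both $h$-positive and $h$-negative). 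This pair now satisfies the hypotheses of Corollary~\ref{C:first part} with no assumption at all on the parities of the $a_j$, because the manifold carrying the $h\neq\rho$ condition is the single summand $-Y_1$, not the whole sum. You should restructure your argument along these lines: split off one offending summand rather than trying to prove a parity statement about $Y$ itself. Your observations that $Y$ is $h$-positive by iterating Lemma~\ref{close under connected sum} and that $S^3$ is a valid $h$-negative partner are correct and reusable, but they are not the crux.
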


\begin{proof}
Suppose to the contrary that the connected sum (\ref{connected sum manifold}) is $\mathbb{Z}/2$ homology cobordant to $S^3$. Without loss of generality, we may assume that $h(Y_1)\neq \rho(Y_1) \pmod 2$. Then the manifold $-Y_1$, which is $h$-negative by Lemma \ref{orientation reversal}, is $\mathbb{Z}/2$ homology cobordant to the manifold 
\[
(\#_{a_1-1}Y_1)\,\# \ldots \#\,(\#_{a_n} Y_n),
\]
which is $h$-positive by Lemma \ref{close under connected sum}. This contradicts Corollary \ref{C:first part}. 
\end{proof}

\begin{proof}[Proof of Theorem \ref{T:Theta}]
Suppose to the contrary that $Y$ is of finite order in $\Theta^{3}_{\mathbb{Z}}$. Then $h(Y) = 0\neq \rho(Y) \pmod 2$. This contradicts Corollary \ref{h-positive not cobordism to h-negative} since a $\mathbb{Z}$ homology cobordism is also a $\mathbb{Z}/2$ homology cobordism.
\end{proof}

\begin{proof}[Proof of Theorem \ref{T:Theta2}]
Suppose to the contrary that $Y$ is of finite order in $\Theta^3_{\Z/2}/\Theta^3_L$. Then there exists an integer $n>0$ and an $L$-space $Y_{1}$ which is a $\mathbb{Z}/2$ homology sphere, such that $(\#_{n}Y)\# Y_{1}$ is $\mathbb{Z}/2$ homology cobordant to $S^{3}$. This contradicts Corollary \ref{h-positive not cobordism to h-negative} since $Y_{1}$ is both $h$-positive and $h$-negative. 

\end{proof}

\begin{proof}[Proof of Corollary \ref{C: thin group infinite index}]
Using the spectral sequences of Ozsv\'ath-Szab\'o \cite{oz: spectral sequence} and Bloom \cite{bloom: spectral sequence} one can easily see that, for any Khovanov-homology thin knot $K_{1}$, the double branched cover $\Sigma(K_{1})$ is an $L$-space over $\mathbb{Z}/2$. The universal coefficient theorem then implies that $\Sigma(K_{1})$ is also an $L$-space over $\mathbb{Q}$. Note that the double branched cover of $I\times S^3$ with branch set a smooth concordance between two knots is a $\Z/2$ homology cobordism between the double branched covers of the knots. The result now follows from Theorem \ref{T:Theta2}.  
\end{proof}

%%%%%%%%%%%%%%%%%%%%%%%%%%%%%%%%%%%%%%%%%%%%%%%

\subsection{Surgery on knots}
In this section, we will use the rational surgery formula of Ozsv\'ath and Szab\'o~\cite{oz:q-surgery} to obtain a sufficient condition for a surgered manifold to be $h$-positive. Corollary \ref{C: surgery infinite order} will be proved in the next section by checking this condition and applying Theorem \ref{T:Theta}.

Let $K$ be a knot in $S^3$. Given co-prime integers $p$ and $q$, denote by $S^3_{p/q} (K)$ the manifold obtained by the $p/q$ surgery on $K$. For any $p > 0$, the manifold $S^3_p (K)$ is a rational homology sphere. It admits exactly $p$ distinct $\spin^c$ structures, which can be naturally identified \cite{oz:z-surgery} with the elements of $\Z/p$. The $\spin^c$ structure on $S^3_p (K)$ corresponding to an integer $s$ will be denoted by $[s]$.

\begin{thm}[Ozsv\'ath-Szab\'o \cite{oz:knots}, Rasmussen \cite{rasmussen:hfk}]\label{stable heegaard floer} For all sufficiently large $p > 0$ and all $s\in \Z$ with $|s|\leq p/2$ the Heegaard Floer homology group $HF^+ (S^3_p (K),[s])$, viewed as a relatively graded $\mathbb{Q}[U]$-module, is independent of $p$.
\end{thm}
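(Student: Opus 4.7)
The plan is to prove this by realizing $HF^+(S^3_p(K),[s])$ as the homology of a subquotient of the knot Floer complex of $K$ that is intrinsic to the pair $(K,s)$. The first step is to construct the knot Floer chain complex $CFK^\infty(K)$: a $\Z\oplus\Z$-filtered chain complex freely generated over $\mathbb{Q}[U,U^{-1}]$ by the intersection points of a doubly-pointed Heegaard diagram for $K$, with filtration indices $(i,j)$ corresponding to the algebraic and Alexander gradings. For each integer $s$, one then defines the subquotient
\[
A^+_s(K)\;=\;C\{\max(i,j-s)\geq 0\},
\]
which inherits from $CFK^\infty(K)$ the structure of a relatively graded $\mathbb{Q}[U]$-module and depends only on the filtered homotopy type of the knot Floer complex, hence only on $K$ and $s$.

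Next, I would set up the identification of $\spin^c$ structures on $S^3_p(K)$ with integers $s$ satisfying $|s|\leq p/2$ via the affine action of $H^2(S^3_p(K);\Z)$, calibrated by the $2$-handle cobordism from $S^3$ to $S^3_p(K)$. The core of the argument is then the large surgery theorem of Ozsv\'ath--Szab\'o and Rasmussen: there exists an integer $p_0 = p_0(K)$, depending only on the Seifert genus of $K$, such that for every $p\geq p_0$ and every $s$ with $|s|\leq p/2$ there is a natural isomorphism of relatively graded $\mathbb{Q}[U]$-modules
\[
HF^+(S^3_p(K),[s])\;\cong\;H_*(A^+_s(K)).
\]
This is established by choosing a Heegaard triple diagram adapted to the $p$-surgery and analyzing the holomorphic-triangle maps appearing in the surgery exact triangle; the hypothesis that $p$ is large ensures that the areas of all but one class of triangles grow without bound, so the triangle map picks out precisely the subquotient $A^+_s$.

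Since the right-hand side is manifestly independent of $p$, the conclusion follows. The main obstacle is the large surgery isomorphism itself: one must verify that the affine identification of $\spin^c$ structures with integers is correctly normalized against the Alexander filtration index $s$ under the bound $|s|\leq p/2$, and then carry out the holomorphic-triangle analysis to show that the only contributions to the differential on $CF^+(S^3_p(K),[s])$ come from within $A^+_s$ once $p\gg g(K)$. Tracking the relative grading under this identification — rather than the absolute Maslov grading, which does shift with $p$ in a computable way — is essential for the statement as written.
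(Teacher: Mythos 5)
Your proposal is correct and follows the same route the paper relies on: the Ozsv\'ath--Szab\'o/Rasmussen large surgery isomorphism identifies $HF^+(S^3_p(K),[s])$ with $H_*(C\{\max(i,j-s)\geq 0\}) = H_*(A^+_s)$ as a relatively graded $\mathbb{Q}[U]$-module once $p$ is large relative to $g(K)$, and the right-hand side is manifestly independent of $p$. The paper does not re-derive this theorem but cites it and uses exactly this identification later, in the proof of Lemma~\ref{property of thin knots}.
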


\begin{pro}\label{surgery h-positive}
Suppose that, for all sufficiently large $p > 0$ and all $s\in \Z$ with $|s|\leq p/2$, the rational homology sphere $(S^3_p (K),[s])$ is $h$-positive. Then, for any positive integer $n$, the integral homology sphere $S^3_{1/n} (K)$ is $h$-positive.
\end{pro}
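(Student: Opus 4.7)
The plan is to invoke the rational surgery mapping cone formula of Ozsv\'ath--Szab\'o \cite{oz:q-surgery}, which presents $HF^+(S^3_{1/n}(K))$ as the homology of an explicit mapping cone, and to read $h$-positivity of $S^3_{1/n}(K)$ directly off of the structure of that cone. Working with Heegaard Floer homology is legitimate here because, as noted after Definition \ref{defi:h-positive/h-negative}, $h$-positivity in the sense of monopole Floer homology is equivalent to the condition that $HF^{\mathrm{red}}$ is supported in degrees $\geq d$.

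Concretely, the mapping cone takes the form
$$
D^+ \;=\; v + h \colon\; \mathbb{A}^+ \,=\, \bigoplus_{s\in\mathbb{Z}} A^+_s \;\longrightarrow\; \mathbb{B}^+ \,=\, \bigoplus_{s\in\mathbb{Z}} B^+_s,
$$
where each $B^+_s$ is an absolutely graded copy of $HF^+(S^3)$, i.e.\ an infinite $U$-tail with explicit top degree, and each $A^+_s$ is, via the large-surgery identification of Theorem \ref{stable heegaard floer}, an absolutely graded copy of $HF^+(S^3_p(K),[s])$ for all sufficiently large $p$. The $h$-positivity hypothesis then yields a splitting of each $A^+_s$ as an infinite $U$-tail of top degree $d_s = d(S^3_p(K),[s])$ plus a finite-dimensional reduced summand $A^{\mathrm{red}}_s$ supported in degrees $\geq d_s$; the chain maps $v_s$ and $h_s$ vanish on those reduced summands (their target has no reduced part) and restrict to $U$-power maps between the tails, with powers dictated by the surgery coefficients and the Alexander grading of $K$.

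I would then analyse the long exact sequence of the cone. Since $\mathbb{B}^+$ carries no reduced classes, the reduced part of $HF^+(S^3_{1/n}(K))$ is assembled, over $\mathbb{Q}$, from three pieces: (i) the torsion of $\coker D^+_*$, i.e.\ the cokernels of the $U$-power maps between the tails of the $A^+_s$ and the $B^+_s$; (ii) the kernel of $D^+_*$ on those tails, shifted up by one in degree; and (iii) the reduced summands $A^{\mathrm{red}}_s$, also shifted up by one. Pieces (i) and (ii) are controlled by the top degrees $d_s$ of the domain tails, while piece (iii) is supported in degrees $\geq d_s + 1$. At the same time, the surviving infinite $U$-tail in $\coker D^+_*$ has top grading equal to $d(S^3_{1/n}(K))$. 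Comparing these bounds will show that all three kinds of contributions lie in degrees $\geq d(S^3_{1/n}(K))$, which is exactly the $h$-positivity of the surgered manifold.

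The main obstacle is the absolute-grading bookkeeping: the mapping cone sums over infinitely many $s\in\mathbb{Z}$, with grading shifts for $A^+_s$ and $B^+_s$ depending nontrivially on $s$, $n$, and the Alexander grading of $K$, and one must check that no cokernel of a $U$-power map in item (i) spills below $d(S^3_{1/n}(K))$. A subsidiary issue is finiteness: for $|s|$ sufficiently large one of $v_s, h_s$ is an identification of $U$-tails while the other is a $U$-power mapping the tail strictly into itself, so only finitely many indices contribute torsion or reduced summands, and this reduces the verification to a bounded computation controlled by the Seifert genus of $K$.
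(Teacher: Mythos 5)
Your strategy is in the same family as the paper's---invoke the Ozsv\'ath--Szab\'o rational surgery mapping cone, split each $A^+_s$ into a $U$-tower and a reduced piece, and push the $h$-positivity hypothesis through the formula by absolute-grading bookkeeping. But there is a genuine gap, and it is precisely the one you flag at the end: you have not shown that the cokernel contributions (your piece (i)) do not spill below $d(S^3_{1/n}(K))$. The paper's proof sidesteps this issue entirely by using the standard observation (cf.\ \cite[Remark 2.3]{Hom-Karakurt-Lidman}) that the suitably truncated mapping-cone map $\Phi^N_{1/n}$ is \emph{surjective}, so that
\[
HF^+(S^3_{1/n}(K)) \;\cong\; \ker \Phi^N_{1/n} \;\subset\; A^+_{-N-1,n}\oplus\bigoplus_{|s|\leq N,\,1\leq i\leq n} A^+_{s,i}
\]
as absolutely graded $\Q[U]$-modules. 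With this identification there is no cokernel to control: the entire Floer homology sits inside the direct sum of the $A^+_{s,i}$, and the proof reduces to the single statement that each $A^+_{s,i}$ is supported in degrees $\geq a_{s,i} \geq d(S^3_{1/n}(K))$, which follows by combining the $h$-positivity hypothesis with the grading inequalities $a_{0,i} = d(S^3_{1/n}(K))$ and $a_{s,i}\geq d(S^3_{1/n}(K))$ from \cite[(2.1)--(2.4)]{Hom-Karakurt-Lidman}. Without the surjectivity input, the long-exact-sequence decomposition you sketch does not close, because the cokernels of the $U$-power maps between tails can \emph{a priori} occur in arbitrarily negative degrees depending on the bottoms of the $B^+_s$ towers, and ruling that out by direct computation is exactly the "bounded verification" you defer.

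Two further points. First, your assertion that $v_s$ and $h_s$ "vanish on those reduced summands (their target has no reduced part)" is unjustified: the target $\T^+$ has no reduced summand as a $\Q[U]$-module, but that does not force a $\Q[U]$-module map out of $A^{\red}_s$ to be zero (every element of $\T^+$ is $U$-torsion). This claim is not needed once one uses the surjectivity trick, but as stated it is a logical leap. Second, your degree shift "$+1$" for the kernel and reduced contributions does not match the conventions the paper uses: the absolute gradings on the $A^+_{s,i}$ are chosen so that the isomorphism $\ker\Phi^N_{1/n}\cong HF^+(S^3_{1/n}(K))$ is grading-\emph{preserving}, with no shift.
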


\begin{proof}
This is a straightforward corollary of Ozsv\'ath-Szab\'o's rational surgery formula \cite{oz:q-surgery}. For the sake of completeness, we will sketch the argument here and refer the reader to \cite{ni-wu:cosmetic} for a concise summary. (See also \cite{Hom-Karakurt-Lidman}, which treats a similar situation as here). 

For a sufficiently large $p > 0$ and any $s \in \Z$, consider $n$ copies $A^+_{s,1},\ldots, A^+_{s,n}$ of the Heegaard Floer homology $HF^+ (S^3_p (K), [s])$ (this notation is justified by Theorem \ref{stable heegaard floer}), and $n$ copies $B^+_{s,1},\ldots, B^+_{s,n}$ of the module $\T^+ = \Q [U,U^{-1}]\big/(U\cdot \Q[U])$. By \cite[Theorem 1.1]{oz:q-surgery} and \cite[Remark 2.3]{Hom-Karakurt-Lidman}, one can recover the Heegaard Floer homology $HF^+ (S^3_{1/n} (K))$ as the homology of the mapping cone of a certain map
\[
\Phi_{1/n}: \left(\mathop{\bigoplus}\limits_{s\in \Z,\,1\leq i\leq n} A^+_{s,i}\right)\;\longrightarrow\; \left(\mathop{\bigoplus}\limits_{s\in \Z,\,1\leq i\leq n} B^+_{s,i}\right).
\]
In practice, one can take a large enough integer $N$ and instead consider the mapping cone of the truncated map
\[
\Phi^{N}_{1/n}:A^{+}_{-N-1,n}\,\oplus\;\left(\mathop{\bigoplus}\limits_{-N\leq s\leq N,\,1\leq i\leq n} A^+_{s,i}\right)\;\longrightarrow\; \left(\mathop{\bigoplus}\limits_{-N\leq s\leq N,\ 1\leq i\leq n}B^+_{s,i}\right).
\]
This map is surjective so one has an isomorphism
\[
\ker \Phi^N_{1/n} \; \cong \; HF^{+}(S^3_{1/n} (K)).
\]
Furthermore, one can impose suitable absolute gradings on $A^+_{s,i}$ such that the above isomorphism preserves the absolute grading. Recall that $A^+_{s,i}$ admits a splitting $A^+_{s,i} = \T^+\,\oplus\, A^{\red}_{s,i}$. Let $a_{s,i}$ be the absolute grading of the bottom term in $\mathcal{T}^{+}\subset A^{+}_{s,i}$. Then 
\begin{align*}
& a_{0,i}\; =\; d(S^3_{1/n}(K))\quad\text{for all $1\leq i\leq n$, and} \\
& a_{s,i}\; \geq\; d(S^3_{1/n}(K))\quad\text{for all $s\in \Z$ and all $1\leq i\leq n$}
\end{align*}
by \cite[(2.1)$\, - \,$(2.4)]{Hom-Karakurt-Lidman}. Using the assumption that the $\spin^c$ structure $[s]$ is $h$-positive for all sufficiently large $p > 0$, we conclude that $A^+_{s,i}$ is supported in degrees $\geq a_{s,i}$. This implies that $\ker \Phi^N_{1/n}$ is supported in degrees greater than or equal to
\[
\mathop{\inf}\limits_{s,i}\, (a_{s,i})\; =\; d(S^3_{1/n}(K)).
\]
Therefore, the integral homology sphere $S^3_{1/n} (K)$ is $h$-positive.
\end{proof}
\subsection{$L$-space knots and thin knots}
In this section, we will apply Proposition \ref{surgery h-positive} to the classes of $L$-space knots and Floer homology thin knots, and then prove Corollary \ref{C: surgery infinite order}. 

Recall that a knot $K$ is called an $L$-space knot (over the rationals) if there is a rational number $p/q > 0$ such that the manifold $S^3_{p/q} (K)$ is an $L$-space, that is, 
\[
\widehat{HF}_{p/q}\,(S^3_{p/q}(K))\, =\, \Q^{\,p}.
\]
This condition actually implies that $S^3_{p'/q'} (K)$ is an $L$-space for all $p'/q'\geq p/q$. Proposition \ref{surgery h-positive} has the following corollary.

\begin{cor}\label{surgery on L-space knot h-positive}
Let $K$ be an $L$-space knot. Then $S^3_{1/n} (K)$ is $h$-positive for all $n>0$.
\end{cor}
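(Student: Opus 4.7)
The plan is to apply Proposition \ref{surgery h-positive} directly, so the task reduces to verifying its hypothesis: that for all sufficiently large integers $p > 0$ and all $s \in \Z$ with $|s| \leq p/2$, the pair $(S^3_p(K), [s])$ is $h$-positive.

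First I would invoke the definition of an $L$-space knot to produce a threshold. Since $K$ is an $L$-space knot (over $\Q$), by definition there exists some rational number $p_0/q_0 > 0$ such that $S^3_{p_0/q_0}(K)$ is an $L$-space, and by the monotonicity statement recalled just before the corollary, $S^3_{p'/q'}(K)$ is then an $L$-space for every $p'/q' \geq p_0/q_0$. Specializing to integral surgeries, this gives a bound $p_1$ such that $S^3_p(K)$ is an $L$-space for every integer $p \geq p_1$.

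Next I would unwind what ``$L$-space'' gives us at the level of individual $\spin^c$ structures. By the definition adopted in Section \ref{S:HM}, $S^3_p(K)$ being an $L$-space over $\Q$ means that $\hmred(S^3_p(K), \s) = 0$ for every $\spin^c$ structure $\s$ on $S^3_p(K)$, and in particular for each $[s]$ with $|s| \leq p/2$. With trivial reduced Floer homology, the support $S(S^3_p(K), [s])$ is empty, so the inclusion $S(S^3_p(K), [s]) \subset [-2h(S^3_p(K), [s]), +\infty)$ holds vacuously. Hence $(S^3_p(K), [s])$ is $h$-positive in the sense of Definition \ref{defi:h-positive/h-negative}.

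This verifies the hypothesis of Proposition \ref{surgery h-positive} for $p \geq p_1$, and the conclusion that $S^3_{1/n}(K)$ is $h$-positive for every $n > 0$ follows immediately. There is no real obstacle here beyond matching the definitions; the content of the corollary lies entirely in Proposition \ref{surgery h-positive} (and, ultimately, in the rational surgery formula of Ozsv\'ath and Szab\'o that powers it), while the $L$-space hypothesis is precisely what is needed to kill $\hmred$ in every $\spin^c$ structure on the large positive integral surgeries.
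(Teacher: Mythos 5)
Your proof is correct and takes the same route the paper intends: invoke the monotonicity of the $L$-space surgery condition to get $S^3_p(K)$ an $L$-space for all large integers $p$, observe that vanishing $\hmred$ makes each $(S^3_p(K),[s])$ vacuously $h$-positive, and then apply Proposition \ref{surgery h-positive}. The paper states this as an immediate corollary of that proposition without writing out the verification; you have simply spelled out the same steps.
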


\begin{rmk}
Using Corollary \ref{surgery on L-space knot h-positive}, one can derive a result similar to Corollary \ref{C: surgery infinite order} for all $L$-space knots. However, this can be proved directly using the Heegaard Floer correction term.
\end{rmk}

Now we turn to the case of surgeries on Floer homology thin knots. We will need a number of constructions involving Heegaard Floer homology of knots, for which we refer to the original paper~\cite{oz:knots}, as well as to the survey \cite{manolescu:knot-survey}.

 Recall that, for an even integer $\tau$, a knot $K$ is called Floer homology $\tau$-thin (over the rationals) if the bigraded knot Floer homology group $\widehat{HFK}_{*}(K,* )$ satisfies the  condition 
\begin{equation}\label{thin condition}
\widehat{HFK}_i (K,j ) = 0\quad \text{unless}\quad i = j + \tau/2.
\end{equation}
The following lemma summarizes properties of thin knots that are useful for the application we have in mind.

\begin{lem}\label{property of thin knots}
Let $\tau$ be an even integer, $K$ a Floer homology $\tau$-thin knot, and $n$ a positive integer.  Then
\begin{enumerate}
\item $d (S^3_{1/n} (K)) = 2\min(0,-\lceil -\tau/4 \rceil )$. In particular, $d (S^3_{1/n} (K))\neq 0$ if and only if $\tau<0$;
\item If $\tau>0$ then for all sufficiently large $p > 0$ and all integers $s$ such that $|s| \leq p/2$ the rational homology sphere $(S^3_p (K),[s])$ is $h$-positive.
\end{enumerate}
\end{lem}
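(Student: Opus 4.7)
The plan is to read off surgery invariants from the standard staircase model of $CFK^\infty(K)$ for a $\tau$-thin knot, apply the $1/n$-surgery formula for the $d$-invariant, and then feed the resulting structure of $A^+_s$ into Proposition \ref{surgery h-positive}. The key structural input is the classification (due to Rasmussen and Petkova for alternating and, more generally, thin knots) of $CFK^\infty(K)$ up to filtered chain homotopy equivalence: after discarding acyclic square summands, the remaining complex is a single staircase whose bifiltered generators sit on the diagonal $i-j = \tau/2$, with step sizes read off from the coefficients of $\Delta_K$. A direct count on this staircase yields $V_0(K) = \max\bigl(0,\,\lceil -\tau/4\rceil\bigr)$; the quarter-shift arises by combining the diagonal slope $\tau/2$ with the factor $1/2$ coming from the $U$-degree, and the ceiling appears because $V_0$ is a non-negative integer cutoff.

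For part (1), I would invoke the $1/n$-surgery formula $d(S^3_{1/n}(K)) = -2\,V_0(K)$ for $n > 0$, as summarized in Ni--Wu \cite{ni-wu:cosmetic}. Substituting the formula for $V_0(K)$ yields
\[
d(S^3_{1/n}(K)) \;=\; -2\,\max\bigl(0,\,\lceil -\tau/4\rceil\bigr) \;=\; 2\,\min\bigl(0,\,-\lceil -\tau/4\rceil\bigr),
\]
which is the claimed formula. In particular $d \neq 0$ exactly when $\lceil -\tau/4\rceil > 0$, i.e.\ when $\tau < 0$.

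For part (2), assume $\tau > 0$. In the stable range $|s| \le p/2$ with $p$ large, the staircase model shows directly that each $A^+_s$ decomposes as an infinite $U$-tower based at Maslov grading $-2V_s(K)$ plus a finite reduced piece $A^{\mathrm{red}}_s$ whose generators all have grading at least $-2V_s(K)$: when the staircase tilts upward (the $\tau > 0$ case), no staircase generator in the $s$-truncation lies below the tower base. Via the identification $d = -2h$ and Theorem \ref{stable heegaard floer}, this is precisely the $h$-positivity condition for $(S^3_p(K),[s])$. Applying Proposition \ref{surgery h-positive} then upgrades this to $h$-positivity of $S^3_{1/n}(K)$ itself.

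The main obstacle is the grading bookkeeping: one must track the absolute Maslov grading through the truncation $A^+_s$ and through the surgery mapping cone, and pin down the quarter-shift that produces $\lceil -\tau/4\rceil$ from the slope $\tau/2$ of the staircase and the $U^{1/2}$ normalization. Once this is verified carefully on the staircase model, both parts reduce to a short combinatorial computation, with the $\tau>0$ and $\tau\le 0$ dichotomy reflecting precisely whether the diagonal tilts up or down relative to the tower base.
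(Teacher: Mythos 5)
Your proposal takes a genuinely different route from the paper. The paper never invokes the structural classification of $CFK^\infty$ for thin knots; instead it works directly from the degree constraint \eqref{thin condition}, observing that $H(C\{i=a,j=b\})$ is concentrated in a single Maslov degree $a+b+\tau/2$ and propagating this through short exact sequences of subquotient complexes $C\{\max(i,j-s)\geq 0\}$, $C\{i\geq 0\}\oplus C\{j\geq s\}$, $C\{\min(i,j-s)\geq 0\}$, etc. For part~(1) the paper then reduces to the alternating-knot computation of Ozsv\'ath--Szab\'o, and for part~(2) it reads the $h$-positivity off two short exact sequences of truncations, with no need to know the isomorphism type of $CFK^\infty$. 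You instead appeal to the staircase-plus-boxes decomposition, a much stronger structural input.

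There are two substantive gaps in your argument as written. First, the decomposition of a thin knot's $CFK^\infty$ into a staircase plus acyclic square summands is due to Petkova and is stated over $\mathbb{F}_2$; the paper's definition of $\tau$-thin (equation~\eqref{thin condition}) and all Floer homology in this paper are over $\mathbb{Q}$, and you would need to justify the decomposition over $\mathbb{Q}$ (the sign ambiguities are nontrivial) or at least pass carefully through a universal coefficients argument. Second, and more seriously for part~(2), you cannot simply ``discard'' the acyclic squares. A square summand is $\mathbb{Q}[U,U^{-1}]$-acyclic, but the truncation $A_s^+ = H(C\{\max(i,j-s)\geq 0\})$ can cut through a square and produce nonzero homology (for example, a $1\times 1$ box with exactly one corner killed by the truncation yields a one-dimensional contribution). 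These contributions land in the reduced part $A^{\mathrm{red}}_s$, which is precisely what you must show is supported in degrees $\geq -2V_s(K)$. You would need to compute the Maslov grading of each box contribution for every $s$ and check the inequality; the staircase alone does not suffice. Relatedly, the heuristic that ``the staircase tilts upward when $\tau>0$'' conflates two things: the combinatorial shape of the staircase is governed by the coefficients of $\Delta_K$ and is independent of $\tau$, whereas $\tau$ enters only through the absolute grading shift of the whole diagonal. The paper's exact-sequence argument for part~(2) sidesteps all of this because it derives the grading bound for $A^+_s$ directly from the thinness constraint without appealing to the staircase picture; by contrast your part~(1), resting on $d(S^3_{1/n}(K)) = -2V_0(K)$ and the value $V_0 = \max(0,\lceil -\tau/4\rceil)$, is essentially sound modulo the coefficient issue, since that $V_0$ formula can also be extracted from the alternating-knot computation the paper cites.
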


\begin{proof}
Recall that the knot Floer complexes are generated by triples $[x,i,j]$ satisfying various conditions, where $i$ and $j$ are integers, and $x$ is an intersection point between Lagrangian tori in the symmetric product of the Heegaard surface. For any $a,b\in \Z$, we will denote by $C\{i=a,j=b\}$ the complex generated by triples $[x,i,j]$ with $i=a$, $j=b$. We will use similar notations for the other complexes. It follows from (\ref{thin condition}) that $H(C\{i=a,j=b\})$ is supported in degree (absolute Maslov grading) $a + b + \tau/2$. Via a basic spectral sequence argument, this implies that
\begin{align*}
& H(C\{\min(i,j-s)\}\geq 0)\;\;\text{is supported in degrees $\geq s + \tau/2$,\; and} \\
& H(C\{\max(i,j-s)\}\leq -1)\,\;\text{is supported in degrees $\leq s + \tau/2 - 2$}.
\end{align*}
With these two facts established, we can prove that 
\[
d(S^{3}_{1}(K))\; =\; 2\min(0,-\lceil -\tau/4 \rceil)
\]
by repeating word for word the proof of \cite[Corollary 1.5]{oz:alternating} (which deals with the special case of an alternating knot $K$). Since $d(S^{3}_{1/n}(K))=d(S^{3}_{1}(K))$ for any $n>0$ (see \cite[Proposition 1.6]{ni-wu:cosmetic}), claim (1) is proved. 

We now turn to claim (2). Since the spin$^{c}$ structures $[s]$ and $[-s]$ are conjugate to each other, one has an isomorphism $HF^{+}(S^3_p (K),[s]) \cong HF^{+}(S^3_p (K),[-s])$. Therefore, it is sufficient to consider the case of $s\geq 0$. Recall from \cite{oz:knots, rasmussen:hfk} that there is an isomorphism
\[
HF^{+}(S^3_p (K),[s])\, \cong\, H(C\{\max(i,j-s)\geq 0\}).
\]
of relatively graded $\mathbb{Q}[U]$-modules. For any integer $a$, denote by $\T^{+}_{a}$ the graded module $(\mathbb{Q}[U,U^{-1}]/U\cdot \mathbb{Q}[U])[-a]$ (cf. (\ref{U-tail})). Then we have a decomposition of absolutely graded $\mathbb{Q}[U]$-modules,
\[
H(C\{\max(i,j-s)\geq 0\})\, \cong\, \T^{+}_{e}\oplus V
\]
for some integer $e$ and a finite dimensional $\mathbb{Q}$-vector space $V$. Consider the short exact sequence
\[
0\rightarrow C\{\max(i,j-s)\geq 0\}\longrightarrow C\{i\geq 0\}\oplus C\{j\geq s\}\longrightarrow C\{\min(i,j-s)\geq 0\}\rightarrow 0.
\] 
Since $H(C\{\min(i,j-s)\geq 0)$ is supported in degrees $\geq s + \tau/2 \geq 1$, we obtain 
\[
H_{\leq -1}(C\{\max(i,j-s)\geq 0\})\, \cong\, H_{\leq -1}(C\{i\geq 0\}) \oplus H_{\leq -1}(C\{j\geq s\})=0,
\]
with the last equality following from the isomorphisms
\[
H(C\{i\geq 0\})\cong HF^{+}(S^{3})\cong \T^{+}_{0}\quad\text{and}\quad H(C\{j\geq s\})\cong HF^{+}(S^{3})[-2s]\cong \T^{+}_{2s}.
\]
Therefore, $H(C\{\max(i,j-s)\geq 0\})$ is supported in degrees $\geq 0$. The proof will be complete once we show that $e=0$. To this end, consider another short exact sequence
\[
\begin{CD}
0\rightarrow C\{\min(-1-i,j-s)\geq 0\} \longrightarrow C\{\max(i,j-s)\geq 0\} @> p >> C\{i\geq 0\}\rightarrow 0.
\end{CD}
\]
Since $C\{\max(-1-i,j-s)\geq 0\}$ is finite dimensional, for any sufficiently large integer $m$ we have isomorphisms 
\[
p_{*}:H_{2m}( C\{\min(i,j-s)\geq 0\})\,\cong\, H_{2m}( C\{i\geq 0\})\,\cong\, \mathbb{Q}.
\]
Let $\xi \in H_{2m}( C\{\min(i,j-s)\geq 0\})$ be any non-zero element. Since $H(C\{i\geq 0\})\cong \T_{0}^{+}$, we have $p_{*}(U^{m}\xi)=U^{m}p_{*}(\xi)\neq 0$. This implies that $U^{m}\xi\neq 0$ and 
\[
e= 2m-2\cdot \max\{n\in \mathbb{Z}\mid U^{n}\xi \neq 0\}\; \leq\; 2m-2m=0.
\]
Because $H(C\{\max(i,j-s)\geq 0\})$ is supported in degrees $\geq 0$ we conclude that $e=0$, which completes the proof. 
\end{proof}

\begin{cor}\label{surgery on thin knot is h-positive}
Let $K$ be a Floer homology $\tau$-thin knot with $\tau > 0$. Then $S^3_{1/n}(K)$ is $h$-positive for all $n > 0$.
\end{cor}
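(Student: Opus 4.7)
The plan is to deduce this corollary directly by combining Proposition \ref{surgery h-positive} with part (2) of Lemma \ref{property of thin knots}. Both results have already been proved in the paper, so there is essentially no new content to establish; the task reduces to verifying that the hypotheses of the proposition are supplied by the lemma.

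First I would recall that Proposition \ref{surgery h-positive} gives a sufficient condition for $S^3_{1/n}(K)$ to be $h$-positive for every positive integer $n$, namely that for all sufficiently large $p > 0$ and every $s \in \Z$ with $|s| \leq p/2$, the rational homology sphere $(S^3_p(K),[s])$ is $h$-positive. The truth of this hypothesis for thin knots with $\tau > 0$ is exactly the content of Lemma \ref{property of thin knots}(2).

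Thus the proof proposal is simply: given $K$ a Floer homology $\tau$-thin knot with $\tau > 0$, Lemma \ref{property of thin knots}(2) ensures that for all sufficiently large $p > 0$ and all integers $s$ with $|s| \leq p/2$ the pair $(S^3_p(K),[s])$ is $h$-positive; applying Proposition \ref{surgery h-positive} to $K$ then shows that $S^3_{1/n}(K)$ is $h$-positive for every $n > 0$.

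There is no real obstacle here, as the substantive work has already been carried out in the proofs of Proposition \ref{surgery h-positive} (via the Ozsv\'ath--Szab\'o rational surgery mapping cone formula and the comparison of absolute gradings) and of Lemma \ref{property of thin knots}(2) (via the spectral sequence analysis of the thin knot Floer complex, culminating in the identification $H(C\{\max(i,j-s)\geq 0\}) \cong \T^+_0 \oplus V$ supported in non-negative degrees). The corollary is therefore immediate from the concatenation of these two statements.
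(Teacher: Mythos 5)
Your proposal is exactly the paper's proof: it cites Lemma \ref{property of thin knots}(2) to verify the hypothesis of Proposition \ref{surgery h-positive} and then invokes the proposition. The argument is correct and complete.
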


\begin{proof}
This is immediate from Lemma \ref{property of thin knots} (2) and Proposition \ref{surgery h-positive}.
\end{proof}

\begin{proof}[Proof of Corollary \ref{C: surgery infinite order}] 
Since $\arf(K)=1$ and $n$ is odd, it follows from the surgery formula for the Rohlin invariant~\cite{gonzalez-acuna:dehn,saveliev:spheres} that  $\rho(S^3_{1/n}(K)) = 1\pmod 2$. Claim (1) now follows from Example \ref{seifert and figure eight}. To prove claim (2), consider the mirror image $\widebar K$ of the knot $K$. Since $\widebar K$ is quasi-alternating and $S^3_{-1/n}(K) = -S^3_{1/n}(\widebar{K})$, it is sufficient to consider the case of $n > 0$. According to \cite{manolescu-ozsvath:quasi}, any quasi-alternating knot $K$ is Floer homology $\sigma(K)$-thin over $\Z/2$, where $\sigma (K)$ stands for the knot signature. By the universal coefficient theorem, this implies that $K$ is also Floer homology $\sigma(K)$-thin over $\Q$. If $\sigma(K) < 0$, it follows from Lemma \ref{property of thin knots} (1) that $d(S^3_{1/n}(K)) \neq 0$ and hence $S^3_{1/n}(K)$ has infinite order in $\Theta^{3}_{\mathbb{Z}}$. If $\sigma(K) > 0$, it follows from Corollary \ref{surgery on thin knot is h-positive} that $S^3_{1/n}(K)$ is $h$-positive and hence it has infinite order in $\Theta^{3}_{\mathbb{Z}}$.
\end{proof}

%%%%%%%%%%%%%%%%%%%%%%%%%%%%%%%%%%%%%%%%%%%%%%%

\section{The correction term}\label{S:correction}
In this section, we will prove Theorem \ref{T:Eta long neck}. The index theory that will go into our proof is not specific to dimension four, therefore, we will work in more generality than strictly necessary.

Let $X$ be a connected smooth spin compact manifold of dimension $n \equiv 0 \pmod 4$ with a primitive cohomology class $\gamma \in H^1 (X;\mathbb Z)$. Let $Y \subset X$ be a connected manifold Poincar\'e dual to $\gamma$ with the induced spin structure $\s$. Choose a metric $g$ on $X$ which takes the form $g = dt^2 + h$ in a product region $[-\ep,\ep] \times Y$, $\ep > 0$. We will assume that $(Y, \s)$ is a spin boundary and that the $\hat A$--genus of $X$ vanishes; both of these conditions are automatic when $X$ is a homology $S^1 \times S^3$. Given a real number $R > 0$, construct the spin manifold
\begin{equation}\label{E:XRn}
X_R = W\,\cup\,([0,R] \times Y)
\end{equation}
as in \eqref{E: XR} by cutting $X$ open along $\{ 0\} \times Y$ and gluing in the cylinder $[0,R] \times Y$ along the two copies of $Y$. The metric $g$ defines a metric $g_R$ on $X_R$, which lifts to a metric $g_R$ on the infinite cyclic cover of $X_R$ determined by $\gamma$. Following \eqref{E:zp}, denote by $\zp(X_R)$ the manifold with \emph{periodic} end modeled on this infinite cyclic cover, and by $\zp$ and $\wp$ the manifolds with \emph{product} ends modeled on the product $\mathbb R \times Y$ with metric $dt^2 + h$. Note that $\wp$ has two ends, corresponding to the two boundary components of $W$. The metrics will often be suppressed in our notations.

\begin{thm}\label{thm}
Assume that the spin Dirac operator $\D^+(\wp): L^2_1\,(\wp;\,\S^+) \to L^2 (\wp;\,\S^-)$ is an isomorphism. Then for all sufficiently large $R$ the end-periodic operator $\D^+(\zp(X_R))$ is Fredholm of index
\[
\ind \D^+(\zp(X_R)) = \ind \D^+(\zp).
\]
\end{thm}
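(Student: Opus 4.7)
The argument splits naturally into two stages: first, proving Fredholmness of $\D^+(\zp(X_R))$ for large $R$, and second, computing its index.

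For Fredholmness, my plan is to apply Taubes's criterion (in the form used throughout \cite{MRS1, MRS3, RS}): the $L^2$ end-periodic operator $\D^+(\zp(X_R))$ is Fredholm precisely when, for every $\zeta\in S^1$, the $\zeta$-twisted Dirac operator $\D^+_\zeta(X_R)$---that is, $\D^+$ acting on sections of the flat line bundle with holonomy $\zeta$ around the generator of $H^1(X_R;\Z)$---is invertible on the closed manifold $X_R$. I would then establish a uniform spectral gap: there exist $R_0>0$ and $c>0$ with $\|\D^+_\zeta \phi\|_{L^2(X_R)}\geq c\,\|\phi\|_{L^2(X_R)}$ for all $R\geq R_0$, all $\zeta\in S^1$, and all $\phi$ in the domain (with an analogous bound for the adjoint). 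The argument is by contradiction: a normalized approximate null sequence $\phi_k$ on $X_{R_k}$ with $R_k\to\infty$ admits, on any fixed compact piece, an $L^2$-weak limit by elliptic regularity; the exponential decay of such approximate solutions along the neck $[0,R]\times Y$, which follows from the fact that $\D(Y,h)$ has no kernel (itself a consequence of the invertibility of $\D^+(\wp)$), forces this limit to lie in $L^2(\wp)$ and to be nontrivial, contradicting the hypothesis on $\wp$. This is precisely the type of eigenvalue estimate foreshadowed in Section~\ref{S:eigenvalue}.

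Granted Fredholmness, homotopy invariance of the Fredholm index shows that $\ind\D^+(\zp(X_R))$ is constant on $[R_0,\infty)$. To identify it with $\ind\D^+(\zp)$, I would use a gluing/excision argument. The two end-periodic manifolds share the initial compact piece $Z$, and their ends differ only by the periodic insertion of length-$R$ cylinders between consecutive copies of $W$. Using the invertibility of $\D^+(\wp)$, one constructs a parametrix for $\D^+(\zp(X_R))$ from a parametrix for $\D^+(\zp)$ together with the spectral inverse of the cylindrical model $\partial_t+\D(Y,h)$ on each inserted cylinder; the error is compact and, for $R$ large, small in operator norm, which forces the two Fredholm indices to coincide. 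An alternative route is to apply the end-periodic index theorem of \cite{MRS3} to both operators and verify that the invertibility of $\D^+(\wp)$ makes the difference of the periodic eta contributions vanish.

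The hard part will be the gluing/parametrix construction in the second stage. Although the intuition---that inserting a long cylinder on which the cross-sectional Dirac operator has a spectral gap does not alter the index---is classical, turning it into a rigorous statement for non-compact, end-periodic Dirac operators requires careful cutoff manipulations and uniform exponential estimates propagated consistently across the infinitely many inserted cylinders. The route through \cite{MRS3} side-steps some of these estimates but transfers the difficulty into a direct comparison of the periodic spectral data of the two infinite cyclic covers.
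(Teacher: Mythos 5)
Your two-stage plan differs substantially from the paper's proof of this theorem, which invokes neither Taubes's Fredholmness criterion nor a parametrix construction. Instead, the paper writes $\zp(X_R) = Z\cup M_1\cup W_1\cup M_2\cup W_2\cup\cdots$ with $M_i=[0,R]\times Y$ and reduces $\D^+(\zp(X_R))$ through six purely algebraic steps (surjectivity of restriction maps, the kernel identification $\ker\D^+(M_i)\cong V_+(Y_i^-)\oplus V_-(Y_i^+)$, spectral projections) to an operator of the form $\D_6 + K$, where $\D_6$ is the block-diagonal sum of the APS problem on $Z$ and the APS problems on each $W_i$, and $\|K\|\le Ce^{-\mu R}$. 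The key observation is that invertibility of $\D^+(\wp)$ is precisely invertibility of the APS operator on $W$ (Lemma~\ref{assum}); hence every $W_i$-block of $\D_6$ is an isomorphism and contributes nothing, and $\ind\D_6=\ind\D^+(\zp)$. Fredholmness and the index identity then follow simultaneously for large $R$ from the smallness of $K$. Your Stage~1 spectral gap estimate is essentially the content of Proposition~\ref{eigenvalue estimate_z}, and your alternative Stage~2 route via the end-periodic index theorem of \cite{MRS3} is exactly what Section~\ref{S:eta} carries out (the paper notes the equivalence explicitly); so both of your ingredients do appear in the paper, just assembled differently.

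Two things in your Stage~2 sketch are genuine gaps. First, your description of the two ends as differing ``only by the periodic insertion of length-$R$ cylinders between consecutive copies of $W$'' is incorrect: the end of $\zp$ is a pure cylinder $[0,\infty)\times Y$ containing no copies of $W$ at all, whereas the end of $\zp(X_R)$ alternates $W$-pieces with $R$-cylinders. What makes the $W$-pieces index-invisible is precisely that the APS problem on $W$ is an isomorphism; a correct parametrix must splice in the inverse of this APS operator on every $W_i$, not merely the spectral inverse of $\p/\p t+\D(Y)$ on the cylinders, otherwise the construction breaks down near each $W_i$. Second, ``homotopy invariance of the Fredholm index'' is not automatic here because the domain Hilbert spaces change with $R$; the paper avoids this by computing the index directly for every large $R$, and the $\eta$-invariant route avoids it because the index is rewritten as a topological integral minus a continuous function of $R$ once the uniform spectral gap is established. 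If you insist on the parametrix route, you will also need the error term to be controlled (small in norm, or trace class), not merely compact, in order to extract a relative-index identity between operators acting on different spaces---this is exactly the hard part you flagged, and it is real work, not a formality.
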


The existence of metrics on $\wp$ making the operator $\D^+(\wp)$ invertible is addressed in Theorem \ref{T:two}. When applied to a spin $4$-manifold $X$ with the rational homology of $S^1\times S^3$, Theorem \ref{T:Eta long neck} is a straightforward corollary of Theorem \ref{thm}.

%%%%%%%%%%%%%%%%%%%%%%%%%%%%%%%%%%%%%%%%%%%%%%

\subsection{Preliminaries}
We begin by proving two technical lemmas which will be used later in the argument.

\begin{lem}\label{norm control}
Suppose $A: H\rightarrow H'$ is a surjective bounded operator between Hilbert spaces. Then there exists a constant $C > 0$ such that for any vector $v\in H'$ one can find a vector $u\in A^{-1}(v)$ with $\|u\|_H \leq C\cdot \|v\|_{H'}$.
\end{lem}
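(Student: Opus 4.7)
The plan is to invoke the open mapping theorem, which is the standard tool for such a quantitative surjectivity statement. First I would observe that $\ker A$ is a closed subspace of $H$ (since $A$ is bounded), so the quotient $H/\ker A$ is a Hilbert space, and $A$ descends to a bounded bijection $\bar A : H/\ker A \to H'$. By the open mapping theorem, $\bar A^{-1}$ is bounded, with some operator norm $C > 0$.

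In the Hilbert-space setting it is cleaner to work with the orthogonal complement $(\ker A)^\perp$ rather than the quotient. The restriction $A\big|_{(\ker A)^\perp} : (\ker A)^\perp \to H'$ is a bounded bijection between Hilbert spaces, so by the open mapping theorem its inverse is bounded by some constant $C$. Given any $v \in H'$, define $u := \bigl(A\big|_{(\ker A)^\perp}\bigr)^{-1}(v)$. Then $Au = v$ by construction, and $\|u\|_H \le C\cdot \|v\|_{H'}$, which is exactly the claimed estimate.

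There is no real obstacle here; this is essentially a restatement of the fact that a bounded surjection between Banach spaces has a bounded (linear) right inverse modulo the kernel. The only subtlety is checking that the chosen $u$ truly satisfies $Au = v$ rather than merely $Au - v \in \ker A$, which is handled by restricting to $(\ker A)^\perp$ where $A$ is honestly injective. Accordingly the proof should fit in a few lines.
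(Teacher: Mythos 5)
Your proof is correct and takes essentially the same approach as the paper: the paper's one-line proof is precisely your first observation, that by the open mapping theorem $H/\ker A \to H'$ is an isomorphism. Your refinement of passing to $(\ker A)^{\perp}$ is a cosmetic Hilbert-space convenience and does not change the substance.
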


\begin{proof}
By the open mapping theorem the map $H/\ker A \to H_1$ is an isomorphism.
\end{proof}

\begin{lem}\label{kernel to domain}
Let $A_1: H \to H_1$ and $A_2: H \to H_2$ be bounded linear operators between Hilbert spaces, and assume that $A_2$ is surjective. Then the operator $A = (A_1,A_2): H\rightarrow H_{1}\oplus H_{2}$ is Fredholm if and only if the operator $A_{1}|_{\ker A_{2}}$ is Fredholm and
\[
\ind A = \ind \left(A_1|_{\ker A_2}\right).
\]
\end{lem}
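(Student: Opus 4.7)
The plan is to reduce $A$ to a block-diagonal form by a change of coordinates, at which point the claim becomes transparent. Let $K = \ker A_2$, which is a closed subspace of $H$ since $A_2$ is bounded. Because $A_2$ is surjective, Lemma \ref{norm control} (an application of the open mapping theorem) gives a bounded right inverse $B: H_2 \to H$ of $A_2$, that is, $A_2 \circ B = \mathrm{Id}_{H_2}$ with $\|Bv\|_H \le C\|v\|_{H_2}$. The image $B(H_2)$ is a closed complement to $K$ in $H$, and every $u \in H$ decomposes uniquely as $u = u_0 + B(A_2 u)$ with $u_0 = u - B(A_2 u) \in K$.

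Using this splitting, I would define two bounded linear isomorphisms to simplify $A$. Let $\Psi: K \oplus H_2 \to H$ be the bounded isomorphism $\Psi(u_0, y_2) = u_0 + B(y_2)$, and let $\Theta: H_1 \oplus H_2 \to H_1 \oplus H_2$ be the bounded isomorphism $\Theta(y_1, y_2) = (y_1 - A_1 B(y_2),\, y_2)$, whose inverse is $(y_1, y_2) \mapsto (y_1 + A_1 B(y_2),\, y_2)$. A direct computation shows that
\[
\Theta \circ A \circ \Psi \,(u_0, y_2) \;=\; \bigl( A_1(u_0),\; y_2 \bigr),
\]
so $\Theta \circ A \circ \Psi = (A_1|_K) \oplus \mathrm{Id}_{H_2}$ as operators $K \oplus H_2 \to H_1 \oplus H_2$.

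Since $\Psi$ and $\Theta$ are bounded invertible, $A$ is Fredholm if and only if $(A_1|_K) \oplus \mathrm{Id}_{H_2}$ is Fredholm, and in that case they have the same index. The latter operator is a direct sum with an identity factor, so it is Fredholm if and only if $A_1|_K$ is Fredholm, and its index equals $\ind(A_1|_K)$. This yields both statements of the lemma. The only step requiring genuine input is the existence of the bounded right inverse $B$, which is exactly Lemma \ref{norm control}; the rest is purely formal manipulation and is the most routine part of the argument.
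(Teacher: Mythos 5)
Your argument is correct. You reduce $A$ to block-diagonal form $(A_1|_K)\oplus \Id_{H_2}$ by conjugating with explicit bounded isomorphisms $\Psi$ and $\Theta$ built from a bounded linear right inverse $B$ of $A_2$; the Fredholm property and the index identity then follow immediately from invariance of these notions under composition with isomorphisms and under taking a direct sum with the identity. This is a genuinely different route from the paper's: there the argument is a short diagram chase, comparing the short exact sequence $0\to\im(A_1|_{\ker A_2})\to\im A\to H_2\to 0$ with $0\to H_1\to H_1\oplus H_2\to H_2\to 0$ to conclude $H_1/\im(A_1|_{\ker A_2}) \cong (H_1\oplus H_2)/\im A$, and then noting the kernels of $A$ and $A_1|_{\ker A_2}$ visibly coincide. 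Your version is more explicit and arguably more transparent, but it leans a little harder on the Hilbert-space structure: you need a bounded \emph{linear} right inverse of $A_2$, equivalently a closed complement of $\ker A_2$. This is automatic here (take $B$ to be the inverse of $A_2$ restricted to $(\ker A_2)^{\perp}$), and the proof of Lemma \ref{norm control} supplies what is needed, but note that the \emph{statement} of that lemma only gives a pointwise norm bound, not a linear section, so a one-line remark to that effect would make the invocation airtight. The paper's diagram chase avoids constructing $B$ altogether and would carry over verbatim to Banach spaces with uncomplemented kernels, which your construction would not.
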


\begin{proof}
The projection map $\im A \to H_2$ can be included in the short exact sequence $0 \to \im \left(A_1|_{\ker A_2}\right) \to \im A \to H_2 \to 0$, which is naturally a subsequence of the short exact sequence $0 \to H_1 \to H_1 \oplus H_2 \to H_2 \to 0$. The quotient sequence $0 \to H_1/\im (A_1|_{\ker A_2}) \to (H_1 \oplus H_2)/\im A \to 0 \to 0$ is exact by the snake lemma, which proves the equality of the cokernels of the two operators in question. The equality of their kernels is clear.
\end{proof}

We will find it convenient to introduce the notation $M = [0,R]\,\times\,Y$ and write $X_R = W\,\cup\, M$ and
\[
\zp(X_R) = Z\cup_{Y^{-}_{1}}M_{1}\cup_{Y^{+}_{1}}W_{1}\cup_{Y^{-}_{2}} M_{2}\cup_{Y^{+}_{2}}W_{2}\cup \ldots
\]
with $W_n = W$ and $M_n = M$ for all $n \ge 1$. Each of the manifolds $Y_n^{\pm}$ is just a copy of $Y$ but the notations are chosen so that $M_n$ is a cobordism from $Y_n^-$ to $Y_n^+$ while $W_n$ is a cobordism from $Y_n^+$ to $Y_{n+1}^-$.

The spin Dirac operator $\D(Y)$ is a self-adjoint elliptic operator on a compact manifold hence it has a discrete spectrum with real eigenvalues of finite multiplicity. Denote by $V_{\pm}(Y)\subset L^{2}_{1/2}\,(Y;\S)$ the subspaces spanned by the eigenspinors of $\D(Y)$ with respectively the positive and the negative eigenvalues. The $L^2$ orthogonal projections onto these subspaces will be denoted by $\pi_{\pm}$.

\begin{lem}\label{assum}
The operator $\D^+(\wp)$ of Theorem \ref{thm} is invertible if and only if the following two conditions are satisfied:
\begin{enumerate}
\item the Dirac operator $\D(Y)$ has zero kernel, and
\item the Dirac operator
$$
(\D^{+}(W),\pi_{+}\circ r^-, \pi_{-}\circ r^+):L^{2}_{1}\,(W;\S^{+})\rightarrow L^{2}\,(W;\S^{-})\oplus V_{+}(Y)\oplus V_{-}(Y)
$$
with the Atiyah--Patodi--Singer boundary conditions is an isomorphism. Here, $r^{\pm}: L^2_1(W;\S^+) \to L^2_{1/2} (Y;\S)$ denote the restriction maps to the boundary components $Y = \p^- W$ and $Y = \p^+ W$ of $W$.
\end{enumerate}
\end{lem}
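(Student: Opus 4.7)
The proof plan follows the classical Atiyah--Patodi--Singer analysis on manifolds with cylindrical ends. I will write $\wp = C^{-}\cup W\cup C^{+}$ where $C^{+}=[0,\infty)\times Y$ is the end glued to $\p^{+}W$ and $C^{-}=(-\infty,0]\times Y$ is the end glued to $\p^{-}W$. The first step is to observe that condition (1) is necessary: by the standard APS theory of Dirac operators on cylindrical-end manifolds, the operator $\D^{+}(\wp)$ is Fredholm on the $L^{2}_{1}\to L^{2}$ completion if and only if the boundary operator $\D(Y)$ is invertible. In particular, invertibility of $\D^{+}(\wp)$ forces (1), and (1) is also the natural hypothesis under which the APS operator in (2) is Fredholm.

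Under (1) the heart of the argument is a separation-of-variables analysis on each cylindrical end. Using the spectral expansion of $\D(Y)$, any $\phi\in L^{2}_{1}(C^{+};\S^{+})$ satisfying $\D^{+}\phi=0$ has a unique form $\phi(t,y)=\sum_{\lambda}c_{\lambda}e^{-\lambda t}\phi_{\lambda}(y)$; the $L^{2}$ condition on $[0,\infty)$ forces $c_{\lambda}=0$ for $\lambda\leq 0$, so the boundary value $r^{+}\phi=\sum_{\lambda>0}c_{\lambda}\phi_{\lambda}$ lies in $V_{+}(Y)$, and conversely any element of $V_{+}(Y)$ determines a unique $L^{2}$ extension by the same formula. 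A symmetric assertion holds on $C^{-}$ with $V_{+}(Y)$ replaced by $V_{-}(Y)$.

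The kernel identification then follows by restriction to $W$: this gives a linear bijection
\[
\ker \D^{+}(\wp)\;\longleftrightarrow\;\ker\bigl(\D^{+}(W),\,\pi_{+}\circ r^{-},\,\pi_{-}\circ r^{+}\bigr),
\]
since any $L^{2}$ solution on $\wp$ restricts to a solution of $\D^{+}(W)\phi=0$ whose boundary values satisfy $\pi_{+}r^{-}\phi=0$ (forced by $L^{2}$-summability on $C^{-}$) and $\pi_{-}r^{+}\phi=0$ (forced on $C^{+}$), while conversely any element of the APS kernel extends uniquely to an $L^{2}$ solution on $\wp$ by the formulas above. Repeating the same spectral analysis verbatim for the formal adjoint $\D^{-}(\wp)$, whose $L^{2}$ boundary conditions swap $V_{+}(Y)$ and $V_{-}(Y)$ at each end and hence realize the adjoint APS boundary value problem on $W$, produces a parallel isomorphism between the two cokernels.

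Since both operators are Fredholm under (1), each is invertible exactly when its kernel and cokernel both vanish, and the two pairs of conditions have been identified. The equivalence claimed in the lemma follows. I expect the main technical point to be the bookkeeping in the separation-of-variables step: verifying that the spectral expansions converge in $L^{2}_{1}$ rather than merely in $L^{2}$, and correctly matching the orientation conventions so that $L^{2}$-summability at $+\infty$ translates into the $V_{+}(Y)$ boundary condition at $\p^{+}W$ and symmetrically at $\p^{-}W$.
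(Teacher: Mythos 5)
Your proposal is correct and follows essentially the same route as the paper: the paper simply observes that $\ker\D(Y)=0$ is equivalent to Fredholmness of $\D^+(\wp)$ and then cites the well-known relation between the cylindrical-end operator and the APS boundary value problem (Atiyah--Patodi--Singer I, Proposition 3.11), which is exactly what your separation-of-variables analysis on $C^\pm$ proves directly. Your version merely unwinds that citation into an explicit kernel/cokernel identification, with the orientation bookkeeping you flag being the only substantive detail to nail down.
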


\begin{proof}
The condition on $\D(Y)$ to have zero kernel is equivalent to the condition on $\D^+(\wp)$ to be Fredholm. The relation between $\D^+(\wp)$ and the operator $\D^+(W)$ with the Atiyah--Patodi--Singer boundary conditions is well known; see \cite[Proposition 3.11]{aps:I}.
\end{proof}

From now on, we will assume that the operator $\D^+(\wp)$ is invertible or, equivalently, that the conditions (1) and (2) of Lemma \ref{assum} are satisfied.

Given a family of Hilbert spaces $H_i$, $i \ge 1$, their direct sum $\bigoplus H_i$ is the Hilbert space which consists of all the sequences $(u_1,u_2,\ldots)$ of vectors $u_i \in H_i$ such that $\sum\, \|u_i\|^{2}_{H_i} < \infty$, the inner product of sequences $(u_1,u_2,\ldots)$ and $(v_1,v_2,\ldots)$ being $\sum\, (u_i, v_i)_{H_i}$. Any uniformly bounded family of bounded operators $T_i: H_i \to H'_i$ gives rise to a well defined bounded operator
\[
\bigoplus A_i: \bigoplus H_i \longrightarrow \bigoplus H'_i
\]
of norm $\sup \|A_i\|$. An application of this abstract construction to the above splitting of $\zp (X_R)$ yields the following result (we suppress spinor bundles in our notations).

\begin{lem}\label{break the sobolev space}
The natural restriction maps provide Hilbert space isomorphisms
$$
L^2(\zp(X_R)) = L^2(Z)\,\oplus\,\left(\bigoplus L^2(W_i)\right)\,\oplus\,\left(\bigoplus L^2(M_i)\right)
\quad\text{and}\quad
L^2_1(\zp(X_R))=\ker r,
$$
where $r$ is the restriction map
$$
L^2_1(Z)\oplus\left(\bigoplus L^2_{1}(W_i)\right)\oplus\left(\bigoplus L^2_1(M_i)\right)\rightarrow \left(\bigoplus L^2_{1/2}(Y^{-}_{i})\right)\oplus\left(\bigoplus L^2_{1/2}(Y^{+}_{i})\right)
$$
which sends
$
\,\phi_0\oplus (\phi_{1},\phi_{2},\ldots)\oplus (\psi_{1},\psi_{2},\ldots)
$
to
$$
(\phi_{0}|_{Y^{-}_{1}}-\psi_{1}|_{Y^{-}_{1}},\phi_{1}|_{Y^{-}_{2}}-\psi_{2}|_{Y^{-}_{2}},\ldots)\oplus (\phi_{1}|_{Y^{+}_{1}}-\psi_{1}|_{Y^{+}_{1}},\phi_{2}|_{Y^{+}_{2}}-\psi_{2}|_{Y^{+}_{2}},\ldots).
$$
\end{lem}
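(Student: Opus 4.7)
The plan is to derive both isomorphisms from the elementary fact that the pieces $Z$, $W_i$, $M_i$ cover $\zp(X_R)$ with pairwise intersections of measure zero (the codimension-one submanifolds $Y_i^{\pm}$). For the $L^2$ statement, I would simply note that the restriction map sending $\phi \in L^2(\zp(X_R))$ to the tuple $(\phi|_Z, \phi|_{W_1}, \phi|_{M_1}, \ldots)$ is an isometry onto $L^2(Z) \oplus (\bigoplus L^2(W_i)) \oplus (\bigoplus L^2(M_i))$ because
\[
\|\phi\|_{L^2}^{2} = \|\phi|_Z\|_{L^2}^{2} + \sum_{i} \|\phi|_{W_i}\|_{L^2}^{2} + \sum_{i} \|\phi|_{M_i}\|_{L^2}^{2},
\]
and surjectivity is immediate since any square-summable tuple may be patched together pointwise almost everywhere. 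The total $L^2$ norm being finite is equivalent to the tuple belonging to the Hilbert direct sum, by definition of the latter.

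For the $L^2_1$ statement, the same identity holds with $L^2_1$ in place of $L^2$ after including the first derivatives, since the weak derivative on $\zp(X_R)$ restricts to the weak derivative on each open piece. Thus the restriction map embeds $L^2_1(\zp(X_R))$ isometrically into $L^2_1(Z) \oplus (\bigoplus L^2_1(W_i)) \oplus (\bigoplus L^2_1(M_i))$. The trace theorem on each compact piece (applied to $Z$, to one copy of $W$, and to the cylinder $M$) gives continuous restriction maps to $L^2_{1/2}$ of the relevant boundary component; these assemble into the bounded operator $r$ in the statement, and by construction the image of $L^2_1(\zp(X_R))$ lies in $\ker r$, because a global $L^2_1$ spinor has well-defined boundary trace which is the same whether computed from either side of $Y_i^{\pm}$.

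The real content is the converse: any tuple in $\ker r$ glues to a genuine element of $L^2_1(\zp(X_R))$. I would prove this by a standard distributional argument. Given such a compatible tuple, define $\phi$ on $\zp(X_R)$ by its restrictions; this is an $L^2$ spinor by the first part. To see that its distributional covariant derivative is $L^2$, test against a smooth compactly supported section $\eta$ and integrate by parts on each piece: the interior contributions yield the pointwise derivative, while the boundary contributions from adjacent pieces along $Y_i^{\pm}$ cancel precisely because the traces agree in $L^2_{1/2}$ (so the jump across each interface vanishes as a distribution on $Y_i^{\pm}$). Hence $\nabla \phi$ coincides with the piecewise derivative and is square integrable, so $\phi \in L^2_1(\zp(X_R))$.

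The one thing worth watching is that the target of $r$ is really a Hilbert direct sum and not just a product, but this is automatic: for any $\phi \in L^2_1(\zp(X_R))$ the $L^2_{1/2}$ norms of the traces on the cutting hypersurfaces are controlled by the $L^2_1$ norms on the adjoining pieces (via a uniform trace bound applicable on each fixed $Z$, $W$, and $M$), and the sum of squared $L^2_1$ norms is finite, so the tuple of differences appearing in the definition of $r$ is automatically square-summable. This makes $r$ well defined and bounded, and the two isomorphisms of the lemma follow.
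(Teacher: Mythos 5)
Your proposal is correct and follows essentially the same route as the paper: a norm-preserving inclusion of $L^2_1(\zp(X_R))$ into $\ker r$, followed by the converse that a compatible tuple glues to a genuine $L^2_1$ spinor. Where you spell out the distributional integration-by-parts argument showing the weak derivative agrees with the piecewise derivative, the paper instead cites this local regularity fact (that elements of $\ker r$ lie in $L^2_{1,\loc}$) to Manolescu's Lemma 3, so the only difference is whether the standard gluing-across-a-hypersurface step is proved or referenced.
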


\begin{proof}
Claim (1) is straightforward. To prove (2), observe that there is an obvious norm preserving  inclusion of $L^2_1 (\zp(X_R))$ into $\ker r$. The result now follows from the fact that all spinors in $\ker r$ belong to $L^2_{1,\rm{loc}}(\zp(X_R))$, see for instance Manolescu \cite[Lemma 3]{Manolescu1}.
\end{proof}

\begin{lem}\label{surjective}
(1) The Dirac operator  $\D^+(M): L^2_1(M)\rightarrow L^2(M)$ is surjective.

\noindent (2) The operator $\ker \D^+ (M)\rightarrow V_+(Y^-)\oplus V_-(Y^+)$ sending $\psi$ to $(\pi_{+}(\psi|_{Y^{-}}),\pi_{-}(\psi|_{Y^{+}}))$ is an isomorphism.

\noindent (3) The restriction maps $r^{\pm}: L^2_1(W;\S^{+})\rightarrow L^2_{1/2}(Y;\S)$ of Lemma \ref{assum} are surjective.

\noindent (4) The restriction map $r$ of Lemma \ref{break the sobolev space} is surjective.

\noindent (5) The operator $\bigoplus \D^+(M_i): \bigoplus L^2_1(M_i)\rightarrow \bigoplus L^2(M_i)$ is surjective.
\end{lem}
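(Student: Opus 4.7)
The plan is to exploit the cylindrical structure of $M = [0,R]\times Y$ and the spectral decomposition of $\D(Y)$ for parts (1) and (2), and then to derive (3)--(5) from the standard trace theorem together with the fact that every $M_{i}$ is literally the same cylinder, so all the relevant estimates come with constants that are uniform in $i$.

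For (1) and (2), decompose spinors on $M$ in the orthonormal eigenbasis $\{\phi_{\lambda}\}$ of $\D(Y)$. After the standard identification of the cylindrical spinor bundles with pullbacks of $\S$, the equation $\D^{+}(M)u = f$ decouples into a family of scalar ODEs $a_{\lambda}'(t) + \lambda a_{\lambda}(t) = b_{\lambda}(t)$ on the bounded interval $[0,R]$. Each such ODE is solvable without boundary conditions; a standard energy estimate gives $\|a_{\lambda}\|_{L^{2}(0,R)}\leq |\lambda|^{-1}\|b_{\lambda}\|_{L^{2}(0,R)}$, and since $|\lambda|$ is bounded away from zero by Lemma \ref{assum}(1), summing in $\lambda$ with the weight $(1+\lambda^{2})$ (which matches the spinorial $L^{2}_{1}$ norm on $M$ via the Lichnerowicz identity on $Y$) yields $\|u\|_{L^{2}_{1}(M)}\leq C\|f\|_{L^{2}(M)}$, hence (1). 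For (2), a kernel element has $a_{\lambda}(t) = a_{\lambda}(0)\,e^{-\lambda t}$; since $\D(Y)$ has no zero eigenvalue, the boundary coefficients $a_{\lambda}(0)$ for $\lambda>0$ and $a_{\lambda}(R)$ for $\lambda<0$ are free parameters, and they are exactly the $\phi_{\lambda}$-coefficients of $\pi_{+}\psi|_{Y^{-}}$ and $\pi_{-}\psi|_{Y^{+}}$, respectively. The delicate point is that $e^{-\lambda t}$ is huge for $\lambda\to -\infty$, but those coefficients come with a compensating factor $e^{\lambda R}$ (from the parameterization via $a_{\lambda}(R)$), so the effective weight is $e^{\lambda(R-t)}\leq 1$, and summing produces an $L^{2}_{1}(M)$ spinor whose norm is controlled by the $L^{2}_{1/2}$ norms of the boundary data.

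For (3), combine the bounded right inverse of the trace theorem on a collar neighborhood of the chosen boundary component of $W$ with a cutoff away from the opposite component. For (4), apply (3) piece by piece: take all $W$-components of the preimage zero and produce $\psi_{i}\in L^{2}_{1}(M_{i})$ with the prescribed boundary data on $Y_{i}^{\pm}$ via a single extension operator on the model cylinder $[0,R]\times Y$; the uniform operator norm of this extension sends $\ell^{2}$-summable boundary data to an $\ell^{2}$-summable preimage, landing in the direct-sum Hilbert space. For (5), combine (1) with Lemma \ref{norm control} to obtain a single constant $C$ such that every $f\in L^{2}(M)$ has a preimage under $\D^{+}(M)$ in $L^{2}_{1}(M)$ of norm at most $C\|f\|$; applying this constant uniformly across the isometric pieces $M_{i}$ turns any $\ell^{2}$-summable target into an $\ell^{2}$-summable preimage. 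The main obstacle throughout is simply the careful bookkeeping of the exponential weights in (2); once that is handled, the remaining uniformity across identical pieces is automatic and the five claims fit together cleanly.
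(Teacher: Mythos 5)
Your proposal is correct and follows essentially the same route the paper sketches: the paper cites Kronheimer--Mrowka Corollary~17.1.5 for (1), hints at spectral decomposition for (2), calls (3) standard, and reduces (4) and (5) to the uniform-constant Lemma~\ref{norm control}; your ODE decoupling $a_\lambda' + \lambda a_\lambda = b_\lambda$ with the $|\lambda|^{-1}$ estimate and the $e^{\lambda(R-t)}$ bookkeeping is precisely the unpacked version of those hints. The only (harmless) deviation is in (4), where you extend directly from the model cylinder $[0,R]\times Y$ rather than routing through the surjectivity of $r^\pm$ on $W$ as the paper suggests, but since all the $M_i$ are isometric copies of the same cylinder the uniform bound is automatic and the argument closes in the same way.
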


\begin{proof}
Claim (1) is proved in \cite[Corollary 17.1.5]{KM}; (2) can be easily verified using the spectral decomposition of $\D(Y)$ and the fact that $\ker \D(Y) = 0$; (3) is a standard fact about Sobolev spaces; (4) follows from (3) and Lemma \ref{norm control}; (5) follows from (1) and Lemma \ref{norm control}.
\end{proof}

%%%%%%%%%%%%%%%%%%%%%%%%%%%%%%%%%%%%%%%%%%%%

\subsection{Proof of Theorem \ref{thm}}
The proof will essentially be a repeated application of Lemma \ref{kernel to domain} to the Dirac
operator
$$
\D_0 = \D^{+}(\zp(X_R)): L^2_1(\zp(X_R);\,\S^+)\rightarrow L^2 (\zp(X_R);\,\S^-).
$$

%%%%%%%%%%%%%%%%%%%%%%%%%%%%%%%%%%%%%%%%%%%%

\medskip\noindent\textbf{Step 1.} Consider the operator
\begin{multline}\notag
\D_{1}: L^2_1 (Z)\oplus \left(\bigoplus L^2_1(W_i)\right)\oplus \left(\bigoplus L^2_1(M_i)\right) \\
\longrightarrow L^{2}(Z)\oplus \left(\bigoplus L^2(W_i)\right)\oplus \left(\bigoplus L^2(M_i)\right)\oplus \left(\bigoplus L^2_{1/2}(Y^-_i)\right)\oplus \left(\bigoplus L^2_{1/2}(Y^+_i)\right)
\end{multline}
sending $\phi_{0}\oplus (\phi_{1},\phi_{2},\ldots)\oplus (\psi_{1},\psi_{2},\ldots)$
to
\begin{align*}
\D^+ \phi_{0}\oplus (\D^+\phi_{1},\D^+\phi_{2},\ldots)\oplus (\D^+\psi_{1},\D^+\psi_{2},\ldots)
&\oplus (\phi_{0}|_{Y^{-}_{1}}-\psi_{1}|_{Y^{-}_{1}},\phi_{1}|_{Y^{-}_{2}}-\psi_{2}|_{Y^{-}_{2}},\ldots) \\ & \oplus (\phi_{1}|_{Y^{+}_{1}}-\psi_{1}|_{Y^{+}_{1}},\phi_{2}|_{Y^{+}_{2}}-\psi_{2}|_{Y^{+}_{2}},\ldots)
\end{align*}
It follows from Lemma \ref{break the sobolev space}, Lemma \ref{surjective} (4) and Lemma \ref{kernel to domain} that $\D_0$ is Fredholm if and only if $\D_1$ is Fredholm, and $$\ind \D_0 = \ind \D_1.$$

%%%%%%%%%%%%%%%%%%%%%%%%%%%%%%%%%%%%%%%%%%%%

\medskip\noindent\textbf{Step 2.} Observe that the kernel of $\bigoplus \D^+(M_i)$ equals $\bigoplus \ker \D^+(M_i)$ and consider the operator
\begin{multline}\notag
\D_2: L^2_1 (Z)\oplus \left(\bigoplus L^2_1(W_i)\right)\oplus \left(\bigoplus \ker \D^+(M_i)\right) \\
\longrightarrow L^2(Z)\oplus\left(\bigoplus L^2(W_i)\right)\oplus\left(\bigoplus L^2_{1/2}(Y^-_i)\right)\oplus\left(\bigoplus L^2_{1/2}(Y^+_i)\right)
\end{multline}
sending $\phi_{0}\oplus (\phi_{1},\phi_{2},\ldots)\oplus (\psi_{1},\psi_{2},\ldots)$
to
\begin{align*}
\D^+\phi_{0}\oplus (\D^+\phi_1,\D^+\phi_2,\ldots) & \oplus (\phi_{0}|_{Y^{-}_{1}}-\psi_{1}|_{Y^{-}_{1}},\phi_{1}|_{Y^{-}_{2}}-\psi_{2}|_{Y^{-}_{2}},\ldots)\\ & \oplus (\phi_{1}|_{Y^{+}_{1}}-\psi_{1}|_{Y^{+}_{1}},\phi_{2}|_{Y^{+}_{2}}-\psi_{2}|_{Y^{+}_{2}},\ldots)
\end{align*}
It follows from Lemma \ref{kernel to domain} and Lemma \ref{surjective} (5) that the operator $\D_1$ is Fredholm if and only if $\D_2$ is Fredholm, and $$\ind \D_1 = \ind \D_2.$$

%%%%%%%%%%%%%%%%%%%%%%%%%%%%%%%%%%%%%%%%%%%%

\medskip\noindent\textbf{Step 3.} Using the subspaces $V_+ (Y)$ and $V_-(Y)$ spanned by the positive and negative eigenspinors of the operator $\D(Y)$, and the respective $L^2$ orthogonal projections $\pi_+$ and $\pi_-$, the operator $\D_2$ can be written as the operator
\begin{multline}\notag
\D_{3}: L^2_1(Z)\oplus \left(\bigoplus L^2_1(W_i)\right) \oplus \left(\bigoplus \ker \D^+(M_i)\right)\longrightarrow L^2 (Z) \oplus \left(\bigoplus L^2(W_i)\right) \\ \oplus\left(\bigoplus V_-(Y_i^-)\right)\oplus \left(\bigoplus V_+(Y_i^+)\right)\oplus\left(\bigoplus V_+(Y_i^-)\right)\oplus \left(\bigoplus V_-(Y_i^+)\right)
\end{multline}
sending $\phi_{0}\oplus (\phi_{1},\phi_{2},\ldots)\oplus (\psi_{1},\psi_{2},\ldots)$
to
\begin{equation*}
\begin{split}
\D^+\phi_0\oplus (\D^+\phi_{1},\D^+\phi_2,\ldots)&\oplus (\pi_{-}\phi_{0}|_{Y^{-}_{1}}-\pi_{-}\psi_{1}|_{Y^{-}_{1}},\;\pi_{-}\phi_{1}|_{Y^{-}_{2}}-\pi_{-}\psi_{2}|_{Y^{-}_{2}},\ldots)\\&\oplus (\pi_{+}\phi_{1}|_{Y^{+}_{1}}-\pi_{+}\psi_{1}|_{Y^{+}_{1}},\;\pi_{+}\phi_{2}|_{Y^{+}_{2}}-\pi_{+}\psi_{2}|_{Y^{+}_{2}},\ldots)\\&\oplus (\pi_{+}\phi_{0}|_{Y^{-}_{1}}-\pi_{+}\psi_{1}|_{Y^{-}_{1}},\;\pi_{+}\phi_{1}|_{Y^{-}_{2}}-\pi_{+}\psi_{2}|_{Y^{-}_{2}},\ldots)\\&\oplus (\pi_{-}\phi_{1}|_{Y^{+}_{1}}-\pi_{-}\psi_{1}|_{Y^{+}_{1}},\;\pi_{-}\phi_{2}|_{Y^{+}_{2}}-\pi_{-}\psi_{2}|_{Y^{+}_{2}},\ldots)
\end{split}
\end{equation*}
Since the operators $\D_2$ and $\D_3$ are isomorphic, we have $$\ind \D_2 = \ind \D_3.$$

%%%%%%%%%%%%%%%%%%%%%%%%%%%%%%%%%%%%%%%%%%%%

\medskip\noindent\textbf{Step 4.} By Lemma \ref{surjective} (2), for each $i \ge 1$ we have an isomorphism
$$
\left(\bigoplus V_+ (Y^-_i)\right)\oplus\left(\bigoplus V_- (Y_i^+)\right) = \bigoplus \ker \D^+(M_i).
$$
Compose this isomorphism with the restrictions to respective boundary components and spectral projections to obtain the operator
$$
\left(\bigoplus V_+ (Y^-_i)\right)\oplus\left(\bigoplus V_- (Y_i^+)\right) \longrightarrow \bigoplus \ker \D^+(M_i) \longrightarrow \left(\bigoplus V_-(Y_i^-)\right) \oplus \left(\bigoplus V_+ (Y_i^+)\right)
$$
sending $(s_{1,+},s_{2,+},\ldots) \oplus (s_{1,-},s_{2,-},\ldots)$ to
$$
(e^{R\D} s_{1,-},e^{R\D} s_{2,-},\ldots)\oplus (e^{-R\D} s_{1,+},e^{-R\D} s_{2,+},\ldots).
$$
Here, we used the notation $\D = \D(Y)$. Note that $e^{-R\D}$ is a smoothing operator on $V_+$ while $e^{R\D}$ is a smoothing operator on $V_-$. The operator $\D_3$ can now be written as
\begin{multline}\notag
\D_{4}: L^2_1(Z)\oplus \left(\bigoplus L^2_1(W_i)\right)\oplus \left(\bigoplus V_+(Y_i^-)\right)\oplus \left(\bigoplus V_-(Y_i^+)\right) \longrightarrow
L^2(Z)\oplus \left(\bigoplus L^2(W_i)\right) \\ \oplus \left(\bigoplus V_-(Y_i^-)\right)\oplus \left(\bigoplus V_+(Y_i^+)\right)\oplus \left(\bigoplus V_+(Y_i^-)\right)\oplus \left(\bigoplus V_-(Y_i^+)\right)
\end{multline}
sending $\phi_{0}\oplus (\phi_{1},\phi_{2},\ldots)\oplus (s_{1,+},s_{2,+},\ldots)\oplus (s_{1,-},s_{2,-},\ldots)$ to
\begin{equation*}
\begin{split}
\D^{+}\phi_{0}\oplus (\D^{+}\phi_{1},\D^{+}\phi_{2},\ldots)&\oplus (\pi_{-}\phi_{0}|_{Y^{-}_{1}}\;-\; e^{R\D}s_{1,-},\,\pi_{-}\phi_{1}|_{Y^{-}_{2}}\;-\;e^{R\D}s_{2,-},\ldots)\\&\oplus (\pi_{+}\phi_{1}|_{Y^{+}_{1}}-e^{-R\D}s_{1,+},\,\pi_{+}\phi_{2}|_{Y^{+}_{2}}-e^{-R\D}s_{2,+},\ldots)\\&\oplus (\pi_{+}\phi_{0}|_{Y^{-}_{1}}-s_{1,+},\,\pi_{+}\phi_{1}|_{Y^{-}_{2}}-s_{2,+},\ldots)\\&\oplus (\pi_{-}\phi_{1}|_{Y^{+}_{1}}-s_{1,-},\,\pi_{-}\phi_{2}|_{Y^{+}_{2}}-s_{2,-},\ldots)
\end{split}
\end{equation*}
Since the operators $\D_3$ and $\D_4$ are isomorphic, we again conclude that
$$
\ind \D_3 = \ind \D_4.
$$

%%%%%%%%%%%%%%%%%%%%%%%%%%%%%%%%%%%%%%%%%%%%

\medskip\noindent\textbf{Step 5.} Consider the last two components of $\D_4$, that is, the operator
$$
L^2_1(Z)\oplus \left(\bigoplus L^2_1(W_i)\right)\oplus \left(\bigoplus V_+(Y_i^-)\right)\oplus \left(\bigoplus V_-(Y_i^+)\right) \rightarrow \left(\bigoplus V_+(Y_i^-)\right)\oplus \left(\bigoplus V_-(Y_i^+)\right)
$$
sending $\phi_0\oplus (\phi_1,\phi_2,\ldots)\oplus (s_{1,+},s_{2,+},\ldots)\oplus (s_{1,-},s_{2,-},\ldots)$ to
$$
(\pi_{+}\phi_{0}|_{Y^{-}_{1}}-s_{1,+},\,\pi_{+}\phi_{1}|_{Y^{-}_{2}}-s_{2,+},\ldots)\oplus (\pi_{-}\phi_{1}|_{Y^{+}_{1}}-s_{1,-},\,\pi_{-}\phi_{2}|_{Y^{+}_{2}}-s_{2,-},\ldots).
$$
This operator is obviously surjective. Therefore, we can apply Lemma \ref{kernel to domain} to the first four components of $\D_4$ restricted to the kernel of the last two components. The resulting operator
$$
\D_{5}: L^2_1 (Z)\oplus \left(\bigoplus L^2_1(W_i)\right)\rightarrow L^2 (Z)\oplus \left(\bigoplus L^2(W_i)\right)\oplus \left(\bigoplus V_-(Y_i^-)\right)\oplus \left(\bigoplus V_+(Y_i^+)\right)
$$
sends $\phi_{0}\oplus (\phi_{1},\phi_{2},\ldots)$ to
\begin{equation*}
\begin{split}
\D^+\phi_0\oplus (\D^+\phi_1,\D^+\phi_2,\ldots)&\oplus (\pi_{-}\phi_{0}|_{Y^{-}_{1}}\;-\;e^{R\D}\pi_{-}\phi_{1}|_{Y_{1}^{+}},\,\pi_{-}\phi_{1}|_{Y^{-}_{2}}\;-\;e^{R\D}\pi_{-}\phi_{2}|_{Y_{2}^{+}},\ldots)\\&\oplus (\pi_{+}\phi_{1}|_{Y^{+}_{1}}-e^{-R\D}\pi_{+}\phi_{0}|_{Y_{1}^{-}},\,\pi_{+}\phi_{2}|_{Y^{+}_{2}}-e^{-R\D}\pi_{+}\phi_{1}|_{Y_{2}^{-}},\ldots)
\end{split}
\end{equation*}
It follows from Lemma \ref{kernel to domain} that the operator $\D_4$ is Fredholm if and only if $\D_5$ is Fredholm, and $$\ind \D_4 = \ind \D_5.$$

%%%%%%%%%%%%%%%%%%%%%%%%%%%%%%%%%%%%%%%%%%%%

\medskip\noindent\textbf{Step 6.} The operator $\D_5$ splits as $\D_5 = \D_6 + K$, where the operator $\D_6$ sends $\phi_0 \oplus \allowbreak (\phi_1,\phi_2,\ldots)$ to
$$
\D^+\phi_0\oplus (\D^+\phi_1,\D^+\phi_{2},\ldots)\oplus (\pi_-\phi_0|_{Y^-_1},\pi_-\phi_1|_{Y^-_2},\ldots)\oplus (\pi_+\phi_1|_{Y^+_1},\pi_+\phi_2|_{Y^+_2},\ldots)
$$
and the operator $K$ sends $\phi_0\oplus (\phi_1,\phi_2,\ldots)$ to
$$
0\oplus 0\oplus (-e^{R\D}\pi_-\phi_1|_{Y_1^+}, -e^{R\D}\pi_-\phi_2|_{Y_2^+},\ldots)\oplus (-e^{-R\D}\pi_+\phi_0|_{Y_1^-},-e^{-R\D}\pi_+\phi_1|_{Y_2^-},\ldots).
$$
According to Lemma \ref{assum}, the operator $\D = \D(Y)$ has zero kernel. Denote by $\mu > 0$ the smallest absolute value of the eigenvalues of $\D(Y)$ then the operator norm of $K$ does not exceed $C\cdot e^{-\mu R}$, where $C$ is a constant independent of $R$. Therefore, if $R$ is sufficiently large, the operator $\D_5$ is Fredholm if %and only if 
$\D_6$ is Fredholm, and in this case
$$
\ind \D_5 = \ind \D_6.
$$
The operator $\D_6$ further splits as a direct sum of the Dirac operator with the Atiyah--Patodi--Singer boundary conditions,
$$
\D_6^0: L^2_1(Z)\rightarrow L^{2}(Z)\oplus V_-(Y_1^-),\quad \phi_0 \to (\D^+\phi_{0}, \pi_-\phi_0|_{Y^-_1}),
$$
and an infinite family of operators
\smallskip
$$
\D_6^i: L^2_1(W_i)\rightarrow L^2(W_i)\oplus V_+(Y_i^+) \oplus V_-(Y_{i+1}^-),\quad \phi_i \to (\D^+\phi_i,\pi_+\phi_i|_{Y^+_i},\pi_-\phi_i|_{Y^-_{i+1}}),
$$

\smallskip\noindent
for $i = 1, 2,\ldots$. By Lemma \ref{assum}, each of the operators $\D_6^i$ with $i \ge 1$ is an isomorphism. Therefore, the operator $\D_6$ is Fredholm if and only if $\D^0_6$ is Fredholm, and
$$
\ind \D_6 = \ind \D_6^0.
$$
The operator $\D^0_6$ is precisely the Dirac operator $\D^+(Z)$ with the Atiyah--Patodi--Singer boundary conditions. Since $\ker \D(Y) = 0$, the operator $\D^+(\zp)$ is a Fredholm operator of index
\[
\ind \D^0_6 = \ind \D^+ (\zp),
\]

\smallskip\noindent
see Atiyah--Patodi--Singer \cite[Proposition 3.11]{aps:I}. This completes the proof of Theorem \ref{thm}.

%%%%%%%%%%%%%%%%%%%%%%%%%%%%%%%%%%%%%%%%%%%%

\section{First eigenvalue estimate}\label{S:eigenvalue}
In this section, we continue the study of manifolds $X_R$ defined in \eqref{E:XRn} by stretching the neck of a spin manifold $X$ of dimension $n \equiv 0 \pmod 4$. We will be interested in estimating the first eigenvalue of $\D^-(X_R)\,\D^+(X_R)$ as $R \to \infty$. This estimate will be used in the compactness argument in Section \ref{S:compact}.

\begin{pro}\label{eigenvalue estimate}
Let us assume that the spin Dirac operator $\D^+(\wp): L^2_2\,(\wp;\,\S^+) \to L^2_1\,(\wp;\S^-)$ is an isomorphism. Then there exist constants $R_0 > 0$ and $\epsilon_1 > 0$ such that for any $R\geq R_0$, the operator
$$
\Delta_R = \D^{-}(X_{R})\, \D^{+}(X_{R}):\, L^{2}_{2}\,(X_{R};\S^{+})\rightarrow L^{2}\,(X_{R};\S^{+})
$$
has no eigenvalues in the interval $[0,\epsilon_1^2)$.
\end{pro}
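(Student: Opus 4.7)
The plan is to argue by contradiction. Suppose there exist sequences $R_n \to \infty$ and spinors $\phi_n \in L^2_2(X_{R_n}; \S^+)$ with $\|\phi_n\|_{L^2} = 1$ and $\|\D^+(X_{R_n})\phi_n\|_{L^2} = \epsilon_n \to 0$; producing such sequences would follow from the existence of eigenvalues $\lambda_n < \epsilon_1^2$ for every choice of $\epsilon_1 > 0$, since an $L^2$-normalized eigenspinor $\phi_n$ with eigenvalue $\lambda_n$ satisfies $\|\D^+ \phi_n\|_{L^2}^2 = \lambda_n$. Note that Lemma \ref{assum} forces $\D(Y)$ to have no kernel, hence the spectral gap $\mu > 0$ (smallest absolute eigenvalue of $\D(Y)$), which will be decisive.

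Choose smooth cut-offs $\alpha_n, \beta_n : X_{R_n} \to [0,1]$ with $\alpha_n^2 + \beta_n^2 \equiv 1$, such that $\alpha_n \equiv 1$ on $W$ together with the near-boundary portions $[0,R_n/3] \times Y$ and $[2R_n/3,R_n] \times Y$ of the neck, $\beta_n \equiv 1$ on a middle portion $[5R_n/12, 7R_n/12] \times Y$, and $|\nabla \alpha_n|,\, |\nabla \beta_n| = O(1/R_n)$. Since $X_{R_n}$ is isometrically cylindrical near $\partial W$, the support of $\alpha_n$ embeds isometrically into $\wp$, so $\alpha_n \phi_n$ extends by zero to a spinor $\tilde\phi_n \in L^2_2(\wp; \S^+)$. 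The Clifford identity
\[
\D^+(\alpha_n \phi_n) = \alpha_n\, \D^+\phi_n + c(\nabla \alpha_n)\,\phi_n
\]
together with $\|c(\nabla \alpha_n)\phi_n\|_{L^2} = O(1/R_n)$ gives $\|\D^+(\wp)\tilde\phi_n\|_{L^2(\wp)} \leq \epsilon_n + C/R_n \to 0$. The assumed invertibility of $\D^+(\wp): L^2_2 \to L^2_1$, upgraded by standard elliptic theory on manifolds with product ends to the coercive estimate $\|\psi\|_{L^2(\wp)} \leq C' \|\D^+(\wp)\psi\|_{L^2(\wp)}$ for $\psi \in L^2_2$, then yields $\|\alpha_n \phi_n\|_{L^2} \to 0$.

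For $\beta_n \phi_n$, the support lies strictly inside the neck, so $\beta_n \phi_n$ extends by zero to a compactly supported spinor on $\mathbb R \times Y$. Under a suitable bundle identification $\D^+ = \partial_t + \D(Y)$ on the cylinder, and for compactly supported $\psi$ the self-adjointness of $\D(Y)$ gives $2\,\mathrm{Re}\langle \partial_t \psi, \D(Y)\psi\rangle = \partial_t \langle \psi, \D(Y)\psi\rangle$, which integrates to zero; hence
\[
\|\D^+ \psi\|_{L^2}^2 = \|\partial_t \psi\|_{L^2}^2 + \|\D(Y)\psi\|_{L^2}^2 \geq \mu^2\,\|\psi\|_{L^2}^2.
\]
Applied to $\psi = \beta_n \phi_n$, together with the same commutator bound $\|\D^+(\beta_n\phi_n)\|_{L^2} \leq \epsilon_n + C/R_n \to 0$, this forces $\|\beta_n \phi_n\|_{L^2} \to 0$.

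Summing the two estimates contradicts $\|\phi_n\|_{L^2}^2 = \|\alpha_n\phi_n\|_{L^2}^2 + \|\beta_n\phi_n\|_{L^2}^2 = 1$, completing the proof. The main obstacle is the careful bookkeeping needed to isometrically embed $\mathrm{supp}(\alpha_n) \subset X_{R_n}$ into $\wp$ and to upgrade the assumed $L^2_2 \to L^2_1$ isomorphism to the $L^2$-coercivity used above; both are standard given the product structure of the metric along the necks. The essential scaling choice is $|\nabla \alpha_n|, |\nabla \beta_n| = O(1/R_n)$, which makes the commutator contributions negligible while the two localized pieces inherit quantitative coercivity from, respectively, the invertibility of $\D^+(\wp)$ and the spectral gap of $\D(Y)$.
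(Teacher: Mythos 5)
Your proof is correct, and it follows a genuinely different route from the one in the paper. The paper decomposes a putative low-eigenvalue spinor by restriction to $W$ and to the long cylinder, rewrites the eigenvalue condition as a boundary-value matching problem, and constructs explicit ``transfer operators'' $T_{\pm}(\lambda,R)$ by solving the ODE on the cylinder; the contradiction comes from showing that the full operator is a small perturbation of the invertible $\D_3 \cong \D_0\oplus\D_0^*$ once $R$ is large and $\lambda$ is small. Your argument is instead a partition-of-unity/cut-off argument of the standard gluing-analysis type: you extract the $L^2$ coercivity $\|\psi\|_{L^2(\wp)} \leq C\|\D^+(\wp)\psi\|_{L^2(\wp)}$ from the hypothesis (the shift in Sobolev scale is standard on manifolds with cylindrical ends, since the Fredholm property and the kernel/cokernel are independent of the Sobolev index), and pair it with the coercivity of $\partial_t+\D(Y)$ on the infinite cylinder coming from the spectral gap $\mu>0$ of $\D(Y)$ (which indeed is forced by the invertibility hypothesis, as recorded in Lemma \ref{assum}). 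The choice $\alpha_n^2+\beta_n^2\equiv1$ is a nice touch, since it makes the final contradiction with $\|\phi_n\|_{L^2}=1$ immediate. The paper's approach pays off in making the dependence on $(\lambda,R)$ completely explicit via the matrices $B_i$, which in turn makes the twisted-family extension (Proposition \ref{eigenvalue estimate_z}) a literal bookkeeping change; your argument also extends to $\D^\pm_z$ for $|z|=1$, since the cylinder piece can be conjugated by a unitary factor $e^{\ln z \cdot f}$ to eliminate the zeroth-order twist without changing $L^2$ norms, but this is worth saying explicitly if one intends to deduce Proposition \ref{eigenvalue estimate_z} from the same argument.
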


\begin{proof}
For the purpose of this proof, we will view $W$ as a cobordism from $Y_1$ to $Y_2$ with $Y_1 = Y_2 = Y$. The manifold $X_R = W\cup\, ([0,R]\times Y)$ is then obtained from $W$ by gluing $\{0\} \times Y$ to $Y_2$ and $\{R\}\times Y$ to $Y_{1}$. Denote by $V_{\pm}(Y_1)\subset L^{2}_{3/2}\,(Y_{1};\S)$ and $V_{\pm}(Y_2)\subset L^{2}_{3/2}\,(Y_{2};\S)$ the subspaces spanned by the eigenspinors of $\D(Y_{1})$ and $\D(Y_{2})$ with respectively  positive and  negative eigenvalues. The $L^2$ orthogonal projections onto these subspaces will be denoted by $\pi_{\pm}$, and the restriction maps $L^2_2\, (W;\S^+) \to L^2_{3/2}\,(Y_i;\S)$ will be denoted by $r_i$ with $i = 1, 2$. As in Lemma \ref{assum}, the operator $\D^+(\wp)$ is an isomorphism if and only if the following two conditions are satisfied:
\begin{enumerate}
\item the Dirac operator $\D(Y)$ has zero kernel, and
\item the Dirac operator
$$
(\D^{+}(W),\pi_{+}\circ r_1, \pi_{-}\circ r_2):L^{2}_{2}\,(W;\S^{+})\rightarrow L^{2}_{1}\,(W;\S^{-})\oplus V_{+}(Y_1)\oplus V_{-}(Y_2)
$$
with the Atiyah--Patodi--Singer boundary conditions is an isomorphism.
\end{enumerate}
The operator $\Delta_R$ is a non-negative self-adjoint elliptic differential operator hence all of its eigenvalues have the form $\lambda^{2}$ with a real $\lambda \ge 0$. Using the fact that $\D^+ (X_R)$ has zero index, one can easily check that $\lambda^2$ is an eigenvalue of $ \Delta_{R}$ if and only if the operator
\begin{gather}
\D_{1}:L^{2}_{2}\,(X_{R};\S^{+}\oplus \S^{-})\longrightarrow L^{2}_{1}\,(X_{R};\S^{+}\oplus \S^{-}), \notag \\
\D_1(\psi^{+},\psi^{-}) = (\D^{-}\psi^{-}-\lambda\psi^{+},\,\D^{+}\psi^{+}-\lambda\psi^{-}),\notag
\end{gather}
has non-zero kernel. We denote the restriction of $(\psi^{+},\psi^{-})$ to $W$, respectively, $[0,R]\times Y$ by $(\phi^{+},\phi^{-})$, respectively, $(\tilde{\phi}^{+},\tilde{\phi}^{-})$. Supposing that $(\psi^{+},\psi^{-})$ belongs to the kernel of $\D_{1}$, then the following conditions are satisfied:
\begin{enumerate}[(i)]
\item $\D^{+}\phi^{+}=\lambda\phi^{-}$ and $\D^{-}\phi^{-}=\lambda\phi^{+}$ on $W$;
\item $\D^{+}\tilde{\phi}^{+}=\lambda\tilde{\phi}^{-}$ and $\D^{-}\tilde{\phi}^{-}=\lambda\tilde{\phi}^{+}$ on $[0,R] \times Y$;
\item $(\pi_{+}\phi^{+}|_{Y_{1}},\pi_{-}\phi^{+}|_{Y_{2}},\pi_{-}\phi^{-}|_{Y_{1}},\pi_{+}\phi^{-}|_{Y_{2}}) = (\pi_{+}\tilde{\phi}^{+}|_{Y_{1}},\pi_{-}\tilde{\phi}^{+}|_{Y_{2}},\pi_{-}\tilde{\phi}^{-}|_{Y_{1}},\pi_{+}\tilde{\phi}^{-}|_{Y_{2}})$;
\item $(\pi_{-}\phi^{+}|_{Y_{1}},\pi_{+}\phi^{+}|_{Y_{2}},\pi_{+}\phi^{-}|_{Y_{1}},\pi_{-}\phi^{-}|_{Y_{2}}) = (\pi_{-}\tilde{\phi}^{+}|_{Y_{1}},\pi_{+}\tilde{\phi}^{+}|_{Y_{2}},\pi_{+}\tilde{\phi}^{-}|_{Y_{1}},\pi_{-}\tilde{\phi}^{-}|_{Y_{2}}).$
\end{enumerate}

\begin{lem}\label{from one end to the other} (1) There exists a linear operator $T_+(\lambda,R): V_+(Y_2)\oplus V_+(Y_1)\rightarrow V_+(Y_2)\oplus V_+(Y_1)$ such that, for any $(\tilde{\phi}^{+},\tilde{\phi}^{-})$ satisfying (ii),
$$
T_+(\lambda,R)(\pi_{+}\tilde{\phi}^{+}|_{Y_{2}},\pi_{+}\tilde{\phi}^{-}|_{Y_{1}})=(\pi_{+}\tilde{\phi}^{-}|_{Y_{2}},\pi_{+}\tilde{\phi}^{+}|_{Y_{1}}).
$$

\noindent (2) There exists a linear operator $T_{-}(\lambda,R): V_- (Y_1)\,\oplus\, V_- (Y_2) \rightarrow V_- (Y_1)\,\oplus\, V_- (Y_2)$ such that, for any $(\tilde{\phi}^{+},\tilde{\phi}^{-})$ satisfying (ii),
$$
T_{-}(\lambda,R)(\pi_{-}\tilde{\phi}^{+}|_{Y_{1}},\pi_{-}\tilde{\phi}^{-}|_{Y_{2}})=(\pi_{-}\tilde{\phi}^{-}|_{Y_{1}},\pi_{-}\tilde{\phi}^{+}|_{Y_{2}}).
$$

\noindent
(3) For any $\epsilon>0$, there exist constants $R_{0} > 0$ and $\epsilon_{2} > 0$ such that, for any $R \geq R_{0}$ and $0\leq \lambda<\epsilon_{2}$,
$$
| T_{\pm}(\lambda,R)| < \epsilon.
$$
\end{lem}

\begin{proof}
We focus on the case of $T_{+}(\lambda,R)$ since the other case is similar. Over $[0,R] \times Y$, use Clifford multiplication with $\p/\p t$ to identify the bundles $\S^{+}$ and $\S^{-}$ with each other and with the pull back of the bundle $\S$. This identifies the operators $\D^+ ([0,R]\times Y)$ and $\D^- ([0,R]\times Y)$ with the operators $\p/\p t + \D(Y)$ and $\p/\p t - \D(Y)$, respectively.

Choose a complete system of orthonormal eigenspinors $\varphi_{i}$ for $\D(Y)$, with corresponding eigenvalues $\lambda_{i}$, $i \ge 1$. Let $V_i\,(Y_1)$ (respectively, $V_i\,(Y_2)$) denote the vector space spanned by $\varphi_{i}$, treated as a section over $Y_{1}$ (respectively, $Y_{2}$). It is sufficient to define $T_{+}(\lambda,R)$ on each of the spaces $V_i\, (Y_2) \oplus V_i\,(V_1)$ with $\lambda_{i} > 0$, which we will do next.

Let us write $\tilde{\phi}^{+} (t,y) = \sum\, a_{i}(t)\,\varphi_{i}(y)$ and $\tilde{\phi}^{-} (t,y) = \sum\, b_{i}(t)\,\varphi_{i}(y)$ then condition (ii) takes the form
\begin{equation}\label{ode}
\frac{d}{dt}\begin{pmatrix}
  a_{i}   \\
  b_{i}
\end{pmatrix}\;=\;\begin{pmatrix}
  -\lambda_{i} & \lambda\\
  \lambda & \lambda_{i}
\end{pmatrix}\begin{pmatrix}
  a_{i}    \\
  b_{i}
\end{pmatrix},
\end{equation}

\medskip\noindent
for all $i$ and $t\in [0,R]$. The matrix of this system will be denoted by $A_{i}$. Recall that $Y_{2}$ is identified with $\{0\}\times Y$ and $Y_{1}$ is identified with $\{R\}\times Y$, and express $(b_{i}(0),a_{i}(R))$ in terms of $(a_{i}(0),b_{i}(R))$. The computation that follows is elementary if a bit tedious.

The eigenvalues of $A_{i}$ are $\pm\,\omega_{i}$, where $\omega_{i} = \sqrt{\,\lambda_i^{2} + \lambda^{2}}$, corresponding to the eigenvectors $(\lambda,\lambda_{i}\pm \omega_{i})$. The solutions of (\ref{ode}) are explicitly given by the formula
$$
x\cdot e^{\omega_{i}t}\cdot \begin{pmatrix}
  \lambda    \\
  \lambda_{i}+\omega_{i}
\end{pmatrix}+y\cdot e^{\omega_{i}(R-t)}\cdot \begin{pmatrix}
  \lambda    \\
  \lambda_{i}-\omega_{i}
\end{pmatrix}
\quad \text{with} \quad x,y\in \mathbb{C},
$$
from which we obtain
$$
\begin{pmatrix}
  b_{i}(0)    \\
  a_{i}(R)
\end{pmatrix}=\begin{pmatrix}
  \lambda_{i}+\omega_{i}& e^{\omega_{i}R}(\lambda_{i}-\omega_{i})   \\
  e^{\omega_{i}R}\lambda & \lambda
\end{pmatrix}\begin{pmatrix}
  x    \\
  y
\end{pmatrix}\;\,
$$
and
$$
\begin{pmatrix}
  a_{i}(0)    \\
  b_{i}(R)
\end{pmatrix}=\begin{pmatrix}
  \lambda& e^{\omega_{i}R}\lambda   \\
  e^{\omega_{i}R}(\lambda_{i}+\omega_{i}) & \lambda_{i}-\omega_{i}
\end{pmatrix}\begin{pmatrix}
  x    \\
  y
\end{pmatrix}.
$$

\medskip\noindent
Therefore,
$$
\begin{pmatrix}
  b_{i}(0)    \\
  a_{i}(R)
\end{pmatrix}\; =\; B_{i}\, \begin{pmatrix}
  a_{i}(0)    \\
  b_{i}(R)
\end{pmatrix},
$$
where
\begin{equation*}
B_i = \frac{1}{(\lambda_{i}-\omega_{i})-(\lambda_{i}+\omega_{i})e^{2\omega_{i}R}}\cdot \begin{pmatrix}
\lambda (e^{2\omega_{i}R} - 1)  &  -2\omega_{i}e^{\omega_{i}R}  \\
-2\omega_{i}e^{\omega_{i}R}     &  \lambda (1 - e^{2\omega_{i}R})
\end{pmatrix}.
\end{equation*}

\medskip\noindent
We define $T_+ (\lambda,R)$ on each of the spaces $V_i\,(Y_2)\,\oplus\,V_i (Y_1)$ by the respective matrix $B_{i}$. To derive estimate (3) we let $\lambda_{0}$ be the smallest absolute value of the eigenvalues of the operator $\D(Y)$. Note that $\lambda_0$ is positive by our assumption on the kernel of $\D(Y)$ and that $\omega_i \ge \lambda_0$ for all $i$. For any $\lambda_i > 0$ we obviously have
$$
\left|(\lambda_{i}-\omega_{i})-(\lambda_{i}+\omega_{i})e^{2\omega_{i}R}\right| > (\lambda_{i}+\omega_{i})(e^{2\omega_{i}R}-1) > \omega_{i}(e^{2\omega_{i}R}-1),
$$
hence
\medskip
$$
\left|\frac{2\omega_{i}e^{\omega_{i}R}}{(\lambda_{i}-\omega_{i})-(\lambda_{i}+\omega_{i}) e^{2\omega_{i}R}}\right|\; \leq\; 2\cdot \frac{e^{\omega_i R}}{e^{2\omega_{i}R}-1}\; \leq\; 2\cdot \frac{e^{\lambda_0 R}}{e^{2\lambda_0 R}-1}
$$

\medskip\noindent
and
\medskip
$$
\left|\frac{\lambda(e^{2\omega_{i}R} - 1)}{(\lambda_{i}-\omega_{i})-(\lambda_{i}+\omega_{i})e^{2\omega_{i}R}}\right| \;\leq\; \frac{\lambda}{\omega_i}\;\leq\; \frac{\lambda}{\lambda_0}.
$$

\bigskip\noindent
Therefore, the norms of $B_i$ approach zero uniformly over $i$ as $R \to \infty$ and $\lambda \to 0$. This proves claim (3) for the operator $T_{+}(\lambda,R)$.
\end{proof}

We now return to the proof of Proposition \ref{eigenvalue estimate}. It follows from Lemma \ref{from one end to the other} together with conditions (iii) and (iv) that
\begin{multline}\notag
(\pi_{+}\phi^{-}|_{Y_{2}},\pi_{+}\phi^{+}|_{Y_{1}},\pi_{-}\phi^{-}|_{Y_{1}},\pi_{-}\phi^{+}|_{Y_{2}})= \\
(T_{+}(\lambda,R)(\pi_{+}\phi^{+}|_{Y_{2}},\pi_{+}\phi^{-}|_{Y_{1}}),T_{-}(\lambda,R)(\pi_{-}\phi^{+}|_{Y_{1}},\pi_{-}\phi^{-}|_{Y_{2}})).
\end{multline}
Therefore, the pair $(\phi^{+},\phi^{-})$ belongs to the kernel of the operator $\D_{2}=\D_{3}-K$,
where the operators
$$
\D_{3}, K: L^{2}_{2}(W;\S^{+}\oplus \S^{-})\rightarrow L^{2}_{1}(W;\S^{+}\oplus \S^{-})\oplus V_+ (Y_2)\oplus V_+ (Y_1)\oplus V_- (Y_1)\oplus V_- (Y_2)
$$
are given by the formulas
\begin{gather}
\D_{3}(\phi^{+},\phi^{-})=(\D^{-}\phi^{-},\D^{+}\phi^{+},\pi_{+}\phi^{-}|_{Y_{2}},\pi_{+}\phi^{+}|_{Y_{1}},\pi_{-}\phi^{-}|_{Y_{1}},\pi_{-}\phi^{+}|_{Y_{2}})\quad\text{and} \notag \\
K(\phi^{+},\phi^{-}) = (\lambda \phi^{+},\lambda\phi^{-},T_{+}(\lambda,R)(\pi_{+}\phi^{+}|_{Y_{2}},\pi_{+}\phi^{-}|_{Y_{1}}),T_{-}(\lambda,R)(\pi_{-}\phi^{+}|_{Y_{1}},\pi_{-}\phi^{-}|_{Y_{2}})). \notag
\end{gather}
One can easily see that $\D_{3}$ is isomorphic to the operator $\D_{0}\,\oplus\,\D^{*}_{0}$ hence its kernel is zero by our assumption on the kernel of $\D_0$ (note that the operator $\D_0$ has zero index). Therefore, there exists a constant $C_0$ such that the operator $\D_2 = \D_3 - K$ has zero kernel as long as $\|K\|\leq C_0$, and so does the operator $\D_1$. The proposition now follows from Lemma \ref{from one end to the other} (3).
\end{proof}

The following result, which will be used in Section \ref{S:eta}, is a straightforward extension of Proposition \ref{eigenvalue estimate} to the holomorphic family of operators
\[
\D^{\pm}_z (X_R) = \D^{\pm}(X_R) - \ln z\cdot df,\quad z \in \mathbb C^*,
\]
where $f: X \to S^1$ is an arbitrary smooth function such that $[df] = \gamma \in H^1 (X;\mathbb Z)$. Note that the operators $\D^+_z (X_R)$ and $\D^-_z (X_R)$ are adjoint to each other whenever $|z| = 1$.

\begin{pro}\label{eigenvalue estimate_z}
Let us assume that the spin Dirac operator $\D^+(\wp): L^2_2\,(\wp;\,\S^+) \to L^2_1\,(\wp;\S^-)$ is an isomorphism. Then there exist constants $R_{0} > 0$ and $\epsilon_{1}>0$ such that for any $R\geq R_{0}$, the operators
$$
\D^{-}_z (X_{R})\, \D^{+}_z (X_{R}):\, L^{2}_{2}\,(X_{R};\S^{+})\rightarrow L^{2}\,(X_{R};\S^{-}),\quad |z| = 1,
$$
have no eigenvalues in the interval $[0,\epsilon_{1}^{2})$.
\end{pro}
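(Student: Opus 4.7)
The strategy is to run the proof of Proposition \ref{eigenvalue estimate} with the twisted operators $\D^\pm_z$ in place of $\D^\pm$, and to verify that each step goes through with bounds uniform in $|z|=1$. The first task is to upgrade the hypothesis by showing that $\D^+_z(\wp)$ is invertible for every $|z|=1$. Since $W$ is obtained by cutting $X$ open along a Poincar\'e dual of $\gamma$, the class $\gamma$ pulls back to zero in $H^1(\wp;\Z)$, so the map $f\colon X\to S^1$ with $[df]=\gamma$ lifts to a smooth function $f_\infty\colon \wp\to \R$ that agrees with the cylinder coordinate $t$ on each of the two product ends. The multiplication operator $u_z:=e^{\ln z\cdot f_\infty}$ is unitary for $|z|=1$ (since $|u_z|\equiv 1$) and has bounded derivatives of all orders, hence preserves every Sobolev space $L^2_k$. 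A direct calculation shows that $u_z\,\D^+(\wp)\,u_z^{-1}=\D^+_z(\wp)$, so $\D^+_z(\wp)$ is invertible with inverse of operator norm uniformly bounded in $z$.

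With this in place, Steps 1--6 in the proof of Proposition \ref{eigenvalue estimate} go through verbatim with $\D^\pm$ replaced by $\D^\pm_z$, because the perturbation $-\ln z\cdot df$ is a zeroth-order term that does not affect the principal symbol, the spectral decomposition of $\D(Y)$, or the boundary projections $\pi_\pm$. The only computation that must be revisited is the transfer matrix analysis underlying Lemma \ref{from one end to the other}. Choosing $f$ so that $df=dt$ on the neck $[0,R]\times Y$, and using the identification of $\S^\pm$ on the neck via Clifford multiplication by $\p/\p t$, the twisted operators restricted to the neck take the form $\D^+_z=\p_t+\D(Y)-\ln z$ and $\D^-_z=-\p_t+\D(Y)+\ln z$. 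Consequently the ODE system \eqref{ode} is modified by the addition of $(\ln z)\cdot I$ to the matrix $A_i$, whose new eigenvalues are $\ln z\pm\omega_i$.

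A direct computation then shows that the twisted transfer matrix $B_i^{(z)}$ differs from $B_i$ only in that the off-diagonal entries pick up unimodular phase factors $z^{\pm R}$, while the diagonal entries are unchanged. Hence $\|B_i^{(z)}\|=\|B_i\|$ for every $|z|=1$, and the bound of Lemma \ref{from one end to the other}(3) holds with the same constants $R_0$ and $\epsilon_2$, uniformly over $|z|=1$. The remaining surjectivity and Fredholm arguments that deduce the eigenvalue estimate from the transfer matrix bounds then carry over without change, yielding the desired $R_0$ and $\epsilon_1$ independent of $z$. The only conceptual ingredient is the gauge-theoretic identification in the first paragraph; once it is in hand, the rest of the argument is mechanical, and the $z$-uniformity of all constants is automatic from the unimodularity of $z^{\pm R}$.
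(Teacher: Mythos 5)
Your strategy is sound and lands on the same mechanism as the paper's proof: once one identifies the right gauge, the $z$-twist only multiplies the transfer operator by unitaries, so the operator norm of $K$ (and hence the small-eigenvalue estimate) is unchanged. The difference is where you put the twist. The paper leaves the operators on both $W$ and the neck completely untwisted and instead introduces the parameter $z$ into the spinor-bundle gluing at $Y_1$, which replaces conditions (iii) and (iv) by their $z$-twisted versions; the effect is that $T_\pm(\lambda,R)$ get multiplied on the left by the unitary matrices $\operatorname{diag}(1,z)$ and $\operatorname{diag}(z,1)$, and the proof then runs verbatim. You instead gauge the twist onto the cylinder by taking $df$ supported on the neck and working the $\ln z$ term into the ODE \eqref{ode}; this modifies $T_\pm$ by off-diagonal unimodular phases but leaves (iii), (iv) alone. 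Both lead to $\|K^{(z)}\|=\|K\|$.

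Two things to tighten. First, the opening paragraph establishing invertibility of $\D^+_z(\wp)$ by conjugation is not needed: with $df$ supported on the neck, $df|_W=0$, so the APS operator $\D_3$ on $W$ appearing in the proof is literally the untwisted one, whose invertibility is exactly the hypothesis (via Lemma \ref{assum}); no $z$-dependent upgrade is required. Second, the choice $df=dt$ on all of $[0,R]\times Y$ is not consistent with $[df]=\gamma$ being a generator of $H^1(X;\Z)$ (its period would be $R$, not $1$), nor can it be $dt$ up to the interfaces while vanishing on $W$ without a smooth cutoff. For a primitive $f$ with $df=f'(t)\,dt$ supported in the interior of the neck and $\int_0^R f'(t)\,dt=1$, the ODE matrix becomes $A_i + (\ln z)f'(t)\,I$; since $I$ commutes with $A_i$, the fundamental solution just picks up the scalar factor $z^{F(t)}$ with $F(t)=\int_0^t f'$, and the transfer matrix acquires $z^{\pm 1}$ (not $z^{\pm R}$) off-diagonal factors. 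The conclusion $\|B_i^{(z)}\|=\|B_i\|$ is of course unaffected, because either factor is unimodular, but it is worth noticing that your $z^{\pm R}$ reflects a non-primitive $df$. The paper's choice of putting the twist in the gluing avoids having to track the support of $df$ or the $t$-dependence of the ODE coefficients at all, which is why it is phrased as a one-line modification of (iii), (iv).
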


\begin{proof}
The above proof can easily be adapted by introducing an extra parameter $z$ into the matching of the spinor bundles over $Y_1$. This preserves the conditions (i) and (ii) but replaces the conditions (iii) and (iv) with
\smallskip
\begin{itemize}
\item[(iii)] $(\pi_{+}\phi^{+}|_{Y_{1}},\pi_{-}\phi^{+}|_{Y_{2}},\pi_{-}\phi^{-}|_{Y_{1}},\pi_{+}\phi^{-}|_{Y_{2}}) = (z\,\pi_{+}\tilde{\phi}^{+}|_{Y_{1}},\pi_{-}\tilde{\phi}^{+}|_{Y_{2}},z\,\pi_{-}\tilde{\phi}^{-}|_{Y_{1}},\pi_{+}\tilde{\phi}^{-}|_{Y_{2}})$;
\item[(iv)] $(\pi_{-}\phi^{+}|_{Y_{1}},\pi_{+}\phi^{+}|_{Y_{2}},\pi_{+}\phi^{-}|_{Y_{1}},\pi_{-}\phi^{-}|_{Y_{2}}) = (z\,\pi_{-}\tilde{\phi}^{+}|_{Y_{1}},\pi_{+}\tilde{\phi}^{+}|_{Y_{2}},z\,\pi_{+}\tilde{\phi}^{-}|_{Y_{1}},\pi_{-}\tilde{\phi}^{-}|_{Y_{2}})$.
\end{itemize}

\smallskip\noindent
The new operators $T_+ (\lambda,R)$ and $T_- (\lambda, R)$ that show up in the formula for $K$ are obtained from the old ones by multiplying them on the left by
$$
\begin{pmatrix}
1 & 0 \\ 0 & z
\end{pmatrix}
\quad\text{and}\quad
\begin{pmatrix}
z & 0 \\ 0 & 1
\end{pmatrix},
$$
respectively. Since $|z| = 1$, this does not change the operator norm of $K$, and the rest of the proof goes through with no change.
\end{proof}

%%%%%%%%%%%%%%%%%%%%%%%%%%%%%%%%%%%%%%%%%%%%

\section{Compactness}\label{S:compact}
The proof of Theorem \ref{T: SW=Lefschetz} naturally divides into two steps: compactness and gluing. In this section, we provide the necessary compactness results; the proof of Theorem \ref{T: SW=Lefschetz} will be completed in Section \ref{S:gluing}.

%%%%%%%%%%%%%%%%%%%%%%%%%%%%%%%%%%%%%%%%%%%%

\subsection{Notations}\label{S:notations}
Let $X$ be a connected smooth spin compact 4-manifold with a primitive cohomology class $\gamma \in H^1 (X;\mathbb Z)$. We will assume that the Poincar\'e dual of $\gamma$ is realized by a rational homology 3-sphere $Y \subset X$ and choose a metric $g$ on $X$ which takes the form $g = dt^2 + h$ in a product region $[-\ep,\ep] \times Y$. Given a real number $T > 0$, consider the manifold with long neck
\[
X_T = W\,\cup\,([-T,T] \times Y)
\]
obtained by cutting $X$ open along $\{ 0\} \times Y$ and gluing in the cylinder $[-T,T] \times Y$ along the two copies of $Y$. This differs from the notation $X_R$ used in Section \ref{S:correction} by a simple re-parametrization. In addition, for any $0 < T' < T < \infty$ we will write $X_T = W_{T'}\,\cup\,I_{T',T}$, where
\[
W_{T'} = ([-T',0]\times Y) \cup W \cup ([0,T'] \times Y)\quad\text{and}\quad I_{T',T} = [-T+T',T-T']\times Y.
\]
We will find it convenient to extend these notations to the case of $T = \infty$ by letting $I_{\infty}$ be the disjoint union $(-\infty,0] \times Y \cup [0,+\infty) \times Y$ and using $X_{\infty}$ to denote the manifold $\wp$ with infinite product ends, as in Section \ref{S:correction}. When $T = \infty$ and $T'$ is finite, the notation $I_{T',\infty}$ will mean $\wp - \operatorname{int} (W_{T'})$.

%%%%%%%%%%%%%%%%%%%%%%%%%%%%%%%%%%%%%%%%%%%%%%%

\subsection{Perturbations and regularity of moduli spaces}\label{S: perturbation} 
Recall that, in order to define the monopole Floer homology of a rational homology sphere $Y$ in Section \ref{S:HM}, we introduced perturbations $\mathfrak{q}$. We will assume that our perturbation $\mathfrak{q}$ satisfies Assumption \ref{small perturbation} with respect to a constant $\epsilon_0$ satisfying $0 < \epsilon_0 < \epsilon_1$, where $\epsilon_1 > 0$ is the constant from Proposition \ref{eigenvalue estimate}.

To define the morphisms \eqref{E:W*} induced on the Floer homology of $Y$ by the spin cobordism $W$, we will need to introduce further perturbations. To this end, consider a collar neighborhood  $U = [0,1]\times \partial W$, with $\{1\}\times \partial W$ identified with the actual boundary $\p W = -Y\,\cup\,Y$. Let $\zeta$ be a cut-off function which equals $1$ near $t=1$ and equals $0$ near $t=0$, and let $\zeta_{0}$ be a bump function with compact support in $(-1,0)$. Pick another perturbation $\mathfrak{p}_{0}$ as in Section \ref{S:HM}, and let
\begin{equation}\label{E: mixed perturbation}
\hat{\mathfrak{p}}\; =\; \zeta\cdot\hat{\mathfrak{q}}\,+\,\zeta_0\cdot\hat{\mathfrak{p}}_0,
\end{equation}
where $\hat{\mathfrak{q}}$ and $\hat{\mathfrak{p}}_{0}$ are the 4-dimensional perturbations corresponding to $\mathfrak{q}$ and $\mathfrak{p}_{0}$ respectively; see \cite[Definition 10.1.1]{KM}. This is a perturbation on $W$, supported in $U$. By gluing the perturbations $\hat{\mathfrak{p}}$ on $W$ and $\hat{\mathfrak{q}}$ on $I_{0,T}$ together, we obtain a perturbation $\hat{\mathfrak{p}}_{T}$ on $X_{T}$. Similarly, we define a perturbation $\hat{\mathfrak{p}}_{\infty}$ on $\wp$ by gluing together the perturbations $\hat{\mathfrak{p}}$ on $W$ and $\hat{\mathfrak{q}}$ on $I_{0,\infty}$. These perturbations give rise to the perturbed Seiberg--Witten equations, whose solutions will be referred to as monopoles.

Let $\L_q$ be the perturbed Chern--Simons--Dirac functional as in Section \ref{S:HM}. Downstairs, the gauge equivalence classes of its critical points form the finite set
\[
\widetilde{\mathfrak C}\; =\; \mathfrak C^*\, \cup\; [\theta]
\]
where $[\theta]$ is the unique reducible class and $\mathfrak C^*$ consists of the irreducible classes. Given $[\alpha], [\beta] \in \widetilde{\mathfrak{C}}$, consider the following moduli spaces:
\begin{enumerate}
\item the moduli space $\breve\M\,([\alpha],[\beta])$ of unparameterized (downstairs) trajectories  of the perturbed Chern--Simons--Dirac gradient flow (that is, monopoles on $\mathbb R \times Y$) limiting to $[\alpha]$ and $[\beta]$ at minus and plus infinity, in other words, the quotient of $\M\,([\alpha],[\beta])$ by translations, excluding the constant trajectory if $[\alpha] = [\beta]$, and
\item the moduli space $\M\,(W_{\infty}, [\alpha], [\beta])$ of (downstairs) monopoles on $W_{\infty}$ limiting to $[\alpha]$ and $[\beta]$ at minus and plus infinity. We will write $\M (W_{\infty}, [\alpha])$ for $\M (W_{\infty}, [\alpha], [\alpha])$.
\end{enumerate}
 
Since $\mathfrak{q}$ is admissible by Assumption \ref{small perturbation}, the moduli space $\breve\M\,([\alpha],[\beta])$ is always regular. The regularity of the moduli space $\M\,(\wp, [\alpha], [\beta])$ is proved in the following lemma. 

\begin{lem}\label{L:p_0}
For any nice admissible perturbation $\mathfrak{q}$, there exists a nice perturbation $\mathfrak{p}_{0}$ such that, for the perturbation (\ref{E: mixed perturbation}), the following conditions are satisfied:
\begin{itemize}
\item the various moduli spaces of upstairs monopoles on $W_{\infty}$ are all regular. As a result, the cobordism induced map $W_{*}: \hmred(Y,\s)\to \hmred(Y,\s)$ can be defined using this perturbation;
\item  the moduli space $\M\,(W_{\infty}, [\alpha], [\beta])$ is regular for all $[\alpha],[\beta]\in \widetilde{\mathfrak{C}}$. 
\end{itemize}
\noindent 
Furthermore, we may assume that $\mathfrak{p}_{0}$ is nice and that it satisfies the estimate
\begin{equation}\label{E: derivative of perturbation small 2}
\|D_{(B_{0},0)}\,\mathfrak{p}_{0}^{1}(0,\psi)\|_{L^2(Y)}\;\leq\; \frac{1}{4}\,\epsilon_0\cdot \|\psi\|_{L^2(Y)}
\end{equation}
for any $\psi\in L^2_{k-1/2}(Y;\S)$, where $B_{0}$ is the product connection and $\epsilon_0 > 0$ is the constant fixed in the beginning of this section.
\end{lem}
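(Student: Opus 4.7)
The plan is to apply the Sard--Smale transversality scheme of \cite[Chapters 11, 24]{KM} inside the closed linear subspace $\mathcal{P}^o \subset \mathcal{P}$ of nice perturbations, i.e.\ those whose zero-order term vanishes on reducible configurations. Since niceness and the estimate \eqref{E: derivative of perturbation small 2} are both \emph{linear} constraints on $\mathfrak{p}_0$, they cut out an open ball $\mathcal{P}^o_\epsilon \subset \mathcal{P}^o$ about the origin. Every residual subset of $\mathcal{P}^o$ meets this ball in a residual, hence nonempty, set, so the smallness estimate is automatic once the regularity steps are carried out within $\mathcal{P}^o$.

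For the upstairs regularity, I would form, for each ordered pair $[\alpha], [\beta]$ of critical points of $v^\sigma_{\mathfrak{q}}$ and each combinatorial type of broken trajectory on $W_\infty$, the parametrized universal moduli space
\[
\widetilde{\mathcal{M}}^\sigma([\alpha],[\beta])\; =\; \{\,(\gamma,\mathfrak{p}_0)\mid \gamma\ \text{is a}\ \hat{\mathfrak{p}}\text{-monopole on}\ W_\infty\ \text{from}\ [\alpha]\ \text{to}\ [\beta]\,\}.
\]
Following \cite[Proposition 24.4.7]{KM}, I would check that the richness of $\mathcal{P}^o$ together with the unique continuation property for the Seiberg--Witten equations makes $\widetilde{\mathcal{M}}^\sigma([\alpha],[\beta])$ a smooth Banach manifold and the forgetful projection to $\mathcal{P}^o$ Fredholm. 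Sard--Smale then yields a residual subset of $\mathcal{P}^o$ making every such upstairs moduli space regular; intersecting the countable family of such conditions gives a single residual subset, which defines $W_*$ on $\hmred$ by the usual count. The same scheme applied to the downstairs moduli spaces $\M(W_\infty, [\alpha], [\beta])$ for $[\alpha],[\beta] \in \widetilde{\mathfrak{C}}$ gives a second residual subset; intersecting both with $\mathcal{P}^o_\epsilon$ produces the desired $\mathfrak{p}_0$.

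The main obstacle is verifying that nice perturbations are \emph{abundant enough} to achieve transversality---i.e., that the vertical differential of the Seiberg--Witten section, restricted to variations in $\mathcal{P}^o$, has dense range at every relevant monopole on $W_\infty$. For irreducible trajectories this follows from the unique continuation theorem of \cite[Chapter 10]{KM}: a nonzero cokernel element that is $L^2$-orthogonal to the image of $\mathcal{P}^o$ would have its spinor component vanish on the open region inside the collar $U$ on which nice perturbations act, forcing it to vanish identically. For moduli spaces touching the reducible $[\theta]$ the linearization decouples; the connection component is automatically surjective because $H^1(W_\infty;i\mathbb{R})$ is trivial at $[\theta]$, and surjectivity of the Dirac component reduces to the absence of kernel of the $t$-dependent perturbed Dirac operator along the trajectory, which is guaranteed by combining Assumption~\ref{small perturbation}(c) for $\mathfrak{q}$ with the smallness estimate \eqref{E: derivative of perturbation small 2} for $\mathfrak{p}_0$ and the spectral gap $\epsilon_0 < \epsilon_1$ coming from Proposition~\ref{eigenvalue estimate}.
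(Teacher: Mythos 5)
Your overall scaffolding matches the paper's proof exactly: restrict to a Banach space of nice perturbations, form the parametrized universal moduli spaces, apply Sard--Smale, and observe that residuality alone lets you pick a representative satisfying the smallness estimate \eqref{E: derivative of perturbation small 2}. You also correctly identify the one genuinely new issue --- that nice perturbations vanish at reducibles and so cannot be used to perturb in the connection direction --- which is the same point the paper highlights.

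However, your explanation of why the connection direction is nevertheless automatically transverse is wrong. You attribute it to $H^1(W_\infty;i\mathbb{R}) = 0$, but the relevant operator at the reducible is $d^+\colon L^2_k(W_\infty;iT^*W_\infty)\to L^2_{k-1}(W_\infty;i\Lambda^2_+T^*W_\infty)$, and its \emph{cokernel} is controlled by the self-dual second cohomology, not the first. The correct reason --- and the one the paper gives --- is that $b^+_2(W)=0$, which forces $d^+$ to be surjective on $W_\infty$ without any perturbation. (The vanishing of $H^1(\wp;\mathbb R)$ is used elsewhere in the paper, to show uniqueness of the reducible up to gauge, but it has nothing to do with surjectivity of $d^+$.) Your treatment of the spinor direction also departs from the paper: you claim surjectivity of the Dirac component at the reducible is guaranteed a priori by the spectral gap and the smallness conditions, whereas the paper achieves spinor-direction transversality at all critical points uniformly, including the reducible, by "repeating the arguments in [KM]" --- i.e., by the genericity step itself, since the derivative $D_{(B_0,0)}\mathfrak{p}_0^1$ of a nice perturbation still acts nontrivially in the spinor direction. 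Your alternative may be salvageable for the single moduli space $\M(W_\infty,[\theta],[\theta])$ under Assumption~\ref{A:Dirac over W}, but you should be careful that "absence of kernel" and "surjectivity" coincide only because the index vanishes, and your phrase "the $t$-dependent perturbed Dirac operator along the trajectory" is not well-defined as written; the cleaner route is the paper's uniform genericity argument.
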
 

\begin{proof}
The proof is a careful check that the arguments of \cite[Proposition 24.4.7]{KM} hold in our situation. We first introduce a large Banach space $\mathcal{P}$ of \emph{nice} perturbations and form the parametrized moduli space 
\[
\M_{\mathcal{P}}(*,*) = \mathop{\bigcup}\limits_{\mathfrak{p}_{0}\in \mathcal{P}}  \M_{\mathfrak{p}_{0}}(*,*).
\]
After proving the regularity of $\M_{\mathcal{P}}(*,*)$, we can apply the Sard--Smale lemma to find a residual subset $U\subset\mathcal{P}$ with the property that $\M_{\mathfrak{p}_{0}}(*,*)$ is regular for any $\mathfrak{p}_{0}\in U$. In particular, we can choose a $\mathfrak{p}_{0}\in U$ satisfying the estimate (\ref{E: derivative of perturbation small 2}). There is one new feature in this argument: since $\mathcal{P}$ only consists of nice perturbations, at a reducible monopole, we can only obtain the transversality in the spinor direction by repeating the arguments in \cite{KM}. This does not cause a problem for the following reason: At a reducible monopole, the linearization of the curvature equation $F^{+}_{A^{t}}=0$ is  the operator $$d^{+}:L^{2}_{k}(W_\infty;iT^{*}W_{\infty})\rightarrow L^{2}_{k-1}(W_\infty;i\Lambda^{2}_{+}T^{*}W_{\infty}).$$ Since $b^{+}_{2}(W)=0$, this operator is surjective (without any perturbation). As a result, the transversality in directions tangent to the space of connections is automatically satisfied.\end{proof}

%%%%%%%%%%%%%%%%%%%%%%%%%%%%%%%%%%%%%%%%%%%%%%%%

\subsection{Statement of the theorem}\label{S:statement}
From now on, we will fix a perturbation $\mathfrak{q}$ satisfying Assumption \ref{small perturbation} and a perturbation $\mathfrak{p}_0$ as in Lemma \ref{L:p_0}. The following compactness theorem is the main result of this section.

\begin{thm}\label{compactness}
Let $T_n$ be a sequence of positive real numbers such that $T_n \to \infty$. Then for any sequence $[(A_{n},\phi_{n})]\in \M(X_{T_{n}})$ there exist $[\alpha]\in \mathfrak{C}^{*}$ and $[(A_{\infty},\phi_{\infty})]\in \M(W_{\infty}, [\alpha])$ such that, after passing to a subsequence, $[(A_{n},\phi_{n})]$ converges to $[(A_{\infty},\phi_{\infty})]$ in the sense of Definition \ref{convergence} below.
\end{thm}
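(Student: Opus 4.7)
The proof will combine the standard broken--trajectory compactness of Kronheimer--Mrowka with the eigenvalue estimate of Proposition \ref{eigenvalue estimate} to exclude a reducible asymptotic critical point. The novel ingredient is the latter; the rest of the argument runs along well--trodden lines from \cite{KM}.

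The first step is to extract a limit monopole on $W_\infty$. The perturbations $\hat{\mathfrak p}_{T_n}$ are uniformly small by construction in Section \ref{S: perturbation}, and the Weitzenb\"ock formula applied to the perturbed Seiberg--Witten equations yields uniform pointwise bounds on $|\phi_n|$ and on $|F_{A_n}|$ independent of $n$. Because $X$ has the rational homology of $S^1\times S^3$ and the spin$^c$ structure has vanishing first Chern class, no non--trivial finite--energy bubble can form on $\mathbb R^4$, so bubbling is ruled out. Coulomb gauge fixing together with standard elliptic regularity on each precompact subset of $W_\infty$, combined with a diagonal extraction, produce a subsequence of $(A_n,\phi_n)$ converging in $C^\infty_{\mathrm{loc}}(W_\infty)$, after gauge transformation, to a monopole $(A_\infty,\phi_\infty)$ on $W_\infty$.

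Second, I identify the asymptotic behavior. The limit $(A_\infty,\phi_\infty)$ has finite analytic energy on $W_\infty$, so by the convergence theory of \cite[Chapter 13]{KM} it admits asymptotic critical points $[\alpha^\pm]$ on the two ends. The non--separating variant of the broken--trajectory analysis of \cite[Chapter 24]{KM} then gives convergence in the appropriate sense along the long neck. The identification of the two ends of the cylinder $[-T_n,T_n]\times Y$ in $X_{T_n}$ forces the asymptotic limits to match: $[\alpha^-]=[\alpha^+]$, which I denote by $[\alpha]$. Hence the limit lies in $\M(W_\infty,[\alpha])$ in the sense of Definition \ref{convergence} below.

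The main obstacle is to show $[\alpha]\in\mathfrak C^*$, i.e., to rule out $[\alpha]=[\theta]$. Suppose for contradiction that $[\alpha]=[\theta]$. Then the downstairs spinor $s_n\phi_n$ tends to zero in $C^\infty_{\mathrm{loc}}(W_\infty)$ and, after further gauge transformation, $A_n$ tends in $L^\infty(X_{T_n})$ to the spin connection $A_0$ on $X_{T_n}$. The blown--up spinor $\phi_n$ continues to satisfy $\|\phi_n\|_{L^2(X_{T_n})}=1$ together with the perturbed Dirac equation, which I write schematically as $\D^+_{A_n}\phi_n+P_n(\phi_n)=0$, where $P_n$ denotes the spinor part of the 4--dimensional perturbation built from $\hat{\mathfrak p}_{T_n}$. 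Writing $A_n=A_0+a_n$ with $\|a_n\|_{L^\infty}\to 0$, Clifford multiplication yields $\|a_n\cdot\phi_n\|_{L^2}\to 0$, while $P_n$ is bounded in $L^2$ operator norm by $\tfrac14\epsilon_0$ thanks to Assumption \ref{small perturbation}(c) and the estimate \eqref{E: derivative of perturbation small 2}. Since $\epsilon_0<\epsilon_1$, this forces
\[
\|\D^+(X_{T_n})\phi_n\|_{L^2}^2\;<\;\epsilon_1^2\;=\;\epsilon_1^2\,\|\phi_n\|_{L^2}^2
\]
for all sufficiently large $n$, which contradicts the assertion of Proposition \ref{eigenvalue estimate} that $\D^-(X_{T_n})\D^+(X_{T_n})$ has no eigenvalues below $\epsilon_1^2$. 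Hence $[\alpha]\in\mathfrak C^*$, completing the proof.
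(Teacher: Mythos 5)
Your overall strategy matches the paper's: extract a local limit on $W_\infty$, use broken-trajectory analysis to organize the asymptotics, and rule out the reducible limit $[\theta]$ via the eigenvalue estimate of Proposition \ref{eigenvalue estimate}. The first two steps are reasonable sketches of what the paper does in more detail (via topological energy bounds, Lemma \ref{energy bound}, and the breaking analysis using Lemma \ref{broken at most once} to force $d=2$), although you do not explicitly verify condition (2) of Definition \ref{convergence}.

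The genuine gap is in the step where you rule out $[\alpha]=[\theta]$. You assert that ``after further gauge transformation, $A_n$ tends in $L^\infty(X_{T_n})$ to the spin connection $A_0$ on $X_{T_n}$.'' This is not an automatic consequence of $L^2_{k,\mathrm{loc}}$ (or $C^\infty_{\mathrm{loc}}$) convergence on $W_\infty$, and it is precisely the content of Lemma \ref{L:global}, which is one of the most delicate technical points in the paper's argument. The difficulty is that the local convergence only gives gauge transformations on compact pieces of $W_\infty$ or on length-four subintervals of the long neck (via Lemma \ref{the middle of a long neck}), and these local gauge transformations need not patch to a single global gauge transformation on the closed manifold $X_{T_n}$. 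The paper's proof of Lemma \ref{L:global} spends a full page constructing such a global gauge transformation by explicit cutoff and normalization arguments, and it uses in an essential way that $Y$ is a rational homology sphere (so the transition functions $\xi(n,j)$ between overlapping gauge choices can be taken real-valued and small after suitable constant renormalizations). Without this global $C^0$ control on the connection, the term $a_n\cdot\phi_n$ in your inequality need not be small in $L^2(X_{T_n})$ --- the $L^2$ norm is taken over the whole growing manifold, and pointwise smallness is needed globally, not just on compact pieces. So the contradiction with Proposition \ref{eigenvalue estimate} does not yet follow. You would need to supply the patching argument (or at least acknowledge and cite it) to close the proof.

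A secondary point: your bound of $\tfrac14\epsilon_0$ on the operator norm of the perturbation is optimistic; the paper obtains $\tfrac12\epsilon_0$ in Lemma \ref{L: L2 norm of perturbation} after combining the contributions from the neck (via $\mathfrak q$) and the collar (via $\mathfrak q$ and $\mathfrak p_0$). This does not affect the conclusion, since the final estimate in the paper is $\tfrac34\epsilon_0<\epsilon_0<\epsilon_1$, but it is worth getting the constants right.
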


\begin{defi}\label{convergence}
Let $T_n$ be a sequence of positive real numbers such that $T_n \to \infty$. We will say that $[(A_{n},\phi_{n})]\in \M(X_{T_{n}})$ converges to $[(A_{\infty},\phi_{\infty})]\in \M(W_{\infty},[\alpha])$ if the following two conditions hold:
\begin{enumerate}
\item there exists a sequence of $L^2_{k+1}$ gauge transformation $u_{n}:X_{T_{n}}\rightarrow S^{1}$ such that
\[
u_{n}\cdot (A_{n},\phi_{n})\longrightarrow (A_{\infty},\phi_{\infty})\quad\text{in}\quad L^{2}_{k,\loc}(\wp),
\]
where by $L^{2}_{k,\loc}(\wp)$ convergence we mean $L^{2}_{k}$ convergence on compact subsets of $W_{\infty}$, and
\item for any $\epsilon > 0$, there exist a real number $T > 0$ and an integer $N > 0$ such that, for all $n > N$, we have $T_n > T$ and there is a sequence of $L^2_{k+1}$ gauge transformations $v_n: I_{T,T_n} \to S^1$ such that
\[
\left\|\,v_{n}\cdot(A_{n},\phi_{n})|_{ I_{T,T_n}}-\gamma_{\alpha}\,\right\|_{L^2_k ( I_{T,T_n})}\; <\; \epsilon.
\]
Here, $\gamma_{\alpha}$ stands for the constant trajectory of the Chern--Simons--Dirac gradient flow (that is, a translation invariant monopole on the cylinder $I_{T,T_n}$) connecting the critical point $\alpha$ to itself.
\end{enumerate}
\end{defi}

\noindent
Condition (2) roughly says that, up to a gauge transformation, $(A_{n},\phi_{n})$ is a ``near-constant trajectory'' when restricted to the middle of the long neck. Note that the definition of convergence given on page 486 of \cite{KM} only includes condition (1). Actually, condition (2) follows from condition (1) in our case but proving this would require some additional work. Instead of doing this, we simply include condition (2) in our definition of convergence.

The rest of this section will be dedicated to the proof of Theorem \ref{compactness}. We begin with some preparations.

%%%%%%%%%%%%%%%%%%%%%%%%%%%%%%%%%%%%%%%%%%%%%

\subsection{Topological energy}
The perturbed topological energy of a configuration $(A,\phi)$ on a $4$-manifold was defined by Kronheimer and Mrowka in \cite[Definition 24.6.3]{KM}. On a cylinder $[a,b]\times Y$, the perturbed topological energy of $(A,\phi)$ is given by
\[
\E^{\top}_{\mathfrak{q}}(A,\phi) = 2 \left(\L_{\mathfrak{q}}\left((A,\phi)|_{\{a\}\times Y}\right)-\L_{\mathfrak{q}}\left((A,\phi)|_{\{b\}\times Y}\right)\right).
\]
More generally, the topological energy of a configuration $(A,\phi)$ on the cobordism $W_T$ is given by
\[
\E_{\mathfrak{q}}^{\top}(A,\phi) = 2 \left(\L_{\mathfrak{q}}\left((A,\phi)|_{\{T\}\times Y}\right) - \L_{\mathfrak{q}}\left((A,\phi)|_{\{-T\}\times Y}\right)\right).
\]

\begin{lem}\label{energy bound}
(1) Let $(A,\phi)$ be a monopole on a cylinder $[a,b]\times Y$ then $\E_{\mathfrak{q}}^{\top}(A,\phi)\geq 0$, with equality if and only if $(A,\phi)$ is gauge equivalent to a constant trajectory.

(2) There exists a constant $C$ such that $\,\E_{\mathfrak{q}}^{\top}(A,\phi)\,\geq\,C$ for any monopole $(A,\phi)$ on $W_T$.
\end{lem}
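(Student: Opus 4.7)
For part (1), after a temporal-gauge transformation on $[a,b]\times Y$ the perturbed four-dimensional Seiberg--Witten equations become precisely the downward gradient-flow equation
\[
\frac{d}{dt}(B(t),\Psi(t)) \;=\; -\grad\L_{\mathfrak{q}}(B(t),\Psi(t))
\]
for $\L_{\mathfrak{q}}$ on $\mathcal{C}(Y)$, where $(B(t),\Psi(t))$ is the slice of $(A,\phi)$ at $\{t\}\times Y$. Integrating the identity $(d/dt)\L_{\mathfrak{q}} = -\|\grad\L_{\mathfrak{q}}\|^{2}_{L^{2}(Y)}$ from $a$ to $b$ yields
\[
\E^{\top}_{\mathfrak{q}}(A,\phi) \;=\; 2\int_{a}^{b}\|\grad\L_{\mathfrak{q}}(B(t),\Psi(t))\|^{2}_{L^{2}(Y)}\, dt \;\geq\; 0,
\]
with equality if and only if $\grad\L_{\mathfrak{q}}$ vanishes identically along the trajectory, i.e., $(A,\phi)$ is gauge equivalent to a constant trajectory sitting at a critical point of $\L_{\mathfrak{q}}$.

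For part (2), I would decompose $W_{T} = ([-T,0]\times Y)\cup W\cup ([0,T]\times Y)$ and use additivity of the topological energy together with part (1) applied on each cylindrical piece (where the perturbation coincides with the translation-invariant extension $\hat{\mathfrak{q}}$, so that the restriction of $(A,\phi)$ is a bona fide downward gradient trajectory of $\L_{\mathfrak{q}}$). This gives
\[
\E^{\top}_{\mathfrak{q}}(A,\phi) \;\geq\; \E^{\top}_{\mathfrak{q}}(A,\phi)\big|_{W},
\]
so it suffices to bound the topological energy of a monopole on the \emph{fixed} compact cobordism $W$ from below by a constant independent of the monopole. To accomplish this, I would invoke the standard energy identity on a compact four-manifold with boundary (cf.\ Kronheimer--Mrowka \cite[Section 4.5]{KM}): for any configuration, the topological energy equals a non-negative analytic energy, plus squared $L^{2}$-norms of the Seiberg--Witten residuals, plus a topological correction. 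The correction depends only on the spin structure on $W$ (in particular $c_{1}=0$) and on the compactly supported perturbation $\hat{\mathfrak{p}}$, so it is a fixed real number. Since the residuals vanish on a monopole and the analytic energy is non-negative, $\E^{\top}_{\mathfrak{q}}(A,\phi)\big|_{W}$ is bounded below by this fixed constant $C$, completing the argument.

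The main obstacle will be the bookkeeping at the interfaces $\{0\}\times Y \subset W$ and between the collar and the cylindrical extensions: one must verify that decomposing $\E^{\top}_{\mathfrak{q}}$ across the common hypersurfaces produces no unmatched boundary contributions. This is arranged by the construction of $\hat{\mathfrak{p}}$ in \eqref{E: mixed perturbation}, which was set up precisely so that the collar portion of the perturbation agrees with $\hat{\mathfrak{q}}$ at the outer edge of the collar; consequently the same functional $\L_{\mathfrak{q}}$ is used to evaluate boundary terms on both sides of each interface and the decomposition is clean. Once this matching is in place, the desired uniform lower bound on $W$ is a direct consequence of the energy identity.
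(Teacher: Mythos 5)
Your proposal is correct and follows essentially the same approach as the paper. Part~(1) is the standard temporal-gauge argument; the paper simply states this, and your fleshed-out version is right, including the sign check that the integrand is non-negative.

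For part~(2) you make explicit the reduction to the fixed cobordism $W$ via additivity and part~(1), which the paper leaves implicit in its terse citation of \cite[Lemma 24.5.1]{KM}; that reduction is correct and is what makes the constant $C$ manifestly independent of $T$. One warning about your statement of the energy identity, though: the relation in \cite[Section 4.5]{KM} runs the other way, namely $\E^{an}_{\mathfrak q}(\gamma) = \E^{\top}_{\mathfrak q}(\gamma) + (\text{non-negative residual terms})$, so it is the analytic energy that dominates the topological energy, and the analytic energy itself is \emph{not} non-negative (it carries scalar-curvature and perturbation corrections); it is merely bounded below by a constant depending on the metric and perturbation on $W$. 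Taken literally, your formula ``$\E^{\top} = (\text{non-negative}) + \|\text{residuals}\|^2 + c$'' would give a uniform lower bound on $\E^{\top}$ for \emph{arbitrary} configurations on $W$, which is false --- one can drive $\E^{\top}$ to $-\infty$ by choosing boundary restrictions with large Chern--Simons--Dirac value. The argument is rescued exactly because you specialize to monopoles, where the residuals vanish and $\E^{\top} = \E^{an}$; with that substitution your conclusion agrees with the paper's appeal to \cite[Lemma 24.5.1]{KM}.
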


\begin{proof}
Claim (1) is clear because any monopole on a cylinder is gauge equivalent to a downward gradient flow line of $\mathcal{L}_{\mathfrak{q}}$. Claim (2) is a direct consequence of \cite[Lemma 24.5.1]{KM} and the equality of the topological and analytic energies for monopoles, see \cite[Definition 4.5.4]{KM}.
\end{proof}

\begin{lem}\label{local compactness}
(1) Let $(A_{n},\phi_{n})$ be a sequence of monopoles on $[a,b] \times Y$ satisfying a uniform bound $\E_{\mathfrak{q}}^{\top}(A_{n},\phi_{n}) \le M$. Then, after passing to a subsequence, there exist gauge transformations $u_{n}:[a,b]\,\times\,Y\rightarrow S^{1}$ and a monopole $(A_{\infty},\phi_{\infty})$ on $[a,b]\,\times\,Y$ such that the sequence $u_{n}\cdot (A_{n},\phi_{n})$ converges to $(A_{\infty},\phi_{\infty})$  in the $L^{2}_{k}$ norm on every interior domain in $[a,b]\times Y$.

(2) Let $I_{n}=[a_{n},b_{n}]$ be a sequence of intervals with $\lim b_{n} = -\infty$ and $\lim a_{n}= +\infty$, and $(A_{n},\phi_{n})$ a sequence of monopoles on $I_n\times Y$ satisfying a uniform bound $\E_{\mathfrak{q}}^{\top}(A_{n},\phi_{n}) \le M$. Then, after passing to a subsequence, there exist gauge transformations $u_{n}: [a_{n},b_{n}]\,\times\,Y\to S^{1}$ and a monopole $[(A_{\infty},\phi_{\infty})] \in \M([\alpha],[\beta])$ with $[\alpha], [\beta]\in \mathfrak{C}$ such that the sequence $u_{n}\cdot (A_{n},\phi_{n})$ converges to $(A_{\infty},\phi_{\infty})$ in $L^{2}_{k,\loc}(\R \times Y)$.

(3) Let $T_{n}$ be a sequence of positive real numbers with $\lim T_{n}=+\infty$, and $(A_{n},\phi_{n})$ a sequence of monopoles on $W_{T_{n}}$ satisfying a uniform bound $\E_{\mathfrak{q}}^{\top}(A_{n},\phi_{n}) \le M$. Then, after passing to a subsequence, there exist gauge transformations $u_{n}:W_{T_{n}}\to S^{1}$ and a monopole $[(A_{\infty},\phi_{\infty})]\in \M(\wp,[\alpha],[\beta])$ with $[\alpha], [\beta]\in \mathfrak{C}$ such that the sequence $u_{n}\cdot (A_{n},\phi_{n})$ converges to $(A_{\infty},\phi_{\infty})$ in $L^{2}_{k,\loc}(\wp)$.
\end{lem}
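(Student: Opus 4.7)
\medskip
\noindent\textbf{Proof proposal for Lemma \ref{local compactness}.}
The strategy is the standard bubble-free compactness argument for Seiberg--Witten monopoles, following \cite[Chapters 5 and 13]{KM} closely. The key mechanism is that the topological energy $\E_{\mathfrak{q}}^{\top}$ equals the analytic energy plus a boundary correction that is itself bounded once we have uniform control of $\L_{\mathfrak{q}}$ at the slices. Given a uniform bound on $\E_{\mathfrak{q}}^{\top}(A_{n},\phi_{n})$, the Weitzenb\"ock formula for $\D^{+}_{A}\D^{-}_{A}$ together with the Seiberg--Witten equations yields a uniform $C^{0}$ bound on the spinors $\phi_{n}$ and a uniform $L^{2}$ bound on $F_{A_{n}}$ on every compact subregion (cf.\ \cite[Proposition 4.5.6, Lemma 10.7.2]{KM}). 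No bubbling can occur because in dimension four, the Seiberg--Witten equations have no conformally invariant energy that could concentrate on points; this is \cite[Theorem 5.1.1]{KM}.

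For part (1), fix a slightly smaller sub-cylinder $[a',b']\times Y\subset (a,b)\times Y$. The uniform $L^\infty$ bound on $\phi_n$ and uniform $L^2$ bound on $F_{A_n}$ on $[a',b']\times Y$ allow us to invoke Uhlenbeck's theorem to find $L^{2}_{k+1}$ gauge transformations $u_{n}:[a',b']\times Y\to S^{1}$ putting $u_{n}\cdot A_{n}$ in Coulomb gauge relative to a fixed reference connection, with uniform $L^{2}_{1}$ bounds. Applying this to slightly shrinking nested sub-cylinders and bootstrapping via the elliptic estimates for the gauge-fixed Seiberg--Witten operator gives uniform $L^{2}_{k}$ bounds on every interior domain. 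Rellich--Kondrachov compactness then yields a subsequence converging in $L^{2}_{k}$ to a limit $(A_\infty,\phi_\infty)$, which is a monopole by continuity. The only subtlety is patching local Coulomb gauge transformations on overlapping sub-cylinders into a single $u_n$ on $[a',b']\times Y$; this is done with a standard argument using the connected, abelian structure group and a partition of unity (cf.\ \cite[Section 9.3]{KM}).

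For parts (2) and (3), we use a diagonal argument. For (2), exhaust $\R\times Y$ by $[-N,N]\times Y$, $N\in\N$; for every fixed $N$, we have $[-N,N]\subset I_{n}$ once $n$ is large, so part (1) applies. Extract nested convergent subsequences and diagonalize, producing gauge transformations $u_{n}$ and a limit monopole $(A_{\infty},\phi_{\infty})$ on $\R\times Y$ with $u_{n}\cdot(A_{n},\phi_{n})\to(A_{\infty},\phi_{\infty})$ in $L^{2}_{k,\loc}$. Since $\E_{\mathfrak{q}}^{\top}$ is additive over sub-cylinders and lower semicontinuous under this convergence, the limit has total topological energy at most $M$, i.e.\ finite energy. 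By \cite[Theorem 13.6.1]{KM}, a finite-energy monopole on $\R\times Y$ is asymptotic to critical points $[\alpha],[\beta]\in\widetilde{\mathfrak{C}}$ of $\L_{\mathfrak{q}}$ at $-\infty$ and $+\infty$ respectively, so $[(A_{\infty},\phi_{\infty})]\in\M([\alpha],[\beta])$. Part (3) is identical in spirit: exhaust $\wp$ by $W_{N}=W\cup([-N,0]\times Y)\cup([0,N]\times Y)$ and diagonalize, using part (1) on the two cylindrical pieces and the direct analogue of part (1) on the compact manifold-with-boundary $W$.

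The main technical obstacle is the gauge-patching step in part (1): local Coulomb gauge fixing on each small sub-cylinder is immediate from Uhlenbeck's theorem, but producing a single $u_{n}$ on the whole sub-cylinder, defined globally on $X_{T_n}$ in part (3) or on the growing $I_{n}\times Y$ in part (2), requires balancing the local harmonic gauge choices so that they agree on overlaps up to constants and then integrating these constants against a partition of unity. Once this is handled as in \cite[Sections 13.4 and 16.1]{KM}, the rest of the argument is a routine combination of interior elliptic estimates, weak compactness, and extraction of diagonal subsequences. The asymptotic convergence to critical points in (2) and (3) is then a direct consequence of finite analytic energy, Palais--Smale for $\L_{\mathfrak{q}}$, and the discreteness of $\widetilde{\mathfrak{C}}$ under Assumption \ref{small perturbation}.
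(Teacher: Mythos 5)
Your argument is correct and follows essentially the same route as the paper: the paper's entire proof is the citation of Theorems~10.7.1 and 24.5.2 of Kronheimer--Mrowka, and what you have written out is a faithful reconstruction of how those local compactness results are established and applied (energy bound $\Rightarrow$ a priori $C^0$/$L^2$ control, Uhlenbeck gauge fixing and patching, interior elliptic bootstrapping, diagonal extraction, and asymptotic convergence to critical points for the finite-energy limit). The only minor quibbles are that some of your specific theorem numbers in~\cite{KM} are not quite the ones the paper invokes, and that ``lower semicontinuity'' of the energy is really just convergence of $\L_{\mathfrak{q}}$ on slices under $L^2_{k,\loc}$ convergence; neither affects the substance.
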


\begin{proof}
This follows from \cite[Theorem 10.7.1 and Theorem 24.5.2]{KM}.
\end{proof}

%%%%%%%%%%%%%%%%%%%%%%%%%%%%%%%%%%%%%%%%%%%%%%

\subsection{Near-constant trajectories}\label{near constant trajectories}
For every $[\alpha]\in \widetilde{\mathfrak{C}}$, choose an open neighborhood $U_{[\alpha]}\subset \B (Y)$ such that $U_{[\alpha]}\cap U_{[\beta]} = \emptyset$ when $[\alpha]\neq [\beta]$. In addition, for every $[\alpha] \in \widetilde{\mathfrak C}$, choose an open neighborhood  $U_{[\gamma_{\alpha}]}$ of the constant trajectory $[\gamma_{\alpha}]\in \B ([0,1] \times Y)$ such that $[(A,\phi)|_{\{t\}\times Y}]\in U_{[\alpha]}$ for all $t\in [0,1]$ and all $[(A,\phi)]\in U_{[\gamma_{\alpha}]}$. Here $\B ([0,1] \times Y)$ denotes the space of gauge equivalent classes of $L^{2}_{k}$ configurations over $[0,1]\times Y$.

\begin{lem}\label{small energy near constant}
There exists a constant $\epsilon_0 > 0$ such that for any monopole $(A,\phi)$ on $[-1,2]\times Y$ of energy $\E_{\mathfrak{q}}^{\top}(A,\phi)\leq 3 \epsilon_0$ we have $[(A,\phi)|_{[0,1]\times Y}]\subset U_{[\gamma_{\alpha}]}$ for some $[\alpha]\in \widetilde{\mathfrak C}$.
\end{lem}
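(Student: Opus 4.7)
The plan is to argue by contradiction, using the local compactness lemma together with the fact that on a cylinder the topological energy controls how far a monopole is from being a constant trajectory.

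First I would suppose the statement fails. Then for each $n$, there is a monopole $(A_n,\phi_n)$ on $[-1,2]\times Y$ with $\E^{\top}_{\mathfrak q}(A_n,\phi_n)\le 3/n$ such that $[(A_n,\phi_n)|_{[0,1]\times Y}]$ lies outside every $U_{[\gamma_\alpha]}$. Apply Lemma \ref{local compactness}(1): after passing to a subsequence and applying $L^2_{k+1}$ gauge transformations $u_n$, the configurations $u_n\cdot(A_n,\phi_n)$ converge in $L^2_k$ on every interior subdomain, in particular on $[-1/2,3/2]\times Y$, to some monopole $(A_\infty,\phi_\infty)$.

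Next I would show $(A_\infty,\phi_\infty)$ has zero topological energy on $[-1/2,3/2]\times Y$. The topological energy depends only on boundary values through $\L_{\mathfrak q}$, which is continuous in the $L^2_{k-1/2}$ topology; combined with the continuity of the restriction map $L^2_k\to L^2_{k-1/2}$ on the two boundary slices, this gives
\[
\E^{\top}_{\mathfrak q}\!\bigl((A_\infty,\phi_\infty)|_{[-1/2,3/2]\times Y}\bigr)\; =\; \lim_{n\to\infty}\E^{\top}_{\mathfrak q}\!\bigl(u_n\cdot(A_n,\phi_n)|_{[-1/2,3/2]\times Y}\bigr).
\]
By gauge invariance and Lemma \ref{energy bound}(1) (which gives monotonicity: the energy on a sub-cylinder is nonnegative and bounded above by the total energy on $[-1,2]\times Y$), the right-hand side is at most $\lim 3/n=0$. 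Hence the limit's energy vanishes, and Lemma \ref{energy bound}(1) applied to $(A_\infty,\phi_\infty)|_{[-1/2,3/2]\times Y}$ shows it is gauge equivalent to a constant trajectory $\gamma_\alpha$ for some critical point $[\alpha]\in\widetilde{\mathfrak C}$.

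Finally, I would absorb the additional gauge transformation trivializing the limit into the $u_n$'s, so that $u_n\cdot(A_n,\phi_n)|_{[0,1]\times Y}\to\gamma_\alpha$ in $L^2_k$. Since $U_{[\gamma_\alpha]}$ is open in $\mathcal B([0,1]\times Y)$, it follows that $[(A_n,\phi_n)|_{[0,1]\times Y}]\in U_{[\gamma_\alpha]}$ for all large $n$, contradicting the choice of the sequence. The main (mild) obstacle is bookkeeping: making sure the convergence takes place on a compact subdomain that strictly contains $[0,1]\times Y$ (so that $L^2_{k,\loc}$ convergence gives genuine $L^2_k$ convergence there), and that the boundary traces of the $(A_n,\phi_n)$ at $t=\pm 1$ can be controlled well enough to pass the energy to the limit.
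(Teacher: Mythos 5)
Your argument is correct and follows the same contradiction scheme as the paper's own proof: take a sequence of monopoles with energy tending to zero whose middle restriction avoids all $U_{[\gamma_\alpha]}$, extract an $L^2_{k,\loc}$ limit via Lemma~\ref{local compactness}(1), deduce the limit has zero energy and hence is a constant trajectory by Lemma~\ref{energy bound}(1), and contradict openness of $U_{[\gamma_\alpha]}$. The paper leaves implicit the bookkeeping you spell out (choice of interior domain strictly containing $[0,1]\times Y$, passage of the energy to the limit via continuity of boundary traces), so your write-up is just a more detailed version of the same proof.
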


\begin{proof}
Suppose that this is not true. Then we can find a sequence of monopoles $(A_{n},\phi_{n})$ on $[-1,2] \times Y$ such that $\lim\,\E_{\mathfrak{q}}^{\top}(A_{n},\phi_{n}) = 0$ as $n \to \infty$ but $[(A_{n},\phi_{n})|_{[0,1]\times Y}]$ does not belong to any $U_{[\gamma_{\alpha}]}$. By Lemma \ref{local compactness} (1), after passing to a subsequence, $[(A_{n},\phi_{n})|_{[0,1]\times Y}]$ will converge to a monopole of zero energy, which by Lemma \ref{energy bound} (1) must be of the form $[\gamma_{\alpha}]$ for some $[\alpha]\in \mathfrak{C}$. This leads to a contradiction.
\end{proof}

\begin{lem}\label{the middle of a long neck}
For any real numbers $M > 0$ and $\epsilon > 0$ there exists a constant $T' > 0$ with the following significance: Let $T > T'$ and let $(A,\phi)$ be a monopole on $[-T,T] \times Y$ such that
\[
\E_{\mathfrak{q}}^{\top}(A,\phi)\leq M\quad\text{and}\quad [(A,\phi)|_{\{t\}\times Y}] \in U_{[\alpha]}\quad\text{for all}\quad t\in [-T,T].
\]
Then for any interval $[t,t+4] \subset [-T+T',T-T']$, there exists a gauge transformation $u: [t,t+4]\,\times\, Y\to S^1$ such that
\[
\left\|\, u\cdot (A,\phi)|_{[t,t+4]\times Y}-\gamma_{\alpha}\, \right\|_{L^2_k([t,t+4]\times Y)} < \epsilon.
\]
\end{lem}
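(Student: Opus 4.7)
The plan is to argue by contradiction, in the standard ``removal of cylinders'' fashion. Suppose the conclusion fails for some $M, \epsilon > 0$. Then for each positive integer $n$ we can find $T_n > n$, a monopole $(A_n, \phi_n)$ on $[-T_n, T_n] \times Y$ with $\E^{\top}_{\mathfrak{q}}(A_n, \phi_n) \leq M$ and every time-slice in $U_{[\alpha]}$, together with an interval $[t_n, t_n+4] \subset [-T_n + n,\, T_n - n]$ for which no $L^2_{k+1}$ gauge transformation brings $(A_n, \phi_n)|_{[t_n, t_n+4]\times Y}$ within $\epsilon$ of $\gamma_\alpha$ in the $L^2_k$-norm.

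First I would translate in the cylinder direction by $-(t_n+2)$ so that the offending interval becomes $[-2, 2]$. The translated monopoles $(\tilde A_n, \tilde \phi_n)$ are then defined on $[a_n, b_n] \times Y$ with $a_n \to -\infty$ and $b_n \to +\infty$ (using $[t_n, t_n+4] \subset [-T_n+n, T_n-n]$), share the same energy bound, and still have every time-slice in $U_{[\alpha]}$. Apply Lemma \ref{local compactness}\,(2): after gauge transformations and passing to a subsequence, $(\tilde A_n, \tilde \phi_n)$ converges in $L^2_{k,\loc}(\R \times Y)$ to a monopole $(A_\infty, \phi_\infty)$ representing an element of $\M([\beta_-], [\beta_+])$ for some critical points $[\beta_\pm]$.

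Next I would identify the limit. Continuity of the evaluation at a time slice, together with the fact that every slice of every $(\tilde A_n, \tilde \phi_n)$ lies in $U_{[\alpha]}$, forces every slice of $(A_\infty, \phi_\infty)$ to lie in $\overline{U_{[\alpha]}}$, and in particular $[\beta_\pm] \in \overline{U_{[\alpha]}}$. Since $\widetilde{\mathfrak{C}}$ is finite, we may assume that the neighborhoods $U_{[\beta]}$ were chosen small enough at the outset of Subsection \ref{near constant trajectories} that $\overline{U_{[\alpha]}}$ contains no critical point other than $[\alpha]$; hence $[\beta_-] = [\beta_+] = [\alpha]$. Consequently $\E^{\top}_{\mathfrak{q}}(A_\infty, \phi_\infty) = 2(\L_{\mathfrak{q}}([\alpha]) - \L_{\mathfrak{q}}([\alpha])) = 0$, so Lemma \ref{energy bound}\,(1) forces $(A_\infty, \phi_\infty)$ to be gauge equivalent to the constant trajectory $\gamma_\alpha$. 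Restricting the $L^2_{k,\loc}$ convergence to the compact cylinder $[-2,2] \times Y$ and pulling the resulting gauge transformation back through the translation yields, for all large $n$, an $L^2_{k+1}$ gauge transformation on $[t_n, t_n+4] \times Y$ bringing $(A_n, \phi_n)$ within $\epsilon$ of $\gamma_\alpha$ in $L^2_k$, contradicting the choice of $(A_n, \phi_n)$.

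The only real subtlety will be the identification $[\beta_\pm] = [\alpha]$, which depends on having shrunk the $U_{[\beta]}$ sufficiently so that their closures do not pick up other critical points; this is harmless because $\widetilde{\mathfrak{C}}$ is finite. Everything else is a direct application of the compactness infrastructure already set up in Section \ref{S:compact}.
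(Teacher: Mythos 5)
Your argument is correct and follows essentially the same path as the paper's proof: negate the statement, translate to recenter the offending interval, apply Lemma~\ref{local compactness}(2) to extract a limit monopole on $\R\times Y$, use the hypothesis on the time slices to pin the limit critical points down to $[\alpha]$, conclude the limit is $\gamma_\alpha$ by Lemma~\ref{energy bound}(1), and contradict the failure on the fixed compact cylinder via $L^2_{k,\loc}$ convergence. The only cosmetic differences are your centering on $[-2,2]$ versus the paper's $[0,4]$, and your explicit use of closures and the finiteness of $\widetilde{\mathfrak C}$ to justify $[\beta_\pm]=[\alpha]$, which is a small precision gain the paper leaves implicit.
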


\begin{proof}
%Suppose there exists $C$ and $\epsilon$ such that no $T'$ exists. Then we can find a sequence of solutions $\gamma_{n}: [-S_{n},S_{n}] \rightarrow \B(Y)$ satisfying two conditions in the lemma and a number $t_{n}\in [-S_{n},S_{n}]$ with $T_{n}=\min(t_{n}+S_{n},S_{n}-t_{n})\to +\infty$ such that $\gamma_{n}([t_{n},t_{n}+4])$ can not be gauge transformed to a solution that is $\epsilon$ closed to the constant solution. Now we re-parameterize $\gamma_{n}|_{[t_{n}-T_{n},t_{n}+T_{n}]}$ to a trajectory $\gamma_{n}: [-T_{n},T_{n}]\rightarrow \B(Y)$ such that the single piece $[0,4]\times Y$ violates the inequality.

Suppose that this is not true. Then there exist a sequence $T'_n \to \infty$ and a sequence $(A_{n},\phi_{n})$ of monopoles on $[-T_n,T_n] \times Y$ with $T_n > T'_n$ such that $\E_{\mathfrak{q}}^{\top}(A_n,\phi_n)\leq M$ and $[(A_n,\phi_n)|_{\{t\}\times Y}] \in U_{[\alpha]}$ for all $t \in [-T_n,T_n]$ but, for some interval $[t_n, t_n+4] \subset [-T_n + T'_n, T_n - T'_n]$, we have
\[
\left\|\,u\cdot (A_{n},\phi_{n})|_{[t_n,t_n+4]\times Y}-\gamma_{\alpha}\,\right\|_{L^2_k([t_n,t_n+4]\times Y)}\;\geq\;\epsilon\; > 0
\]
for all $n$ and all gauge transformations $u: [t_n, t_n+4]\,\times\, Y\to S^1$. Using the translation invariance of the Seiberg--Witten equations on the cylinder, re-parameterize $(A_n, \phi_n)$ to obtain a monopole on $[-T_n - t_n, T_n - t_n]$, called again $(A_n,\phi_n)$, such that
\[
\left\|\,u\cdot (A_{n},\phi_{n})|_{[0,4]\times Y}-\gamma_{\alpha}\,\right\|_{L^2_k([0,4]\times Y)}\;\geq\;\epsilon\; > 0
\]
for all $n$ and all gauge transformations $u: [0,4]\,\times\, Y\to S^1$. Note that $\lim\,(-T_n - t_n) = -\infty$ and $\lim\,(T_n - t_n) = \infty$. Therefore, by Lemma \ref{local compactness} (2), after passing to a subsequence, we can find gauge transformations $u_n: [-T_n,T_n] \times Y \to S^{1}$ such that $u_n\cdot (A_{n},\phi_{n}) \to (A_{\infty},\phi_{\infty})$ in $L^2_{k,\loc}(\R \times Y)$, where $(A_{\infty},\phi_{\infty})$ is a monopole on $\R \times Y$ limiting to critical points in $\widetilde{\mathfrak C}$ at plus and minus infinity. Since the gauge equivalence class of the restriction of $(A_{\infty},\phi_{\infty})$ to each slice $\{t\} \times Y$ is contained in $U_{[\alpha]}$, and since $[\alpha]$ is the only critical point in $U_{[\alpha]}$, the monopole $(A_{\infty},\phi_{\infty})$ much be gauge equivalent to $\gamma_{\alpha}$ (note that if $Y$ were not a rational homology sphere, we would need to impose the extra condition that $U_{[\alpha]}$ is contractible). Without loss of generality, we may assume that $(A_{\infty},\phi_{\infty}) = \gamma_{\alpha}$. But then the $L^2_{k,\loc}(\R \times Y)$ convergence implies that
\[
\left\|\,u_n\cdot (A_n,\phi_n)|_{[0,4]\times Y}-\gamma_{\alpha}\,\right\|_{L^2_k([0,4]\times Y)}\to 0,
\]
which leads to a contradiction.
\end{proof}

\begin{lem}\label{global norm estimate}
For any $\epsilon > 0$ there exists $\delta > 0$ with the following significance: For any $T > 5$ and any irreducible critical point $[\alpha]\in \mathfrak C^*$, let $(A,\phi)$ be a monopole on $[-T,T]\times Y$ with the property that, for any $[t,t+4] \subset [-T,T]$, there exist a gauge transformation $u_t: [t,t+4]\times Y\rightarrow S^1$ such that
\[
\|u_{t}\cdot (A,\phi)|_{[t,t+4]\times Y}-\gamma_{\alpha}\|_{L^2_k([t,t+4]\times Y)}\leq \delta.
\]
Then there exists a gauge transformation $u: [-T+5,T-5] \times Y\rightarrow S^1$ such that
\[
\|u\cdot (A,\phi)|_{[-T+5,T-5]\times Y}-\gamma_{\alpha}\|_{L^2_k([-T+5,T-5]\times Y)}\leq \epsilon.
\]
\end{lem}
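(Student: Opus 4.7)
The strategy is first to patch the local gauge transformations into a single gauge transformation on a slightly shorter cylinder, and then to turn the slice-wise smallness into a uniform $L^2_k$ estimate via exponential decay at the non-degenerate critical point $[\alpha]$.

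For the patching, I would cover $[-T+4,T-4]$ by the overlapping four-intervals $I_n=[n,n+4]$ with $n\in\Z\cap[-T+4,T-8]$, so that consecutive intervals share a segment of length three. By hypothesis there is a gauge transformation $u_n$ on $I_n\times Y$ with $\|u_n\cdot(A,\phi)-\gamma_\alpha\|_{L^2_k(I_n\times Y)}\le\delta$. On each overlap $I_n\cap I_{n+1}$ both $u_n\cdot(A,\phi)$ and $u_{n+1}\cdot(A,\phi)$ lie in a $\delta$--neighborhood of $\gamma_\alpha$, so the transition $g_n=u_n u_{n+1}^{-1}$ approximately stabilizes $\gamma_\alpha$. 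Since $[\alpha]\in\mathfrak{C}^*$ is irreducible and the stabilizer of an irreducible configuration in the $U(1)$ gauge group is trivial (by unique continuation for the spinor), a constant $c_n\in S^1$ can be chosen so that $c_n g_n$ is arbitrarily close to $1$ in $L^2_{k+1}$ provided $\delta$ is small. Multiplying the $u_n$'s inductively by these constants and then interpolating through $u_n\cdot\exp(i\chi_n\log(u_n^{-1}u_{n+1}))$ with a suitable cutoff $\chi_n$ yields a single gauge transformation $u$ on $[-T+4,T-4]\times Y$ satisfying $\|u\cdot(A,\phi)-\gamma_\alpha\|_{L^2_k(I_n\times Y)}\le C\delta$ uniformly in $n$.

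Now set $\xi(t)=(u\cdot(A,\phi))|_{\{t\}\times Y}-\alpha\in L^2_{k-1/2}(Y)$. In temporal gauge the Seiberg--Witten equations on the neck take the form $\dot\xi+\grad\L_{\mathfrak q}(\alpha+\xi)=0$, or equivalently
\[
\dot\xi+H\xi=N(\xi),
\]
where $H$ is the Hessian of $\L_{\mathfrak q}$ at $\alpha$ and $N(\xi)=O(|\xi|^2)$. Admissibility of $\mathfrak{q}$ (Assumption \ref{small perturbation}\,(b)) ensures $[\alpha]$ is non-degenerate, so $H$ is a self-adjoint Fredholm operator on the local slice with a spectral gap $\mu>0$ around zero. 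The standard hyperbolic estimate at a non-degenerate rest point (cf.\ \cite[Chapter 13]{KM}), applied with boundary data at $t=-T+4$ and $t=T-4$, then gives, for $\delta$ small enough to absorb the quadratic term $N(\xi)$,
\[
\|\xi(t)\|_{L^2_{k-1/2}(Y)}\;\le\;C'\delta\bigl(e^{-\mu(t+T-4)}+e^{-\mu(T-4-t)}\bigr)\qquad\text{for all } t\in[-T+5,T-5].
\]
Squaring and integrating in $t$ produces a bound $\|u\cdot(A,\phi)-\gamma_\alpha\|_{L^2_k([-T+5,T-5]\times Y)}\le C''\delta$ that is independent of $T$, so choosing $\delta$ so that $C''\delta\le\epsilon$ finishes the proof.

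The main technical hurdle will be the hyperbolic estimate in the third paragraph: one has to pass from the patched temporal gauge to a slice in which the flow equation truly reads $\dot\xi+H\xi=N(\xi)$ up to a controlled error, and then execute a contraction-mapping bootstrap preventing $N(\xi)$ from spoiling the exponential decay. The patching step is routine provided the irreducibility of $[\alpha]$ is used to trivialize the stabilizer, but the uniformity in $T$ of the final estimate is exactly what rules out any naive summation of the local bounds and forces one to invoke the hyperbolic dynamics of the gradient flow near $\alpha$.
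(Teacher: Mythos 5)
Your two-step strategy — first patch the local gauge transformations using the triviality of the stabilizer of the irreducible configuration $\gamma_\alpha$, then exploit the spectral gap of the Hessian at the nondegenerate critical point to obtain a $T$-uniform exponential decay estimate — is exactly the content of Lemma 19.3.2 of Kronheimer and Mrowka (with the patching step extracted from their Lemma 13.6), which is what the paper's one-line proof cites. The technical hurdles you flag (temporal-gauge normalization, absorbing the quadratic error, and promoting the slicewise $L^2_{k-1/2}(Y)$ bound to the four-dimensional $L^2_k$ norm via elliptic bootstrap) are precisely the ones resolved in Chapter 13 of \cite{KM}, so your proposal correctly reconstructs the cited argument.
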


\begin{proof}
This is essentially Lemma 19.3.2 of \cite{KM}. %Note that \cite{KM} makes use of $L^{2}_{k}$ norm upstairs instead of downstairs. However, since $[\alpha]$ is irreducible, these two norms are equivalent to each other. Actually, a careful check of the proof in the book shows that the similar result holds downstairs even for the reducible critical point $[\alpha^{\text{red}}]$. However, this will not be needed in our argument.
In fact, the patching argument in the proof of Lemma \ref{converging to reducible} will be extracted, as in the proof of this lemma, from~\cite[Lemma 13.6.]{KM}.
\end{proof}

%%%%%%%%%%%%%%%%%%%%%%%%%%%%%%%%%%%%%%%%%%%%%%%

\subsection{Broken trajectories on $W_{\infty}$}\label{S:broken}
Let $\gr^{\Q}: \mathfrak C^*\to \mathbb Q$ be the absolute grading function~\cite[page 587]{KM} on the irreducible critical points, and extend it to the unique reducible $[\theta]$ by the formula
\[
\gr^{\mathbb{Q}}([\theta])\, = -2 n(Y,h,\s)
\]
(compare with the grading in Lemma \ref{L: reducible grading}). Then, for any $[\alpha], [\beta]\in \mathfrak C^*$, the expected dimensions (denoted by $\text{e.dim}$) of the moduli spaces are as follows:
\begin{enumerate}
\item $\text{e.dim}(\breve\M([\alpha],[\beta]))\, =\, \gr^{\mathbb{Q}}([\alpha]) - \gr^{\mathbb{Q}} ([\beta]) - 1$;
\item $\text{e.dim}(\breve\M([\alpha],[\,\theta]))\, =\, \gr^{\mathbb{Q}}([\alpha]) - \gr^{\mathbb{Q}}([\theta]) - 1$;
\item $\text{e.dim}(\breve\M([\,\theta],[\alpha]))\, =\, \gr^{\mathbb{Q}}([\theta]) - \gr^{\mathbb{Q}}([\alpha]) - 2$;
\item $\text{e.dim}(\M (\wp,[\alpha],[\beta])) = \gr^{\mathbb{Q}}([\alpha]) - \gr^{\mathbb{Q}}([\beta])$;
\item $\text{e.dim}(\M (\wp,[\alpha],[\,\theta])) = \gr^{\mathbb{Q}}([\alpha]) - \gr^{\mathbb{Q}}([\theta])$;
\item $\text{e.dim}(\M (\wp,[\,\theta],[\alpha])) = \gr^{\mathbb{Q}}([\,\theta]) - \gr^{\mathbb{Q}}([\alpha])-1$;
\item $\text{e.dim}(\M (\wp,[\,\theta],[\theta]))=-1$.
\end{enumerate}
By our regularity assumption, the actual dimensions of the moduli spaces are equal to their expected dimensions except in case (7): in this case, there is always one dimensional cokernel of the corresponding Fredholm operator. %(This phenomenon is called the boundary obstruction.)
As a result, $\M (\wp,[\theta],[\theta])$ is a zero-dimensional manifold which only contains reducible monopoles. Note that the moduli space $\breve\M([\theta],[\theta])$ is empty because we only allow non-constant trajectories in our definition of the moduli spaces $\breve\M([\alpha],[\alpha])$.

\begin{lem}\label{broken at most once}
Let $d \ge 2$ and suppose that $[\alpha_1]$, \ldots, $[\alpha_{d-1}]$ are critical points such that the moduli spaces $\breve\M([\alpha_1],[\alpha_2])$, \ldots, $\breve\M([\alpha_{d-2}],[\alpha_{d-1}])$ and $\M(\wp,[\alpha_{d-1}],[\alpha_{1}])$ are all non-empty. Then $d = 2$.
\end{lem}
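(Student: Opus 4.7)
My plan is a pure dimension count based on the absolute $\Q$-grading $\gr^{\Q}$ together with the regularity results already in hand. Because each moduli space listed in the hypothesis is non-empty, the admissibility of $\mathfrak{q}$ (Assumption \ref{small perturbation}) together with Lemma \ref{L:p_0} guarantees that each has non-negative actual dimension, with the sole exception of case (7): the moduli space $\M(\wp,[\theta],[\theta])$ has expected dimension $-1$ but actual dimension zero, since it consists only of reducible monopoles.

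Next I would sum
\begin{equation*}
D \;=\; \sum_{i=1}^{d-2}\dim \breve{\M}([\alpha_i],[\alpha_{i+1}])\;+\;\dim \M(\wp,[\alpha_{d-1}],[\alpha_1])\;\ge\;0
\end{equation*}
and substitute the formulas (1)--(6) of Section \ref{S:broken}. The $\gr^{\Q}$-contributions telescope to zero because the internal cancellation in $\sum_{i=1}^{d-2}(\gr^{\Q}[\alpha_i]-\gr^{\Q}[\alpha_{i+1}])=\gr^{\Q}[\alpha_1]-\gr^{\Q}[\alpha_{d-1}]$ is killed by the $(\gr^{\Q}[\alpha_{d-1}]-\gr^{\Q}[\alpha_1])$ term coming from $\M(\wp,\cdot,\cdot)$. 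What remains is a sum of constants: each of the $d-2$ factors $\breve{\M}$ contributes $-1$, each occurrence of $[\theta]$ as a \emph{source} (formulas (3) and (6)) contributes an additional $-1$, and if case (7) is invoked the actual dimension is $+1$ larger than what the formal grading count predicts. Combining, $D\le -(d-2)+\varepsilon$, where $\varepsilon\in\{0,1\}$ records whether case (7) arises and the source-$\theta$ corrections only make the bound sharper.

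The argument concludes by case analysis. If case (7) does not occur, then $D\ge 0$ and $D\le -(d-2)$ immediately force $d\le 2$. If case (7) does occur with $d=3$, then $[\alpha_1]=[\alpha_2]=[\theta]$ and the required factor $\breve{\M}([\theta],[\theta])$ is empty by our convention excluding constant trajectories, contradicting the hypothesis. If case (7) occurs with $d\ge 4$, then $D\le -(d-3)<0$ still contradicts $D\ge 0$. Since the hypothesis already forces $d\ge 2$, we conclude $d=2$. The main obstacle—minor in substance but demanding care—is the bookkeeping of the extra $-1$'s coming from boundary-unstable sources and the separate treatment of the anomalous case (7); aside from that, the proof is a clean telescoping argument in the grading.
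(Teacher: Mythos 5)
Your proof is correct and matches the paper's approach: the paper's argument is the one-line claim that the lemma ``follows from our regularity assumption by a simple dimension count,'' and you have simply carried out that count explicitly, correctly tracking the $-1$ penalties for boundary-unstable sources, the anomalous $+1$ in case (7), and the emptiness of $\breve\M([\theta],[\theta])$.
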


\begin{proof}
This follows from our regularity assumption by a simple dimension count.
\end{proof}

%%%%%%%%%%%%%%%%%%%%%%%%%%%%%%%%%%%%%%%%%%%%%

\subsection{Proof of Theorem \ref{compactness}}
We will follow closely the argument of \cite[Section 16.2]{KM}. Let $(A_n,\phi_n)$ be a sequence of monopoles as in the statement of Theorem \ref{compactness}. Since
\begin{multline}\notag
\E_{\mathfrak{q}}^{\top}\left((A_n,\phi_n)|_{[-T_n,T_n] \times Y}\right) = 2\left(\L_{\mathfrak{q}}\left((A_n,\phi_n)|_{\{-T_n\}\times Y}\right) - \L_{\mathfrak{q}}\left((A_n,\phi_n)|_{\{T_n\}\times Y}\right)\right) \\ = - \E_{\mathfrak{q}}^{\top}\left((A_n,\phi_n)|_W\right),
\end{multline}
we conclude from Lemma \ref{energy bound} that 
\[
\E_{\mathfrak{q}}^{\top}\left((A_n,\phi_n)|_{W}\right)\,\leq 0\quad\text{and}\quad \E_{\mathfrak{q}}^{\top}\left((A_n,\phi_n)|_{[-T_n,T_n] \times Y}\right)\,\leq M
\]
for some constant $M \ge 0$ independent of $n$. As a result, we obtain the uniform bounds
\begin{equation}\label{energy control}
\E_{\mathfrak{q}}^{\top}\left((A_n,\phi_n)|_{W_T}\right) \leq M\quad \text{and}\quad\E_{\mathfrak{q}}^{\top}\left((A_n,\phi_n)|_{I\times Y}\right)\leq M
\end{equation}
for any $0 < T < T_n$ and any interval $I \subset [-T_n,T_n]$, which will allow us to apply Lemma \ref{local compactness}.

Choose neighborhoods $U_{[\alpha]}$ and $U_{[\gamma_{\alpha}]}$ as in Section \ref{near constant trajectories}, and let $\epsilon_0 > 0$ be the constant provided by Lemma \ref{small energy near constant}. Restricting $(A_n,\phi_n)$ to the slices $\{t\} \times Y$ gives rise to a path
$\gamma_n: [-T_n,T_n] \to \B (Y)$. For each $n$, consider the set
\[
S_n = \left\{\,p\in \mathbb Z\; |\; [p,p+1] \subset [-T_n,T_n]\;\;\text{and}\;\;\E_{\mathfrak{q}}^{\top}\left((A_n,\phi_n)|_{[p,p+1]\times Y}\right)\geq \epsilon_0 > 0\,\right\}.
\]
This set contains at most $M/\epsilon_0$ elements. By passing to a subsequence, we may find an integer $j$ such that every set $S_n$ contains exactly $j$ elements, $p^{n}_{1}<p^{n}_{2}<\cdots<p^{n}_{j}$. Also introduce the integers $p_{0}^{n}=\lceil -T_{n}\rceil$ and $p_{j+1}^{n}=\lfloor T_{n}\rfloor-1$. After passing to a subsequence one more time, we may assume that, for each integer $m$ between $0$ and $j$, either $\lim\limits_{n\to\infty}(p^{n}_{m+1}-p^{n}_{m})=+\infty$ or $p^{n}_{m+1}-p^{n}_{m}$ is independent of $n$. On the set $\{0,1,\cdots,j+1\}$, define an equivalence relation
\[
m_{1}\sim m_{2}\quad\text{iff}\quad \lim\limits_{n\rightarrow \infty}\left|\;p^{n}_{m_{1}}-p^{n}_{m_{2}}\,\right| < \infty.
\]
Denote by $d$ the number of the equivalence classes; since $0$ is not equivalent to $j$, we must have $d \ge 2$. Pick representatives $m_{0}<m_{1}<m_{2}<\cdots <m_{d-1}$, one for each equivalence class, and let
\[
a^{n}_{i} = \min_m \left\{\,p^{n}_{m}\;|\;m\sim m_i \right\}\quad \text{and}\quad b^{n}_{i} = \max_m \left\{\,p^{n}_{m}\;|\;m\sim m_i \right\}\quad\text{for}\quad 0 \leq i \leq d-1.
\]
Then
\[
\lceil -T_n\rceil=a^n_0\; \le\; b^n_0\; <\; a^n_1\; \le\; b^n_1 < \cdots <a_{d-1}^n\; \le\; b^n_{d-1}=\lfloor T_n\rfloor-1,
\]
the difference $b^n_m - a^n_m$ is independent of $n$ for $0\leq m\leq d-1$, and $\lim\limits_{n\to \infty} (a^n_{m+1}-b^n_m) = \infty$ for $0 \leq m\leq d-2$.
Using Lemma \ref{small energy near constant} together with the translation invariance of the Seiberg--Witten equations, we obtain
\begin{equation}\label{eq 1}
\gamma_{n}([b^{n}_{m}+2, a^{n}_{m+1}-1])\subset U_{[\alpha_{m+1}]}
\end{equation}
 for some critical point $[\alpha_{m+1}]$. Here, we passed to a subsequence again to ensure that $[\alpha_{m+1}]$ is independent of $n$.  Using Lemma \ref{local compactness} and passing to a subsequence if necessary we conclude the following:

\smallskip

\begin{enumerate}
\item[$\bullet$\;] There exist gauge transformations $u_n: X_{T_n}\to S^1$ such that $u_n\cdot(A_n,\phi_n)$ converges in $L^2_{k,\loc}(\wp)$ to a monopole $(A_{\infty},\phi_{\infty})$ on $W_{\infty}$. Using (\ref{eq 1}), it is not difficult to see that $[(A_{\infty},\phi_{\infty})] \in \M (\wp,[\alpha_{d-1}],[\alpha_{1}])$.
\item[$\bullet$\;] Let $\tau_t$ denote the translation on trajectories defined by $(\tau_t \cdot \gamma) (x) = \gamma(x+t)$. Then, for every $n \geq 0$ and $1 \leq m\leq d-2$, we have
\[
\tau_{(a^n_m + b^n_m)/2}\,\cdot \gamma_{n}\; \longrightarrow\; \gamma_{\infty,m}\quad\text{in}\quad L^2_{k,\loc}(\mathbb R \times Y),
\]
where $\gamma_{\infty,m}$ is a trajectory on $\mathbb{R}\times Y$. Since the topological energy of $\gamma_n ([a^n_m, a^n_m+1])$ is greater than or equal to $\epsilon_0 > 0$, we conclude that $\gamma_{\infty,m}$ is not a constant trajectory. Using (\ref{eq 1}), it is not difficult to see that $\gamma_{\infty,m}$ represents a monopole in $\M([\alpha_m],[\alpha_{m+1}])$.
\end{enumerate}

\smallskip\noindent
Since all the moduli spaces $\M([\alpha_{1}],[\alpha_{2}]), \cdots, \M([\alpha_{d-2}],[\alpha_{d-1}])$ and $\M (\wp, [\alpha_{d-1}],[\alpha_{1}])$ are non-empty, it follows from Lemma \ref{broken at most once} that $d = 2$. Keeping this in mind, our earlier discussion implies the following two results:
\smallskip
\begin{enumerate}
\item[(A)] There exist gauge transformations $u_n: X_{T_n} \to S^1$ such that $u_n\, \cdot\, (A_n,\phi_n) \to (A_{\infty},\phi_{\infty})$ in $L^2_{k,\loc}(\wp)$ for some $[(A_{\infty},\phi_{\infty})] \in \M (\wp,[\alpha])$, where $[\alpha] = [\alpha_1]$;
\item[(B)] There exists a constant $T' > 0$ such that $\gamma_{n}([-T_n + T',T_n - T']) \subset U_{[\alpha]}$ for all $n$ (this is implied by (\ref{eq 1})).
\end{enumerate}

\smallskip\noindent
Using Lemma \ref{the middle of a long neck}, we can replace (B) by the following:

\smallskip

\begin{enumerate}
\item[(C)] For any $\epsilon > 0$, there exists a constant $C > 0$ with the following significance: for any $n$ large enough so that $T_n > C + 4$ and any interval $[t, t+4] \subset [-T_n + C,T_n - C]$, there exists a gauge transformation $u_{n,t}: [t,t+4]\times Y \to S^1$ such that
\[
\|u_{n,t}\cdot (A_{n},\phi_{n})|_{[t,t+4]\times Y}-\gamma_{\alpha}\|_{L^2_k([t,t+4]\times Y)}\,\leq\, \epsilon.
\]
\end{enumerate}

\begin{lem}\label{converging to reducible}
The critical point $[\alpha]$ is irreducible.
\end{lem}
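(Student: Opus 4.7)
The plan is to argue by contradiction using the eigenvalue estimate of Proposition~\ref{eigenvalue estimate}. Suppose $[\alpha] = [\theta]$. Then the limit $(A_\infty, \phi_\infty) \in \M(W_\infty, [\theta], [\theta])$ must be reducible, because the dimension count in Section~\ref{S:broken} (case~(7)) together with the regularity conclusion of Lemma~\ref{L:p_0} shows that $\M(W_\infty, [\theta], [\theta])$ is a zero-dimensional manifold consisting entirely of reducibles. Hence $\phi_\infty \equiv 0$. A Mayer--Vietoris computation applied to $X = W/\!\!\sim$ gives $H^1(W; \mathbb{R}) = 0$, so the flat connection $A_\infty$ (which is asymptotic to the product connection $B_0$ on both ends) is in fact gauge equivalent to $B_0$ on all of $W_\infty$. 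Absorbing this gauge into the transformations $u_n$ from condition~(1) of Definition~\ref{convergence}, we may assume $(A_n, \phi_n) \to (B_0, 0)$ in $L^2_{k,\loc}(W_\infty)$.

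Combining this with condition~(2) of Definition~\ref{convergence}, applied to $[\alpha] = [\theta]$ so that $\gamma_\theta = (B_0, 0)$, we obtain a separate sequence of gauge transformations $v_n$ under which $(A_n, \phi_n)$ is $L^2_k$-close to $(B_0, 0)$ on the middle part $I_{T, T_n}$ of the neck. On the overlap between the $u_n$- and $v_n$-regions both gauged configurations approach $(B_0, 0)$, so the transition $v_n u_n^{-1}$ is $L^2_{k+1}$-close to an element of the stabilizer $S^1$ of $(B_0, 0)$; after rescaling by a constant the two sequences can be patched into a single global sequence of gauge transformations, following the argument of \cite[Lemma~13.6.1]{KM}. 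Under this patched gauge, $(A_n, \phi_n) \to (B_0, 0)$ in $L^2_k(X_{T_n})$ uniformly as $n \to \infty$. In particular, by the Sobolev embedding $L^2_k \hookrightarrow C^0$ (using $k \geq 3$), we have $\|A_n - B_0\|_{C^0(X_{T_n})} \to 0$, while $\phi_n \to 0$ in $L^2_k$.

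Since $(A_n, \phi_n)$ is irreducible (the moduli space $\M(X_{T_n}, g_{T_n}, \hat{\mathfrak p}_{T_n})$ contains no reducibles under Assumption~\ref{A:Dirac over W} together with the smallness of $\hat{\mathfrak p}_{T_n}$; see the discussion preceding \eqref{E:wxg}), we have $\phi_n \not\equiv 0$, and we set $\psi_n = \phi_n/\|\phi_n\|_{L^2(X_{T_n})}$, so $\|\psi_n\|_{L^2} = 1$ and $\D^+_{A_n, \hat{\mathfrak p}_{T_n}} \psi_n = 0$. Writing
\[
\D^+_{A_n, \hat{\mathfrak p}_{T_n}} \psi_n \;=\; \D^+_{B_0, \hat{\mathfrak p}_{T_n}} \psi_n \,+\, \tfrac{1}{2}\,\rho(A_n - B_0)\,\psi_n,
\]
the $C^0$-smallness of $A_n - B_0$ gives $\|\D^+_{B_0, \hat{\mathfrak p}_{T_n}} \psi_n\|_{L^2} \to 0$. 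On the other hand, Proposition~\ref{eigenvalue estimate} asserts that $\D^-(X_R)\D^+(X_R)$ has no spectrum in $[0, \epsilon_1^2)$ for large $R$; combined with the pointwise bound on the spinor component of the perturbation at $(B_0, 0)$ supplied by \eqref{E: derivative of perturbation small} and \eqref{E: derivative of perturbation small 2} (these 3D bounds transfer to the 4D perturbation $\hat{\mathfrak p}_{T_n}$ because $\hat{\mathfrak q}$ and $\hat{\mathfrak p}$ are translation- and collar-invariant lifts of their 3D counterparts), a standard perturbation-of-spectrum argument yields a uniform lower bound $\|\D^+_{B_0, \hat{\mathfrak p}_{T_n}} \psi\|_{L^2} \geq c\,\|\psi\|_{L^2}$ with $c > 0$. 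Applied to $\psi_n$, this contradicts $\|\psi_n\|_{L^2} = 1$, completing the proof.

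The principal obstacle is the gauge patching in the second paragraph: one must align two sequences of gauge transformations defined on overlapping open regions into a single global sequence, and verify uniform convergence of the resulting configurations to $(B_0, 0)$ in $L^2_k(X_{T_n})$. This hinges on the stabilizer of the reducible being only $S^1$, so that the discrepancy between the two gauges is constant in the limit. A secondary but important technical point is the transfer of the 3D perturbation bounds \eqref{E: derivative of perturbation small}--\eqref{E: derivative of perturbation small 2} to the 4D perturbation $\hat{\mathfrak p}_{T_n}$, which is needed in the last step to preserve a strictly positive lower bound on the spectrum of $\D^-_{B_0, \hat{\mathfrak p}_{T_n}}\D^+_{B_0, \hat{\mathfrak p}_{T_n}}$.
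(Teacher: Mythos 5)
Your overall strategy is the same as the paper's: assume $[\alpha]=[\theta]$, observe that the limit in $\M(\wp,[\theta])$ is the trivial reducible $(A_0,0)$, patch local gauge transformations into a global one over $X_{T_n}$ so that the connection is uniformly $C^0$-close to trivial, and then derive a contradiction from the first-eigenvalue bound of Proposition~\ref{eigenvalue estimate}. The gauge-patching step you identify as the principal obstacle is indeed carried out carefully in the paper's Lemma~\ref{L:global}, and the $S^1$-stabilizer phenomenon you invoke is exactly what makes the patching work. However, there are two genuine gaps in the way you execute the final contradiction.

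First, the assertion $\D^+_{A_n,\hat{\mathfrak p}_{T_n}}\psi_n=0$ for the normalized spinor $\psi_n=\phi_n/\|\phi_n\|_{L^2}$ is not valid: the perturbed Dirac operator $\phi\mapsto \D^+_{A}\phi-\hat{\mathfrak p}^1(A,\phi)$ is not homogeneous in $\phi$, since the perturbations of Section~\ref{S: perturbation} are gradients of nonlinear functionals. What one can extract from the equation $\D^+_{A_n}\phi_n=\hat{\mathfrak p}^1_{T_n}(A_n,\phi_n)$ is the norm estimate $\|\D^+_{A_n}(\phi_n)\|_{L^2}\le\tfrac12\epsilon_0\|\phi_n\|_{L^2}$, valid once the restrictions of $(A_n,\phi_n)$ to the slices are close enough to $[\theta]$ that the derivative bounds \eqref{E: derivative of perturbation small} and \eqref{E: derivative of perturbation small 2} apply. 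This is Lemma~\ref{L: L2 norm of perturbation} in the paper, and it replaces your attempted normalization. (With this estimate, there is no need to normalize at all; the contradiction is homogeneous in the spinor.)

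Second, the claim that after patching $(A_n,\phi_n)\to(B_0,0)$ in $L^2_k(X_{T_n})$ \emph{uniformly} is not established by the local patching: closeness in $L^2_k$ on each piece $[t,t+4]\times Y$ aggregates over a neck of length $\sim T_n$ and gives a total $L^2$-norm that could grow rather than vanish. The paper sidesteps this by only extracting $C^0$ control on the connection difference $A_n-\tilde u_n^{-1}d\tilde u_n$ from the patching (Lemma~\ref{L:global}); no global $L^2_k$ control on $\phi_n$ is needed. Your concluding ``perturbation-of-spectrum'' step for $\D^+_{B_0,\hat{\mathfrak p}_{T_n}}$ is then unnecessary as well: the paper writes $\D^+\psi_n=\D^+_{A_n-\tilde u_n^{-1}d\tilde u_n}(\psi_n)-(A_n-\tilde u_n^{-1}d\tilde u_n)\cdot\psi_n$ with $\psi_n=\tilde u_n\phi_n$, bounds the first term by Lemma~\ref{L: L2 norm of perturbation} and the second by $C^0$-smallness, and contradicts the eigenvalue bound on the \emph{unperturbed} operator $\D^+(X_{T_n})$ directly. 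Repairing these two points essentially turns your argument into the paper's.
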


This lemma will be proved in Section \ref{S:reducible}. For now, let us assume it and finish the proof of Theorem \ref{compactness}. It is clear that condition (1) of Definition \ref{convergence} follows from (A) and Lemma \ref{converging to reducible}. Therefore, we only need to verify condition (2) of Definition \ref{convergence}. For any $\epsilon > 0$, let $C$ be the constant from (C). Choose $T = C + 5$ and let $N$ be an integer large enough so that $T_n > T$ for all $n \geq N$. A straightforward application of Lemma \ref{global norm estimate} finishes the proof.

%%%%%%%%%%%%%%%%%%%%%%%%%%%%%%%%%%%%%%%%%%%%%%

\subsection{Convergence to reducible}\label{S:reducible}
In this subsection we prove Lemma \ref{converging to reducible}. The proof will be based on the following three lemmas.

\begin{lem}
The moduli space $\M(\wp,[\theta])$ contains a single point $[(A_0,0)]$, where $A_0$ is a trivial connection on $\wp$.
\end{lem}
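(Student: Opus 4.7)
The proof divides into two steps.

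First, any element of $\M(\wp, [\theta])$ must be reducible. This is already noted in Section \ref{S:broken}: the expected dimension of $\M(\wp, [\theta], [\theta])$ is $-1$ due to a one-dimensional cokernel at reducible solutions, and by the regularity established in Lemma \ref{L:p_0} the actual moduli space is zero-dimensional and consists only of reducibles. So I only need to classify the reducibles up to gauge.

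Second, I would show that all reducible monopoles are gauge-equivalent to $[(A_0, 0)]$. Since the perturbation $\hat{\mathfrak{p}}$ is assembled from the nice perturbations $\mathfrak{q}$ and $\mathfrak{p}_0$, it vanishes on reducible configurations, and so a reducible $(A, 0)$ satisfies $F_A^+ = 0$ exactly. Because the spin structure trivializes the determinant line bundle, I may write $A = A_0 + a$ for a globally defined $a \in \Omega^1(\wp; i\R)$ decaying at both ends, where $A_0$ is a trivial reference connection. A Coulomb gauge fix produces $d^* a = 0$ with $a \in L^2_1$, so $a$ lies in the $L^2$ kernel of the elliptic operator
\[
d^+ \oplus d^*: L^2_1(\wp; \Omega^1 \otimes i\R) \longrightarrow L^2(\wp; (\Omega^+ \oplus \Omega^0) \otimes i\R).
\]
An APS-type argument on a manifold with product ends identifies this kernel with the image of $H^1_c(\wp; \R) \to H^1(\wp; \R)$, modulo further constraints from the self-dual cohomology of $W$. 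We have $b^+_2(W) = 0$ by hypothesis, and $H^1(\wp; \R) \cong H^1(W; \R) = 0$ by Poincar\'e--Lefschetz duality: the long exact sequence of $(X, Y)$ combined with the fact that $Y$ generates $H_3(X; \Q)$ gives $H_3(X, Y; \Q) = 0$, whence by excision $H_3(W, \partial W; \Q) = 0$, i.e., $H^1(W; \R) = 0$. Therefore $a = 0$ and $[(A, 0)] = [(A_0, 0)]$.

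The main technical obstacle is setting up the $L^2$ analysis on the non-compact manifold $\wp$: producing a global Coulomb gauge with $L^2_1$ decay and identifying the $L^2$ kernel of $d^+ \oplus d^*$ with the claimed topological invariant via an APS-type index theorem. Once the analytic framework is in place, the remaining topological input is elementary.
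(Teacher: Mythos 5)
Your proof reaches the correct conclusion and shares the key topological input $H^1(\wp;\R)=0$ with the paper, but the analytic middle step is heavier than necessary and slightly muddled. The paper does not impose a Coulomb gauge or invoke an APS-type identification of $L^2$ cohomology. Instead, it observes that for $a\in L^2_k$ on $\wp$ with $d^+a=0$, the Stokes integration $\int_{\wp} da\wedge da = 0$ forces $\|d^+a\|^2=\|d^-a\|^2$, hence $da=0$; then $H^1(\wp;\R)=0$ gives a primitive $\xi$ with $a=i\,d\xi$, and $u=e^{i\xi}\in L^2_{k+1,\loc}$ is a gauge transformation (in the sense of \cite[Definition 24.2.1]{KM}) carrying $(A,0)$ to $(A_0,0)$. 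Your route through the $L^2$ kernel of $d^+\oplus d^*$ is valid once one knows that kernel coincides with that of $d\oplus d^*$ (which again follows from the Stokes argument), but the appeal to ``further constraints from the self-dual cohomology of $W$'' and the invocation of $b_2^+(W)=0$ are not doing anything here: the self-dual cohomology obstructs surjectivity of $d^+$, not the vanishing of the kernel on $1$-forms, and once $H^1(\wp;\R)=0$ is in hand the image of $H^1_c\to H^1$ is automatically zero. Your Poincar\'e--Lefschetz computation of $H^1(W;\R)=0$ is correct and matches the paper's implicit reasoning.
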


\begin{proof}
Since both $\mathfrak{q}$ and $\mathfrak{p}_{0}$ are nice, we may disregard the perturbations when studying reducible monopoles downstairs. We saw in Section \ref{S:broken} that the moduli space $\M (\wp,[\theta])$ contains only reducible monopoles $[(A,0)]$ with $F^+_A = 0$. Write $A = A_0 + a$, where $a$ is an $L^2_k$ differential 1-form on $\wp$ with coefficients in $i\mathbb R$ satisfying $d^{+}a=0$. Integration by parts shows that
\smallskip
\[
\int_{\wp}\langle da,da\rangle = 0.
\]

\smallskip\noindent
Therefore, the 1-form $a$ is closed. Since $H^1 (\wp,\mathbb R) = 0$, there exists $\xi: \wp \to \mathbb R$ such that $a = id\xi$, and $(A,0)$ is gauge equivalent to $(A_0,0)$ via the gauge transformation $u = e^{i\xi} \in L^2_{k+1,\loc}(\wp)$. Now we use \cite[Definition 24.2.1]{KM} to conclude that $[(A,0)] = [(A_0,0)]$.
\end{proof}

\begin{lem}\label{L:four}
For any $\epsilon > 0$ there exists a positive integer $N$ such that, for any $n \ge N$,
\begin{enumerate}
\item $T_{n}\geq 4$
\item There exists a gauge transformation $u_{n}:W_{4}\rightarrow S^{1}$ such that
\[
\|u_{n}\cdot(A_{n},\phi_{n})|_{W_{4}}-(A_0,0)\|_{L^2_k (W_4)} < \epsilon
\]
\item For any interval $[t,t+4] \subset [-T_n,T_n]$ there exists a gauge transformation $u_{n,t}: [t,t+4]\times Y \to S^1$ such that
\[
\|u_{n,t}\cdot(A_{n},\phi_{n})|_{[t,t+4]\times Y}-(A_0,0)\|_{L^2_k([t,t+4]\times Y)} < \epsilon.
\]
\end{enumerate}
\end{lem}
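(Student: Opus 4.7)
The plan is to prove the three claims in order, working under the standing assumption of this subsection that $[\alpha]=[\theta]$ is the reducible critical point (this being the case in which Lemma \ref{converging to reducible} must ultimately be contradicted). Part (1) is immediate from $T_n\to\infty$, so all the real work is in parts (2) and (3).

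For part (2), the preceding lemma identifies $\M(\wp,[\theta])$ with the single point $[(A_0,0)]$, so conclusion (A) of the proof of Theorem \ref{compactness} produces gauge transformations $u_n:X_{T_n}\to S^1$ with $u_n\cdot(A_n,\phi_n)\to(A_\infty,\phi_\infty)$ in $L^2_{k,\loc}(\wp)$, where $[(A_\infty,\phi_\infty)]=[(A_0,0)]$. After pre-composing the $u_n$ with a single fixed $L^2_{k+1,\loc}$ gauge transformation on $\wp$, I may arrange that the actual limit is $(A_0,0)$. Since $W_4$ is a compact subset of $\wp$, the $L^2_{k,\loc}$ convergence restricts to $L^2_k$ convergence on $W_4$, yielding the stated bound for all sufficiently large $n$. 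The identical argument applied to any fixed $W_{T'}$ gives an $L^2_k(W_{T'})$ bound for all $n$ sufficiently large; this strengthened version will be used in part (3).

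For part (3), I first observe that with the representative $\theta=(B_0,0)$ fixed in Section \ref{S:HM} (built from the product connection and the trivialized spinor bundle), the constant trajectory $\gamma_\theta$ on any cylindrical slice coincides with the restriction $(A_0,0)|_{[t,t+4]\times Y}$, so approximation to $\gamma_\theta$ is automatically approximation to $(A_0,0)$. Let $C_0$ be the constant supplied by property (C) from the proof of Theorem \ref{compactness} for the tolerance $\epsilon$. For $n$ large with $T_n\geq 2C_0+8$, any interval $[t,t+4]\subset[-T_n,T_n]$ lies in one of two regimes: either $[t,t+4]$ is contained in $W_{C_0+4}\subset X_{T_n}$ (covering $t\in[-T_n,-T_n+C_0]\cup[T_n-C_0-4,T_n-4]$), or $[t,t+4]\subset[-T_n+C_0,T_n-C_0]$ (covering the complementary middle range of $t$). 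In the first regime I take $u_{n,t}$ to be the restriction of the global $u_n$ from the strengthened version of part (2) applied to $W_{C_0+4}$; in the second regime I take $u_{n,t}$ to be the gauge transformation supplied by (C).

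The main technical point is the bookkeeping that arranges for the two regimes to cover every $[t,t+4]\subset[-T_n,T_n]$ exactly. This works because the single constant $C_0$ from (C) serves both to demarcate the middle-of-neck region handled by (C) and to determine the size $W_{C_0+4}$ of the compact region handled by the strengthened part (2); the $+4$ buffer ensures that no $[t,t+4]$ straddles the two regimes without being fully contained in one. The identification $\gamma_\theta=(A_0,0)|_{[t,t+4]\times Y}$ requires no extra gauge fixing beyond what is already packaged in (A) and (C), since both sides are literally the product connection paired with the zero spinor in the fixed trivialization.
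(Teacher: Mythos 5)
Your proposal is correct and follows essentially the same argument as the paper: part (1) from $T_n\to\infty$, part (2) by restricting the $L^2_{k,\loc}(\wp)$ convergence from (A) to the compact set $W_4$, and part (3) by the same dichotomy between intervals in the middle of the neck (handled by (C)) and intervals near the ends of the neck (handled by (A) via a fixed compact subset of $\wp$). The additional observations you make explicit — pre-composing with a fixed gauge transformation to pin the limit to $(A_0,0)$ rather than a gauge-equivalent representative, and identifying $\gamma_\theta$ with the restriction of $(A_0,0)$ — are left implicit in the paper but do not change the argument.
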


\begin{proof}
Claim (1) follows trivially from $T_n \to \infty$. Since $W_4$ is a compact subset of $\wp$, claim (2) follows from (A). To prove claim (3), let $C > 0$ be the constant from (C). If $[t,t+4]$ belongs to $[-T_n + C,T_n - C]$, claim (3) follows from (C). Otherwise, $[t,t+4]$ belongs to either $[-T_n,-T_n+C+4]$ or $[T_n-C-4,T_n]$. For every $n$, these are identified with the fixed  compact subsets $[0,C+4]$ and $[-C-4,0]$ of $\wp$ hence the result follows from (A).
\end{proof}

\begin{lem}\label{L: L2 norm of perturbation}
Let $\epsilon_0 > 0$ be the constant fixed in the beginning of Section \ref{S: perturbation}. Then there exists an integer $N > 0$ such that, for any $n\geq N$, we have 
\[
\|\D^+_{A_n}(\phi_n)\|_{L^2(X_{T_n})}\; \leq\; \frac 12\,\epsilon_0\cdot\|\phi_n\|_{L^2 (X_{T_n})}.
\]
\end{lem}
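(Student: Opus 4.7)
The plan is to exploit two ingredients: that the perturbation $\hat{\mathfrak{p}}_{T_n}$, being assembled from the nice perturbations $\mathfrak{q}$ and $\mathfrak{p}_0$, vanishes identically on the reducible slice, and that Lemma \ref{L:four} places $(A_n,\phi_n)$ $L^2_k$-close to the reducible $(A_0,0)$ on compact pieces once $n$ is large. Because $(A_n,\phi_n)$ satisfies the perturbed Dirac equation $\D^+_{A_n}\phi_n = \hat{\mathfrak{p}}^1_{T_n}(A_n,\phi_n)$, the claim reduces to bounding $\|\hat{\mathfrak{p}}^1_{T_n}(A_n,\phi_n)\|_{L^2(X_{T_n})}$ by $\tfrac{1}{2}\epsilon_0\|\phi_n\|_{L^2(X_{T_n})}$.

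First I would Taylor expand the spinor component of $\hat{\mathfrak{p}}_{T_n}$ about the reducible: niceness forces the zeroth-order term and every pure $A$-variation at $(A_0,0)$ to vanish, leaving the pointwise decomposition
\[
\hat{\mathfrak{p}}^1_{T_n}(A,\phi) = D_\phi\hat{\mathfrak{p}}^1_{T_n}|_{(A_0,0)}(\phi) + R(A,\phi),\qquad |R(A,\phi)|\leq C\bigl(|A-A_0|+|\phi|\bigr)|\phi|,
\]
uniform on a bounded neighborhood of $(A_0,0)$. For the linear term I would lift the 3-dimensional estimates: Assumption \ref{small perturbation}(c) and inequality \eqref{E: derivative of perturbation small 2} give $\tfrac{1}{4}\epsilon_0$ bounds on $D_\psi\mathfrak{q}^1|_{(B_0,0)}$ and $D_\psi\mathfrak{p}_0^1|_{(B_0,0)}$ in $L^2(Y)$. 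Since both $\hat{\mathfrak{q}}$ and $\hat{\mathfrak{p}}_0$ act slicewise along cylindrical directions, these 3D bounds integrate to 4D $L^2$ bounds of the same operator norm; combined with the structure $\hat{\mathfrak{p}}_{T_n}=\hat{\mathfrak{q}}$ on the neck and $\zeta\hat{\mathfrak{q}}+\zeta_0\hat{\mathfrak{p}}_0$ near $W$ (with $|\zeta|,|\zeta_0|\leq 1$), this yields a uniform estimate
\[
\|D_\phi\hat{\mathfrak{p}}^1_{T_n}|_{(A_0,0)}(\phi)\|_{L^2(X_{T_n})}\leq \delta\,\|\phi\|_{L^2(X_{T_n})}
\]
with some $\delta<\tfrac{1}{2}\epsilon_0$ independent of $n$.

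To control the remainder $R$, I would cover $X_{T_n}$ by the compact piece $W_4$ together with a finite family of essentially disjoint length-$4$ cylinders $[t_k,t_k+4]\times Y$ tiling $I_{4,T_n}$. On each piece, Lemma \ref{L:four} supplies a local $L^2_{k+1}$ gauge transformation $u$ with $\|u\cdot(A_n,\phi_n)-(A_0,0)\|_{L^2_k}<\epsilon$ for $n$ large, and the Sobolev embedding $L^2_k\hookrightarrow L^\infty$ in four dimensions (valid since $k\geq 3$) promotes this to $\|u(A_n-A_0)\|_{L^\infty}+\|u\phi_n\|_{L^\infty}\leq C_1\epsilon$. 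Because $\hat{\mathfrak{p}}^1_{T_n}$ is $U(1)$-equivariant and $|u\phi_n|=|\phi_n|$, the Taylor identity applied in this gauge yields
\[
\|\hat{\mathfrak{p}}^1_{T_n}(A_n,\phi_n)\|_{L^2(\text{piece})}\leq (\delta+2CC_1\epsilon)\,\|\phi_n\|_{L^2(\text{piece})}.
\]
Square-summing over the pieces, then choosing $\epsilon$ so small that $\delta+2CC_1\epsilon\leq\tfrac{1}{2}\epsilon_0$ and $N$ accordingly from Lemma \ref{L:four}, will complete the proof.

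The hard part is producing the strict inequality $\delta<\tfrac{1}{2}\epsilon_0$ on the compact cobordism $W$, where the sum $\zeta\hat{\mathfrak{q}}+\zeta_0\hat{\mathfrak{p}}_0$ mixes two separate $\tfrac{1}{4}\epsilon_0$ contributions; one expects this to work because the cutoffs $\zeta$ and $\zeta_0$ can be arranged with essentially disjoint supports, so the two contributions combine via a Pythagorean-type estimate to $\delta\leq \epsilon_0/(2\sqrt{2})$, leaving a positive multiplicative gap that absorbs the $O(\epsilon)$ Taylor remainder.
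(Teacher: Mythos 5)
Your overall strategy matches the paper's: reduce via the perturbed Dirac equation $\D^+_{A_n}\phi_n=\hat{\mathfrak p}^1_{T_n}(A_n,\phi_n)$ to a bound on the spinor component of the perturbation, kill the zeroth-order term by niceness, quote Assumption \ref{small perturbation}(c) and estimate \eqref{E: derivative of perturbation small 2} for the $\tfrac14\epsilon_0$ derivative bounds, feed in Lemma \ref{L:four} for smallness near the reducible, and integrate. Your instinct about $\zeta\hat{\mathfrak q}+\zeta_0\hat{\mathfrak p}_0$ is also correct: the cutoffs are arranged with disjoint supports (this is built into the construction in \cite[Ch.~24]{KM} that Section \ref{S: perturbation} follows), so there is no loss in combining the two $\tfrac14\epsilon_0$ contributions.

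There is, however, a genuine gap in the execution. You write a pointwise Taylor remainder $|R(A,\phi)|\leq C\bigl(|A-A_0|+|\phi|\bigr)|\phi|$ and then control it via the Sobolev embedding $L^2_k\hookrightarrow L^\infty$ on $4$-dimensional tiles. But the perturbations $\mathfrak q$ and $\mathfrak p_0$ in the Kronheimer--Mrowka framework are (gradients of) cylinder functions and hence \emph{non-local}: the value $\mathfrak q^1(B,\psi)$ at a point of $Y$ depends on the whole slice configuration, not on the values of $(B,\psi)$ at that point. So the pointwise remainder bound simply does not hold, and the $L^\infty$ step is a dead end — it throws away exactly the Sobolev regularity one needs. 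The right move, and what the paper's proof does, is to work slicewise: the $\tfrac14\epsilon_0$ derivative bound at $(B_0,0)$ combined with continuity of $D\mathfrak q^1$ furnishes a neighborhood $V$ of $[\theta]$ in $\B(Y)$ (in its $L^2_{k-1/2}$ topology) on which the full $\mathfrak q^1$ satisfies $\|\mathfrak q^1(B,\psi)\|_{L^2(Y)}\leq \tfrac12\epsilon_0\|\psi\|_{L^2(Y)}$; then the $L^2_k$ smallness from Lemma \ref{L:four} on each tile places each slice $[(A_n,\phi_n)|_{\{t\}\times Y}]$ inside $V$ via the trace $L^2_k(\text{tile})\to L^2_{k-1/2}(\text{slice})$; integrating the slicewise bound over $I_{0,T_n}$, and the analogous bound from \eqref{E: derivative of perturbation small 2} over the collar $U$, gives the lemma. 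If you replace your pointwise/$L^\infty$ steps by this slicewise argument, the proof closes.
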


\begin{proof}
Since $(A_n,\phi_n)$ solves the perturbed Seiberg--Witten equations, we have the equality
\[
\D^+_{A_n}(\phi_n) = \hat{\mathfrak{p}}^{1}_{T_{n}}(A_n,\phi_n),
\]
where $\hat{\mathfrak{p}}^{1}_{T_{n}}(A_{n},\phi_{n})$ denotes the spinor component of the perturbation term $\hat{\mathfrak{p}}_{T_{n}}(A_{n},\phi_{n})$; see Section \ref{S: perturbation}. It is supported in $I_{0,T_{n}}\cup U$, where $U$ is a collar neighborhood of $\partial W$. By our definition of $\hat{\mathfrak{p}}_{T_{n}}$,
\[
\hat{\mathfrak{p}}^{1}_{T_{n}}(A_{n},\phi_{n})\big|_{\{t\}\times Y}=\mathfrak{q}^{1}\left((A_{n},\phi_{n})\big|_{\{t\}\times Y}\right)\quad\text{for all}\;\; t \in [-T_n,T_n].
\] 
Since $\mathfrak{q}$ satisfies Assumption \ref{small perturbation}, it follows from (\ref{E: derivative of perturbation small}) that there exists a neighborhood $V$ of $[\theta]\in \B(Y)$ such that 
\[
\|\mathfrak{q}^1(B,\psi)\|_{L^2(Y)}\;\leq\; \frac 12\,\epsilon_0\cdot \|\psi\|_{L^2(Y)}
\]
for any configuration $(B,\psi)$ with $[(B,\psi)]\in V$. But then, by Lemma \ref{L:four}, there exists a positive integer $N$ such that for any $n\geq N$, we have $[(A_{n},\phi_{n})|_{\{t\}\times Y}]\in V$ for all $t\in [-T_n,T_n]$. Therefore, we have the estimate
\[
\|\D^{+}_{A_{n}}(\phi_{n})\big|_{\{t\}\times Y}\|_{L^2(Y)}\;\leq\; \frac 1 2\,\epsilon_0\cdot \|\phi_{n}\big|_{\{t\}\times Y}\|_{L^2(Y)}\quad\text{for all\; $t \in [-T_{n},T_{n}]$},
\]
which implies that 
\[
\|\D^{+}_{A_{n}}(\phi_{n})\|_{L^2(I_{0,T_n})}\; \leq\; \frac 1 2\,\epsilon_0\cdot \|\phi_{n}\|_{L^{2}(I_{0,T_{n}})}.
\]

\smallskip\noindent
A similar argument involving estimate \eqref{E: derivative of perturbation small 2} shows that 
\[
\|\D^{+}_{A_{n}}(\phi_{n})\|_{L^{2}(U)}\; \leq\; \frac 1 2\,\epsilon_0\cdot \|\phi_{n}\|_{L^{2}(U)}.
\]
This completes the proof of the lemma because $\D^{+}_{A_{n}}(\phi_{n})$ is supported on $I_{0,T_{n}}\cup U$.
\end{proof}

\begin{lem}\label{L:global}
For any $\epsilon > 0$ there exists a positive integer $N$ such that, for any $n \ge N$, there is a global gauge transformation $\tilde{u}_n: X_{T_{n}} \to S^{1}$ such that
\[
\|\,A_{n}-\tilde{u}^{-1}_{n}d\tilde u_n\,\|_{C^{0}(X_{T_n})}\; <\; \epsilon.
\]
\end{lem}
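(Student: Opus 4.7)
\textbf{Proof strategy for Lemma \ref{L:global}.} The plan is to patch together the \emph{local} gauge transformations furnished by Lemma \ref{L:four} into a \emph{global} gauge transformation, using the fact that the overlap transition functions must be nearly constant in $S^1$. Since we are working with the abelian group $S^1$ and the Sobolev exponent $k\geq 3$ gives the embedding $L^2_k\hookrightarrow C^0$ on any compact $4$-dimensional piece of fixed diameter, $L^2_k$-smallness of the gauge-transformed connection will automatically upgrade to $C^0$-smallness. Thus it suffices to produce a global gauge transformation $\tilde u_n$ with $\|\tilde u_n\cdot A_n - A_0\|_{L^2_k}$ uniformly small on each piece of a fixed cover.

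\textbf{Cover and local gauges.} Fix a small constant $\delta>0$ (to be determined), and let $N$ be so large that Lemma \ref{L:four} gives, for every $n\geq N$, gauge transformations $u_0$ on $W_4$ and $u_j$ on each cylindrical piece $J_j=[t_j,t_j+4]\times Y$ with $\|u_0\cdot(A_n,\phi_n)-(A_0,0)\|_{L^2_k(W_4)}<\delta$ and $\|u_j\cdot(A_n,\phi_n)-(A_0,0)\|_{L^2_k(J_j)}<\delta$. Choose the cover so that consecutive pieces $J_j$ overlap in a cylinder of length $2$, and so that $J_1$ overlaps $W_4$ (on each of its two boundary components) in such a cylinder. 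The number of pieces grows linearly with $T_n$, but the diameter of each piece is fixed.

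\textbf{Transition functions and phase adjustment.} On any overlap $V = U_i\cap U_j$, the identity $u_i\cdot A_n - u_j\cdot A_n = -\,(u_iu_j^{-1})^{-1}d(u_iu_j^{-1})$ together with the triangle inequality forces $\|(u_iu_j^{-1})^{-1}d(u_iu_j^{-1})\|_{L^2_{k-1}(V)}<2\delta$. Since $V$ is connected of fixed diameter and $L^2_{k-1}\hookrightarrow C^0$ for $k\geq 3$, the $S^1$-valued function $u_iu_j^{-1}$ differs from a constant $c_{ij}\in S^1$ by $O(\delta)$ in $C^0$, and in fact in $L^2_k(V)$ by elliptic regularity. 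We now adjust inductively: keeping $u_0$ fixed, we replace each $u_j$ by $c_j u_j$, choosing $c_j\in S^1$ on the piece $U_j$ so that the transition between $U_{j-1}$ and $U_j$ becomes $O(\delta)$-close to $1\in S^1$ in $C^0$ on the overlap. Because $S^1$ is compact this induction proceeds over arbitrarily many pieces without any accumulation of constants.

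\textbf{Patching and conclusion.} With the adjusted $u_j$'s, on each overlap we may write $u_iu_j^{-1}=e^{i\xi_{ij}}$ with $\xi_{ij}$ small in $L^2_k$ (single-valued because the transition is close to $1$). Choose a partition of unity $\{\chi_j\}$ subordinate to the cover with uniformly bounded derivatives (of orders up to $k$), and on each $U_k$ write the adjusted $u_k$ as $e^{if_k}$ for a local lift $f_k$. Define
\[
\tilde u_n = \exp\Bigl(i\sum_j \chi_j f_j\Bigr),
\]
which is globally well-defined because on any overlap the differences $f_i-f_j=\xi_{ij}$ are single-valued and small. A direct computation gives $\tilde u_n\cdot A_n - u_k\cdot A_n = -i\,d\!\sum_j \chi_j(f_j-f_k)$ on $U_k$, so $\|\tilde u_n\cdot A_n - A_0\|_{L^2_k(U_k)}=O(\delta)$. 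Applying the Sobolev embedding $L^2_k(U_k)\hookrightarrow C^0(U_k)$ on each fixed-geometry piece $U_k$ and choosing $\delta$ small enough in terms of $\epsilon$ yields the claimed bound $\|A_n-\tilde u_n^{-1}d\tilde u_n\|_{C^0(X_{T_n})}<\epsilon$.

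\textbf{Main obstacle.} The only subtle point is that the number of pieces in the cover is unbounded as $n\to\infty$, so one must verify that the patching constants $c_j$ and the local lifts $f_k$ can be chosen with uniform bounds independent of $n$. This is where the compactness of $S^1$ and the uniform geometry of the fixed-diameter overlaps is essential: the inductive phase adjustment involves only $C^0$-small corrections on each step and therefore does not accumulate.
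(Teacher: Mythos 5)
Your proposal correctly identifies the overall strategy of patching local gauges with uniform control, but there is a genuine gap: you have not addressed the \emph{holonomy obstruction} that arises because the cover of $X_{T_n}$ closes up into a loop.

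Since $X_{T_n} = W \cup ([-T_n, T_n]\times Y)$, your chain of pieces $W_4, J_1, J_2, \dots, J_m$ wraps around and closes: $J_m$ overlaps $W_4$ again. You fix $u_0$ on $W_4$ and inductively adjust $u_1, u_2, \dots, u_m$ by constants $c_j$ so that each \emph{consecutive} transition is $O(\delta)$-close to $1$. But nothing controls the transition between $c_m u_m$ on the last piece and the original $u_0$ on $W_4$: after going around the loop, the accumulated constant is an arbitrary element of $S^1$ — essentially the holonomy of $A_n$ around the generator of $H_1(X_{T_n})$ — and it has no reason to be close to $1$. (For a model: on $S^1_T\times Y$ with $A_n = A_0 + i\alpha_n\,dt$, $\alpha_n$ a small constant, the accumulated phase is $e^{i\alpha_n T}$, which can be anything even as $\alpha_n \to 0$.) Consequently, the logarithm difference $f_m - f_0$ on the final overlap is not small, the exponent $\sum_j \chi_j f_j$ is not well-defined modulo $2\pi$-ambiguities with small branch jumps, and the bound $\|\tilde u_n\cdot A_n - A_0\|_{L^2_k(U_k)}=O(\delta)$ fails on that overlap. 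Your appeal to ``compactness of $S^1$'' does not help here: the issue is not accumulation of corrections but a single global phase mismatch.

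The paper's proof handles exactly this point by an extra step that your proposal lacks: after patching the cylindrical pieces into $\bar u_n$, it multiplies by $e^{i(a+bt)}$ where $t$ is the cylinder coordinate, choosing $a$ to match $u_n$ at one end of the cylinder and $b$ to absorb the residual holonomy mismatch at the other end. Because the mismatch is bounded in $S^1$ and the cylinder has length $\sim 2T_n$, one may take $|b|\le \pi/(T_n-2)$, so the extra term $b\,dt$ in the flat connection is $C^0$-small for $n$ large. This ``linear interpolation in phase,'' spreading a bounded holonomy correction over a long neck, is the key mechanism that makes the lemma true; a fixed-size partition of unity as in your proposal cannot reproduce it.
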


Before we go on to prove this lemma, we will show how it implies Lemma \ref{converging to reducible}. We know from Proposition \ref{eigenvalue estimate} that as $n\rightarrow \infty$ the smallest eigenvalue of the operator  $\D^- \,\D^+$ on $X_{T_n}$ is bounded from below by $\epsilon^2_1 > 0$. Therefore, for all sufficiently large $n$ and all positive spinors $\psi$,
\[
\|\,\D^+ \psi\,\|_{L^2(X_{T_n})}\; \ge \; \epsilon_0\cdot \|\psi\|_{L^2(X_{T_n})}.
\]
On the other hand, consider the sequence of gauge transformations $\tilde  u_n$ from Lemma \ref{L:global} and the sequence of spinors $\psi_n = \tilde u_n \phi_n$ with $\epsilon = \epsilon_0/4$. Lemma \ref{L: L2 norm of perturbation} implies that  
\[
\|\D^{+}_{A_{n}-\tilde u^{-1}_n d\tilde u_n}(\psi_{n})\|_{L^2 (X_{T_n})}\; \leq\; \frac 1 2\,\epsilon_0\cdot \|\psi_{n}\|_{L^2 (X_{T_n})}
\]
We then conclude using Lemma \ref{L:global} that
\[
\|\,\D^+\psi_{n}\,\|_{L^2 (X_{T_n})} = \|\D^{+}_{A_{n}-\tilde u^{-1}_n d\tilde u_n}(\psi_{n})-\, (A_n - \tilde u^{-1}_n d\tilde u_n)\cdot \psi_{n}\,\|_{L^2 (X_{T_n})}\; < \epsilon_0\cdot\|\psi_{n}\|_{L^2 (X_{T_n})}
\]
for all sufficiently large $n$, which gives a contradiction.

\begin{proof}[Proof of Lemma \ref{L:global}]
Let $N$ be as in Lemma \ref{L:four} then, for any $n \ge N$, there exists an integer $m_{n} \geq 1$ such that $-T_{n}+2m_{n}\in [T_{n}-4,T_{n}-2)$. For $1\leq j\leq m_{n}-1$, we denote the gauge transformation $u_{n,-T_{n}+2j}$ of Lemma \ref{L:four} by $u(n,j)$. We wish to glue the gauge transformations $u_n, u(n,1),u(n,2),\cdots,u(n,m_{n}-1)$ together with the help of cutoff functions.

First, we pick a base point $o\in Y$. After multiplying by suitable constant gauge transformations,  we may assume that
\[
u(n,j)(\{-T_{n}+2j+2\}\times o)=u(n,j+1)(\{-T_{n}+2j+2\}\times o).
\]
Since $Y$ is a rational homology sphere, we have
\[
u(n,j)|_{[-T_{n}+2j+2,-T_{n}+2j+4]\times Y}=e^{i\xi(n,j)}\cdot u(n,j+1)|_{[-T_{n}+2j+2,-T_{n}+2j+4]\times Y},
\]
where
\[
\xi(n,j):[-T_{n}+2j+2,-T_{n}+2j+4]\times Y\rightarrow \mathbb{R}
\]
satisfies
\begin{equation}\label{base point normalization}
\xi(n,j)(\{-T_{n}+2j+2\}\times o) = 0.
\end{equation}
Now, since
\[
\|A_{n}-u(n,j)^{-1}du(n,j)\,\|_{L^{2}_{k}([-T_{n}+2j+2,-T_{n}+2j+4]\times Y)} < \epsilon
\]
and
\[
\|A_{n}-u(n,j+1)^{-1}du(n,j+1)\,\|_{L^{2}_{k}([-T_{n}+2j+2,-T_{n}+2j+4]\times Y)} < \epsilon,
\]
we have
\begin{multline}\notag
\|d\xi(n,j)\|_{L^{2}_{k}([-T_{n}+2j+2,-T_{n}+2j+4]\times Y)} = \\ \|u(n,j)^{-1}du(n,j)-u(n,j+1)^{-1}du(n,j+1)\|_{L^{2}_{k}([-T_{n}+2j+2,-T_{n}+2j+4]\times Y)} < 2\epsilon.
\end{multline}
Together with (\ref{base point normalization}), this implies that there exists a constant $C_2 > 0$ such that
\begin{equation}
\|\xi(n,j)\|_{L^{2}_{k+1}([-T_{n}+2j+2,-T_{n}+2j+4]\times Y)}\; <\; C_2\cdot \epsilon.
\end{equation}
Next, choose a bump function $\tau: [0,2]\rightarrow [0,1]$ such that $\tau([0,1])=1$ and $\tau([3/2,2])=0$. We let $\tau_{t}:[t,t+2]\times Y\rightarrow [0,1]$ be the function defined by the formula $\tau_t (s) = \tau(s-t)$. Extend the function $\tau_{-T_{n}+2j+2}\cdot \xi(n,j)$ defined on
\[
[-T_{n}+2j+2,-T_{n}+2j+4]\times Y
\]
by zero to obtain a function
\[
\tilde{\xi}(n,j): [-T_{n}+2j+2,-T_{n}+2j+5]\times Y\rightarrow \mathbb{R}.
\]
We have $\|\tilde{\xi}(n,j)\|_{L^{2}_{k+1}([-T_{n}+2j+2,-T_{n}+2j+5])} < C_3\, \epsilon$ for a constant $C_3 > 0$, which implies that
\begin{equation}
\|d\tilde{\xi}(n,j)\|_{C^{0}([-T_{n}+2j+2,-T_{n}+2j+5] \times Y)}\; <\; C_4\cdot \epsilon
\end{equation}
for another constant $C_4 > 0$. Now, for $2\leq j\leq m_{n}-1$, consider the gauge transformations
\[
\bar{u}(n,j) = e^{i\tilde{\xi}(n,j-1)}\cdot u(n,j): [-T_{n}+2j,-T_{n}+2j+3]\times Y\rightarrow S^{1}.
\]
One can easily see that $\bar{u}(n,j)$ equals $u(n,j-1)$ over $[-T_{n}+2j,-T_{n}+2j+1]\times Y$ and equals $u(n,j)$ over $[-T_{n}+2j+2,-T_{n}+2j+3]\times Y$. Therefore, the functions $\bar{u}(n,2)$, $\bar{u}(n,3), \cdots, \bar{u}(n,m_{n}-1)$, together with the functions $u(n,1)|_{[-T_{n}+2,-T_{n}+5]\times Y}$ and $ u(n,m_{n}-1)|_{[-T_{n}+2m_{n},-T_{n}+2m_{n}+2]\times Y}$, agree with each other on the overlaps. As a result, we can glue them together to obtain a gauge transformation
\[
\bar{u}_{n}: [-T_{n}+2,-T_{n}+2m_{n}+2]\times Y\rightarrow S^{1}.
\]
Since $\bar{u}_n$ is obtained from $u (n,j)$ by multiplying by $e^{i\tilde{\xi}(n,j-1)}$, there is a constant $C_5 > 0$ such that
\[
\|A_{n}-\bar{u}^{-1}_{n}d\bar{u}^{-1}_n\,\|_{C^{0}([-T_{n}+2,-T_{n}+2m_{n}+2]\times Y)}\; <\; C_5\cdot \epsilon.
\]
In our next step, we introduce $\hat u_n = e^{i(a+bt)}\,\bar u_n$, where $t$ is the coordinate in the cylindrical direction, and $a \in [0,2\pi]$ and $b \in [0,\pi/(T_{n}-2)]$ are chosen so that
\[
\hat{u}_{n}(\{-T_{n}+2\}\times o)=u_{n}(\{-T_{n}+2\}\times o)
\]
and
\[
\hat{u}_{n}(\{-T_{n}+2m_{n}+2\}\times o)=u_{n}(\{-T_{n}+2m_{n}+2\}\times o).
\]
There is a constant $C_6 > 0$ such that
\[
\|A_{n}-\hat{u}^{-1}_{n}d\hat{u}^{-1}_{n}\,\|_{C^{0}([-T_{n}+2,-T_{n}+2m_{n}+2]\times Y)}\;\leq\; C_{6}\cdot \epsilon + \pi/(T_{n}-2),
\]
where, by choosing $N > 0$ large enough, we may assume $\pi/(T_{n}-2) < \epsilon$ for all $n\geq N$. Arguing as before, we can find
\[
\xi_n: ([-T_n,-T_n+2]\cup [-T_n+2m_n+2, -T_n+2m_n+4])\times Y\rightarrow \mathbb R
\]
such that $e^{i\xi_{n}}\hat{u}_{n}$ equals $u_{n}$ over the domain of $\xi_{n}$ and
\[
\xi_{n}(\{-T_{n}\}\times o)= \xi_{n}(\{-T_{n}+2m_{n}+4\}\times o) = 0.
\]
As before, we have the estimate
\[
\|d\xi_{n}\|_{L^{2}_{k}}\; < \; 4\epsilon +b\; < \; 5\epsilon
\]
on the domain of $\xi_n$.
Let $\tau_n$ be a cut-off function on the domain of $\xi_n$ which equals $1$ when restricted to $([-T_{n}+1,-T_{n}+2]\cup [-T_{n}+2m_{n}+2, -T_{n}+2m_{n}+3])\times Y$ and equals $0$ when restricted to $([-T_{n},-T_{n}+1/2]\cup [-T_{n}+2m_{n}+7/2, -T_{n}+2m_{n}+4])\times Y$. Assume that the $C^{\infty}$ norm of $\tau_n$ is uniformly bounded for all $n\geq N$ and extend $\tau_n\cdot \xi_n$ by zero to a function $\tilde{\xi}_{n}: W_4\rightarrow \mathbb R$. The gauge transformations $e^{i\tilde{\xi}(n)}\,u_{n}$ and $\hat{u}_{n}|_{[-T_{n}+1,-T_{n}+2m_{n}+3]\times Y}$ match on the overlap of their domains, therefore, we can glue them together to the desired gauge transformation $\tilde{u}_{n}: X_{T_n} \to S^1$. Since $\tilde{u}_{n}$ is obtained by modifying $u_{n}$ and $\bar{u}_{n,j}$ using the cutoff functions $\tilde{\xi}_{n}$, $\tilde{\xi}_{n,j}$ and the function $e^{i(a+bt)}$ the estimate of the lemma can be easily verified.
\end{proof}

%%%%%%%%%%%%%%%%%%%%%%%%%%%%%%%%%%%%%%%%%%%%%%%

\section{Gluing results}\label{S:gluing}
In this section, we will finish the proof of Theorem \ref{T: SW=Lefschetz} by first establishing a bijective correspondence between monopoles on $X_R$ and monopoles on $\wp$ for all sufficiently large $R$, and then matching the signs to identify $\# \M(X_R,g_{R},\hat{\mathfrak{p}}_R)$ with the Lefschetz number in the monopole chain complex $C^o$. To simplify notations, we will continue writing $\M(X_R)$ for $\M(X_R,g_{R},\hat{\mathfrak{p}}_{R})$.

\begin{thm}\label{SW=Lefschetz}
Assume that the spin Dirac operator $\D^+(\wp): L^2_1\,(\wp;\,\S^+) \to L^2\,(\wp;\S^-)$ is an isomorphism. Then, for all sufficiently large $R > 0$, the moduli space $\M (X_R)$ is regular, and there exists a homeomorphism
\begin{equation}\label{E:rho}
\rho:\; \M (X_R)\longrightarrow \mathop{\bigcup}\limits_{[\alpha]\in\,\mathfrak{C}^*} \M (W_{\infty}, [\alpha]).
\end{equation}
\end{thm}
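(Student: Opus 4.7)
\medskip
\textbf{Proof plan.} The theorem will be proved by the standard compactness-gluing strategy, following the framework of \cite{KM}. The compactness half is already in place: Theorem \ref{compactness} shows that for any sequence $R_n \to \infty$ and any $[(A_n,\phi_n)] \in \M(X_{R_n})$, a subsequence converges in the sense of Definition \ref{convergence} to some $[(A_\infty,\phi_\infty)] \in \M(\wp,[\alpha])$ with $[\alpha] \in \mathfrak{C}^*$. This provides a candidate target for $\rho$ and, once combined with the gluing half, supplies its surjectivity.

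The heart of the argument is the gluing construction, which produces an inverse map $\bigcup_{[\alpha] \in \mathfrak{C}^*} \M(\wp,[\alpha]) \to \M(X_R)$ for all sufficiently large $R$. Given $[(A,\phi)] \in \M(\wp,[\alpha])$ with $[\alpha]$ irreducible, the non-degeneracy of $[\alpha]$ (Assumption \ref{small perturbation}) combined with standard exponential-decay results for solutions to the perturbed Seiberg-Witten equations on cylinders implies that $(A,\phi)$ converges exponentially fast to the constant trajectory $\gamma_\alpha$ on both ends of $\wp$, with rate controlled by the smallest absolute eigenvalue of the Hessian of $\L_{\mathfrak{q}}$ at $[\alpha]$. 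Using cutoff functions and a gauge transformation to identify the two ends, I would build a pre-glued configuration $(A_R,\phi_R)$ on $X_R$ that satisfies the perturbed Seiberg-Witten equations up to an error $\mathfrak{e}_R$ whose $L^2$ norm decays exponentially in $R$.

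The main analytic step is to correct $(A_R,\phi_R)$ to a genuine monopole via a Banach contraction / Newton iteration. For this one needs a uniform lower bound $\|\mathcal{D}_R u\|_{L^2} \ge C \|u\|_{L^2_1}$, where $\mathcal{D}_R$ denotes the linearized Seiberg-Witten operator at $(A_R,\phi_R)$ augmented by a gauge-fixing condition, and $C > 0$ is independent of $R$. Such a bound is obtained by a cut-and-paste estimate that uses three ingredients: (a) the regularity of $\M(\wp,[\alpha])$ furnished by the perturbation constructed in Lemma \ref{L:p_0}; (b) the non-degeneracy of $[\alpha]$, which gives invertibility of the Hessian on the cylindrical middle; and (c) the first-eigenvalue estimate of Proposition \ref{eigenvalue estimate}, which pins down the Dirac component of $\mathcal{D}_R$ on the long neck. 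Once uniform invertibility is in hand, the contraction mapping yields, for each input monopole, a unique nearby true monopole on $X_R$, and the same invertibility implies regularity of $\M(X_R)$.

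The main obstacle will be assembling the uniform invertibility estimate cleanly, since it requires piecing together gauge-fixing on three different regions (the two ends carrying $(A,\phi)|_W$ and the translation-invariant cylindrical middle) while correctly weighting the norms to absorb the exponential tails. Once the map and its inverse are both constructed, the composition with compactness furnishes a two-sided homeomorphism: injectivity of the gluing map is built in by the contraction principle, while surjectivity follows because any $[(A_n,\phi_n)] \in \M(X_{R_n})$ not in the image of gluing would by Theorem \ref{compactness} subconverge to some $[(A_\infty,\phi_\infty)] \in \M(\wp,[\alpha])$, whose re-gluing would then have to coincide with $[(A_n,\phi_n)]$ for large $n$ by uniqueness of the contraction solution, a contradiction.
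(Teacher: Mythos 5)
Your proposal is a correct outline but takes a genuinely different route from the paper. You describe the classical ``pre-glue, then correct by a Newton/contraction iteration'' scheme: build an approximate solution by splicing, establish a uniform lower bound on the linearization, and use a quantitative implicit function theorem to produce the true monopole. The paper instead stays entirely inside the Kronheimer--Mrowka fiber-product formalism. It invokes \cite[Theorem 18.2.1]{KM} (reproduced as Theorem~\ref{near constant solution on finite cylinder}) to obtain, for each $[\alpha] \in \mathfrak{C}^*$ and each $T \in [T_1,\infty]$, a diffeomorphism $u_{[\alpha]}(T,-): B([\alpha]) \to \M^*(I_T)$ onto a neighborhood of the constant trajectory, together with the crucial $C^\infty_{\loc}$ convergence $\mu_{[\alpha],T} \to \mu_{[\alpha],\infty}$ as $T \to \infty$. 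It then reformulates $\M^*(X_T)$ as the fiber product $\bigcup_{[\alpha]} \Fib(R^-_{T_2}, \mu_{[\alpha],T-T_2})$ (Lemma~\ref{solution as fiber product II}), uses the regularity of $\M(\wp)$ to get transversality at $T = \infty$, and then applies the implicit function theorem to the finite-dimensional transversal intersection. Morally the two are equivalent --- the hard analysis your uniform invertibility bound would carry is packaged inside Theorem 18.2.1 of \cite{KM} --- but the paper's version avoids re-deriving the elliptic estimates and makes the injectivity and surjectivity bookkeeping cleaner, since the compactness statement (Definition~\ref{convergence}, condition (2)) literally says that a monopole on $X_T$ lies in the image of $\mu_{[\alpha],T-T_2}$ for large $T$. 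Your approach is a fine alternative, but expect the uniform invertibility estimate to be the most delicate part.

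One small misattribution worth flagging: you invoke Proposition~\ref{eigenvalue estimate} to control ``the Dirac component of $\mathcal{D}_R$ on the long neck'' in your ingredient (c). That proposition concerns the Dirac operator on $X_R$ at the \emph{trivial} connection; in the paper it is used only to rule out a sequence of irreducible monopoles converging to the reducible critical point (Lemma~\ref{converging to reducible}). For the neck portion of the linearization at a pre-glued configuration near an \emph{irreducible} $[\alpha]$, what you actually need is the invertibility of the extended Hessian of $\L_{\mathfrak{q}}$ at $[\alpha]$, i.e., the non-degeneracy guaranteed by Assumption~\ref{small perturbation}~(b), which is already your ingredient (b). Proposition~\ref{eigenvalue estimate} has no role there.
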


%\begin{cor}\label{C:SW=Lefschetz}
%Under the above assumptions, $\#\,\M(X_R,g_R,\beta) = - \Lef\,(W_*: C^o \to C^o)$ for all sufficiently large $R > 0$
%\end{cor}

We will first prove Theorem \ref{SW=Lefschetz} by adopting the gluing techniques from \cite{KM} to the non-separating case at hand.  Theorem \ref{T: SW=Lefschetz} will be proved at the end of this section.  

%%%%%%%%%%%%%%%%%%%%%%%%%%%%%%%%%%%%%%%%%%%%%%%

\subsection{Fiber products}
We will be using notations from Section \ref{S:notations}. Denote by $\M^*(X_T)$ the moduli space of irreducible monopoles on $X_T$. It follows from Proposition \ref{eigenvalue estimate_z} that $\M^*(X_T) = \M(X_T)$ for all sufficiently large $T > 0$. The similarly defined moduli spaces $\M^*(W_{T'})$ and $\M^*(I_{T',T})$ will be infinite dimensional Hilbert manifolds because both $W_{T'}$ and $I_{T',T}$ have non-empty boundary but we are not imposing any boundary conditions. By the unique continuation theorem \cite[Section 10.8]{KM}, the restriction of an irreducible monopole to the boundary is irreducible, therefore, for all $0 < T' < T < \infty$ we have well defined restriction maps
\[
R^-_{T'}: \M^*(W_{T'}) \to \B^*(Y)\times \B^*(Y)\quad\text{and}\quad R^+_{T',T}: \M^*(I_{T',T}) \to \B^* (Y)\times \B^*(Y),
\]
where $\B^*(Y)$ consists of irreducible configurations in $\B (Y)$. One can show that these maps are embeddings of Hilbert manifolds. It will be convenient to extend these notations to the case of $T = \infty$. Recall that in Section \ref{S:notations} we defined $I_{T',\infty} = \wp - \operatorname{int} (W_{T'})$. Let $\M^* (I_{T',\infty})$ be the moduli space of monopoles $(A,\phi)$ on $I_{T',\infty}$ satisfying
\[
\lim\limits_{t \to +\infty}\left[(A,\phi)|_{\{t\}\times Y}\right]\; =\; \lim\limits_{t \to -\infty}\left[(A,\phi)|_{\{t\}\times Y}\right]\; \in\; \mathfrak C^*.
\]
Then the restriction map
\[
R^+_{T',\infty}: \M^* (I_{T',\infty}) \to \B^*(Y)\times \B^*(Y)
\]
is still well defined and is an embedding of Hilbert manifolds. To unify the notations, we will write $X_{\infty} = \wp$ and
\[
\M^*(X_{\infty}) = \bigcup\limits_{[\alpha]\in \mathfrak{C}^{*}} \M^*(\wp, [\alpha]).
\]
Then, for all $0 < T' < T \le \infty$, we have the following commutative diagram whose unmarked arrows are given by restriction to submanifolds

\begin {equation*}
\begin{tikzpicture}
\draw (4,4.5) node (a) {$\B^*(Y) \times \B^*(Y)\qquad\;$};
\draw (8,6) node (b) {$\M^*(W_{T'})$};
\draw (8,3) node (c) {$\M^*(I_{T',T})$};
\draw (12,4.5) node (d) {$\M^*(X_T)$};
\draw[->](b)--(a) node [midway,above](TextNode){$R^-_{T'}$\;};
\draw[->](c)--(a) node [midway,below](TextNode){$R^+_{T',T}$};
\draw[->](d)--(b) node [midway,above](TextNode){};
\draw[->](d)--(c) node [midway,above](TextNode){};
\end{tikzpicture}
\end {equation*}

\begin{lem}\label{solution as fiber product}
For all $0 < T' < T \le \infty$, the above diagram is a Cartesian square, that is, $\M^*(X_T)$ is homeomorphic to the fiber product
\[
\Fib\,(R^-_{T'},R^+_{T',T}) = \left\{(x,y) \in \M^*(W_{T'})\times \M^*(I_{T',T}) \mid R^-_{T'}(x) = R^+_{T',T}(y) \right\}.
\]
\end{lem}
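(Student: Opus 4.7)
My plan is to exhibit a continuous bijection between $\M^*(X_T)$ and the fiber product, realized by restriction in one direction and by gluing in the other. First I will define the forward map sending a monopole $[(A,\phi)]$ on $X_T$ to the pair of its gauge equivalence classes on $W_{T'}$ and $I_{T',T}$; by the unique continuation theorem cited just above the lemma, both restrictions are irreducible, and they agree further on the common boundary $\partial W_{T'} = \partial I_{T',T}$ by construction, so the image lies in the fiber product. Continuity in the $L^2_k$ topology is immediate from the definition of the norms.

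For the inverse, I will take a pair $(x,y)$ in $\Fib(R^-_{T'}, R^+_{T',T})$ and choose representatives $(A_1,\phi_1)$ of $x$ and $(A_2,\phi_2)$ of $y$. The condition $R^-_{T'}(x) = R^+_{T',T}(y)$ supplies a gauge transformation $u$ on $\partial W_{T'}$ relating the boundary restrictions, and my plan is to extend $u$ to a gauge transformation $\tilde{u}$ on $I_{T',T}$ and apply $\tilde{u}^{-1}$ to $(A_2,\phi_2)$. After this modification the two configurations literally agree on the common boundary and glue to a configuration $(A,\phi)$ on $X_T$. Because the perturbed Seiberg--Witten equations are local and the perturbation $\hat{\mathfrak{p}}_T$ was constructed by gluing the compatible perturbations $\hat{\mathfrak{p}}$ on $W$ and $\hat{\mathfrak{q}}$ on the cylindrical piece, this $(A,\phi)$ is again a monopole. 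Independence of all the choices follows because two different gluings differ by a gauge transformation that fixes an irreducible boundary configuration, hence is trivial, so the two resulting monopoles are gauge equivalent on $X_T$.

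Injectivity of the forward map follows from the same triviality of stabilizers: if two monopoles on $X_T$ become gauge-equivalent upon restriction to each piece, then the boundary traces of the implementing gauge transformations take the same irreducible boundary configuration to itself, hence must coincide and glue to an element of $\mathcal{G}(X_T)$. Continuity of the inverse will be a standard slice-theorem consequence once its existence is established. The case $T=\infty$ is handled identically, as the asymptotic decay conditions defining $\M^*(I_{T',\infty})$ are preserved by a matching modification supported in a fixed collar neighborhood of the finite boundary; the fiber product then decomposes naturally over $[\alpha]\in\mathfrak{C}^*$ and matches the decomposition $\M^*(X_\infty) = \bigcup_{[\alpha]}\M^*(\wp,[\alpha])$ set up earlier.

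The principal technical point I anticipate is the extendability of the boundary gauge transformation $u$ across $I_{T',T}$: since $Y$ is merely a rational homology sphere, $\mathcal{G}(Y)$ may have several connected components indexed by the torsion group $H^1(Y;\mathbb{Z})$, and a transformation in a non-identity component cannot be extended by a naive exponential cutoff. I propose to handle this by first modifying the representatives via elements of $\mathcal{G}(W_{T'})$ and $\mathcal{G}(I_{T',T})$, which shifts the class of $u$ in $\pi_0(\mathcal{G}(\partial W_{T'}))$ within the subgroup generated by the two restriction maps; a Mayer--Vietoris comparison for $X_T = W_{T'}\cup I_{T',T}$ shows that any $u$ arising from a genuine element of the fiber product can be brought into the identity component by such a shift, after which $u = e^{if}$ for some real-valued function $f$ on $\partial W_{T'}$ and extension by cutoff is routine.
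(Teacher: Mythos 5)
Your plan reproduces the standard restriction/gluing bijection; the paper itself simply cites Kronheimer--Mrowka's Lemma~19.1.1 (the separating case) and omits the details, so in spirit you and the paper take the same route. The one point to correct is your concern about $\pi_0$ of the gauge group on~$Y$. You assert that for a rational homology sphere $Y$ the group $H^1(Y;\mathbb{Z})$, which indexes $\pi_0(\operatorname{Map}(Y,S^1)) = [Y,S^1]$, may be a nontrivial ``torsion group'' and set up a Mayer--Vietoris argument to bring the boundary gauge transformation into the identity component. In fact $H^1(Y;\mathbb{Z})$ is never torsion: by universal coefficients $H^1(Y;\mathbb{Z}) \cong \operatorname{Hom}(H_1(Y;\mathbb{Z}),\mathbb{Z})$, which is free abelian, and since $H_1(Y;\mathbb{Z})$ is finite for a rational homology sphere this group vanishes outright. (The torsion group you have in mind is $H^2(Y;\mathbb{Z}) \cong H_1(Y;\mathbb{Z})$, which parametrizes $\operatorname{spin}^c$ structures, not components of the gauge group.) Thus the gauge group on $Y$ is already connected, every boundary gauge transformation is of the form $e^{if}$ with $f\colon Y \to \mathbb{R}$, and the Mayer--Vietoris detour is unnecessary; your extension step works immediately.

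With that simplification the remainder of your outline is sound: irreducibility of boundary restrictions from unique continuation, triviality of stabilizers for both well-definedness and injectivity, the local nature of the equations together with the compatible gluing of $\hat{\mathfrak{p}}$ and $\hat{\mathfrak{q}}$ into $\hat{\mathfrak{p}}_T$, and the $T=\infty$ case handled by a modification supported in a fixed collar. The remaining analytic points --- that the glued configuration is genuinely $L^2_k$ across the seam (elliptic regularity, since the equation determines the normal jet on the gluing hypersurface from the tangential data) and that the inverse map is continuous (slice theorem) --- are filled in exactly as in the separating case in Kronheimer--Mrowka, and you could also note, as the paper does, that only the irreducible part of the blown-up moduli space is in play, so no new phenomena near the reducible locus need be addressed.
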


\begin{proof}
The proof is identical to that of \cite[Lemma 19.1.1]{KM}, which deals with the separating case, and will be omitted. Note that some new issues would appear if we were to glue reducible monopoles in the blown-up moduli space but we do not deal with them here.
\end{proof}

In less formal terms, Lemma \ref{solution as fiber product} asserts that the moduli space $\M^*(X_T)$ is the intersection of the moduli spaces $\M^*(W_{T'})$ and $\M^*(I_{T',T})$ viewed as submanifolds of $\B^*(Y)\,\times\,\B^*(Y)$. We will prove Theorem \ref{SW=Lefschetz} by showing that all of these intersections occur in small neighborhoods of constant trajectories $\gamma_{\alpha}$, where the intersection points for sufficiently large $T$ can be matched with those for $T = \infty$ using implicit function theorem.

%%%%%%%%%%%%%%%%%%%%%%%%%%%%%%%%%%%%%%%%%%%%%

\subsection{Technical lemmas}
The central role in our proof will be played by the following theorem, which is a special case of \cite[Theorem 18.2.1]{KM}. When $T$ is finite, $I_T$ will stand for $[-T,T]\times Y$ and $R^+_T$ for the corresponding restriction map.

\begin{thm}\label{near constant solution on finite cylinder}
There exists a constant $T_1 > 0$ such that for all $T\geq T_1$ and $[\alpha] \in \mathfrak C^*$, there exist smooth maps
\begin{gather*}
u_{[\alpha]}(T,-): B([\alpha]) \to \M^* (I_T) \\
u_{[\alpha]}(\infty,-): B([\alpha])\to \M^* (I_{\infty})
\end{gather*}
which are diffeomorphisms from an open neighborhood $B([\alpha])\subset \B^*(Y)$, which is independent of $T$, onto neighborhoods of the constant solutions $[\gamma_{\alpha}]$. Moreover, the maps
\[
\mu_{[\alpha],T}\; =\; R^+_T\circ u_{[\alpha]} (T,-)\,: B([\alpha]) \to \B^*(Y)\times \B^*(Y)
\]
are smooth embeddings for all $T \in [T_1,\infty]$, and we have a $C^{\infty}_{\loc}$ convergence
\[
\mu_{[\alpha],T}\; \longrightarrow\;\mu_{[\alpha],\infty}\quad\text{as}\quad T \to \infty.
\]
Finally, there exists a constant $\eta > 0$, independent of $T$, such that the image of the map $u_{[\alpha]}(T,-)$ contains all the trajectories $[\gamma]\in \M^* (I_T)$ with $\|\gamma-\gamma_{\alpha}\|_{L^2_k(I_T)}\leq \eta$.
\end{thm}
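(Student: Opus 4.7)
The plan is to obtain the maps $u_{[\alpha]}(T,-)$ via a parametrized implicit function theorem, uniform in $T \in [T_1,\infty]$, applied to the perturbed Seiberg--Witten equations on the cylinder near the constant trajectory $\gamma_\alpha$. First I would fix a Coulomb-type gauge slice at $\gamma_\alpha$, so that configurations on $I_T$ within a small $L^2_k$-ball are written uniquely as $\gamma_\alpha + \xi$ with $\xi$ lying in a closed complement $\mathcal{K}_T$ to the gauge orbit directions. Modulo this slice condition, the perturbed four-dimensional Seiberg--Witten equations become a smooth nonlinear equation $\mathcal{F}_T(\xi)=0$, and its linearization at $\xi=0$ is an operator of the form $\partial_t + L_\alpha$, where $L_\alpha$ is the extended Hessian of $\L_\mathfrak{q}$ at $\alpha$ on a slice through $\alpha$ inside $\mathcal{C}(Y)$.

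The key analytic input is the admissibility of $\mathfrak{q}$ from Assumption \ref{small perturbation}(b): this forces $L_\alpha$ to be a self-adjoint Fredholm operator with \emph{no} zero eigenvalue, and hence with a spectral gap $\delta>0$. Standard Floer-theoretic estimates then give, for the operator $\partial_t + L_\alpha$ on $I_T$, a right inverse $Q_T$ (modulo the finite-dimensional data of a boundary value) whose operator norm is bounded by a constant independent of $T$; the decay constant $\delta$ controls the exponential tails responsible for all $T$-dependence, so the estimates persist as $T\to\infty$ provided we use weighted Sobolev spaces with weight matching $\delta$ on the infinite cylinder, and the unweighted $L^2_k$-norm on $I_T$ for finite $T$.

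Given the uniform right inverse, the quantitative implicit function theorem produces, for each $b \in B([\alpha]) \subset \mathcal{B}^*(Y)$ sufficiently close to $[\alpha]$, a unique small solution $u_{[\alpha]}(T,b)$ of $\mathcal{F}_T=0$ normalized so that $b$ plays the role of an initial/boundary parameter (e.g., the projection of $\xi|_{t=-T}$ onto a fixed complement of $\ker(\partial_t+L_\alpha)$). The uniqueness within a fixed $L^2_k$-ball supplies both the constant $\eta>0$ of the last assertion and the injectivity of $\mu_{[\alpha],T} = R^+_T\circ u_{[\alpha]}(T,-)$; the latter is an embedding by the unique continuation property for $\mathcal{F}_T$. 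The $C^\infty_{\loc}$-convergence $\mu_{[\alpha],T} \to \mu_{[\alpha],\infty}$ as $T\to\infty$ follows because the solutions $u_{[\alpha]}(T,b)$ and $u_{[\alpha]}(\infty,b)$ are produced by the same contraction-mapping scheme and differ on any fixed compact region by quantities exponentially small in $T$; elliptic bootstrapping (iterated interior regularity for the Seiberg--Witten equations) upgrades the $L^2_k$-closeness to smooth closeness on compacts.

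The main obstacle is arranging genuinely $T$-uniform estimates across the range $T\in[T_1,\infty]$. Concretely, the finite cylinder $I_T$ and the ``infinite'' model underlying $\M^*(I_\infty)$ live naturally in different function spaces (standard versus exponentially weighted Sobolev spaces), and one must set up cut-off and extension operators that transport $Q_T$ and $Q_\infty$ into one another with controlled errors. This is the standard linear-gluing step in cylindrical gauge theory, and executing it carefully---so that the contraction constants, the radii of the implicit function theorem, and the operator norms of $Q_T$ are all bounded by quantities that remain finite as $T\to\infty$---is where the technical weight of the argument lies. Once this is done, the smooth structure, the limiting behavior, and the uniform radius $\eta$ all fall out of the statement of the quantitative implicit function theorem applied with a single set of constants.
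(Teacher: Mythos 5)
The paper does not prove this statement from scratch: it is taken as ``a special case of \cite[Theorem 18.2.1]{KM}'', with the concrete form of the parametrization recalled in the proof of Lemma \ref{convergence on the boundary}. Your sketch is an attempt to reconstruct the [KM, Section 18.4] argument, and the broad shape is right: a gauge slice at $\gamma_\alpha$, a nonlinear map $\mathcal F_T$ whose linearization is $\partial_t+L_\alpha$, the spectral gap coming from admissibility, a $T$-uniform right inverse, and a quantitative inverse function theorem.

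The gap is in the boundary parametrization, which is exactly where the $T$-uniformity is won or lost. You propose to normalize by ``the projection of $\xi|_{t=-T}$ onto a fixed complement of $\ker(\partial_t+L_\alpha)$'', i.e.\ by boundary data at a \emph{single} end. This does not give a right inverse with norm bounded independently of $T$: any nonzero $\K_{\mathfrak q}^-$-component of a boundary value at $t=-T$ propagates under $\partial_t+L_\alpha$ with growth of order $e^{\delta(t+T)}$, so by the time one reaches $t=T$ the solution has left every fixed $L^2_k$-ball and the contraction constants degenerate as $T\to\infty$. (If one instead reads your phrase as prescribing precisely the $\K_{\mathfrak q}^-$-part at $t=-T$, the situation is worse: that is the growing half, so the problem is not even Fredholm with uniform bounds.) What \cite[Section 18.4]{KM} does --- and what the paper explicitly records in Lemma \ref{convergence on the boundary} --- is prescribe \emph{split} spectral data, one piece at each end: the parameter $(a,b)\in\K_{\mathfrak q}^+\oplus\K_{\mathfrak q}^-$ is the $\Pi^+$-projection of the restriction at $t=-T$ together with the $\Pi^-$-projection of the restriction at $t=+T$. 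With this Atiyah--Patodi--Singer-type condition each half of the data is prescribed where the corresponding modes are small and decay across the cylinder, so $(\partial_t+L_\alpha,\Pi^+r_{-T},\Pi^-r_{+T})$ is invertible with norm bounded uniformly in $T\in[T_1,\infty]$. This is also what makes $\mu_{[\alpha],T}=R^+_T\circ u_{[\alpha]}(T,-)$ an embedding essentially for free: its composition with $(\Pi^+,*,*,\Pi^-)$ is the identity on the ball, so no appeal to unique continuation is needed.

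Once the one-ended normalization is replaced by the split spectral one, the rest of your outline --- the quantitative implicit function theorem with $T$-independent constants, the exponential decay giving $C^\infty_{\loc}$-convergence of $\mu_{[\alpha],T}$ to $\mu_{[\alpha],\infty}$, and the uniform uniqueness radius $\eta$ --- all goes through as you describe.
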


\begin{rmk}\label{separating}
In addition, we will assume that, for all $S,T\in [T_1,\infty]$ and all $[\alpha]\neq [\beta] \in \mathfrak C^*$,
$$\operatorname{im}(u_{[\alpha]}(S,-))\cap \operatorname{im}(u_{[\beta]}(T,-))=\emptyset,\quad\operatorname{im}(\mu_{[\alpha]}(S,-))\cap \operatorname{im}(\mu_{[\beta]}(T,-))=\emptyset.$$
\end{rmk}

\begin{lem}\label{long neck near constant}
Let $\eta > 0$ be as in Theorem \ref{near constant solution on finite cylinder}. Then one can find constants $0 < T_2 < T_3 < \infty$ with the following significance: for any $T \in [T_3,\infty]$, any element of $\M^*(X_T)$ can be represented by a monopole $(A,\phi)$ such that $\|(A,\phi)|_{I_{T_2,T}}-\gamma_{\alpha}\|_{L^2_k (I_{T_2,T})}\, \leq\, \eta/2$ for some $[\alpha]\in \mathfrak C^*$.
\end{lem}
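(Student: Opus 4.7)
The plan is to argue by contradiction, converting a failure of the uniform estimate into a sequence of monopoles on $X_{T_n}$ with $T_n\to\infty$, and then applying the compactness theorem (Theorem \ref{compactness}) together with clause (2) of Definition \ref{convergence} to reach a contradiction. The case $T=\infty$ will be handled separately by exponential decay.

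Suppose first that no pair $(T_2,T_3)$ with $T_3<\infty$ serves the purpose for finite $T$. Taking $T_2=T_3=n$ in the negation of the statement, for each integer $n$ we produce $T_n\geq n$ and a monopole $[(A_n,\phi_n)]\in\M^*(X_{T_n})$ such that for every $[\alpha]\in\mathfrak{C}^*$ and every gauge representative,
\[
\|(A_n,\phi_n)|_{I_{n,T_n}}-\gamma_\alpha\|_{L^2_k(I_{n,T_n})}\;>\;\eta/2.
\]
By Theorem \ref{compactness}, pass to a subsequence so that $[(A_n,\phi_n)]$ converges in the sense of Definition \ref{convergence} to some $[(A_\infty,\phi_\infty)]\in\M(W_\infty,[\alpha])$ with $[\alpha]\in\mathfrak{C}^*$.

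Applying clause (2) of Definition \ref{convergence} with $\epsilon=\eta/4$ yields a real number $T_0>0$, an integer $N>0$, and gauge transformations $v_n:I_{T_0,T_n}\to S^1$ (for $n>N$) satisfying
\[
\|v_n\cdot(A_n,\phi_n)|_{I_{T_0,T_n}}-\gamma_\alpha\|_{L^2_k(I_{T_0,T_n})}\;<\;\eta/4.
\]
Choose $n\geq\max(N+1,T_0)$. Since $I_{n,T_n}=[-T_n+n,T_n-n]\times Y\subset[-T_n+T_0,T_n-T_0]\times Y=I_{T_0,T_n}$, restricting the same gauge representative gives
\[
\|v_n\cdot(A_n,\phi_n)|_{I_{n,T_n}}-\gamma_\alpha\|_{L^2_k(I_{n,T_n})}\;<\;\eta/4\;<\;\eta/2,
\]
contradicting the choice of $(A_n,\phi_n)$. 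This provides the desired $T_2,T_3$ for the finite part of the range.

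For $T=\infty$, recall that $I_{T_2,\infty}$ is a disjoint union of two cylindrical half-spaces. The set $\mathfrak{C}^*$ is finite, and for each $[\alpha]\in\mathfrak{C}^*$ the moduli space $\M(\wp,[\alpha])$ is a compact zero-dimensional (hence finite) manifold by the regularity discussion in Section \ref{S:broken}. Standard exponential-decay estimates for solutions of the perturbed Seiberg--Witten equations on a half-cylinder asymptotic to the non-degenerate critical point $[\alpha]$ (see \cite[Chapter 13]{KM}) imply that for each $[(A,\phi)]\in\M(\wp,[\alpha])$ there is a finite $T_2^{(A,\phi)}$ for which the conclusion holds. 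Since the union $\bigcup_{[\alpha]\in\mathfrak{C}^*}\M(\wp,[\alpha])$ is finite, enlarging $T_2$ to the maximum of the individual values gives a uniform constant which, combined with the $T_3$ produced above (and with $T_2$ possibly enlarged), proves the lemma.

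The main obstacle has already been overcome by Theorem \ref{compactness}; the argument above is essentially an unpacking of the definition of convergence, matched on $I_{n,T_n}\subset I_{T_0,T_n}$ with the uniform $L^2_k$ gauge bound. The only mild subtlety is ensuring the $T=\infty$ endpoint is covered, which follows from exponential decay and the finiteness of the asymptotic critical-point set together with the compactness of each $\M(\wp,[\alpha])$.
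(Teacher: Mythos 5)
Your argument is correct and follows essentially the same route as the paper: for finite $T$ you argue by contradiction exactly as the authors do, building a sequence $(A_n,\phi_n)$ on $X_{T_n}$ with $T_n\to\infty$ that violates the estimate on wider and wider middle cylinders, then invoking Theorem \ref{compactness} and clause (2) of Definition \ref{convergence} to reach a contradiction, while the paper's proof dispatches the $T=\infty$ endpoint with the single sentence that it ``should be clear,'' which your exponential-decay-plus-finiteness unpacking correctly justifies. Two cosmetic remarks: the statement requires $T_2 < T_3$ strictly, so take $T_2 = n$, $T_3 = n+1$ rather than $T_2 = T_3 = n$ (this changes nothing); and the finiteness of $\bigcup_{[\alpha]}\M(\wp,[\alpha])$ that you invoke, while true, rests on the compactness supplied by Theorem \ref{compactness} together with the no-breaking dimension count of Lemma \ref{broken at most once}, so it is worth making that dependence explicit rather than attributing it solely to ``the regularity discussion in Section \ref{S:broken}.''
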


\begin{proof}
The case $T = \infty$ should be clear once we remember that, for $T = \infty$, the notation $I_{T_2,\infty}$ means $W_{\infty} - \operatorname{int} (W_{T_2})$. Let us now assume that $T < \infty$ and suppose to the contrary that the constants $T_2$ and $T_3$ do not exist. Then we can find two sequences of real numbers $T'_n < T_n$, both going to infinity as $n \to \infty$, and a sequence of monopoles $(A_n,\phi_n)$ on $X_{T_n}$ such that, for any gauge transformation $u_n: I_{T_n', T_n}\times Y \to S^1$ and any $\alpha \in \mathfrak C^*$, we have
\[
\|u_n\cdot (A_n,\phi_n)|_{I_{T_n', T_n}} - \gamma_{\alpha}\|_{L^2_k(I_{T_n', T_n})}\; >\; \eta/2
\]
for all $n$. According to Theorem \ref{compactness}, after passing to a subsequence, we may assume that $[(A_{n},\phi_{n})]$ converges to an element in $\M (\wp,[\alpha'])$ for some $[\alpha']\in\mathfrak C^*$. This leads to a contradiction with part (2) of Definition \ref{convergence} of convergence.
\end{proof}

\begin{lem}\label{solution as fiber product II}
Let $T_2$ and $T_3$ be the constants from Lemma \ref{long neck near constant}. Then for any $T\in [T_{3},+\infty]$, there exists a homeomorphism
\[
\M^*(X_T)\; = \bigcup\limits_{[\alpha]\in \mathfrak C^*}\Fib\, (R^-_{T_2},\,\mu_{[\alpha],T-T_2})
\]
between $\M^*(X_{T})$ and a disjoint union of the fiber products. Moreover, the moduli space $\M^*(X_{T})$ is regular if and only if, for any $[\alpha]\in \mathfrak C^*$, the images of the maps $R^-_{T_{2}}$ and $\mu_{[\alpha],T-T_2}$ intersect transversely in $\B^*(Y)\times \B^*(Y)$.
\end{lem}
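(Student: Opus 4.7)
The plan is to combine the fiber product description from Lemma \ref{solution as fiber product} with the gluing parameterizations supplied by Theorem \ref{near constant solution on finite cylinder}. Since the cylinder $I_{T_2,T}$ is isometric by translation to the symmetric cylinder $I_{T-T_2}$, I can transport the parameterization $u_{[\alpha]}(T-T_2,-): B([\alpha])\to \M^*(I_{T-T_2})$ to obtain a diffeomorphism from $B([\alpha])$ onto an open neighborhood of the constant trajectory $[\gamma_\alpha]\in \M^*(I_{T_2,T})$, with the composite $R^+_{T_2,T}\circ u_{[\alpha]}(T-T_2,-)$ being precisely the map $\mu_{[\alpha],T-T_2}$. (For $T=\infty$, $I_{T_2,\infty}$ has two cylindrical components and one uses $u_{[\alpha]}(\infty,-)$ instead, with no further change.)

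The first step is then to show that these neighborhoods exhaust the subset of $\M^*(I_{T_2,T})$ that contributes to the fiber product. By Lemma \ref{long neck near constant}, every element of $\M^*(X_T)$ admits a representative whose restriction to $I_{T_2,T}$ lies within $L^2_k$-distance $\eta/2$ of $\gamma_\alpha$ for some $[\alpha]\in \mathfrak{C}^*$. The last clause of Theorem \ref{near constant solution on finite cylinder} then forces this restriction into the image of $u_{[\alpha]}(T-T_2,-)$, and Remark \ref{separating} guarantees that distinct $[\alpha]$ produce disjoint such images. Combining this with Lemma \ref{solution as fiber product} yields the decomposition
\[
\M^*(X_T)\;=\;\bigcup_{[\alpha]\in\mathfrak{C}^*}\Fib\bigl(R^-_{T_2},\,\mu_{[\alpha],T-T_2}\bigr),
\]
with the union being disjoint.

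For the regularity statement, I would interpret $\M^*(X_T)$ as the intersection of the Hilbert submanifolds $R^-_{T_2}(\M^*(W_{T_2}))$ and $R^+_{T_2,T}(\M^*(I_{T_2,T}))$ of $\B^*(Y)\times \B^*(Y)$. A standard Mayer--Vietoris computation (parallel to the separating case in \cite[Chapter 24]{KM}) identifies the linearized Seiberg--Witten operator at a monopole $[(A,\phi)]\in \M^*(X_T)$, modulo gauge, with the difference of the tangent maps of $R^-_{T_2}$ and $R^+_{T_2,T}$; hence surjectivity of the former at $[(A,\phi)]$ is equivalent to transversality of the two images at the common boundary value. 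Since $u_{[\alpha]}(T-T_2,-)$ is a local diffeomorphism onto $\M^*(I_{T_2,T})$ near $[\gamma_\alpha]$, this transversality is in turn equivalent to transversality of $R^-_{T_2}$ with $\mu_{[\alpha],T-T_2}$.

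The main technical obstacle is the first step: one must know that the $\eta/2$ bound from Lemma \ref{long neck near constant} is strictly smaller than the $\eta$ bound controlling the image of $u_{[\alpha]}(T-T_2,-)$, so that the near-constant representative genuinely lands in the parametrized neighborhood. This is built into the choice of constants, so only bookkeeping is required. The remainder of the argument is a formal translation between the fiber product formulations.
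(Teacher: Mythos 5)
Your proposal is correct and follows the same route the paper takes: it combines the fiber product description of Lemma \ref{solution as fiber product}, the near-constant restriction bound $\eta/2 < \eta$ from Lemma \ref{long neck near constant} fed into the last clause of Theorem \ref{near constant solution on finite cylinder} (together with Remark \ref{separating} for disjointness), and the identification of regularity with transverse intersection as in \cite{KM}. The only minor imprecision is the citation for the regularity step: the relevant result is \cite[Theorem 19.1.4]{KM} rather than Chapter 24, but the mechanism you describe is the right one.
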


\begin{proof}
The first assertion is a straightforward corollary of Theorem \ref{near constant solution on finite cylinder}, Lemma \ref{solution as fiber product}, and Lemma \ref{long neck near constant}. The second assertion is essentially \cite[Theorem 19.1.4]{KM}.
\end{proof}

\begin{lem}\label{convergence on the boundary}
Let $B([\alpha]) \subset \B^*(Y)$ be an open neighborhood as in Theorem \ref{near constant solution on finite cylinder}, and let $x_n \in B([\alpha])$ be a sequence such that $\mu_{[\alpha],T_n}(x_n)\to \mu_{[\alpha],\infty}(x)$ for some $T_n \to \infty$ and some $x \in B([\alpha])$. Then $x_n \to x$ in the topology of $\B^*(Y)$.
\end{lem}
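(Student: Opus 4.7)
The approach is to locally invert the maps $\mu_{[\alpha],T_n}$ near $x$ in a way that is independent of $n$, and then use global injectivity of each $\mu_{[\alpha],T_n}$ on $B([\alpha])$ to identify the local preimages with the sequence $(x_n)$. Throughout, write $\mu_T = \mu_{[\alpha],T}$ and $y_\infty = \mu_{\infty}(x)$; by hypothesis, $y_n := \mu_{T_n}(x_n)$ converges to $y_\infty$ in $\B^*(Y) \times \B^*(Y)$.

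First I would apply the local immersion theorem to the smooth embedding $\mu_\infty$ at $x$: since its derivative is a split injection of Banach spaces, there exist open neighborhoods $U \subset B([\alpha])$ of $x$ and $V$ of $y_\infty$ in $\B^*(Y) \times \B^*(Y)$, together with a diffeomorphism $\phi: V \to U \times W$ (where $W$ is a neighborhood of $0$ in a complementary Banach subspace) satisfying $\phi \circ \mu_\infty(u) = (u, 0)$ for every $u \in U$. After shrinking $U$ to a smaller neighborhood with closure inside the original $U$, the $C^\infty_{\rm loc}$ convergence $\mu_{T_n} \to \mu_\infty$ furnished by Theorem \ref{near constant solution on finite cylinder} guarantees that, for all sufficiently large $n$, the compositions $\phi \circ \mu_{T_n}|_U$ are defined and converge in $C^1$ to the map $u \mapsto (u, 0)$.

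Writing $\phi \circ \mu_{T_n}(u) = (a_n(u), b_n(u))$, the quantitative inverse function theorem in Banach spaces, applied via the standard contraction-mapping argument using $a_n \to \mathrm{id}_U$ in $C^1$, yields a smaller neighborhood $U' \subset U$ of $x$, independent of $n$, such that for all large $n$ the maps $a_n|_{U'}$ are diffeomorphisms onto their images, with smooth inverses defined on a common neighborhood of $x$ and converging uniformly to the identity. In particular, $\mu_{T_n}(U')$ contains a fixed open neighborhood $V'' \subset V$ of $y_\infty$ for all large $n$.

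Since $y_n \to y_\infty$, for $n$ large we have $y_n \in V''$, so $y_n = \mu_{T_n}(z)$ for some $z \in U'$. By the global injectivity of the embedding $\mu_{T_n}: B([\alpha]) \to \B^*(Y) \times \B^*(Y)$, this $z$ must coincide with $x_n$, and in particular $x_n \in U'$ for all large $n$. From $\phi(y_n) = (a_n(x_n), b_n(x_n)) \to \phi(y_\infty) = (x, 0)$ we read off $a_n(x_n) \to x$, and applying the uniformly convergent inverses gives $x_n = a_n^{-1}(a_n(x_n)) \to x$, which is the desired convergence. The main technical subtlety is the uniform-in-$n$ version of the inverse function theorem that produces the common neighborhood $U'$ and the uniform convergence of the $a_n^{-1}$; all the remaining steps are soft consequences of the normal form and the global injectivity of each $\mu_{T_n}$.
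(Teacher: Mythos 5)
Your argument reaches the right conclusion, but it takes a genuinely more roundabout route than the paper's, and it leans on a nontrivial analytic step where the paper has none. The paper's proof exploits the specific construction of the maps $u_{[\alpha]}(T,-)$ from \cite[Section 18.4]{KM}: by definition, $(\Pi^+,\Pi^-)\circ R^+_T\circ u_{[\alpha]}(T,-)$ is the identity on $B([\alpha])$, where $\Pi^\pm$ are the spectral projections of the Hessian and are independent of $T$. Concretely, writing $x=(a,b)\in\K^+_{\mathfrak q}\oplus\K^-_{\mathfrak q}$, one has $\mu_{[\alpha],T}(a,b)=(a,*,*,b)$ for every $T\in[T_1,\infty]$, so the \emph{same fixed linear} retraction pulls $x_n$ back out of $\mu_{[\alpha],T_n}(x_n)$ for all $n$, and the claimed convergence is immediate from continuity of that retraction. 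Your proof instead manufactures a local retraction near $y_\infty$ from the immersion normal form of $\mu_\infty$ alone, and then needs a uniform-in-$n$ inverse function theorem to transfer it to the $\mu_{T_n}$. That step is where the real work is hiding: the hypothesis you have is $C^\infty_{\rm loc}$ convergence $\mu_{[\alpha],T}\to\mu_{[\alpha],\infty}$, and $B([\alpha])$ is an open ball in the infinite-dimensional space $\K=T_{[\alpha]}\B^*(Y)$, where closed balls are not compact. To run the contraction-mapping inverse function theorem with a radius $U'$ independent of $n$, you need $\|Da_n-\mathrm{id}\|$ small \emph{uniformly on a ball}, not merely on compact subsets, which is strictly stronger than the literal reading of $C^\infty_{\rm loc}$. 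In this setting the stronger estimates do in fact come out of the contraction-mapping construction in \cite[Section 18.4]{KM}, so your argument can be made to go through, but you should recognize that this is a genuine estimate one must extract from the construction, not a ``soft consequence.'' The paper's proof avoids the issue entirely by noticing that the construction hands you a $T$-independent global left inverse for free, which makes the whole lemma essentially tautological. If you find yourself invoking a uniform inverse function theorem to prove something that ought to be obvious, it is worth looking for a hidden split or retraction like this one.
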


\begin{proof}
This will be clear once we recall the construction of the map $u_{[\alpha]}(T,-)$ from Section 18.4 of \cite{KM}. The tangent space $T_{[\alpha]}\B^*(Y)$, denoted by $\K$, has a decomposition $\K = \K_{\mathfrak{q}}^+ \oplus \K_{\mathfrak{q}}^-$ given by the spectral decomposition of the Hessian of $\L_{\mathfrak{q}}$. We will denote by $\Pi^{\pm}: \K \to \K_{\mathfrak{q}}^{\pm}$ the corresponding projections. We will also identify small open balls $B_{\epsilon}([\alpha]) \subset \K$ of radius $\epsilon > 0$ with open neighborhoods $B([\alpha])\subset\B^*(Y)$. According to \cite[Section 18.4]{KM}, for any sufficiently small $\epsilon > 0$, there exist constants $\epsilon' > 0$ and $T_1 > 0$ with the following significance: For any $T\in [T_1,\infty]$ and any $(a,b)\in B_{\epsilon} ([\alpha])\, \subset\, \K_{\mathfrak{q}}^+\oplus \K_{\mathfrak{q}}^-$, there exists a unique
\[
[\gamma^T_{(a,b)}]\;\in\; (R^+_T)^{-1}(B([\alpha])\times B([\alpha]))\;\subset\; \M^*(I_T)
\]
such that $\|\gamma^T_{(a,b)} - \gamma_{\alpha}\|_{L^2_k(I_T)}\,<\,\epsilon'$ and $(\Pi_+,\Pi_-)(R^+_T\, ([\gamma^{T}_{(a,b)}])) = (a,b)$. The map $u_{[\alpha]}(T,-)$ is then defined by the formula
\[
u_{[\alpha]}(T,(a,b)) = [\gamma^T_{(a,b)}].
\]
With this definition in place, write $x_n = (a_n,b_n)\in \K_{\mathfrak{q}}^+\oplus\K_{\mathfrak{q}}^-$ and similarly $x_{\infty}=(a_{\infty},b_{\infty})\in \K_{\mathfrak{q}}^+\oplus\K_{\mathfrak{q}}^-$. Then $\mu_{[\alpha],T_n}(x_n) = (a_n,*,*,b_n)$ and $\mu_{[\alpha],\infty}(x_{\infty}) = (a_{\infty},*,*,b_{\infty})$, where we embedded $B([\alpha])\times B([\alpha])$ into $\K_{\mathfrak{q}}^+\oplus\K_{\mathfrak{q}}^-\oplus\K_{\mathfrak{q}}^+\oplus\K_{\mathfrak{q}}^-$. From this we clearly see that $\mu_{[\alpha],T_n}(x_n)\to \mu_{[\alpha],\infty}(x)$ implies $x_n\to x_{\infty}$.
\end{proof}

%%%%%%%%%%%%%%%%%%%%%%%%%%%%%%%%%%%%%%%%%%%%%%

\subsection{Proof of Theorem \ref{SW=Lefschetz}}
Let $T_2$ and $T_3$ be the constants from Lemma \ref{long neck near constant}. Using Lemma \ref{solution as fiber product II} and our regularity assumption on $\M(\wp)$ we can claim that, for any $[\alpha] \in \mathfrak C^*$, the images of the maps $R^-_{T_2}$ and $\mu_{[\alpha],\infty}$ intersect each other transversely and we have a homeomorphism (bijection)
\[
\M^*(X_{\infty})\; = \bigcup\limits_{[\alpha]\in \mathfrak C^*}\Fib\, (R^-_{T_2},\,\mu_{[\alpha],\infty}).
\]
By Theorem \ref{near constant solution on finite cylinder}, the maps $\mu_{[\alpha],T}$ converge to $\mu_{[\alpha],\infty}$ in the $C^{\infty}_{\loc}$ topology as $T \to \infty$. The implicit  function theorem now implies that there exists a constant $T_4 \ge T_3$ such that any $(y,x) \in \Fib\,(R^-_{T_2},\,\mu_{[\alpha],\infty})$ has an open neighborhood $U(y,x) \subset \M^* (W_{T_2}) \times B_{[\alpha]}$ with the following significance: for any $T \ge T_4$, the images of embeddings $R^-_{T_2}$ and $\mu_{[\alpha],T-T_2}$ intersect each other in exactly one point in $U(y,x)$, and this intersection is transverse. Therefore, our proof will be finished once we prove the following lemma.

\begin{lem}
There exists a constant $T_5 \ge T_4$ such that, for any $T\geq T_5$ and any element of $\M^*(X_T)$ represented by
\[
(y',x')\; \in \bigcup\limits_{[\alpha]\in \mathfrak C^*} \Fib\, (R^-_{T_2},\,\mu_{[\alpha],T-T_2}),
\]
there exists a point $(y,x) \in \Fib\,(R^-_{T_2},\,\mu_{[\alpha],\infty})$ such that $(y',x')\in U(y,x)$.
\end{lem}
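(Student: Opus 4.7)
I would argue by contradiction using the compactness theorem and the continuity statement of Lemma \ref{convergence on the boundary}. Suppose no such $T_5$ exists. Then we can find a sequence $T_n \to \infty$ with $T_n \ge T_4$, critical points $[\alpha_n] \in \mathfrak{C}^*$, and points
\[
(y'_n, x'_n) \in \Fib\,(R^-_{T_2},\,\mu_{[\alpha_n],T_n-T_2})\; \subset\; \M^*(W_{T_2})\times B([\alpha_n])
\]
representing elements of $\M^*(X_{T_n})$, such that $(y'_n,x'_n)$ does not belong to any neighborhood $U(y,x)$ with $(y,x) \in \Fib\,(R^-_{T_2},\,\mu_{[\alpha_n],\infty})$. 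Since $\mathfrak{C}^*$ is finite, after passing to a subsequence we may assume that $[\alpha_n] = [\alpha]$ is constant.

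Let $[(A_n,\phi_n)] \in \M^*(X_{T_n})$ be the monopole corresponding to $(y'_n, x'_n)$. By Theorem \ref{compactness}, a subsequence converges to some $[(A_\infty,\phi_\infty)] \in \M(W_\infty,[\beta])$ in the sense of Definition \ref{convergence}. The key observation is that $[\beta] = [\alpha]$: by construction of the fiber product decomposition in Lemma \ref{solution as fiber product II}, the restriction of $[(A_n,\phi_n)]$ to the middle segment $I_{T_2,T_n-T_2}$ is of the form $u_{[\alpha]}(T_n - T_2, x'_n)$ and in particular each slice $\{t\}\times Y$ lies in the neighborhood $B([\alpha])$. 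Part (2) of Definition \ref{convergence} forces the limiting critical point to be $[\alpha]$ as well, using the disjointness arranged in Remark \ref{separating}.

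Let $(y,x) \in \M^*(W_{T_2}) \times B([\alpha])$ be the pair representing $[(A_\infty,\phi_\infty)]$ under the Cartesian square of Lemma \ref{solution as fiber product}. Then $(y,x) \in \Fib\,(R^-_{T_2},\,\mu_{[\alpha],\infty})$. The local $C^\infty$ convergence on $W_{T_2}$ from Definition \ref{convergence}\,(1) directly gives $y'_n \to y$ in $\M^*(W_{T_2})$. For the second coordinate, note that $\mu_{[\alpha],T_n-T_2}(x'_n) = R^-_{T_2}(y'_n) \to R^-_{T_2}(y) = \mu_{[\alpha],\infty}(x)$ in $\B^*(Y) \times \B^*(Y)$, so Lemma \ref{convergence on the boundary} yields $x'_n \to x$ in $B([\alpha])$. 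Hence $(y'_n,x'_n) \to (y,x)$ in $\M^*(W_{T_2}) \times B([\alpha])$, and consequently $(y'_n,x'_n) \in U(y,x)$ for all sufficiently large $n$, contradicting our assumption.

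The main obstacle is verifying that the compactness argument matches the critical point $[\alpha]$ coming from the fiber product description with the critical point $[\beta]$ produced by Theorem \ref{compactness}; this is exactly what condition (2) of Definition \ref{convergence} was set up to guarantee, together with the disjointness of images in Remark \ref{separating}. Once this identification is in place, the implicit function theorem setup at $T = \infty$ and the continuity of the restriction to the boundary (Lemma \ref{convergence on the boundary}) close the argument routinely.
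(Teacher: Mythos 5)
Your proof is correct and follows essentially the same approach as the paper: argue by contradiction, apply Theorem \ref{compactness} to extract a convergent subsequence, identify the limit as a point of $\Fib(R^-_{T_2},\mu_{[\alpha],\infty})$, and deduce $y'_n\to y$ from part (1) of Definition \ref{convergence} and $x'_n\to x$ from Lemma \ref{convergence on the boundary}. The only cosmetic difference is that you fix $[\alpha_n]=[\alpha]$ upfront by finiteness of $\mathfrak C^*$ and then separately verify $[\beta]=[\alpha]$, whereas the paper extracts the same identification more briefly from Remark \ref{separating} at the point where it is used.
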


\begin{proof}
Suppose to the contrary that this is not the case. Then there is a sequence
\[
(y_n, x_n)\;\in\; \Fib\,(R^-_{T_2},\mu_{[\alpha_n],T_n}),\quad T_n \to \infty,
\]
representing monopoles $(A_n,\phi_n)$ on $X_{T_n}$ such that $(y_n,x_n) \notin U(y,x)$ for any $(y,x)$ in the fiber product $\Fib\,(R^-_{T_2},\,\mu_{[\alpha],\infty})$. By Theorem \ref{compactness}, after passing to a subsequence, we may assume that $[(A_{n},\phi_{n})]$ converges to a monopole $[(A_\infty, \phi_\infty)]$ on $X_{\infty}$. Represent the latter by $(y_{\infty},x_{\infty})\in \Fib\,(R^-_{T_2}, \mu_{[\alpha], \infty})$. Then
\begin{itemize}
\item[(a)] $y_{n}\rightarrow y_{\infty}$. This follows from Part (1) of Definition \ref{convergence} of convergence because $W_{T_2}$ is a compact subset of $W_{\infty}$ and
\[
y_n = [(A_n,\phi_n)|_{W_{T_2}}]\quad\text{and}\quad y_{\infty} = [(A_{\infty},\phi_{\infty})|_{W_{T_2}}].
\]
\item[(b)] $x_{n}\rightarrow x_{\infty}$. This follows from Remark \ref{separating}, which implies that $[\alpha_{n}] = [\alpha]$ for all large enough $n$, and from Lemma \ref{convergence on the boundary} applied to the convergent sequence
\[
\mu_{[\alpha_n],T_n}(x_n)\, =\, R^-_{T_2}(y_{n})\;\longrightarrow\; R^-_{T_2}(y_{\infty})\, =\, \mu_{[\alpha],\infty}(x_{\infty}).
\]
\end{itemize}
Therefore, $(y_n,x_n) \in U(y_{\infty},x_{\infty})$ for all sufficiently large $n$. This gives a contradiction.
\end{proof}

%%%%%%%%%%%%%%%%%%%%%%%%%%%%%%%%%%%%%%%%%%%%

\subsection{Proof of Theorem \ref{T: SW=Lefschetz}}
All we need to do is compare the signs with which the mono\-poles corresponding to each other under the map \eqref{E:rho} are counted in $\#\,\M(X_R)$ and $\Lef (W_*: C^o \to C^o)$. This is done in \cite[Proposition 3]{F} under a different $\mathbb{Z}/2$ grading convention. Since our setting here is slightly different, we give an alternative argument using excision principle.

In the product case, $X = S^1 \times Y$, the orientation transport argument of \cite{MRS1} (see also \cite{RS2} in the instanton setting) can be used to show that 
\begin{equation}\label{E:sign}
\#\,\M(X)\, =\, \pm\, \chi\,(C^o)
\end{equation}
up to an overall sign which is independent of the choice of $Y$. The sign is determined by the sign fixing condition
\begin{equation}\label{E:casson}
\lsw (X) = - \lambda (Y)
\end{equation}
of \cite[Section 11.2]{MRS1}, where $\lambda(Y)$ is the Casson invariant normalized so that $\lambda(\Sigma(2,3,5)) = -1$. To calculate the sign in \eqref{E:sign}, we will let $X = S^1 \times Y$, where $Y = \Sigma(2,3,7)$ is the Brieskorn homology sphere endowed with a natural metric $h$ realizing its Thurston geometry, and compute 
\begin{equation}\label{E:lsw}
\lsw (X)\; =\; \#\,\M(X,g)\, -\, w(X,g)
\end{equation}
with respect to the product metric $g = dt^2 + h$. The correction term in this formula equals 
\[
w(X,g)\; =\; \ind \D^+ (Z_{\infty})\, +\, \frac 1 8\,\sign Z, 
\]
where $Z$ can be any smooth compact spin manifold with boundary $Y$. Let us choose $Z$ to be the plumbed manifold with the intersection form isomorphic to $E_8\,\oplus\,H$, where $E_8$ is positive definite. According to \cite[Section 6]{RS:ded}, the index of $\D^+(Z_{\infty})$ vanishes, therefore, $w(X,g) = 1$. Since $\lambda (Y) = -1$ we conclude from formulas \eqref{E:casson} and \eqref{E:lsw} that $\#\,\M(X,g) = 2$. This needs to be compared to the Euler characteristic of the chain complex $C^o$. The latter complex is known \cite{moy} to have exactly two generators of the same grading. Note that 
\[
-2h(Y)=0 \; > \; \gr^{\mathbb{Q}}([\theta_{0}]) = -2w(X,g) = -2.
\]
By \cite[Lemma 2.9]{lin:sw-psc}, the boundary map $\partial ^{o}_{s}: C^{o}\rightarrow C^{s}$ must be non-zero. Therefore, both generators of $C^{o}$ must be of odd grading, and the overall sign in formula \eqref{E:sign} is a minus.

The general case now follows by using the excision principle for determinant bundles as in \cite[Section 25]{KM}. 

%%%%%%%%%%%%%%%%%%%%%%%%%%%%%%%%%%%%%%%%%%%%

\section{Generic metrics}\label{S:generic}
This section contains two results about metrics with no harmonic spinors on two types of spin manifolds: compact manifolds with product regions and non-compact manifolds with cylindrical ends. These results are similar to those of Amman, Dahl, and Humbert \cite{ADH} and are proved by a  modification of their argument.

%%%%%%%%%%%%%%%%%%%%%%%%%%%%%%%%%%%%%%%%%%%%

\subsection{Manifolds with product regions}
Let $X$ be a connected smooth spin compact manifold of dimension $n \equiv 0 \pmod 4$ and $f: X \to S^1$ a smooth map with the primitive cohomology class $[df] \in H^1 (X; \Z)$. Let $Y \subset X$ be a connected manifold Poincar\'e dual to $[df]$, and introduce Riemannian metrics $h$ on $Y$ and $g$ on $X$ so that $g = dt^2 + h$ in a product region $[-\ep,\ep] \times Y \subset X$. According to \cite[Theorem 1.1]{ADH}, the spin Dirac operator $\D(Y,h)$ is invertible in the Sobolev $L^2$ completion for a generic choice of $h$.

\begin{thm}\label{T:one}
Let $X$ be a manifold as above with a product region $[-\ep,\ep] \times Y$ and suppose that the metric $h$ is such that $\ker \D(Y,h) = 0$. If $X$ is spin cobordant to zero then there exists a metric $g$ on $X$ such that $g = dt^2 + h$ in the product region and $\D^+ (X,g)$ is invertible in the Sobolev $L^2$ completion.
\end{thm}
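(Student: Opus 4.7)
The plan is to adapt the generic metric theorem of Ammann, Dahl, and Humbert \cite{ADH} to the constrained setting in which the metric is fixed to equal $dt^2 + h$ on the product region $[-\ep,\ep]\times Y$. Since $X$ is spin null-cobordant and $\dim X \equiv 0 \pmod 4$, the $\hat A$-genus vanishes, so $\ind \D^+(X,g) = 0$ for every metric $g$; invertibility of $\D^+(X,g)$ in $L^2$ is therefore equivalent to triviality of $\ker \D^+(X,g)$.

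Denote by $\mathcal{M}$ the Fr\'echet space of smooth Riemannian metrics on $X$ agreeing with $dt^2 + h$ on $[-\ep,\ep]\times Y$. The function $g \mapsto \dim \ker \D^+(X,g)$ is upper semicontinuous, so the subset $\mathcal{M}_0 \subset \mathcal{M}$ of metrics with trivial kernel is open; I would prove it is also dense by induction on the kernel dimension. Given $g \in \mathcal{M}$ with $\dim \ker \D^+(X,g) = k \geq 1$, pick a nonzero $\phi \in \ker \D^+(X,g)$ and invoke Aronszajn's unique continuation theorem for the Dirac operator: the zero set of $\phi$ has empty interior, so $\phi$ is nowhere vanishing on some open ball $U$ contained in the interior of $W = X \setminus ([-\ep,\ep]\times Y)$. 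Using the Bourguignon--Gauduchon formula for the variation of the Dirac operator under metric deformations compactly supported in $U$---which automatically preserve the constraint defining $\mathcal{M}$---one constructs, exactly as in \cite[Section 3]{ADH}, a finite-dimensional family of symmetric $2$-tensors whose infinitesimal action on $\phi$ modulo $\im \D^+(X,g)$ spans a direction transverse to the kernel. A Banach-space implicit function argument then produces the desired $g' \in \mathcal{M}$ arbitrarily close to $g$ with $\dim \ker \D^+(X,g') < k$, and iterating yields a metric in $\mathcal{M}_0$ as close to the original as we please.

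The main technical point will be verifying that the ADH perturbation construction can be carried out with the constraint that the metric be fixed on the product region. This is exactly what unique continuation provides: a nontrivial harmonic spinor cannot be concentrated on the product region, so there is always room to perturb inside $W$. The local-in-$U$ aspects of the ADH argument---the surjectivity of the variation map and the implicit function step---transport verbatim, since they see only the metric on the support of the perturbation. The role of the spin null-cobordism hypothesis is solely to ensure the vanishing of the index, which is what permits the kernel to be driven all the way to zero rather than stabilizing at $|\hat A(X)|$.
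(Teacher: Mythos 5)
Your proposal takes a genuinely different route from the paper, and it has a gap at its central step. The paper does not argue by perturbing the metric on a fixed $X$ to kill the kernel; instead it \emph{transports invertibility across a spin cobordism}: it begins with $X'=S^1\times Y$ carrying the product metric $dt^2+h$, for which $\D^+(X',g')=dt\cdot(\p/\p t+\D(Y,h))$ is invertible for free, shows that $X$ is obtained from $X'$ by a spin cobordism with all handles of index between $2$ and $n-1$ supported away from the product region, and then invokes the surgery theorem of Ammann--Dahl--Humbert to carry invertibility across each handle. Since the ADH surgery metrics are modified only in thin tubular neighborhoods of the attaching spheres, the product form $dt^2+h$ on $[-\ep,\ep]\times Y$ is preserved throughout.

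The missing step in your argument is the transversality claim: that for a nonzero $\phi\in\ker\D^+(X,g)$ one can always find a symmetric $2$-tensor $k$ supported in $W$ so that the Bourguignon--Gauduchon variation $D_g\D(k)\phi$ projects nontrivially onto $\coker\D^+(X,g)$. This is not what \cite[Section~3]{ADH} proves --- that section is part of the surgery machinery, not a surjectivity-of-the-variation statement on a fixed manifold. The ``perturb a fixed manifold to attain minimal kernel'' approach is precisely Maier's argument, and Maier proves it only for $n\le 4$ (where the Clifford-algebraic analysis needed to verify transversality can be carried out); the general-dimensional version is exactly the part of the generic-metric conjecture that remains open, and it is the reason ADH were forced to go through surgery rather than a direct perturbation. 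Note that the paper itself signals this division of labor: the remark after Theorem~\ref{T:one} says that \emph{once} a single good metric exists, genericity follows from the soft convexity argument of \cite[Proposition~3.1]{M}; the hard content of Theorem~\ref{T:one} is the existence, and for that the surgery route is used. Your use of unique continuation to localize $\phi$ inside $W$, and your observation that perturbations supported there preserve the constraint $g=dt^2+h$, are both correct and are in fact echoed in the paper's proof of Theorem~\ref{T:two}; but on their own they do not yield the required surjectivity, and your proof is incomplete without a substitute for it in dimensions $n>4$.
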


\begin{proof}
We will use the method of transporting invertibility of the operator $\D^+ (X,g)$ across a spin cobordism as in \cite{ADH} but we will be more specific in choosing the cobordism. 

Start with the manifold $X' = S^1 \times Y$ with the product metric $g' = dt^2 + h$ and the spin Dirac operator $\D^+ (X',g') = dt\,\cdot\,(\p/\p t + \D (Y,h))$. Since the operator $\D (Y,h)$ is invertible, so is the operator $\D^+ (X',g')$ by a direct calculation.

We claim that there is a spin cobordism from $X'$ to $X$ that contains a product region $I \times [-\ep,\ep] \times Y$, and furthermore has handles of index at most $n - 1$.  To prove this, write 
\[
X = W\, \cup_{\{-\ep,\ep\}\times Y}\, ([-\ep,\ep] \times Y)\quad\text{and}\quad X' = V \cup_{\{-\ep,\ep\}\times Y} [-\ep,\ep] \times Y;
\]
see Figure \ref{F:spin}. We assume of course that $V$ has a product structure as well.  Let us consider
\[
U = W\, \cup\, ([0,1] \times \{-\ep,\ep\}\times Y)\, \cup\, V
\]
where $\{0\} \times \{-\ep,\ep\}\times Y$ is identified with $\partial V$ and $\{1\} \times \{-\ep,\ep\}\times Y$ is identified with $\partial W$.  
After rounding corners, $U$ is a spin manifold diffeomorphic to $X$ hence $U$ is the boundary of a connected spin manifold $R$ of dimension $n+1$. View $R$ as a cobordism from $V$ to $W$, relative to the product region $[0,1] \times  \partial V$, and give it a handle decomposition. 

\smallskip

\begin{figure}[h]
\labellist
\small\hair 0mm
\pinlabel {$\{0\} \times [-\ep,\ep] \times Y$}  at 150 -10
 \pinlabel {$[0,1] \times [-\ep,\ep] \times Y$}  at 150 50
 \pinlabel {$\{1\} \times [-\ep,\ep] \times Y$}  at 150 107
 \pinlabel {$W$}  at 20 130
 \pinlabel {$V$}  at 15 11
 \pinlabel {$R$}  at 250 70
\endlabellist
\centering
\includegraphics[scale=1]{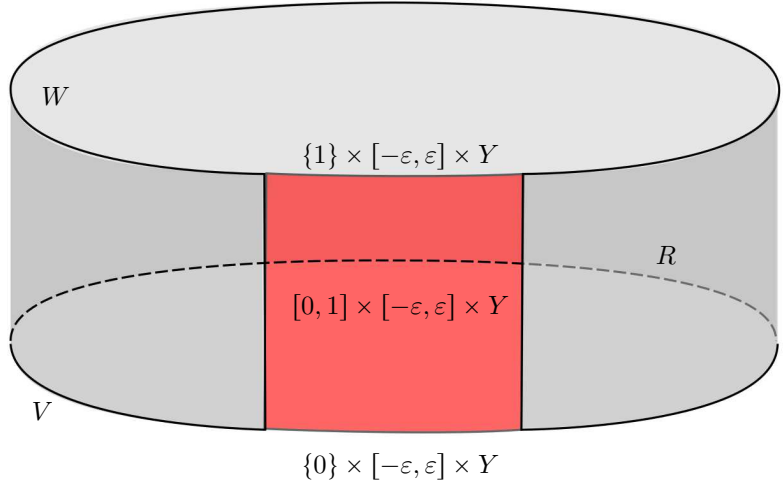}
\vspace*{1ex}
\caption{The spin cobordism}
\label{F:spin}
\end{figure}

By surgeries on generators of $\pi_1(R)$, preserving the spin condition, we can make $R$ simply connected, and cancelling the $0$-handles and $(n+1)$-handles, we may assume that all the handles have index between $1$ and $n$.  Then a standard handle trading argument can be used to eliminate the handles of index $1$ and $n$.  Gluing this to the product region $I \times [-\ep,\ep] \times Y$ gives the desired cobordism.

According to \cite[Theorem 1.1]{ADH}, the manifold $X$ admits a metric $g$ such that $\ker \D (X,g) = 0$. This explicitly constructed metric matches the original product metric on $X'$ away from arbitrarily thin tubular neighborhoods of the surgery spheres. By construction, these spheres lie outside of the product region, so that the metric remains a product there. Since $X$ is spin cobordant to zero, we conclude that $\hat{A}(X) =0$. Therefore, the operator $\D^+ (X,g)$ has zero index and must be invertible.
\end{proof}

\begin{rmk}
The existence of just a single metric as in Theorem \ref{T:one} implies that the set of such metrics is in fact generic (that is, $C^0$ open and $C^{\infty}$ dense) in the space of all metrics on $X$ with a fixed product metric on $[-\ep,\ep] \times Y$. The proof of \cite[Proposition 3.1]{M} goes through after one notes that fixing a product metric on $[-\ep,\ep] \times Y$ defines a convex subset in the space of all metrics.
\end{rmk}

%%%%%%%%%%%%%%%%%%%%%%%%%%%%%%%%%%%%%%%%%%%%%%%%

\subsection{Manifolds with product ends}
Let $X$ be a manifold from the previous section with a product region $[-\ep,\ep] \times Y$ and a metric $g$ which in the product region takes the form $g = dt^2 + h$. Cut $X$ open along $Y = \{0\}\times Y$ into a cobordism $W$, and attach infinite product ends to $W$. This results in the non-compact manifold 
\[
\wp = ((-\infty,0]\times Y)\,\cup\,W\, \cup\, ([0,\infty) \times Y).
\]

\begin{thm}\label{T:two}
Let us suppose that $\ker \D(Y,h) = 0$ and that $X$ is spin cobordant to zero. Then one can find a metric $g$ on $\wp$ such that $g = dt^2 + h$ on the product ends and $\ker \D^+ (\wp,g) = 0$ in the Sobolev $L^2$ completion.
\end{thm}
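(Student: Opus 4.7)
The plan is to mimic the proof of Theorem \ref{T:one}, with the closed manifold $X' = S^1 \times Y$ used there replaced by the infinite cylinder $V_\infty := \R \times Y$ equipped with the product metric $g'_\infty := dt^2 + h$. Since $\D^+(V_\infty, g'_\infty) = dt \cdot (\partial_t + \D(Y,h))$ and $\ker \D(Y,h) = 0$ by hypothesis, a direct Fourier decomposition along $\R$ shows that $\D^+(V_\infty, g'_\infty)$ is invertible on the $L^2$ Sobolev completion. The strategy is to realize $\wp$ as the result of finitely many spin-preserving surgeries performed on $V_\infty$ in a compact region, and to transport invertibility across these surgeries using the technique of Ammann--Dahl--Humbert \cite{ADH}.

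First I would construct a non-compact spin cobordism $\mathcal R$ from $V_\infty$ to $\wp$ that is a product on its ends. Using the hypothesis that $X$ is spin null-cobordant, one can take a compact spin null-cobordism $\bar R$ of $X$; extending the map $f: X \to S^1$ of Section \ref{S:correction} to a smooth map $\bar f: \bar R \to S^1$ transverse to a regular value and cutting $\bar R$ along $\bar f^{-1}(\text{pt})$ yields a compact spin cobordism $R_\text{cpt}$ from $[-\ep, \ep] \times Y$ to $W$, relative to the product boundary. Gluing $I \times (-\infty, -\ep] \times Y$ and $I \times [\ep, \infty) \times Y$ onto $R_\text{cpt}$ along this boundary produces $\mathcal R$, which modifies $I \times V_\infty$ only in a compact region. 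Following the handle trading argument in the proof of Theorem \ref{T:one}, I would then apply further surgeries in the interior of $R_\text{cpt}$ to kill $\pi_1(R_\text{cpt})$, cancel its $0$- and $(n+1)$-handles, and trade away its $1$- and $n$-handles, so that $R_\text{cpt}$ is built from handles of index between $2$ and $n-1$ only; these surgeries leave the product ends of $\mathcal R$ undisturbed.

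Next I would transport invertibility from $V_\infty$ to $\wp$ by applying the surgery construction of \cite{ADH} inductively to the handles of $R_\text{cpt}$. At each stage the surgery sphere has codimension at least $2$ and lies in a manifold with product ends whose $L^2$ Dirac operator is already invertible; the ADH construction then produces a new metric, supported in a small compact neighborhood of the attaching sphere, for which the surgered manifold again has invertible $L^2$ Dirac operator. Since all these modifications occur inside a compact subset of $\mathcal R$, the final metric $g$ on $\wp$ coincides with $dt^2 + h$ on the product ends, and $\D^+(\wp, g)$ has zero $L^2$ kernel, completing the proof.

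The main obstacle will be verifying that the ADH surgery construction, originally stated for closed spin manifolds, extends to the product-end setting. The essential input in \cite{ADH} is a Fredholm-theoretic analysis of how invertibility of the Dirac operator behaves under metric deformation, and the point to check is that on our manifolds with product ends the relevant Fredholm theory persists, thanks to the spectral gap of $\D(Y,h)$ at zero, which forces any $L^2$ kernel element to decay exponentially on the ends. Because the surgery is localized in a compact region, the ADH perturbation does not interact with the asymptotic behavior near infinity, and the closed-manifold argument essentially carries over after replacing the closed ambient manifold by a sufficiently large compact piece whose Dirac operator, with Atiyah--Patodi--Singer boundary conditions, inherits the invertibility properties used in \cite{ADH}.
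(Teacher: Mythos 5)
Your overall strategy coincides with the paper's: use the spin null-cobordism hypothesis to produce a sequence of surgeries (disjoint from the product region) taking $\R\times Y$ to $\wp$, start from the invertible operator $\D^+(\R\times Y, dt^2+h)$, and transport invertibility across each surgery via Ammann--Dahl--Humbert. You also correctly identify the central obstacle—ADH is stated for closed manifolds—and the right structural reason it can be overcome, namely the spectral gap of $\D(Y,h)$ at zero.

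Where your proposal stops short is the final step. Saying that ``the closed-manifold argument essentially carries over after replacing the closed ambient manifold by a sufficiently large compact piece with APS boundary conditions'' is a different route than the paper takes, and it glosses over the actual technical issue. The places in \cite{ADH} where compactness enters are compactness arguments (Rellich lemma applied to bounded sequences of harmonic spinors as the surgery-tube radius shrinks), not merely invertibility of a fixed operator; tracking those arguments through an APS boundary value problem while the metric degenerates near the surgery sphere would need its own justification. The paper instead works directly on the non-compact manifold and isolates the precise input needed: an a priori estimate (Lemma \ref{L}) saying that a harmonic $L^2$ spinor $\varphi$ on a product end $[0,\infty)\times Y$ with $\ker\D(Y,h)=0$ satisfies $\|\varphi\|_{L^2([0,\infty)\times Y)}\le C\,\|\varphi\|_{L^2([0,1]\times Y)}$. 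This lets one run Rellich on a fixed compact piece containing $W$ and then recover strong $L^2$ convergence over all of $\wp$ by exhaustion, which is precisely what is used to repair ADH's Lemma 3.4 and the subsequent convergence steps. You should formulate and use this estimate explicitly rather than invoke APS.

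One smaller point: your construction of the cobordism by extending $f$ over a null-cobordism $\bar R$ of $X$ and cutting along $\bar f^{-1}(\text{pt})$ silently assumes that $f$ extends over $\bar R$, which need not hold for an arbitrary choice of $\bar R$ (the obstruction lives in $H^2(\bar R,X;\Z)$). The paper sidesteps this by first forming the closed manifold $U$ from $W$, $V$, and two product tubes, choosing a spin null-cobordism $R$ of $U$, and then reading $R$ as a cobordism from $V$ to $W$ relative to the product region; you should either follow that construction or justify that $\bar R$ can be modified so the extension exists.
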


\begin{rmk}\label{R:two}
Note that the conditions of this theorem are obviously satisfied for spin 4-manifolds $X$ with integral homology of $S^1 \times S^3$ because the spin cobordism class of such a manifold is determined by its $\hat A$--genus, 
\[
\hat A (X)\; =\; -\,\frac 1 8 \sign (X)\, =\, 0.
\]
\end{rmk}

The rest of this section will be dedicated to the proof of Theorem \ref{T:two}. We showed in the proof of Theorem \ref{T:one} that $X$ can be obtained from $X' = S^1 \times Y$ by performing surgery along spheres disjoint from the product region $[-\ep,\ep] \times Y$. In the language of manifolds with product ends, this implies that $\wp$ can be obtained from the product manifold $\mathbb R \times Y$ by performing surgeries inside a compact region in $\mathbb R \times Y$. Since $\ker \D(Y,h) = 0$, the Sobolev $L^2$ completion of the operator $\D^+ (X')$ is obviously invertible. Therefore, we can proceed with the construction of a desired metric on $\wp$ exactly as in \cite{ADH} making some changes along the way to account for the non-compactness of $\wp$.

The first change comes up in the proof of \cite[Lemma 3.4]{ADH} which uses Rellich Lemma to conclude that an $L^2_1$ bounded sequence of spinors contains a strongly convergent subsequence in $L^2$. While Rellich Lemma fails on non-compact manifolds, the sequence of \emph{harmonic} spinors in Lemma 3.4 still admits a strongly convergent subsequence: we first apply Rellich Lemma on the compact manifold $([-1,0]\times Y)\,\cup\,W\, \cup\, ([0,1] \times Y)$, and then use the following estimate.

\begin{lem}\label{L}
Let $X = [\,0,\infty) \times Y$ be a manifold with product metric $g = dt^2 + h$ such that $\ker \D(Y,h) = 0$. Then there exists a constant $C > 0$ such that, for any harmonic $L^2$--spinor $\varphi$ on $X$,
\[
\|\varphi\|_{L^2([\,0,\infty)\times Y)}\;\le\;C\cdot \|\varphi\|_{L^2([0,1]\times Y)}.
\]
\end{lem}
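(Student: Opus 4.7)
\medskip

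The plan is to prove Lemma \ref{L} by separation of variables, exploiting the spectral gap of $\D(Y,h)$ at the origin. First I would set up the standard identification of the spinor bundle on the cylinder $[0,\infty)\times Y$ with the pullback of the $Y$-spinor bundle (via Clifford multiplication by $\partial/\partial t$), under which the four-dimensional Dirac operator takes the form $\D^{\pm} = \pm(\partial_t + \D(Y,h))$ on the half-spinor components (with the sign on $\D(Y,h)$ depending on the chirality, but in either case it acts as a self-adjoint operator with no kernel).

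Next, since $\ker \D(Y,h) = 0$ and $Y$ is compact, there is a spectral gap: let $\{\varphi_i\}$ be a complete orthonormal eigenbasis of $\D(Y,h)$ with eigenvalues $\lambda_i$ and set $\mu = \min_i |\lambda_i| > 0$. Given a harmonic $L^2$-spinor $\varphi$ on the half-cylinder, I would expand each chirality component in this eigenbasis as $\varphi^{\pm}(t,y) = \sum_i a_i^{\pm}(t)\,\varphi_i(y)$. The harmonicity equations $\D^{\pm}\varphi^{\pm} = 0$ reduce to the ODE system $a_i'(t) \pm \lambda_i a_i(t) = 0$, whose solutions are pure exponentials $a_i(0)\,e^{\mp\lambda_i t}$. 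The $L^2(X)$ condition on $\varphi$ eliminates every mode that grows at infinity, so in fact $|a_i^{\pm}(t)|^2 = |a_i^{\pm}(0)|^2\, e^{-2|\lambda_i|t}$ for all $i$.

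Finally, I would compute directly:
\[
\|\varphi\|^2_{L^2([0,\infty)\times Y)} \;=\; \sum_{i,\pm}\, \frac{|a_i^{\pm}(0)|^2}{2|\lambda_i|},\qquad \|\varphi\|^2_{L^2([0,1]\times Y)} \;=\; \sum_{i,\pm}\, \frac{|a_i^{\pm}(0)|^2\,(1 - e^{-2|\lambda_i|})}{2|\lambda_i|}.
\]
The ratio of the $i$-th terms is $(1-e^{-2|\lambda_i|})^{-1}$, which is a decreasing function of $|\lambda_i|$ and is therefore uniformly bounded above by $(1-e^{-2\mu})^{-1}$. Taking $C = (1-e^{-2\mu})^{-1/2}$ yields the claimed estimate.

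I do not expect any serious obstacle here: the argument is essentially a term-by-term comparison that rests on the elementary fact that the positive spectral gap of $\D(Y,h)$ forces exponential decay of every Fourier mode of $\varphi$ along the cylinder at a rate at least $\mu$. The only minor care required is in handling both chiralities simultaneously and verifying that the sign ambiguity in the identification $\D^{\pm} = \pm(\partial_t + \D(Y,h))$ does not affect the conclusion, since $L^2$-admissibility retains exactly the modes with exponential decay at $+\infty$ in either case.
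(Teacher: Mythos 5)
Your proof is correct and follows essentially the same argument as the paper's: expand in the eigenbasis of $\D(Y,h)$, observe that the $L^2$ condition on the half-line forces each Fourier coefficient to decay like $e^{-|\lambda|t}$, and then compare the two $L^2$ norms term-by-term to extract the constant from the spectral gap. The only (cosmetic) difference is that you handle both chirality components, whereas the paper need only consider the positive-chirality case since ``harmonic'' there means $\D^+\varphi=0$; this makes your constant depend on $\mu = \min_i |\lambda_i|$ rather than on the smallest positive eigenvalue, but the statement and the method are the same.
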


\begin{proof}
Let $\{\,\psi_{\lambda}\}$ be an orthonormal basis of eigenspinors of $\D(Y,h)$ and let $\lambda_0 > 0$ stand for the smallest positive $\lambda$. Since $\varphi$ is an $L^2$--spinor in the kernel of the operator $\D^+(X,g) = dt\cdot (\p/\p t + \D(Y,h))$, it takes the form
\[
\varphi (t,y)\; =\; \sum_{\lambda > 0}\; a_{\lambda}\cdot e^{-\lambda t}\cdot \psi_{\lambda} (y).
\]
A direct calculation with this formula gives
\[
\|\varphi\|^2_{L^2([\,0,\infty)\times Y)}\; =\; \int_0^{\infty}\;\|\varphi\|^2_{L^2(Y)}\,dt\; =\; \sum_{\lambda > 0}\; \frac {\;\,|a_{\lambda}|^2}{2\lambda}
\]
and 
\[
\|\varphi\|^2_{L^2([0,1]\times Y)}\; =\; \int_0^1\;\|\varphi\|^2_{L^2(Y)}\,dt\; =\; \sum_{\lambda > 0}\; \frac {\;\,|a_{\lambda}|^2}{2\lambda}\cdot\left(1 - e^{-2\lambda}\right).
\]
This leads to the desired estimate with the constant $C = 1/\sqrt{\,1 - e^{-2\lambda_0}}$.
\end{proof}

\noindent
The rest of the proof of Lemma 3.4 goes through using exhaustion of the complement of the surgery sphere $S$ in $\wp$ by the compact sets 
\[
([-1/\ep,0]\times Y)\,\cup\,W\, \cup\, ([0,1/\ep] \times Y) - U_S (\ep),
\]
where $U_S (\ep)$ is the open tubular neighborhood of $S$ of radius $\ep > 0$. All the lemmas used in that proof are already proved in \cite[Section 2]{ADH} without the compactness assumption.

The second change comes up in the proof of Step 2 on page 537 of \cite{RS}. That step goes through using exhaustion by the compact sets
\[
([-Z,0]\times Y)\,\cup\,W\, \cup\, ([0,Z] \times Y) - U_S (1/Z)
\]
for positive integers $Z$. In Step 3, convergence in $C^1_{\rm{loc}}(\wp - S)$ of a sequence of harmonic spinors implies only $L^2$ convergence on compact subsets of $\wp - U_S (s)$. To obtain the desired $L^2$ convergence on the entire $\wp - U_S (s)$, we use Lemma \ref{L} one more time.

%%%%%%%%%%%%%%%%%%%%%%%%%%%%%%%%%%%%%%%%%%%%

\section{Periodic $\eta$-invariants}\label{S:eta}
Let $X$ be a connected smooth spin compact manifold of dimension $n \equiv 0 \pmod 4$ and $f: X \to S^1$ a smooth map such that the cohomology class $[df] \in H^1 (X; \mathbb Z)$ is primitive. Choose a connected manifold $Y \subset X$ Poincar\'e dual to $[df]$. We will assume that the manifold $Y$ with the induced spin structure is a spin boundary and that the $\hat A$--genus of $X$ vanishes; both of these conditions are automatic when $X$ is a homology $S^1 \times S^3$. Define the Riemannian manifold $X_R$ with long neck as in \eqref{E:XRn}. Recall that the metric $g_R$ on $X_R$ takes the form $g_R = dt^2 + h$ along the neck.
%which is a product in a normal neighborhood $[-1,1] \times Y$. For any $R \ge 1$, define the Riemannian manifold $X_R$ by replacing $[-1,1] \times Y$ with $[-R,R]\times Y$. Let $W$ be the cobordism from $Y$ to itself obtained by cutting $X$ open along $\{0\} \times Y$.
Consider the holomorphic family
\[
\D^{\pm}_z\; =\; \D^{\pm} (X_R,g_R) - \ln z\cdot df,\quad z \in \mathbb C^*.
\]
Under the assumption that $\ker \D^+_z = 0$ for all $z$ on the unit circle $|z| = 1$, the periodic $\eta$--invariant $\eta (X_R)$ was defined in \cite{MRS3} by the formula

\begin{equation}\label{E:eta}
\eta (X_R) = \frac 1 {\pi i}\,\int_0^{\infty}\oint_{|z| = 1}\;
\Tr \left(df\cdot \D^+_z \exp (-t \D^-_z \D^+_z)\right)\,\frac {dz} z\,dt.
\end{equation}

\medskip\noindent
It follows from \cite[Theorem A and Remark 5.4]{MRS3} that the so defined periodic $\eta$--invariant is independent of the choice of $f$ as long as $df$ is supported in the product region in $X_R$, which we will assume from now on.

\begin{thm}\label{T:eta}
Let $\eta(Y)$ be the Atiyah--Patodi--Singer $\eta$--invariant \cite{aps:I} of the Dirac operator $\D = -\D(Y,h)$.\footnote{The change in sign is dictated by the different conventions for the spin Dirac operator with respect to the product metric $g = dt^2 + h$ used in this paper and in \cite{MRS3}.} Assume that the operator $\D$ has zero kernel, as does the $L^2$--completion of the operator $\D^+(\wp)$ on the manifold $\wp$ obtained from $W$ by attaching infinite product ends. Then the invariants $\eta (X_R)$ are well--defined and, for all sufficiently large $R$,
\[
\eta(X_R) = \eta(Y).
\]
\end{thm}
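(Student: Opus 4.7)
The plan is to deduce $\eta(X_R) = \eta(Y)$ by comparing two index theorems, the end-periodic index theorem of \cite{MRS3} applied to $Z_\infty(X_R)$ and the classical Atiyah--Patodi--Singer index theorem applied to $Z_\infty$, and observing that the indices on the two sides are equal by the Fredholm/index results already proved.

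First, I would verify that $\eta(X_R)$ is well-defined for all sufficiently large $R$. This requires $\ker \D^+_z = 0$ for every $z$ on the unit circle, which is exactly what Proposition \ref{eigenvalue estimate_z} furnishes: there is a uniform positive lower bound $\epsilon_1^2$ on the spectrum of $\D^-_z \D^+_z$ once $R \ge R_0$, so the trace integrand in \eqref{E:eta} decays exponentially in $t$ uniformly in $z$, and the integral converges absolutely. The hypotheses needed for this proposition, namely invertibility of $\D^+(W_\infty)$ and $\ker \D(Y,h) = 0$, are precisely those imposed in Theorem \ref{T:eta} (via Lemma \ref{assum}).

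Next, I would apply the end-periodic index theorem of \cite{MRS3} to the manifold $Z_\infty(X_R) = Z \cup W_0 \cup W_1 \cup \cdots$ with periodic end modeled on the infinite cyclic cover of $X_R$. Writing $f$ so that $df$ is supported in the neck region of $X_R$, this theorem gives
\[
\ind \D^+(Z_\infty(X_R)) \;=\; \int_Z \hat A(g) \,-\, \frac{1}{8}\sign Z \,-\, \frac{1}{2}\,\eta(X_R),
\]
where the topological/curvature term is the usual contribution from the compact piece $Z$ (and the correction $\tfrac{1}{8}\sign Z$ is the bulk contribution of $W_\infty$ which vanishes since $\hat A(X)=0$; the precise shape of the identity is recorded in \cite[Theorem A]{MRS3}). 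On the other hand, the classical Atiyah--Patodi--Singer theorem applied to $Z_\infty = Z \cup ([0,\infty)\times Y)$, which is legitimate because $\ker \D(Y,h) = 0$, gives
\[
\ind \D^+(Z_\infty) \;=\; \int_Z \hat A(g) \,-\, \frac{1}{8}\sign Z \,-\, \frac{1}{2}\,\eta(Y).
\]
The local integrands on $Z$ are identical in the two formulas because $X_R$ and $Z_\infty$ share the very same compact region $Z$ with the same metric.

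Finally, Theorem \ref{thm} (equivalently, the content underpinning Theorem \ref{T:Eta long neck}) states that for all sufficiently large $R$ the operator $\D^+(Z_\infty(X_R))$ is Fredholm of index
\[
\ind \D^+(Z_\infty(X_R)) \;=\; \ind \D^+(Z_\infty).
\]
Subtracting the two displayed identities and using this equality of indices, the bulk terms cancel and one is left with $\eta(X_R) = \eta(Y)$, as desired. The potential obstacle, and the only substantive analytic input, is ensuring that the end-periodic index formula of \cite{MRS3} applies uniformly in $R$ once $R$ is large enough; this is guaranteed by the uniform spectral gap of Proposition \ref{eigenvalue estimate_z}, which makes the heat kernel trace absolutely integrable and the $\eta$-invariant definition valid throughout the family. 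Everything else in the argument is the formal subtraction of two index theorems sharing a common local interior integrand.
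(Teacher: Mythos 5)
Your argument is correct, but it deliberately follows the shortcut that the paper explicitly declines to take. You recover $\eta(X_R) = \eta(Y)$ by subtracting the end-periodic index formula of~\cite{MRS3} applied to $\zp(X_R)$ from the Atiyah--Patodi--Singer formula applied to $\zp$, and then invoking Theorem~\ref{thm} (proved in Section~\ref{S:correction}) to equate the two indices for large $R$. The paper's proof of Theorem~\ref{T:eta}, by contrast, uses the two index formulas only to conclude that $\eta(X_R) - \eta(Y)$ is an even integer, and then shows $\eta(X_R) \to \eta(S^1\times Y) = \eta(Y)$ as $R \to \infty$ by a direct comparison of heat kernels via Duhamel's principle, the Gaussian off-diagonal bound of Proposition~\ref{P:est1}, and the large-time bound of Proposition~\ref{P:est3}. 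The authors remark right before the proof that Theorem~\ref{thm} and Theorem~\ref{T:eta} are ``essentially equivalent'' in view of the periodic index theorem, and they present the heat-kernel argument precisely because it is an \emph{independent} route, inspired by~\cite{DW}, and of interest in its own right; your approach is the equivalent short derivation they mention but do not carry out. One small imprecision to flag: the extra $-\tfrac{1}{8}\sign Z$ you insert in the index formula (and your parenthetical that it ``vanishes since $\hat A(X)=0$'') is not how~\cite[Theorem A]{MRS3} is stated, and the vanishing claim doesn't parse --- but since that term appears identically on both sides and cancels in the subtraction, the error is harmless to your conclusion.
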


\begin{proof}
The well--definedness of $\eta(X_R)$ for all sufficiently large $R$ follows from Proposition \ref{eigenvalue estimate_z}. According to \cite[Theorem A and Remark 5.4]{MRS3}, for any spin manifold $\zp(X_R)$ whose periodic end is modeled on $(X_R, g_R)$ we have
\[
\ind \D^+ (\zp(X_R))\; =\; \int_Z \widehat A(Z)\; -\; \frac 1 2\,\eta\,(X_R).
\]
A similar formula applied to an end-periodic manifold $\zp$ whose end is modeled on $(S^1 \times Y, dt^2 + h)$ yields
\[
\ind \D^+ (\zp)\; =\; \int_{Z} \widehat A(Z)\; -\; \frac 1 2\,\eta\,(S^1 \times Y).
\]
We know from \cite[Section 6.3]{MRS3} that $\eta\,(S^1 \times Y) = \eta\,(Y)$. By subtracting the above formulas from each other, we conclude that $\eta (X_R)$ must differ from $\eta (Y)$ by an even integer. The statement of the theorem will follow as soon as we prove that $\eta (X_R)$ and $\eta (S^1 \times Y)$ can be made arbitrarily close by choosing sufficiently large $R$. The proof of this will occupy the rest of this section.
\end{proof}

\begin{rmk}
For any $X$ which is spin cobordant to zero, all of the conditions of Theorem \ref{T:eta} are satisfied for the right choice of metric; see Theorem \ref{T:two}.
\end{rmk}

\begin{rmk}
Because of the periodic index theorem \cite{MRS3} the statements of Theorem \ref{thm} and Theorem \ref{T:eta} are essentially equivalent to each other. What follows is an independent proof of Theorem \ref{T:eta} using heat kernel techniques. We chose to include this proof, inspired by \cite{DW}, because it may be of interest in its own right.
\end{rmk}

%%%%%%%%%%%%%%%%%%%%%%%%%%%%%%%%%%%%%%%%%%%%%%

\subsection{Heat kernel estimates on $S^1_r \times Y$}
Let $S^1_r$ a circle of length $r$ and consider a smooth function $f$ on its universal cover, the real line, such that $f(t+r) = f(t) + 1$. The cohomology class of $df$ generates $H^1 (S^1_r; \mathbb Z)$, and we will write $df = f'(u)\,du$ with respect to the natural parameter $u$ on the circle $S^1_r$. Consider the family of elliptic operators $\p/\p u - \ln z\cdot f' (u)$ with $z \in \mathbb C^*$. If $K^r_z (t;u,v)$ is the kernel of the operator $\exp\,(t\cdot (\p/\p u - \ln z\cdot f')^2)$ then $(\p/\p u - \ln z\cdot f')\,K_z^r (t;u,v)$ is the kernel of the operator
\[
(\p/\p u - \ln z\cdot f')\, \exp\,(t\cdot (\p/\p u - \ln z\cdot f')^2).
\]

\begin{lem}\label{L:taubes}
There are positive constants $\gamma$ and $C > 0$ independent of $z$ and $r$ such that the following estimates hold for all unitary $z$, $r \ge 1$, and $t > 0$:
\begin{gather}
\left|K^r_z (t;u,v)\right|\;\le\;C\cdot t^{-1/2}\cdot e^{-\gamma (u-v)^2/t}\quad\text{and} \notag \\
\left|(\p/\p u - \ln z\cdot f')\,K^r_z (t;u,v)\right|\;\le\;C\cdot t^{-1}\cdot e^{-\gamma (u-v)^2/t}. \notag
\end{gather}
\end{lem}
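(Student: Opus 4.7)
The plan is to derive a closed-form expression for $K^r_z(t;u,v)$ by the method of images on the universal cover of $S^1_r$, and then estimate the resulting series termwise. Lifting $f$ so that $f(u+r) = f(u)+1$ and setting $g(u) = e^{\ln z\,\cdot\, f(u)}$, which satisfies $g(u+r) = z\cdot g(u)$, one checks directly that $\mathcal{D}_z := \partial/\partial u - \ln z\cdot f'$ equals $g\circ(\partial/\partial u)\circ g^{-1}$. Consequently $\mathcal{D}_z^2$ acting on functions on $S^1_r$ is unitarily conjugate to $\partial^2/\partial u^2$ acting on sections of the flat line bundle over $S^1_r$ with monodromy $z^{-1}$, whose heat kernel is obtained by summing the Euclidean heat kernel $p_t(x) = (4\pi t)^{-1/2}\,e^{-x^2/(4t)}$ over the deck group with the appropriate weights:
\[
K^r_z(t;u,v) \;=\; e^{\ln z\,(f(u)-f(v))}\sum_{n\in\mathbb{Z}} z^{-n}\,p_t(u-v-nr).
\]

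For the first bound, I would take absolute values, noting that for $z$ on the unit circle both $|z^{-n}|=1$ and $|e^{\ln z(f(u)-f(v))}|=1$, yielding
\[
|K^r_z(t;u,v)| \;\le\; (4\pi t)^{-1/2}\sum_{n\in\mathbb{Z}} e^{-(u-v-nr)^2/(4t)}.
\]
After choosing the lift so that $|u-v|\le r/2$, the $n=0$ term supplies a Gaussian of the desired form, and the lower bound $|u-v-nr|\ge (|n|-1/2)\,r$ for $|n|\ge 1$ reduces the tail to $\sum_{n\ge 1} e^{-n^2 r^2/(16t)}$. A dichotomy in $t$ relative to $r^2$ handles this tail uniformly in $r\ge 1$: for $t\le r^2$ the series is geometric and is bounded by $C\,e^{-r^2/(Ct)}$, which (since $r^2\ge 4(u-v)^2$) is dominated by $C\,e^{-(u-v)^2/(C't)}$, absorbing into a Gaussian $e^{-\gamma(u-v)^2/t}$ with any $\gamma<1/4$; for $t\ge r^2$, Poisson summation recasts the sum as $r^{-1}\sum_k e^{-t(\theta+2\pi k)^2/r^2}\,e^{-i(u-v)(\theta+2\pi k)/r}$ with $z = e^{i\theta}$, which is bounded by $C/r\le C\,t^{-1/2}$, and the Gaussian factor on the right-hand side of the desired estimate is bounded below by a positive constant because $(u-v)^2/t\le 1/4$ in this regime.

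For the derivative bound, I would differentiate the formula through the sum. Since $\mathcal{D}_z = g\,\partial_u\,g^{-1}$,
\[
\mathcal{D}_z K^r_z(t;u,v) \;=\; e^{\ln z(f(u)-f(v))}\sum_n z^{-n}\,(\partial_u p_t)(u-v-nr),
\]
and the pointwise identity $\partial_u p_t(x) = -\tfrac{x}{2t}\,p_t(x)$ together with the absorption $(|x|/\sqrt{t})\,e^{-x^2/(4t)} \le C\,e^{-\alpha x^2/t}$ (valid for any $\alpha<1/4$) gives $|\partial_u p_t(x)|\le C\,t^{-1}\,e^{-\alpha x^2/t}$. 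Repeating the two-regime analysis from the first bound then produces the $t^{-1}$ estimate. The main obstacle throughout is the uniformity of $C$ and $\gamma$ jointly in $z$ on the unit circle and in $r\ge 1$: this is exactly what the dichotomy provides, with the small-$t$ regime controlled by the direct series and the large-$t$ regime by its Poisson dual, and with the condition $r\ge 1$ ensuring that the spacing between image points does not degenerate so that the geometric-type bound in the first regime has a uniform constant.
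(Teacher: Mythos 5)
Your proof takes a genuinely different route from the paper's: you use the method of images on the universal cover of $S^1_r$ and a $t$-versus-$r^2$ dichotomy, whereas the paper normalizes $f'\equiv 1/r$ by a conjugation observation, writes out the Fourier series for $K^r_z$ directly, and deduces the result from the scaling identity $K^r_z(t;u,v)=r^{-1}K^1_z(t/r^2;u/r,v/r)$ together with a claimed ``well-known'' uniform Gaussian bound for $K^1_z$ on the unit circle. Your conjugation identity, the image-sum formula, the Poisson-dual expression (which coincides with the paper's Fourier series), and the small-time regime $t\le r^2$ are all correct.

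The large-time step, however, is wrong: the asserted inequality $C/r\le C\,t^{-1/2}$ for $t\ge r^2$ is reversed, since $t\ge r^2$ is equivalent to $t^{-1/2}\le r^{-1}$. This is not a fixable slip but a symptom of the lemma being false as stated. At $z=1$ the kernel $K^r_1$ is the ordinary heat kernel on $S^1_r$, so $K^r_1(t;u,u)\to 1/r>0$ as $t\to\infty$, while $C\,t^{-1/2}e^{-\gamma(u-v)^2/t}\to 0$; hence no uniform $\gamma,C$ make the claimed bound hold for all $t>0$. The paper's own proof has the same hidden defect, because its cited ``well-known'' bound $|K^1_z(t;u,v)|\le C\,t^{-1/2}e^{-\gamma(u-v)^2/t}$ on the compact circle likewise fails at $z=1$ for large $t$. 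A correct formulation would carry an $e^{\alpha t}$ factor (as Lemma~\ref{L:est2} already does), or restrict to $t\lesssim r^2$, or impose a lower bound on $\mathrm{dist}(z,1)$. None of this endangers the later arguments, since Lemma~\ref{L:taubes} is always invoked either in tandem with the exponentially decaying heat kernel coming from the invertibility of $\D(Y,h)$, or merely as a boundedness statement at large $t$; but the lemma exactly as written is not provable by your route or by the paper's, and the reversed inequality in your $t\ge r^2$ regime is precisely where that obstruction surfaces in your approach.
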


\begin{proof}
We begin by observing that, for the purpose of making kernel estimates on the circle $|z| = 1$, one may assume that $f'(u)$ is constant and is therefore equal to $1/r$. This can be seen from the formula
\[
z^h\cdot (\p/\p u - \ln z\cdot f'(u)) \cdot z^{-h}\; =\; \p/\p u - \ln z\cdot (f'(u) + h'(u)),
\]
which holds for any function $h$ defined on $S^1_r$, and the fact that adding $dh$ does not change the cohomology class of $df$. Choosing $h(u) = u/r - f(u)$ will then do the job. Conjugating the operator by the unitary complex number $z^{h(x)}$ multiplies the kernel by $z^{h(x) - h(y)}$ hence preserves its norm.

Let $z = e^{is}$ then the kernel $K^r_z (t;u,v)$ of $\exp(t\cdot (\p/\p u - is/r))$ is given by the formula
\[
K^r_z (t;u,v)\; =\; \frac 1 r\,\cdot\; \sum_{k \in \mathbb Z}\;  e^{-t\,(2\pi k-s)^2/r^2} \cdot e^{2\pi iku/r} \cdot e^{-2\pi ikv/r}
\]
with respect to the orthonormal basis $r^{-1/2}\,e^{2\pi iku/r}$ on the circle $S^1_r$. One can easily verify that
\[
K^r_z (t;u,v)\; =\; \frac 1 r\,\cdot\, K^1_z (t/r^2; u/r,v/r),
\]
where $K^1_z (t;u,v)$ stands for the heat kernel of $\exp\,(t\cdot (\p/\p u - is)^2)$ on the circle of length one. It is well known that there exist positive constants $\gamma$ and $C$ independent of $z$ such that
\[
|K^1_z (t;u,v)|\; \le\; C\cdot t^{-1/2}\cdot e^{-\gamma (u-v)^2/t}\quad\text{for all\; $t > 0$}.
\]
But then
\begin{multline}\notag
|K^r_z (t;u,v)|\; \le\; C\cdot \frac 1 r\cdot \left(\frac t {r^2}\right)^{-1/2}\cdot\; e^{-\gamma (u/r - v/r)^2/(t/r^2)} \\ \le\; C\cdot t^{-1/2}\cdot e^{-\gamma (u - v)^2/t}\quad\text{for all\; $t > 0$}
\end{multline}
with the same constants $\gamma$ and $C$ independent of $z$ and $r$. A similar calculation proves the estimate on $(\p/\p u - is/r)\,K^r_z (t;u,v)$ as well; compare with \cite[Example 2.5]{DW}.
\end{proof}

Given a closed spin Riemannian manifold $Y$ of dimension $n-1$, consider the chiral spin Dirac operators $\D^{\pm}$ on $S^1_r\times Y$ and their twisted versions $\D^{\pm}_z = \D^{\pm} - \ln z\cdot df$. We wish to derive estimates on the kernels of the operators $\exp(-t\D^-_z\D^+_z)$ and $\D^+_z \exp(-t\D^-_z\D^+_z)$ which are uniform in $z$ on the unit circle $|z| = 1$ and in $r$. Denote by $K_z (t;x,y)$ the kernel of the operator $\exp(-t\D^-_z\D^+_z)$ then $\D^+_z K_z (t;x,y)$ is the kernel of $\D^+_z \exp (-t\D^-_z\D^+_z)$.

\begin{lem}\label{L:est1}
Suppose that the Dirac operator $\D$ on $Y$ has zero kernel. Then there exist positive constants $\gamma$ and $C$ independent of $z$ and $r$ such that the following estimates hold for all unitary $z$, $r \ge 1$, and $t > 0$:
\begin{gather}
|K_z (t;x,y)|\;\le\;C\cdot t^{-n/2}\cdot e^{-\gamma\,d^2(x,y)/t}\quad\text{and}\; \label{E:K} \\
|\D_z K_z (t;x,y)|\;\le\;C\cdot t^{-(n+1)/2}\cdot e^{-\gamma\,d^2(x,y)/t}. \label{E:DK}
\end{gather}
\end{lem}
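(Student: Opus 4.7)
The plan is to reduce the estimate to Lemma \ref{L:taubes} combined with standard heat-kernel bounds on the closed spin manifold $Y$ of dimension $n-1$. On the product $S^1_r\times Y$ with product metric $du^2+h$, using Clifford multiplication by $\partial/\partial u$ to identify $\S^+\cong\pi_Y^*\S\cong\S^-$, the twisted chiral Dirac operator takes the form
\[
\D^+_z\;=\;\partial_u\,-\,\ln z\cdot f'(u)\,+\,\D.
\]
Since $\partial_u$ commutes with $\D$, and $\D$ acts only in the $Y$ direction, a short computation gives the product decomposition
\[
\D^-_z\,\D^+_z\;=\;-\bigl(\partial_u-\ln z\cdot f'(u)\bigr)^2\,+\,\D^2,
\]
in which the two summands commute and act on separate tensor factors of $L^2(S^1_r)\otimes L^2(Y;\S)$.

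Consequently the heat semigroup splits as a tensor product, and its Schwartz kernel factorizes as
\[
K_z(t;(u_1,y_1),(u_2,y_2))\;=\;K^r_z(t;u_1,u_2)\cdot K_Y(t;y_1,y_2),
\]
where $K^r_z$ is the kernel studied in Lemma \ref{L:taubes} and $K_Y$ is the heat kernel of the positive operator $\D^2$ on $Y$. The assumption $\ker\D=0$ gives a spectral gap $\D^2\geq\mu>0$, so by combining the classical short-time Gaussian estimate
\[
|K_Y(t;y_1,y_2)|\;\leq\;C\,t^{-(n-1)/2}\,e^{-\gamma\,d_Y(y_1,y_2)^2/t},\qquad 0<t\leq 1,
\]
with the exponential bound $|K_Y(t;y_1,y_2)|\leq C\,e^{-\mu t}$ for $t\geq 1$ (valid since $Y$ is compact), one obtains a single Gaussian bound of the same form valid for all $t>0$; an analogous argument yields $|\D K_Y(t;y_1,y_2)|\leq C\,t^{-n/2}\,e^{-\gamma\,d_Y(y_1,y_2)^2/t}$. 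Multiplying these against the bound of Lemma \ref{L:taubes} and using $d^2(x,y)=d_{S^1_r}(u_1,u_2)^2+d_Y(y_1,y_2)^2$ for the product metric yields \eqref{E:K}.

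For \eqref{E:DK}, apply $\D^+_z=\partial_u-\ln z\cdot f'(u)+\D$ to the factorized kernel. Since $K_Y$ is independent of $u$ and $K^r_z$ of $y$, this produces exactly two contributions: $\bigl[(\partial_u-\ln z\cdot f')K^r_z\bigr]\cdot K_Y$, of order $t^{-1}\cdot t^{-(n-1)/2}=t^{-(n+1)/2}$ by the second estimate of Lemma \ref{L:taubes}, and $K^r_z\cdot[\D K_Y]$, of order $t^{-1/2}\cdot t^{-n/2}=t^{-(n+1)/2}$ by the derivative bound on $K_Y$ recorded above. Both carry the required Gaussian decay in $d^2(x,y)/t$, so \eqref{E:DK} follows. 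There is no serious analytic obstacle; the only point of care is the uniformity of constants in $r\geq 1$ and in $z$ on the unit circle, which is immediate because Lemma \ref{L:taubes} supplies uniformity for the $S^1_r$ factor while the $Y$ factor is independent of both $r$ and $z$.
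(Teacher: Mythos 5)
Your proposal is correct and takes essentially the same route as the paper: both rely on the separation of variables $\D^-_z\D^+_z = -(\partial_u - \ln z\cdot f')^2 + \D^2$, the resulting tensor-product factorization of the heat kernel into the $S^1_r$ kernel of Lemma~\ref{L:taubes} and the heat kernel of $\D^2$ on $Y$, and the two-term expansion of $\D^+_z$ acting on that product. The paper cites \cite[Proposition 1.1]{DW} for the $Y$-factor Gaussian bound where you derive it by patching the short-time parametrix estimate with the spectral-gap decay for $t\ge 1$; this is just an expansion of the same citation, not a different argument.
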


\begin{proof}
On the circle $|z| = 1$, we have $\D_z^+ = du\, (\p/\p u - \D - \ln z\cdot f')$ and $\D_z^- = (\p/\p u + \D - \ln z\cdot f')\, du$ hence $\D_z^-\D_z^+ = - (\p/\p u - \ln z \cdot f')^2 + \D^2$ and the kernel of $\exp(-t \D_z^-\D_z^+)$ is the product of the kernels of $\exp\,(t\cdot (\p/\p u - \ln z\cdot f')^2)$ and $\exp(-t \D^2)$. To obtain estimate \eqref{E:K}, simply combine the estimate $C\cdot t^{-1/2}\cdot e^{-(u-v)^2/4t}$ on the former kernel with the estimate $C\cdot t^{-(n-1)/2}\cdot e^{-\gamma\,d^2(x',y')/t}$ for $x', y' \in Y$ on the latter; see Lemma \ref{L:taubes} and \cite[Proposition 1.1]{DW}, respectively. To obtain estimate \eqref{E:DK}, write the operator $\D^+_z\exp(-t\D^-_z \D^+_z)$ in the form
\begin{equation}\label{E:DK1}
\begin{split}
du\, \left[(\p/\p u - \ln z\cdot f')\,\exp\,(t\cdot (\p/\p u - \ln z\cdot f')^2)\right]\cdot \exp(-t \D^2) \hspace{0.4in} \\ -
du\cdot\exp\,(t\cdot (\p/\p u - \ln z\cdot f')^2)\cdot \left[\D \exp(-t\D^2)\right]
\end{split}
\end{equation}
and apply the estimates of Lemma \ref{L:taubes} and \cite[Proposition 1.1]{DW} twice.
\end{proof}

\begin{lem}\label{L:est0}
Suppose that the Dirac operator $\D$ on $Y$ has zero kernel. Then there are positive constants $\mu$ and $C$ independent of $z$ and $r$ such that the following estimates hold for all unitary $z$, $r \ge 1$, and $t \ge 8$:
\[
|K_z (t;x,y)|\;\le\;C\cdot e^{-\mu t}\;\;\text{and}\quad |\D_z K_z (t;x,y)|\;\le\;C\cdot e^{-\mu t}.
\]
\end{lem}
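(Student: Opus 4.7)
My approach will parallel the proof of Lemma~\ref{L:est1} but replace the short-time Gaussian bounds on the $Y$-factor by large-time spectral-gap decay. On the unit circle $|z|=1$, the identity $\D_z^-\D_z^+ = -(\partial/\partial u - \ln z\cdot f')^2 + \D^2$ yields the product factorization
$$K_z(t;x,y) = K_z^r(t;u,v)\cdot K^Y(t;x',y'), \qquad x=(u,x'),\;\; y=(v,y'),$$
where $K^Y$ is the heat kernel of $\exp(-t\D^2)$ on $Y$. The analogous decomposition of $\D_z^+\,e^{-t\D_z^-\D_z^+}$ into two summands is precisely formula \eqref{E:DK1}, so each summand is again a pointwise product of an $S^1_r$-factor and a $Y$-factor.

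For the $S^1_r$-factors I will simply drop the harmless Gaussian weight $e^{-\gamma(u-v)^2/t}\le 1$ in Lemma \ref{L:taubes}; this gives, uniformly in $r\ge 1$, in $z$ on the unit circle, and for $t\ge 8$,
$$|K_z^r(t;u,v)|\le C t^{-1/2}\le C',\qquad \bigl|(\partial/\partial u - \ln z\cdot f')K_z^r(t;u,v)\bigr|\le C t^{-1}\le C'.$$
For the $Y$-factors, the assumption $\ker\D=0$ supplies a positive spectral gap: let $\lambda_0>0$ denote the smallest absolute value of the eigenvalues of $\D$. A standard argument, using the spectral expansion together with the bound $K^Y(1;x',x')\le C_1$ (finite by smoothness of the heat kernel on the compact manifold $Y$) and the Cauchy--Schwarz estimate $|K^Y(t;x',y')|\le\sqrt{K^Y(t;x',x')\,K^Y(t;y',y')}$, shows that
$$K^Y(t;x',x')\;\le\; e^{-\lambda_0^2(t-1)}\,K^Y(1;x',x')\qquad\text{for $t\ge 1$},$$
and hence yields constants $C,T_0\ge 1$ depending only on $(Y,h)$ with
$$|K^Y(t;x',y')|\le C e^{-\lambda_0^2 t},\qquad |\D_{x'}K^Y(t;x',y')|\le C e^{-\lambda_0^2 t}\qquad\text{for $t\ge T_0$,}$$
the second inequality being proved by the same Cauchy--Schwarz recipe applied to $\D K^Y$.

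Taking pointwise products of these $Y$-bounds against the $S^1_r$-bounds above gives both inequalities of the lemma for $t\ge\max(8,T_0)$, with any $\mu<\lambda_0^2$ and a suitably enlarged $C$; enlarging $C$ further to absorb the bounded time interval $[8,T_0]$ extends the bounds to all $t\ge 8$. The only genuinely non-trivial step is the large-time exponential bound on $K^Y$ and $\D K^Y$, which is where the hypothesis $\ker\D=0$ enters; beyond that the proof is a direct combination of estimates already available from Lemma \ref{L:taubes}.
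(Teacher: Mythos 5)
Your proof is correct and follows essentially the same approach as the paper's: the same product factorization on $|z|=1$ into an $S^1_r$-factor and a $Y$-factor, the same use of Lemma \ref{L:taubes} with the Gaussian weight dropped to bound the $S^1_r$-factor uniformly for $t\ge 8$, the same use of formula \eqref{E:DK1} for $\D_z^+K_z$, and the same spectral-gap decay on the $Y$-factor. The only difference is cosmetic: where the paper simply cites \cite[Proposition 1.1]{DW} for the large-time exponential bounds on the kernels of $\exp(-t\D^2)$ and $\D\exp(-t\D^2)$ on $Y$, you reprove them from scratch via the Cauchy--Schwarz and diagonal-decay argument, which makes the proof slightly more self-contained but does not change its substance.
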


\begin{proof}
We use again the fact that the kernel of $\exp(-t \D_z^-\D_z^+)$ is the product of the kernels of $\exp\,(t\cdot (\p/\p u - \ln z\cdot f')^2))$ and $\exp(-t \D^2)$. The former is uniformly bounded for all $t \ge 8$ by Lemma \ref{L:taubes}. As for the latter, note that the smallest eigenvalue of $\D^2$ is positive. Denote this eigenvalue by $\lambda^2$ then the kernel of $\exp(-t\,\D^2)$ can be estimated from above by $C\cdot e^{-\lambda^2 t/2}$ by \cite[Proposition 1.1]{DW}. The argument for the kernel of $\D_z K_z (t;x,y)$ is similar using equation \eqref{E:DK1} together with \cite[Proposition 1.1]{DW}.
\end{proof}

%%%%%%%%%%%%%%%%%%%%%%%%%%%%%%%%%%%%%%%%%%%%%

\subsection{Heat kernel estimates on $X_R$}
In this section, we will prove certain estimates on the heat kernels on manifolds $X_R$. To begin with, we will give a description of $X_R$ which differs notationally from that in \eqref{E:XRn}.

Let $Y$ be a connected submanifold of $X$ which is Poincar\'e dual to $df$, and let $W$ be the cobordism obtained by cutting $X$ open along $Y$. Assume that the Riemannian metric on $X$ is a product metric in a normal neighborhood $[-1,1] \times Y$. The induced metric on $W$ will have product regions $[-1,0] \times Y$ and $[0,1] \times Y$ near its boundary components. We will use these product regions to define, for every real number $R \ge 1$, the manifold
\[
X_R = W\; \cup\; ([-R-1,R+1] \times Y)
\]
by gluing the product region $[-1,0] \times Y$ of the first summand to $[-R-1,-R] \times Y$ of the second, and the product region $[0,1] \times Y$ of the first summand to $[R,R+1] \times Y$ of the second. The gluing functions we use are linear on the first factor and are the identity on the second. We will view the manifold $[-R-1,R+1] \times Y$ with the identified boundary components as the product $S^1_R \times Y$ with the circle $S^1_R$ of circumference $2R + 2$, cut open along a copy of $Y$.

Throughout this section we assume that the Dirac operator $\D$ on $Y$ has zero kernel,
%$R \ge 2$ and that
$df$ is supported in the product region $[-1,1] \times Y$, and $f$ is a function of the normal coordinate $u$ in that region.

%%%%%%%%%%%%%%%%%%%%%%%%%%%%%%%%%%%%%%%%%%%%%

\subsubsection{Gaussian estimates}
Denote by $K_z (t;x,y)$ the kernel of the operator $\exp(-t\,\D^-_z \D^+_z)$ on $X_R$ and by $K^1_z (t;x,y)$ and $K^2_z (t;x,y)$ the kernels of the operators $\exp(-t\,\D^-_z \D^+_z)$ on, respectively, $X$ and $S^1_R \times Y$. We wish to compare the functions $K_z (t;x,x)$ and $K_z^2 (t;x,x)$ over the product region $[-1,1] \times Y$ shared by the manifolds $X_R$ and $S^1_R \times Y$. To this end, define an approximate kernel of the operator $\exp(-t \D^-_z \D^+_z)$ on $X_R$ by the formula
\begin{equation}\label{E:Ka}
K^a_z (t;x,y) = \phi_1(x)\cdot K^1(t;x,y)\cdot \psi_1(y) + \phi_2(x)\cdot K^2_z(t;x,y)\cdot \psi_2(y).
\end{equation}
The functions $\psi_1$ and $\psi_2$ here form a smooth partition of unity on $X_R$ such that $\supp\,(\psi_2) = [-R-4/7,R+4/7] \times Y$ and $\psi_2 = 1$ on $[-R-3/7,R+3/7] \times Y$. The function $\phi_1$ equals zero on $[-R-1/7,R+1/7] \times Y$ and one outside of $[-R-2/7,R+2/7]\times Y$, and the function $\phi_2$ equals one on $[-R-5/7,R+5/7] \times Y$ and zero outside of $[-R-6/7,R+6/7]\times Y$. Note that $\phi_j = 1$ on $\supp\,(\psi_j)$ and that the distance between $\supp\,(\p\phi_j/\p u)$ and $\supp\,(\psi_j)$ is greater than or equal to $1/7$, $j = 1, 2$.

\begin{rmk}
It is important to note that in \eqref{E:Ka} we did not twist the Dirac operators on $X$ because $df$ is supported away from $W$ in the manifold $X_R$.
\end{rmk}

The advantage of having the approximate smoothing kernel $K^a_z (t;x,x)$ is that it is defined on the same manifold $X_R$ as $K_z (t;x,x)$ while
\[
K_z (t;x,x) - K^2_z (t;x,x) = K_z (t;x,x) - K^a_z (t;x,x)
\]
in the region $[-1,1] \times Y$ of our interest. To calculate the latter difference, consider the error term
\[
-E_z(t;x,y) = \left(\frac {\p}{\p t} + \D^-_z \D^+_z\right) K^a_z (t;x,y),
\]
where the operator $\D^-_z \D^+_z$ acts on the variable $x$ for any fixed $t$ and $y$. Since
\[
(\D^-_z\D^+_z) (\phi_j K^j_z) = (\Delta \phi_j) K^j_z - 2\,\nabla_{\nabla\phi_j} K^j_z + \phi_j\, (\D^-_z\D^+_z) K^j_z
\]
and both $K^j_z$ solve the heat equation, we obtain
\begin{equation}\label{E:err}
\begin{split}
-E_z (t;x,y)\, \hspace{3.5in} \\ = \Delta \phi_1(x)\cdot K^1 (t;x,y)\cdot \psi_1(y) - 2\,\nabla_{\nabla \phi_1(x)} K^1 (t;x,y)\cdot \psi_1(y) \; \\ +\; \Delta \phi_2(x)\cdot K^2_z (t;x,y)\cdot \psi_2(y) - 2\,\nabla_{\nabla \phi_2(x)} K^2_z (t;x,y)\cdot \psi_2(y).
\end{split}
\end{equation}

\smallskip\noindent
In particular, $E_z (t;x,y) = 0$ whenever $d(x,y) < 1/7$. Following the standard argument, see for instance \cite[Section 10.4]{MRS3}, we obtain
\[
K_z (t;x,y) - K^a_z (t;x,y) = \int_0^t \int_{X_R} K_z (s;x,w)\cdot E_z (t-s;w,y)\,dw\,ds.
\]

\smallskip\noindent
The $w$--integration in this formula extends only to $\supp_w E_z(t-s;w,y)$, which is contained in $N = ([-R-6/7,-R-1/7] \times Y)\; \cup\; ([R + 1/7,R+6/7] \times Y)$. Therefore, the above integral can be written in the form
\smallskip
\begin{equation}\label{E:conv1}
K_z (t;x,y) - K^a_z (t;x,y) = \int_0^t \int_N K_z (s;x,w)\cdot E_z (t-s;w,y)\,dw\,ds.
\end{equation}

To obtain an equation on the kernel of $\D^+_z e^{-t \D^-_z \D^+_z}$ on $X_R$ similar to \eqref{E:conv1}, apply $\D^+_z$ to both sides of that equation\,:
\smallskip
\begin{equation}\label{E:conv2}
\begin{split}
\D^+_z K_z(t;x,y) - \D^+_z K^a_z (t;x,y) \hspace{2.2in} \\ = \int_0^t \int_N \D^+_z K_z (s;x,w)\cdot E_z (t-s;w,y)\,dw\,ds.
\end{split}
\end{equation}

\medskip\noindent
We will use this formula to obtain our first on-diagonal estimate on the difference between the kernels $\D^+_z K_z$ and $\D^+_z K^a_z$. The second such estimate will be coming up in Proposition \ref{P:est3}.

\begin{pro}\label{P:est1}
There are positive constants $\alpha$, $\gamma$, and $C$ independent of unitary $z$ and $R$ such that the following estimate holds for $x \in [-1,1] \times Y \subset X_R$ and $t > 0$:
\[
|\D^+_z K_z(t;x,x) - \D^+_z K^a_z (t;x,x)|\;\le\;C\cdot e^{\alpha t}\cdot e^{-\gamma R^2/t}.
\]
\end{pro}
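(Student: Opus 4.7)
The plan is to estimate the convolution in \eqref{E:conv2} at $y=x\in[-1,1]\times Y$ by bounding each of its factors by a Gaussian and then integrating. The first observation is that since $x$ sits in the interior of the cylinder one has $\psi_{2}(x)=1$ and $\psi_{1}(x)=0$, so only the two $K_{z}^{2}$-terms in \eqref{E:err} contribute to $E_{z}(t-s;w,x)$. Because $w\in N$ has cylinder coordinate in $\pm[R+1/7,R+6/7]$ while $x$ has cylinder coordinate in $[-1,1]$, their distance on $S^{1}_{R}\times Y$ is at least $R$ (for $R$ sufficiently large, going in either direction around the circle). Invoking Lemma \ref{L:est1} on $S^{1}_{R}\times Y$, applicable uniformly for $r=2R+2\ge 1$ and $|z|=1$, together with the uniform sup-norm bounds on $\nabla\phi_{2}$ and $\Delta\phi_{2}$, one obtains
\[
|E_{z}(t-s;w,x)|\;\le\;C\,(t-s)^{-(n+1)/2}\,e^{-\gamma_{1}R^{2}/(t-s)}.
\]

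The more delicate step is a Gaussian upper bound on $|\D^{+}_{z} K_{z}(s;x,w)|$ on $X_{R}$ whose constants are independent of $R$ and of unitary $z$. Since $|z|=1$ the perturbation $-\ln z\cdot df$ is a uniformly bounded zeroth-order term, so $\D_{z}$ retains the principal symbol of $\D$ and hence its wave operator $\cos(\tau\D_{z})$ still has unit propagation speed. Combining this with the Fourier-transform identity
\[
e^{-s\,\D^{-}_{z}\D^{+}_{z}}\;=\;\frac{1}{\sqrt{4\pi s}}\int_{-\infty}^{\infty}e^{-\tau^{2}/(4s)}\cos(\tau\D_{z})\,d\tau
\]
and the local $L^{2}$-to-$C^{0}$ Sobolev estimates on $X_{R}$, whose constants are controlled by the local geometry and are therefore uniform in $R\ge 1$, the Cheeger--Gromov--Taylor method yields
\[
|\D^{+}_{z}K_{z}(s;x,w)|\;\le\;C\,s^{-(n+1)/2}\,e^{-\gamma_{1}d(x,w)^{2}/s}.
\]
Any path in $X_{R}$ from $x\in[-1,1]\times Y$ to $w\in N$ has length at least $R-6/7$ (along the cylinder one travels at least $R-6/7$, while a detour through $W$ costs at least $R$ plus $\operatorname{diam}(W)$), so after possibly shrinking $\gamma_{1}$ the bound becomes $|\D^{+}_{z}K_{z}(s;x,w)|\le C\,s^{-(n+1)/2}\,e^{-\gamma_{1}R^{2}/s}$.

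Substituting both pointwise estimates into \eqref{E:conv2} and using that $\operatorname{vol}(N)$ is uniformly bounded in $R$, one gets
\[
|\D^{+}_{z}K_{z}(t;x,x)-\D^{+}_{z}K^{a}_{z}(t;x,x)|\;\le\;C\int_{0}^{t}s^{-(n+1)/2}(t-s)^{-(n+1)/2}\,e^{-\gamma_{1}R^{2}/s}\,e^{-\gamma_{1}R^{2}/(t-s)}\,ds.
\]
The singularities at $s=0,t$ are absorbed by the elementary inequality
\[
r^{-\beta}\,e^{-cR^{2}/r}\;\le\;C_{\beta}\,R^{-2\beta}\,e^{-cR^{2}/(2r)},
\]
applied to each factor with $\beta=(n+1)/2$. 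The integrand is thereby majorised by $C\,R^{-2(n+1)}\,e^{-(\gamma_{1}/2)R^{2}(1/s+1/(t-s))}\le C\,R^{-2(n+1)}\,e^{-2\gamma_{1}R^{2}/t}$, where the last step uses $1/s+1/(t-s)\ge 4/t$. Integration in $s$ produces the bound $C\,R^{-2(n+1)}\,t\,e^{-2\gamma_{1}R^{2}/t}$, which is dominated by $C\,e^{\alpha t}\,e^{-\gamma R^{2}/t}$ once $\gamma<2\gamma_{1}$ and $\alpha>0$ is chosen large enough to absorb the polynomial factor in $t$.

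The main obstacle is the uniform Davies--Gaffney type bound in the second paragraph: one must verify that both the finite-propagation-speed representation for $\D_{z}$ and the local Sobolev embeddings transfer to $X_{R}$ with constants that are independent of $R$ and of $z$ on the unit circle. This ultimately rests on the uniformly bounded geometry of $X_{R}$ for $R\ge 1$, together with the boundedness of the purely imaginary zeroth-order perturbation $-\ln z\cdot df$ for $|z|=1$.
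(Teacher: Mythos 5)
Your proposal follows the same overall strategy as the paper: start from the Duhamel formula \eqref{E:conv2}, exploit the fact that $x\in[-1,1]\times Y$ and $w\in N$ are at distance $\gtrsim R$ to extract a factor $e^{-\gamma R^2/s}\,e^{-\gamma R^2/(t-s)}$, and then integrate in $s$. The conclusion is correct. However, the route you take to the two key kernel bounds differs from the paper's in both places, and it is worth recording the comparison.

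For the uniform off-diagonal estimate on $\D^+_z K_z(s;x,w)$ on $X_R$, the paper simply invokes Lemma~\ref{L:est2}, which it proves by the Duhamel iterative construction of the heat kernel from the approximate kernel on $X$ and $S^1_R\times Y$ (following~\cite{DW}); the uniformity in $R$ and $z$ is inherited from the explicit product kernel formula on $S^1_R\times Y$ (Lemma~\ref{L:taubes}). You instead propose a direct Cheeger--Gromov--Taylor argument via finite propagation speed for $\cos(\tau\D_z)$. That route is also sound in principle: on $|z|=1$ the operator $\D_z$ is self-adjoint (as the paper notes), its principal symbol is that of $\D$, and the family $\{X_R\}_{R\geq 1}$ has uniformly bounded geometry. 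But be aware that this substitutes one substantial lemma for another — you would still need to justify the passage from the finite-propagation representation to the pointwise bound on $\D^+_z K_z$ (not merely $K_z$), uniformly in the zeroth-order twist and in $R$, and you would need to account for a possible $e^{\alpha s}$ correction from the negative part of the scalar curvature via Lichnerowicz. You do flag this as the main obstacle, which is the right instinct; in the paper this work is done once in Lemma~\ref{L:est2} and reused.

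For the error term, the paper obtains the cleaner bound $|E_z(t;x,y)|\le C\,e^{-\gamma d^2(x,y)/t}$ in \eqref{E:E-est}, with no negative power of $t$, by absorbing $t^{-(n+1)/2}$ into the Gaussian using the observation that $E_z$ vanishes for $d(x,y)<1/7$; the analogous absorption for $\D^+_z K_z(s;x,w)$ uses the spare factor $e^{-1/49s}$ coming from $d^2(x,w)\ge R^2/4 + 1/49$. You keep the $(t-s)^{-(n+1)/2}$ and $s^{-(n+1)/2}$ prefactors and instead kill them with the elementary inequality $r^{-\beta}e^{-cR^2/r}\le C_\beta R^{-2\beta} e^{-cR^2/(2r)}$. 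This works equally well. Your remark that only the $K^2_z$-terms of $E_z(t-s;w,x)$ survive because $\psi_1(x)=0$ for $x\in[-1,1]\times Y$ is a correct simplification, though the paper does not need it since \eqref{E:E-est} already treats both pieces.

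One small inaccuracy: the distance from $[-1,1]\times Y$ to $N$ is $R-6/7$ (the paper's computation), not $\ge R$; this is harmless since both are $\ge R/2$ for $R$ large, but the constant you absorb should be adjusted accordingly.
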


\noindent
The proof of this proposition will use the following two lemmas which provide us with estimates on $\D^+_z K_z$ and $E_z$ which are uniform in $z$ and $R$.

\begin{lem}\label{L:est2}
There are positive constants $\alpha$, $\gamma$, and $C$ independent of unitary $z$ and $R$ such that the following estimates hold for $x, y \in X_R$ and $t > 0$:
\begin{gather}
|K_z(t;x,y)|\;\,\le\;\,C\cdot e^{\alpha t}\cdot t^{-n/2}\cdot e^{-\gamma\,d^2(x,y)/t}\quad\text{and\;} \label{E:K-est} \\ |\D_z^+ K_z(t;x,y)|\;\le\;C\cdot e^{\alpha t}\cdot t^{-(n+1)/2}\cdot e^{-\gamma\,d^2(x,y)/t}. \label{E:DK-est}
\end{gather}
\end{lem}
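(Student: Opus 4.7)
The plan is to derive the estimates \eqref{E:K-est} and \eqref{E:DK-est} from standard Gaussian heat kernel estimates for generalized Laplacians, with the uniformity in $z$ and $R$ coming from two simple observations: first, the manifolds $X_R$ all have bounded geometry (curvature, injectivity radius) uniform in $R$, since the added neck $[-R-1,R+1]\times Y$ carries the product metric $dt^2 + h$; and second, for unitary $z$ the twist $-\ln z\cdot df$ has purely imaginary coefficients bounded by $\pi\cdot \|df\|_{C^0}$, which is independent of $R$ because $df$ is supported in the fixed product region $[-1,1]\times Y\subset W$.

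First I would set up the Weitzenb\"ock formula. Writing $\D^\pm_z = \D^\pm - \ln z\cdot df$, a direct computation gives
\[
\D^-_z\D^+_z \;=\; \D^-\D^+ \,+\, A_z,
\]
where $A_z$ is a first-order differential operator with smooth coefficients supported in $[-1,1]\times Y$ and bounded uniformly in $z$ on the unit circle. Combining this with the Lichnerowicz formula $\D^-\D^+ = \nabla^*\nabla + \mathcal{R}$ (where $\mathcal{R}$ is an endomorphism depending on the scalar curvature of $g_R$, which is uniformly bounded in $R$), one obtains a uniform lower bound $\D^-_z\D^+_z \ge -\alpha_0$ on $L^2(X_R;\S^+)$ for some $\alpha_0 > 0$ independent of $z$ and $R$. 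This gives an $L^2$--operator norm bound $\|e^{-t\D^-_z\D^+_z}\|\le e^{\alpha_0 t}$.

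The second step is the off-diagonal Gaussian estimate \eqref{E:K-est}. Here I would use Davies' perturbation method: for a Lipschitz function $\psi$ on $X_R$ with $|\nabla\psi|\le 1$, conjugate the generator to obtain the operator $H_{\lambda,z} = e^{\lambda\psi}\D^-_z\D^+_z e^{-\lambda\psi}$ and show, by the uniformly bounded geometry and the Weitzenb\"ock formula, that $H_{\lambda,z} + (\alpha_0 + c\lambda^2) \ge 0$ for a universal constant $c > 0$. This yields the operator bound $\|e^{\lambda\psi}e^{-t\D^-_z\D^+_z}e^{-\lambda\psi}\|_{L^2\to L^2}\le e^{(\alpha_0+c\lambda^2)t}$. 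Combining this with the on-diagonal Nash-type estimate $|K_z(t;x,x)|\le C e^{\alpha t} t^{-n/2}$ (which follows via Moser iteration or a Sobolev inequality, both available uniformly in $R$ thanks to bounded geometry) and optimizing over $\lambda$ by taking $\psi(w) = d(w,y)$ and $\lambda \sim d(x,y)/t$, produces \eqref{E:K-est} with $\gamma = 1/(4c)$; this is the standard Cheng--Li--Yau / Davies argument, transported to the $X_R$ setting without modification.

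For the gradient estimate \eqref{E:DK-est}, I would use the semigroup decomposition
\[
\D^+_z K_z(t;x,y) \;=\; \int_{X_R} \D^+_z K_z(t/2;x,w)\cdot K_z(t/2;w,y)\,dw
\]
together with the $L^2$--operator bound $\|\D^+_z e^{-s\D^-_z\D^+_z}\|_{L^2\to L^2}\le C e^{\alpha s} s^{-1/2}$ (from the spectral theorem and $\|\D^+_z u\|^2 = \langle\D^-_z\D^+_z u,u\rangle$), followed by a second Davies conjugation with $\psi(w) = d(x,w) + d(w,y)$ to pick up the Gaussian factor $e^{-\gamma d(x,y)^2/t}$. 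Equivalently, one may apply $\D^+_z$ to the heat equation $(\p_t + \D^-_z\D^+_z)K_z = 0$ and combine parabolic Schauder estimates (uniform because of bounded geometry) with the Gaussian bound on $K_z$ already established; either route loses an additional $t^{-1/2}$, giving the stated $t^{-(n+1)/2}$.

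The main obstacle I anticipate is simply bookkeeping the uniformity: one must check that all constants appearing in the Sobolev inequality, the Moser iteration, the Schauder estimates, and the Davies conjugation depend only on the geometry of $W$ and the fixed product metric on the neck, not on $R$ or $z$. This is straightforward but tedious: the $R$-independence follows from the translation invariance of the product metric on the cylinder, and the $z$-independence follows because $\ln z$ runs over the compact arc $[-\pi i,\pi i]$.
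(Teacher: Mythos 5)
Your proposal is essentially correct, but it takes a genuinely different route from the paper. The paper derives Lemma~\ref{L:est2} by a parametrix argument: starting from the approximate kernel $K^a_z$ defined in~\eqref{E:Ka}, which is glued from the already-estimated kernels on the model spaces $S^1_R \times Y$ (Lemma~\ref{L:est1}) and $X$ (\cite[Proposition~1.1]{DW}), the true kernel on $X_R$ is built by the Duhamel iteration of \cite[Theorem~2.4]{DW}, and the Gaussian bounds simply propagate through that iteration; the factor $e^{\alpha t}$ emerges from the iteration and is tied to $\operatorname{vol}(Y)$. Your approach is instead intrinsic: Weitzenb\"ock plus Davies' exponential-weight conjugation and a Moser/Nash on-diagonal bound, with uniformity coming from the $R$-independent bounded geometry of $X_R$ and compactness of the circle $|z|=1$. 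This buys you independence from the approximate-kernel construction and a more portable argument, at the cost of importing the bounded-geometry/Moser machinery; the paper's route is shorter here precisely because Lemma~\ref{L:est1} and the approximate kernel already do most of the work. One small tightening: since $\D^+_z$ and $\D^-_z$ are mutual adjoints for unitary $z$, the operator $\D^-_z\D^+_z$ is automatically non-negative on $L^2(X_R;\S^+)$, so your $\alpha_0$ can be taken to be zero and the $e^{\alpha t}$ factor in your version really only enters (if at all) through the on-diagonal step and Davies conjugation rather than through a Lichnerowicz lower bound. This does not affect correctness, but it is worth noting that the two proofs attribute the $\alpha$ to different sources.
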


\begin{proof}
According to Lemma \ref{L:est1}, such estimates hold on $S^1_R \times Y$ with $\alpha = 0$ and the constants $\gamma$ and $C$ independent of $z$ and $R$. According to \cite[Proposition 1.1]{DW}, the same estimates with $\alpha = 0$ hold on the manifold $X$ but only for non-twisted Dirac operators, which are exactly the operators that $X$ contributes into the definition \eqref{E:Ka} of the approximate kernel. Now, the kernels of $\exp(-t\D^-_z \D^+_z)$ and $\D^+_z\exp(-t\D^-_z \D^+_z)$ on $X_R$ can be constructed from this approximate kernel by an iterative procedure using the Duhamel principle as in the proof of \cite[Theorem 2.4]{DW}. In the process, one obtains the estimates \eqref{E:K-est} and \eqref{E:DK-est} on $X_R$ from the respective estimates on $S^1_R \times Y$ and $X$. The constants in these estimates will be independent of $z$ and $R$ because they were already independent of $z$ and $R$ on $S^1_R\times Y$ and $X$. One also acquires in the process a possibly non-zero constant $\alpha$ which has to do with the volume of $Y$ and is therefore independent of $z$ and $R$.
\end{proof}

\begin{lem}
There exist positive constants $\gamma$ and $C$ independent of unitary $z$ and $R$ such that the following estimate holds for $x, y \in X_R$ and $t > 0$\,:
\begin{equation}\label{E:E-est}
|E_z (t;x,y)|\;\le\;C\cdot e^{-\gamma\,d^2(x,y)/t}.
\end{equation}
\end{lem}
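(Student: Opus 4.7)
The plan is to combine the support structure of $E_z$ with the Gaussian heat kernel bounds already available. Inspecting formula \eqref{E:err} together with the support conditions of the cutoff functions $\phi_j, \psi_j$ introduced just before it, one sees that if $E_z(t;x,y)\neq 0$ then either $x\in\supp(\nabla\phi_j)$ (equivalently $x\in\supp(\Delta\phi_j)$) and $y\in\supp(\psi_j)$ for some $j\in\{1,2\}$. Since $\phi_j=1$ on $\supp(\psi_j)$ and the relevant supports are separated by a gap of at least $1/7$ along the cylindrical direction, it follows that $d(x,y)\ge 1/7$ throughout $\supp E_z$.

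First, I would bound each of the four terms in \eqref{E:err} separately. The cutoff functions $\phi_j,\psi_j$ are fixed and $R$-independent, so $|\phi_j|,|\nabla\phi_j|,|\Delta\phi_j|,|\psi_j|$ are uniformly bounded. For the $K^1$ and $K^2_z$ factors I would apply, respectively, Proposition~1.1 of Dai--Wei on the closed manifold $X$ and Lemma~\ref{L:est1} on $S^1_R\times Y$ to get $|K^j(t;x,y)|\le C\, t^{-n/2}\, e^{-\gamma d^2(x,y)/t}$, uniformly in unitary $z$ and in $R$. The derivative terms $\nabla_{\nabla\phi_j} K^j$ require the analogous estimate
\[
|\nabla K^j(t;x,y)|\;\le\;C\cdot t^{-(n+1)/2}\cdot e^{-\gamma d^2(x,y)/t},
\]
which on $X$ is again given by Dai--Wei and on $S^1_R\times Y$ follows by the argument of Lemma~\ref{L:est1} (the factorization of the kernel into the $u$-direction piece and the $\D$-piece carries over, and the $u$-derivative estimate in Lemma~\ref{L:taubes} provides the extra $t^{-1/2}$). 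Summing the four contributions gives, on $\supp E_z$,
\[
|E_z(t;x,y)|\;\le\; C\cdot t^{-(n+1)/2}\cdot e^{-\gamma d^2(x,y)/t}.
\]

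The second step is to absorb the polynomial prefactor $t^{-(n+1)/2}$. Split the Gaussian as $e^{-\gamma d^2/t}=e^{-\gamma d^2/(2t)}\cdot e^{-\gamma d^2/(2t)}$. Using $d(x,y)\ge 1/7$, on $(0,1]$ we have
\[
t^{-(n+1)/2}\cdot e^{-\gamma d^2/(2t)}\;\le\; t^{-(n+1)/2}\cdot e^{-\gamma/(98\,t)},
\]
and the right-hand side is continuous on $(0,1]$ and tends to $0$ as $t\to 0^+$, hence is uniformly bounded. For $t\ge 1$ the factor $t^{-(n+1)/2}$ is itself bounded by $1$. Combining these two regimes, there are constants $C>0$ and $\gamma'>0$ (we may take $\gamma'=\gamma/2$) depending only on the cutoff data, the metrics, and $Y$, such that $|E_z(t;x,y)|\le C\, e^{-\gamma' d^2(x,y)/t}$ on $\supp E_z$; since $E_z$ vanishes elsewhere, \eqref{E:E-est} follows.

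The main obstacle is not conceptual but rather one of bookkeeping: verifying that the Gaussian estimate for $\nabla K^2_z$ on $S^1_R\times Y$ is uniform both in unitary $z$ and in $R\ge 1$. This, however, follows by the same conjugation trick used to prove Lemma~\ref{L:taubes} (replacing $f$ by a linear function so that $f'\equiv 1/r$ and absorbing the phase change into a unitary multiplier which preserves pointwise norms), together with the product structure $\D^-_z\D^+_z=-(\partial_u-\ln z\cdot f')^2+\D^2$ that factors the heat kernel; no new analysis is required beyond what already entered Lemma~\ref{L:est1}.
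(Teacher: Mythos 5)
Your argument is correct and matches the paper's proof in its essential steps: bound each term in \eqref{E:err} by Gaussian estimates for the kernels and their spatial derivatives (using Dai--Wei on $X$ and the $S^1_R\times Y$ factorization together with Lemma~\ref{L:taubes}, whose conjugation trick gives uniformity in $z$ and $R$), then use the support separation $d(x,y)\ge 1/7$ to absorb the polynomial prefactor into the Gaussian. You simply spell out what the paper leaves as a brief sketch.
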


\begin{proof}
This follows from formula \eqref{E:err} for the error term and the usual estimates on the kernels $K^1 (t;x,y)$ and $K^2_z (t;x,y)$ and their space derivatives. That the estimates for $K^2_z (t;x,y)$ are independent of $z$ and $R$ follows as in the proof of Lemma \ref{L:taubes} from an explicit formula for the heat kernel on $S^1_R \times Y$. The negative powers of $t$ that show up in the estimates are absorbed into the factor $e^{-\gamma\,d^2(x,y)/t}$ using the observation that $E_z (t;x,y) = 0$ whenever $d(x,y) < 1/7$.
\end{proof}

\begin{proof}[Proof of Proposition \ref{P:est1}]
We can now proceed with estimating the difference $\D^+_z K_z(t;x,x)\allowbreak - \D^+_z K^a_z (t;x,x)$ for $x \in [-1,1] \times Y$ using formula \eqref{E:conv2}. For all $z \in N$ we have $d(x,z)\, \ge\, (R - 1) + 1/7\, \ge\, R/2 + 1/7$ hence $d^2(x,z)\; \ge\; R^2/4 + 1/49$ and \eqref{E:DK-est} gives
\begin{equation}\notag
\begin{split}
|\D^+_z K_z(s;x,z)|\;\le\;C_1\cdot e^{\alpha_1 s}\cdot s^{-(n+1)/2}\cdot e^{-\gamma_1\,d^2(x,z)/s} \hspace{1.2in} \\
\;\le\;C_1\cdot e^{\alpha_1 s}\cdot s^{-(n+1)/2}\cdot e^{-1/49s}\cdot e^{-\gamma_1 R^2/4s}
\;\le\;C_2\cdot e^{\alpha_1 s}\cdot e^{-\gamma_1 R^2/4s}
\end{split}
\end{equation}
Similarly, using \eqref{E:E-est}, we obtain $|E_z (t-s;z,x)|\;\le\;C_3\cdot e^{-\gamma_2 R^2/4(t-s)}$ hence
\begin{multline}\notag
|\D^+_z K_z(t;x,x) - \D^+_z K^a_z(t;x,x)| \\ \;\le\;\int_0^t \int_N\;C_4\cdot e^{\alpha_1 s}\cdot e^{-\gamma R^2/s}\cdot e^{-\gamma R^2/(t-s)}\,dz\,ds \\
\;\le\;\int_0^t \int_N\;C_4\cdot e^{\alpha_1 t}\cdot e^{-\gamma R^2/t}\,dz\,ds\;\le\;C\cdot e^{\alpha t}\cdot e^{-\gamma R^2/t},
\end{multline}

\medskip\noindent
where we used the obvious fact that $1/t\,\le\,1/s + 1/(t-s)$ for $s \in (0,t)$.
\end{proof}

%%%%%%%%%%%%%%%%%%%%%%%%%%%%%%%%%%%%%%%%%%%%%%

\subsubsection{Large time estimates}
The second estimate that goes into the proof of our theorem has to do with the smallest eigenvalue of the operator $\D$ on $Y$. Such estimates are well-known for the $L^2$--norms of heat kernels; the following proposition claims pointwise estimates.

\begin{pro}\label{P:est3}
There exist positive constants $\mu$ and $C$ independent of $R$ such that
\[
|\D^+_z K_z (t;x,y)| \le C\cdot e^{-\mu t}\quad\text{and}\quad
|\D^+_z K^a_z (t;x,y)| \le C\cdot e^{-\mu t}
\]
for all $x, y \in [-1,1] \times Y \subset X_R$ and $t \ge 8$.
\end{pro}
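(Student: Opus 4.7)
The plan is to handle the two estimates in Proposition \ref{P:est3} separately, since they have rather different natures.

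For the approximate kernel $K^a_z$, the key observation is that in the region $x, y \in [-1,1] \times Y$, which sits deep inside the long neck $[-R-1,R+1] \times Y$ once $R$ is large, the cutoffs entering \eqref{E:Ka} degenerate drastically: one has $\phi_1 \equiv 0$ and $\phi_2 \equiv 1$ in a neighborhood of such $x$, while $\psi_1(y) = 0$ and $\psi_2(y) = 1$. Applying the product rule to \eqref{E:Ka} and using that $d\phi_j$ vanishes near $x$ for both $j$, one obtains the identity
$$\D^+_z K^a_z(t;x,y) = \D^+_z K^2_z(t;x,y).$$
The desired bound is then immediate from Lemma \ref{L:est0} applied to $S^1_R \times Y$, whose constants $\mu$ and $C$ are already uniform in $|z|=1$ and $r = R \ge 1$.

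For the true heat kernel $K_z$ on $X_R$ no product structure is available globally, and one must bring in spectral information. I would invoke Proposition \ref{eigenvalue estimate_z}: for all sufficiently large $R$ and all unitary $z$, the operator $\D^-_z\D^+_z$ on $X_R$ has no spectrum in $[0, \epsilon_1^2)$ with $\epsilon_1 > 0$ independent of $R$ and $z$, so self-adjointness gives $\|e^{-s \D^-_z \D^+_z}\|_{L^2 \to L^2} \le e^{-s \epsilon_1^2}$ for all $s \ge 0$. To convert this operator-norm decay into a pointwise bound on the kernel, I would factor
$$\D^+_z e^{-t \D^-_z \D^+_z} = \bigl[\D^+_z e^{-\D^-_z \D^+_z}\bigr] \circ e^{-(t-2) \D^-_z \D^+_z} \circ e^{-\D^-_z \D^+_z} \qquad (t \ge 8),$$
write the kernel as an iterated integral, and apply Cauchy--Schwarz to obtain
$$|\D^+_z K_z(t;x,y)| \le \|\D^+_z K_z(1;x,\cdot)\|_{L^2} \cdot e^{-(t-2)\epsilon_1^2} \cdot \|K_z(1;\cdot,y)\|_{L^2}.$$
The two $L^2$ factors are bounded by constants independent of $x$, $y$, $z$, and $R$: one squares and integrates the Gaussian bounds \eqref{E:K-est} and \eqref{E:DK-est} at time $s = 1$, using the fact that the family $\{X_R\}$ has uniformly bounded geometry so that $\int_{X_R} e^{-2\gamma d^2(x,y)}\,dy$ is dominated by a constant depending only on $\gamma$ and $n$. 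Choosing $\mu = \epsilon_1^2$ and absorbing the factor $e^{2\epsilon_1^2}$ into $C$ finishes the argument.

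The main obstacle is really not internal to this proposition: the whole scheme hinges on having an $R$- and $z$-uniform spectral gap, which is precisely the content of Proposition \ref{eigenvalue estimate_z}. Without that input one is stuck with the growing factor $e^{\alpha t}$ of Lemma \ref{L:est2} and no decay as $t \to \infty$. Once the spectral gap is available, the remaining semigroup/Cauchy--Schwarz sandwich together with the bounded-geometry volume estimate are routine, and a secondary concern is only the bookkeeping to confirm that the derivatives of the cutoffs in $K^a_z$ really do vanish on the region where one needs to evaluate.
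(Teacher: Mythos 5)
Your argument is correct, and the two halves fare differently when compared against the paper's own proof. For the approximate kernel $K^a_z$ your reasoning is essentially what the paper does, just spelled out: on $[-1,1]\times Y$ the cutoffs in \eqref{E:Ka} reduce $\D^+_z K^a_z$ to $\D^+_z K^2_z$ (the derivatives of $\phi_2$ vanish there, so no extra commutator terms appear), after which the second estimate of Lemma \ref{L:est0} applies directly with its $z$- and $r$-uniform constants.

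For the true heat kernel $K_z$ you take a genuinely different route. The paper expands $\D^+_z K_z(t;x,y)$ in the eigenbasis of the full self-adjoint operator $\D_z$ on $X_R$, controls each eigenspinor pointwise via a Sobolev embedding $|\phi_k(x)|\le a\,\|\phi_k\|_{L^2_{2n}}$ combined with the norm equivalence \eqref{E:norms}, and then sums, using Proposition \ref{eigenvalue estimate_z} for the spectral gap and the heat-trace bound from \cite[Lemma 10.13]{MRS3} to keep the sum and its constants uniform in $R$. You instead factor the semigroup, $\D^+_z e^{-t\D^-_z\D^+_z} = (\D^+_z e^{-\D^-_z\D^+_z})\circ e^{-(t-2)\D^-_z\D^+_z}\circ e^{-\D^-_z\D^+_z}$, use the $R$-uniform spectral gap of Proposition \ref{eigenvalue estimate_z} only through the $L^2\to L^2$ operator bound $\|e^{-s\D^-_z\D^+_z}\|\le e^{-s\epsilon_1^2}$, and pay for the endpoints with the already-uniform Gaussian estimates \eqref{E:K-est}--\eqref{E:DK-est} at $t=1$, closing the loop with a Cauchy--Schwarz sandwich and a bounded-geometry volume bound on $\int_{X_R} e^{-2\gamma d^2(x,\cdot)}$. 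Both proofs hinge in an essential way on Proposition \ref{eigenvalue estimate_z}; the trade-off is that your argument avoids having to justify $R$-uniformity of the Sobolev constant and of the trace $\sum_k |\lambda_k| e^{-\lambda_k^2}$ (the place where the paper invokes \cite[Lemma 10.13]{MRS3}), at the modest cost of making explicit the uniform volume-growth input that the paper leaves implicit. One cosmetic point worth flagging: the statement of Proposition \ref{P:est3} asserts constants ``independent of $R$,'' but as in the paper's own proof this is meant for $R$ large enough that Proposition \ref{eigenvalue estimate_z} is in force; your write-up should carry the same caveat.
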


\begin{proof}
The estimate for $\D^+_z K^a_z (t;x,y)$ is precisely the second estimate of Lemma \ref{L:est0}. The following proof is modeled after the proof of \cite[Proposition 1.1]{DW}.

We begin with an observation about the Sobolev spaces on $X_R$ with a fixed $R$. For a unitary $z$ and a non-negative integer $s$, define the Sobolev norm
\[
\|\phi\|_{L^2_s(z)}\;=\;\|\phi\|_{L^2} + \|\D^s_z \phi\|_{L^2}.
\]
This norm is equivalent to the Sobolev norm
\[
\|\phi\|_{L^2_s}\;=\;\|\phi\|_{L^2} + \|\D^s \phi\|_{L^2}
\]
meaning that the identity operators $\Id: L^2_s (z) \to L^2_s$ and $\Id: L^2_s \to L^2_s (z)$ are bounded. The norms of these operators are continuous functions of $z$ which achieve their absolute minimum and maximum on the circle $|z| = 1$. Therefore, there exist positive constants  $m$ and $M$ independent of $z \in S^1$ such that for all $\phi$ we have
\begin{equation}\label{E:norms}
m\cdot \|\phi\|_{L^2_s}\;\le\;\|\phi\|_{L^2_s (z)}\;\le\;M\cdot \|\phi\|_{L^2_s}.
\end{equation}

With this understood, let $\D_z\,\phi_k = \lambda_k\, \phi_k$ be the spectral decomposition of the full Dirac operator $\D_z$ with $\|\phi_k\|_{L^2} = 1$ (both $\phi_k$ and $\lambda_k$ depend on $z$ but we omit this dependence from our notation). We first estimate
\[
K_z (t;x,y) = \sum_k\; e^{-t\, \lambda_k^2}\cdot \phi_k (x) \cdot \overline{\phi}_k (y).
\]
Use the Sobolev embedding theorem with $s = 2n$, see for instance \cite[Lemma 1.1.4]{G}, and inequality \eqref{E:norms} to obtain the following pointwise estimates
\[
|\phi_k (x)|\;\le\;a\cdot \|\phi_k\|_{L^2_{2n}}\;\le\;(a/m)\cdot\|\phi_k\|_{L^2_{2n} (z)} = (a/m) \cdot (1 + \lambda_k^{2n})
\]
with the constants $a$ and $m$ independent of $z$ and $k$. For any $t \ge 8$, we have
\begin{align*}
|\D_z K_z(t;x,y)| & \le\; (a/m)\cdot \sum\;|\lambda_k|\,e^{-t \lambda_k^2}\cdot (1+ \lambda_k^{2n})^2 \\
& \le\; (a/m)\cdot ((2n)!)^2\cdot \; \sum\;\; e^{-(t-3)\,\lambda_k^2} \\
& \le\; (a/m)\cdot ((2n)!)^2\cdot \; \sum\;\; e^{-(t-4)\,\lambda^2}\cdot e^{-\lambda_k^2}\;\le\;C_1\cdot e^{-\lambda^2 t/2}
\end{align*}
with some positive constants $\mu$ and $C_1$ independent of $z$. In the last line, we used the condition that $\ker \D_z = 0$ on the circle $|z| = 1$ to guarantee that the smallest eigenvalue of the family $\D^2_z$ over $z \in S^1$ is positive; we call this eigenvalue $\lambda^2$. We also used the fact that
\begin{equation}\label{E:trace}
\sum\; |\lambda_k|\,e^{-\lambda_k^2}
\end{equation}
is a continuous function of $z \in S^1$ to estimate it from above by a constant independent of $z$.
The constants in the estimates depend on $R$ in two different ways. One is via the smallest eigenvalue $\lambda$ which by Proposition \ref{eigenvalue estimate_z} is bounded away from zero by a positive constant for all sufficiently large $R$. The other is via the function \eqref{E:trace} which can be shown to be bounded for $t \ge 8$ using \cite[Lemma 10.13]{MRS3} and Proposition \ref{eigenvalue estimate_z}.
\end{proof}

%%%%%%%%%%%%%%%%%%%%%%%%%%%%%%%%%%%%%%%%%%%%%%

\subsection{Proof of Theorem \ref{T:eta}}
For our choice of $df$, the difference between $\eta (X_R)$ and $\eta (S^1_R \times Y)$ is given by integrating the quantity
\[
\int_{[-1,1]\times Y}\; \Tr\, (df\cdot (\D^+_z K_z (t;x,x) - \D^+_z K^a_z(t;x,x)))\,dx,
\]
with respect to $t$ and $z$ as in formula \eqref{E:eta} (in the above formula, $\Tr$ stands for the matrix trace). The quantity $\D^+_z K_z (t;x,x) - \D^+_z K^a_z(t;x,x))$ has been estimated twice, first by $C\cdot e^{\alpha t}\cdot e^{-\gamma R^2/t}$ for $t > 0$ in Proposition \ref{P:est1} and then by $C\cdot e^{-\mu t}$ for $t \ge 8$ in Proposition \ref{P:est3}. The graphs of $e^{\,\alpha t-\gamma R^2/t}$ and $e^{-\mu t}$ intersect at the point $t = \beta R$, where
\smallskip
\[
\beta\, =\, \sqrt{\,\frac {\gamma}{\alpha + \mu}}.
\]

\smallskip\noindent
Therefore, for sufficiently large $R$ and after adjusting the constants $C$, we can use the first estimate on the interval $0 < t \le \beta R$ and the second on the interval $\beta R \le t < \infty$. Since both estimates are independent of $x$ and $z$, we can integrate them with respect to these two variables. The integration results in estimating the distance between $\eta (X_R)$ and $\eta (S^1_R \times Y)$ by a uniform constant times the integral

\[
\int_0^{\beta R} e^{\,\alpha t - \gamma R^2/t}\,dt\; +\; \int_{\beta R}^{\infty}\;e^{-\mu t}\,dt.
\]

\medskip\noindent
The latter integral can easily be estimated from above by
\[
\beta\cdot R\cdot e^{(\,\alpha\beta - \gamma/\beta) R}\; +\; (1/\mu)\cdot e^{-\mu\beta R}.
\]
Since $\alpha\beta - \gamma/\beta$ is negative, the difference between $\eta (X_R)$ and $\eta (S^1_R \times Y)$ must approach zero as $R \to \infty$. This completes the proof of Theorem \ref{T:eta}.

%%%%%%%%%%%%%%%%%%%%%%%%%%%%%%%%%%%%%%%%%%%%

\end{document}